\title[SEMIARTINIAN
VON NEUMANN REGULAR ALGEBRAS]
{Representation of artinian partially ordered sets over semiartinian Von Neumann regular algebras}
\date{March 10, 2009}
\newtheorem{theo}{Theorem}[section]
\newtheorem{pro}[theo]{Proposition}
\newtheorem{prodef}[theo]{Proposition and Definition}
\newtheorem{lem}[theo]{Lemma}
\newtheorem{cor}[theo]{Corollary}
\newtheorem{ex}[theo]{Example}
\theoremstyle{remark}
\newtheorem{re}[theo]{Remark}
\newtheorem{notation}[theo]{Notation}
\newtheorem{remnotation}[theo]{Remark and Notation}
\newtheorem{notations}[theo]{Notations}
\newtheorem{definition}[theo]{Definition}
\numberwithin{equation}{section}
\newcommand{\putl}[5]{\put(#1,#2){\line(#3,#4){#5}}}
\newcommand{\putbb}[3]{\put(#1,#2){\makebox(0,0)[b]{#3}}}
\newcommand{\CA}{\mathcal{A}}
\newcommand{\CB}{\mathcal{B}}
\newcommand{\CC}{\mathcal{C}}
\newcommand{\CF}{\mathcal{F}}
\newcommand{\CG}{\mathcal{G}}
\newcommand{\CH}{\mathcal{H}}
\newcommand{\CK}{\mathcal{K}}
\newcommand{\CL}{\mathcal{L}}
\newcommand{\CM}{\mathcal{M}}
\newcommand{\CN}{\mathcal{N}}
\newcommand{\CP}{\mathcal{P}}
\newcommand{\CQ}{\mathcal{Q}}
\newcommand{\CV}{\mathcal{V}}
\newcommand{\CY}{\mathcal{Y}}
\newcommand{\BC}{\mathbb C}
\newcommand{\BD}{\mathbb D}
\newcommand{\BE}{\mathbb E}
\newcommand{\BF}{\mathbb F}
\newcommand{\BL}{\mathbb L}
\newcommand{\BM}{\mathbb M}
\newcommand{\BN}{\mathbb N}
\newcommand{\BP}{\mathbb P}
\newcommand{\BR}{\mathbb R}
\newcommand{\ZA}{\mathbf{A}}
\newcommand{\ZB}{\mathbf{B}}
\newcommand{\ZK}{\mathbf{K}}
\newcommand{\ZM}{\mathbf{M}}
\newcommand{\ZO}{\mathbf{O}}
\newcommand{\ZP}{\mathbf{P}}
\newcommand{\ZS}{\mathbf{S}}
\newcommand{\ZU}{\mathbf{U}}
\newcommand{\za}{\mathbf{a}}
\newcommand{\zb}{\mathbf{b}}
\newcommand{\zc}{\mathbf{c}}
\newcommand{\zd}{\mathbf{d}}
\newcommand{\ze}{\mathbf{e}}
\newcommand{\zf}{\mathbf{f}}
\newcommand{\zg}{\mathbf{g}}
\newcommand{\zh}{\mathbf{h}}
\newcommand{\zi}{\mathbf{i}}
\newcommand{\zm}{\mathbf{m}}
\newcommand{\zo}{\mathbf{o}}
\newcommand{\zp}{\mathbf{p}}
\newcommand{\zr}{\mathbf{r}}
\newcommand{\zs}{\mathbf{s}}
\newcommand{\zu}{\mathbf{u}}
\newcommand{\bet}{{\beth\,}}
\renewcommand{\a}{\alpha}
\renewcommand{\b}{\beta}
\newcommand{\g}{\gamma}
\renewcommand{\d}{\delta}
\renewcommand{\l}{\lambda}
\renewcommand{\r}{\rho}
\newcommand{\s}{\sigma}
\renewcommand{\t}{\tau}
\newcommand{\f}{\varphi}
\renewcommand{\L}{\varLambda}
\renewcommand{\Xi}{\varXi}
\renewcommand{\Pi}{\varPi}
\renewcommand{\Sigma}{\varSigma}
\newcommand{\F}{\varPhi}
\renewcommand{\P}{\varPsi}
\newcommand{\al}{\boldsymbol{\aleph}}
\newcommand{\nin}{\not\in} 
\newcommand{\sbs}{\subset} 
\newcommand{\psbs}{\subsetneqq} 
\newcommand{\nsbs}{\not\subset} 
\newcommand{\sps}{\supset} 
\newcommand{\psps}{\supsetneqq} 
\newcommand{\vu}{\emptyset} 
\newcommand{\is}{\simeq} 
\newcommand{\subis}{\lesssim} 
\renewcommand{\le}{\leqslant} 
\renewcommand{\ge}{\geqslant} 
\newcommand{\defug}{\,\,\colon\!\!=}
\DeclareMathOperator{\Ker}{Ker} 
\DeclareMathOperator{\Imm}{Im} 
\DeclareMathOperator{\Hom}{Hom} \DeclareMathOperator{\End}{End}
 \DeclareMathOperator{\BIE}{BiEnd}
\DeclareMathOperator{\Supp}{Supp} 
 \DeclareMathOperator{\Soc}{Soc}
\DeclareMathOperator{\Tr}{Tr} \DeclareMathOperator{\LL}{L}
\newcommand{\CFM}{\BC\BF\BM}
\newcommand{\FR}{\BF\BR}
\newcommand{\FM}{\BF\BM}
\renewcommand{\H}[3]{\Hom_{#3}\left(#1, #2\right)}
\newcommand{\Ord}{{\ZO\zr\zd}}
\newcommand{\Simp}{{\ZS\zi\zm\zp}}
\newcommand{\Prosimp}{{\ZP\zr\zo\zs\zi\zm\zp}}
\newcommand{\Prim}{{\ZP\zr\zi\zm}}
\newcommand{\map}[3]{#1\colon #2\to#3}
\newcommand{\lmap}[3]{#1\colon#2\longrightarrow#3}
\renewcommand{\max}[1]{#1^{\bigstar}}
\begin{document}

\author{Giuseppe Baccella}

\address{Dipartimento di Matematica Pura ed Applicata\\
Universit\`a di L'Aquila.\\ L'Aquila, 67100 Italy}

\email{baccella@univaq.it}

\subjclass{Primary 16E50; Secondary 16D60, 16S50, 16E60, 06A06}

\keywords{Von Neumann regular ring; Artinian poset; well founded poset; semiartinian ring; $V$-ring; simple module.}

\dedicatory{Dedicated to the memory of Adalberto Orsatti \emph{(Il Maestro)} and of Dimitri Tyukavkin}

\begin{abstract}
If $R$ is a semiartinian Von Neumann regular ring, then the set
    $\Prim_{R}$ of
    primitive ideals of $R$, ordered by inclusion, is an artinian poset in
    which all maximal chains have a greatest element. Moreover, if $\Prim_{R}$
    has no infinite antichains, then the lattice $\BL_{2}(R)$ of all ideals of $R$ is anti-isomorphic to the lattice of all upper subsets of $\Prim_{R}$. Since the assignment $U\mapsto r_R(U)$ defines a bijection from any set $\Simp_R$ of representatives of simple right $R$-modules to $\Prim_{R}$, a natural partial order is induced in $\Simp_R$, under which the maximal elements are precisely those simple right $R$-modules which are finite dimensional over the respective endomorphism division rings; these are always $R$-injective. Given any artinian poset $I$ with at least two elements and having a finite cofinal subset, a lower subset $I'\sbs I$ and a field $D$, we present a construction which produces a semiartinian and unit-regular $D$-algebra $D_I$ having the following features: (a) $\Simp_{D_I}$ is order isomorphic to $I$; (b) the assignment $H\mapsto\Simp_{D_I/H}$ realizes an anti-isomorphism from the lattice $\BL_{2}(D_I)$ to the lattice of all upper subsets of $\Simp_{D_I}$; (c) a non-maximal element of $\Simp_{D_I}$ is injective if and only if it corresponds to an element of $I'$, thus $D_I$ is a right $V$-ring if and only if $I' = I$; (d) $D_I$ is a right \emph{and} left $V$-ring if and only if $I$ is an antichain; (e) if $I$ has finite dual Krull length, then $D_I$ is (right and left) hereditary; (f) if $I$ is at most countable and $I' = \vu$, then $D_I$ is a countably dimensional $D$-algebra.

\end{abstract}

\maketitle

\setcounter{section}{-1}
\section{Introduction}

For a given right semiartinian ring $R$ we introduced in \cite{Bac:15} what we called the {\em natural preorder\/} ``$\,\preccurlyeq\,$'' in the class of all simple right
modules over $R$. The idea was to define,
for every simple module $U_R$, a particular $U$-peak ideal $I(U)$
(in the sense that the right socle of $R/I(U)$ is essential,
projective and $U$-homogeneous) and, given another simple module
$V_R$, to declare that $U\preccurlyeq V$ in case $I(U)\sbs I(V)$.
It turn out that the natural preorder is a Morita invariant;
moreover $U\is V$ if and only if both $U\preccurlyeq V$ and
$V\preccurlyeq U$, so that ``$\,\preccurlyeq\,$'' induces the
\emph{natural partial order} in any set $\Simp_R$ of representatives
of simple right $R$-modules. With respect to the natural partial
order, $\Simp_R$ is an artinian poset in which every maximal chain
has a maximum.

It is worth to observe that, since the class of right semiartinian
rings is closed by factor rings, for every $U\in\Simp_{R}$ the
primitive ring $R/r_{R}(U)$ has nonzero socle. This implies that $U$
is the unique (up to an isomorphism) simple and faithful right
$R/r_{R}(U)$-module and the assignment $U\mapsto r_{R}(U)$ defines a
bijection from $\Simp_{R}$ to the set $\Prim_{R}$ of (right)
primitive ideals of $R$. In view of this fact it would appear quite
natural to declare $U\preccurlyeq V$ in case $r_{R}(U)\sbs
r_{R}(V)$; moreover we must recall that Camillo and Fuller already
remarked in \cite{CamilloFuller:2} that the set $\Prim_{R}$, ordered
by inclusion, is always artinian when $R$ is right semiartinian. The
point is that in many interesting cases $\Prim_{R}$ is just the set
of all maximal (two-sided) ideals and the above partial order
becomes the trivial one, giving thus no insight into the structure
of $R$; for example, this is the case when if $R$ is left perfect,
in particular when $R$ is right artinian. The situation changes
dramatically when $R$ is a regular ring; in this case it turns out
that $I(U) = r_{R}(U)$ for all $U\in\Simp_{R}$, therefore
$U\preccurlyeq V$ if and only if $r_{R}(U)\sbs r_{R}(V)$; moreover
$U$ is a maximal element of $\Simp_{R}$ if and only if $U$ is
finite dimensional as a vector space over the division ring $\End(U_{R})$ and, if it is the case, then $U_{R}$ is injective. By the regularity, every ideal of $R$ is the intersection of all right primitive ideals containing it, therefore the order structure of $\Simp_{R}$, or equivalently of $\Prim_{R}$, strictly affects the order structure of the lattice  $\BL_{2}(R)$ of all ideals of $R$; for instance, if $\Simp_{R}$ has no infinite antichains, then $\BL_{2}(R)$ is anti-isomorphic to the lattice of all upper subsets of $\Simp_{R}$, therefore $\BL_{2}(R)$ is artinian (see \cite[Corollary 4.8 and Theorem 4.5]{Bac:15}).

The main subject of the present work is to investigate which artinian partially ordered sets can be realized as $\Simp_{R}$, or equivalently as $\Prim_{R}$, for some semiartinian and regular ring $R$. This problem appears as a special instance of the more general problem of determining those complete lattices which are isomorphic to $\BL_{2}(R)$ for some regular ring $R$. A rather general answer to this problem was given by Bergman in \cite{Bergman:001}, by showing that if $L$ is a complete and distributive lattice, which has a compact greatest element and each element of which is the supremum of compact join-irreducible elements, then there exists a unital, regular and locally matricial algebra $R$ over any given field $F$ such that $\BL_{2}(R)$ is isomorphic to $L$. Our main result is that if $I$ is an artinian poset and $D$ is a division ring, then there exists a unit-regular and semiartinian ring $D_I$, having $D$ as subring, such that $\Simp_{D_I}$ is isomorphic to $I$ provided $I$ has a finite cofinal subset, otherwise $\Simp_{D_I}$ is isomorphic to the poset obtained from $I$ by adding a suitable maximal element.

As it was proved in \cite{BacCiamp:14}, if $R$ is a semiartinian and unit-regular ring, then the abelian group $\ZK_{0}(R)$ is free of rank $|\Simp_{R}|$; however, in the same paper the order structure of $\ZK_{0}(R)$ was investigated only in the case in which $R$ satisfies the so called restricted comparability axiom (see in Section 4 below for the definition). In a forthcoming paper we will resume that investigation, precisely we will characterize those partially ordered abelian groups which can be realized as $\ZK_{0}(R)$ for some semiartinian and unit-regular ring $R$. In particular we will see that if $I$ is an artinian poset having a finite cofinal subset, then $\ZK_{0}(D_I)$ is isomorphic to the free abelian group generated by $I$, together with the submonoid generated by the elements $i-j$ for $j<i$ in $I$ as the positive cone.

Now $\ZK_{0}(R)$ is the Grothendieck group of the abelian monoid $\CV(R)$ of isomorphism classes of finitely generated projective right $R$-modules. When $R$ is a regular ring, then $\CV(R)$ enjoys some fundamental and well known properties. The inverse problem of deciding wether, given an abelian monoid $M$ having the same properties, there exists a regular ring $R$ such that $\CV(R)$ is isomorphic to $M$, is known as the \emph{Realization Problem for Von Neumann Regular Rings\/}; Ara recently wrote a nice survey on it (see \cite{Ara:011}). Only after the present work was complete we became aware of the recent important works by Ara and Brustenga \cite{AraBrustenga:010} and by Ara \cite{Ara:012} on this problem. Precisely, given a field $K$, in the first one a regular $K$-algebra $Q(E)$ is associated to a column-finite quiver $E$, via the Leavitt path algebra $L(E)$ of $E$ (see \cite{AbramsArandaPino:010}), in such a way that $\CV(Q(E))$ is isomorphic to $\CV(L(E))$; in the second one a regular $K$-algebra $Q(\BP)$ is functorially associated to each finite poset $\BP$, in such a way that $\CV(Q(\BP))$ is the abelian monoid generated by $\BP$ with the only relations given by $p = p+q$ if and only if $q<p$ in $\BP$. To some extent our present research parallels the above works. Our construction of the ring $D_I$ is far from being functorial on $I$, exactly as the map which assigns to a set $X$ the ring $\CFM_{X}(D)$ of all column-finite $X\times X$-matrices with entries in a given ring $D$ is not a functor on $X$. Nonetheless, if $I$ and $J$ are isomorphic artinian posets, then the rings $D_I$ and $D_J$ turn out to be isomorphic and we can list several nice ring and module theoretical features of $D_I$.  It would be interesting to find relationships, if any, between the algebra $Q(\BP)$ of Ara and our algebra $D_{\BP}$ when $\BP$ is a finite poset.

Our work is divided into nine sections. In section 1 we examine some basic features of artinian posets needed when dealing with semiartinian and regular rings. In particular, given an artinian poset $I$, for every ordinal $\a$ we consider the {\em
$(\a+1)$-th layer\/} $I^{\bullet}_{\a+1}$ of $I$, namely: $I^{\bullet}_{1}$ is the set of all minimal elements of $I$ and, for every ordinal $\a>1$ one defines recursively $I^{\bullet}_{\a+1}$ as the set of all minimal elements of the set $I\setminus \left(\bigcup_{\b<\a}I^{\bullet}_{\b+1}\right)$. The set of all layers is a partition of $I$ and we define the \emph{canonical length function} $\map{\l_I}I{\Ord}$ as the function which assigns to every $i\in I$ the (unique) successor ordinal $\l_I(i)$ such that $i$ belongs to the $\l_I(i)$-th layer of $I$ (recall that a \emph{length function} on an artinian poset $I$ is any strictly increasing map from $I$ to the well ordered class $\Ord$ of all ordinals).

The second, third and fourth sections are devoted to the study of the natural partial order of $\Simp_{R}$, when $R$ is a semiartinian and regular ring. We recall that if $R$ is any right semiartinian ring and $M$ is any right $R$-module, then we define the ordinal $h(M) = \min\{\,\a\mid M\cdot\Soc_\a(R_R) = M\}$; if $M$ is finitely generated, then $h(M)$ is a successor ordinal if. If $U_R$ is simple and $h(U) = \a+1$, then $U_{R/\Soc_\a(R_R)}$ is projective and $\a$ is the largest ordinal such that $\H{U}{R/\Soc_\a(R_R)}{R}\ne 0$ (see \cite[Theorem
1.3]{BacDic:1}) while, if $R$ is regular, $\a$ is the \emph{unique} ordinal with this property. Now $h$ defines a length function on the artinian poset $\Simp_R$ and if $\l$ denotes the canonical length function on $\Simp_R$, then it turns out that $\l(U)\le h(U)$ for every $U\in\Simp_R$. We concentrate our attention on two special classes of semiartinian and regular rings. A ring $R$ belongs to the first one if and only if the two length functions $\l$ and $h$ coincide, while it belongs to the second one if and only if the assignment $H\mapsto\Simp_{R/H}$ realizes an anti-isomorphism from the lattice $\BL_{2}(R)$ to the lattice of all upper subsets of $\Simp_{R}$. We say that $R$ is \emph{well behaved} in the first case and \emph{very well behaved} in the second. Of course, if $R$ is very well behaved then $R$ is well behaved and, in addition, $\Simp_R$ has only finitely many maximal elements. We illustrate with examples that these latter two conditions are actually independent and, together, do not imply that $R$ is very well behaved. Next, for any semiartinian and regular ring $R$, we pass to establish which properties of the poset $\Simp_R$ are connected with the various comparability axioms on $R$.

We start with section 5 our construction of semiartinian unit-regular rings. The scenario of the whole drama is the ring $Q = \CFM_{X}(D)$ of all column-finite matrices with entries in a given ring $D$, where $X$ is a suitable \underline{transfinite ordinal}, together with the ideal $\BF\BR_X(D)$ of all matrices with only finitely many nonzero rows. It is well known that if $R$ is any ring and $\map{\f,\psi}QR$ are two ring isomorphisms, then $\f(\BF\BR_X(D)) = \psi(\BF\BR_X(D))$; let's say that the elements of this latter ideal are the \emph{finite-ranked} elements of $R$. Thus, the first main step is to associate to every ordinal $\xi\le X$ a family $(Q_\a)_{\a\le\xi}$ of unital subrings of $Q$ having the following features: (a) if $\a<\xi$, then $Q_\a$ is isomorphic to $Q$, (b) by denoting with $F_\a$ the ideal of $Q_\a$ of all finite-ranked elements when $\a<\xi$, then $Q_{\b}\cap F_{\a} = 0$ whenever
$\a<\b\le\xi$. Actually, we already gave in \cite{Bac:12} a construction which aimed to the same objective. Unfortunately the proof of Proposition 4.2 in that paper contains a gap. Filling that gap - if ever possible, would have required a considerable work and the result would have not been suitable for our present purposes either. Thus we decided to completely reorganize the construction by using a totally different approach, in which we rely mainly on ordinal arithmetic. With the new construction we have at disposal a total control of the parametrization of the entries of the matrices we deal with, as it is needed in order to accomplish the subsequent main construction.

With section 6 artinian posets enter the scene. First, we define a \emph{polarized (artinian) poset} as an ordered pair $(I,I')$, where $I$ is an artinian poset and $I'$ is a lower subset of $I$. Starting from a polarized artinian poset $(I,I')$, a ring $D$ and an appropriately sized transfinite ordinal $X$, to each element $i\in I$ we associate a (not necessarily unital) subring $H_i$ of $Q = \CFM_X(D)$, in such a way that $\CH = \{H_i\mid i\in I\}$ is an independent set of $(D,D)$-submodules of $Q$ with the following features: (a) if $i$ is a maximal element of $I$, then $H_i$ is isomorphic to $D$; (b) if $i$ is not maximal and belongs to $I'$ (resp. to $I\setminus I'$), then $H_i$ is isomorphic to $\FR_X(D)$ (resp. to the left ideal $\FM_X(D)$ of $Q$ whose elements are all matrices with only finitely many nonzero entries); moreover $H_i H_j = 0$ if and only if $i,j$ are not comparable, while both $H_i H_j$ and $H_j H_i$ are nonzero and
are contained in $H_i$ if $i\le j$. This enables us to consider the (not necessary unital) subring $H_I = \bigoplus_{i\in I}H_i$ and the unital subring $D_I = H_I + \mathbf{1}_{Q}D$ of $Q$ and we show that $H_I = D_I$ if and only if $I$ has a finite cofinal subset. The study of this subring, together with the strict relationship between upper subsets of $I$ and ideals of $D_I$, is the subject of sections 7 and 8.

In section 9, finally, we take $D$ as a division ring and show that, given a polarized artinian poset $(I,I')$, the ring $D_I$ has the following features: (a) $D_I$ is a unit-regular and semiartinian ring, which is also (right and left) hereditary in case $I$ has finite dual Krull length; (b) there is a map $i\mapsto U_I$ from $I$ to $\Simp_{D_I}$ which is an order isomorphism in case $I$ has a finite cofinal subset, otherwise $\Simp_{D_I}$ contains $D_I/H_I$ as an additional maximal element; (c) a non-maximal element $U_i$ of $\Simp_{D_I}$ is injective if and only if $i\in I'$, thus $D_I$ is a right $V$-ring if and only if $I' = I$; (d) $D_I$ is a right \emph{and} left $V$-ring if and only if $I$ is an antichain; (e) if $I$ has a finite cofinal subset, then the assignment $H\mapsto\Simp_{D_I/H}$ realizes an anti-isomorphism from the lattice $\BL_{2}(D_I)$ to the lattice of all upper subsets of $\Simp_{D_I}$; (f) if $I$ is at most countable and $I' = \vu$, then $D_I$ is countably dimensional over $D$.

We conclude this introduction with a few remarks about terminology and notations. In several instances we deal with rings without multiplicative
identity and subrings which are not unital subrings but, often, they
have their own multiplicative identities. However, in order to avoid
any ambiguity, if not otherwise stated the word ``ring'' means
``ring with multiplicative identity'', while ``subring'' means
``unital subring'' (that is, if we state that a ring $R$ is a
subring of some ring $T$ we mean that $R$ shares the same
multiplicative identity of $T$) and all ring homomorphisms preserve
multiplicative identity.

Given a ring $R$, we shall denote with $\Simp_R$ a chosen
irredundant set of re\-presentatives of all simple right
$R$-modules, while $\Prosimp_R$ will be the subset of $\Simp_R$ of
representatives of all simple and projective right $R$-modules. If
any given set $\ZU$ of simple right $R$-modules turns out to be an
irredundant set of representatives of all simple right $R$ modules,
we shall summarize this fact by writing $\ZU = \Simp_R$.

Recall that the {\it Loewy chain\/} (or {\it lower Loewy chain\/},
according to some authors) of a right $R$-module $M$ is the
non-decreasing chain of submodules $(\Soc_{\a}(M))_{\a\ge 0}$, parametrized over the ordinals,
defined by the following rules: set $\Soc_0(M) = 0$ and,
recursively, define $\Soc_{\a+1}(M)$ in such a way that
$\Soc_{\a+1}(M)/\Soc_\a(M) = \Soc(M/\Soc_\a(M))$ (we denote by $\Soc(M)$ the
socle of $M$) for each ordinal $\a$ and $\Soc_\a(M) =
\bigcup_{\b<\a} \Soc_\b(M)$ if $\a$ is a limit ordinal. The module
$M/\Soc_{\a}(M)$ is called the {\it $\a$-th Loewy factor\/} of $M$,
the first ordinal $\xi$ such that $\Soc_{\xi}(M) = \Soc_{\xi+1}(M)$
is called the {\it Loewy length\/} of $M$ (denoted by $\LL(M)$) and
one says that $M$ is {\it semiartinian\/} or a {\it Loewy module\/}
if $\Soc_{\xi}(M) = M$. The ring $R$ is {\it right semiartinian\/}
if the module $R_R$ is semiartinian or, equivalently, if every
non-zero right $R$-module contains a simple submodule; if it is the
case, then each $\Soc_\a(R_R)$ is an ideal.

If $R$ is a right semiartinian ring and $M$ is some right
$R$-module, we define the ordinal $h(M) = \min\{\,\a\mid
M\cdot\Soc_\a(R_R) = M\}$; clearly, when $M$ is finitely generated $h(M)$ is not a limit ordinal if. If $U_R$ is simple and $h(U) = \a+1$,
then $U_{R/\Soc_\a(R_R)}$ is projective and $\a$ is the largest
ordinal such that $\H{U}{R/\Soc_\a(R_R)}{R}\ne 0$ (see \cite[Theorem
1.3]{BacDic:1}) while, if $R$ is regular, $\a$ is the \emph{unique}
ordinal with this property.

\section{Some preliminary notions on artinian partially ordered sets.}

Let $I$ be a given partially ordered set. For every subset $J\sbs I$
define
\begin{gather*}
\{\le J\} \defug \{k\in I\mid k\le j\text{ for al }j\in J\}, \\
 \{J\le\} \defug \{k\in I\mid j\le k\text{ for al }j\in J\};
\end{gather*}
thus the notations $\{\le i\}$ and $\{i\le\}$ have an obvious
meaning for every element $i\in I$.  A {\em lower subset\/} (resp.
{\em upper subset\/}) of a poset $I$ is a subset $J\sbs I$ such that
if $j\in J$, then $\{\le j\}\sbs J$ (resp. $\{j\le\}\sbs J$). In
particular $\{\le K\}$ and $\{K\le\}$ are respectively the smallest lower
subset and the smallest upper subset of $I$ which contain a given subset $K\sbs I$. We denote by
$\Uparrow\!\!I$ (resp. $\Downarrow\!\!I$) the set of all upper
subsets (resp. lower subsets) of $I$; both $\Uparrow\!\!I$ and
$\Downarrow\!\!I$ are complete and distributive lattices and the map
$J\mapsto I\setminus J$ is an anti-isomorphism from
$\Uparrow\!\!I$ to $\Downarrow\!\!I$ .

For every subset $J$ of $I$ let us denote by $J_1$ the set of all
minimal elements of $J$. We recall that the {\em dual classical
Krull filtration\/} of a poset $I$ is the ascending chain
$(I_{\a})_{0\le\a}$ of subsets of $I$ defined as follows (see
\cite{Albu:10}):
\begin{gather*}
I_{0} \defug \emptyset, \\
I_{\a+1} \defug I_{\a}\cup\left(I\setminus I_{\a}\right)_1
\quad \text{ for all }\a, \\
I_{\a} \defug \bigcup_{\b<\a}I_{\b}\quad \text{ if $\a$ is a limit
ordinal }.
\end{gather*}
Clearly there exists a smallest ordinal $\xi$ such that $I_{\xi+1} =
I_{\xi}$; moreover $I$ is artinian (i.~e. it satisfies the DCC or,
equivalently, every chain of $I$ is well ordered) if and only if $I
= I_{\xi}$ and, in this case, the ordinal $\xi$ is called the {\em
dual classical Krull dimension\/} of $I$. In the sequel we shall
make use of the following further notations: for every ordinal $\a$
\[
I^{\bullet\bullet}_{\a} \defug I\setminus I_{\a}, \quad
I^{\bullet}_{\a+1} \defug \left(I\setminus I_{\a}\right)_1.
\]
Observe that $I_\a$ is a lower subset, while $I^{\bullet\bullet}_\a$
is an upper subset. If $I$ is artinian, then it is clear that
$\left\{I^{\bullet}_{\a+1}\mid \a<\xi\right\}$ is a partition of $I$
and
\[
I_{\a} = \bigcup_{\b<\a}I^{\bullet}_{\b+1}
\]
for all $\a<\xi$; we will often call $I^{\bullet}_{\a+1}$ the {\em
$(\a+1)$-th layer\/} of $I$. A similar notion is introduced in E. Harzheim book \cite{Harzheim:1} where, given a finite poset $I$, for every positive integer $n$ the \emph{$n$-level} $L_n$ of $I$ is defined exactly as our $n$-th layer. Of course, every subset of an artinian
poset is artinian with respect to the induced partial order.

\begin{pro}\label{lowersbs}
If $J$ is a lower subset of an artinian poset $I$, then
\[
J_{\a} = J\cap I_{\a}\quad\text{for every ordinal $\a$}.
\]
\end{pro}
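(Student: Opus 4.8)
The plan is to prove the equality $J_\a = J\cap I_\a$ by transfinite induction on $\a$, exploiting the fact that $J$ being a lower subset means the induced order on $J$ is ``compatible'' with that of $I$ in the strong sense that an element of $J$ is minimal in a subset $S\sbs J$ precisely when it is minimal in $S$ regarded as a subset of $I$. The base case $\a=0$ is trivial since $J_0 = \vu = J\cap I_0$, and the limit case is immediate: if $\a$ is a limit ordinal and the claim holds below $\a$, then $J_\a = \bigcup_{\b<\a}J_\b = \bigcup_{\b<\a}(J\cap I_\b) = J\cap\bigcup_{\b<\a}I_\b = J\cap I_\a$. So the real content is the successor step.

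For the successor step, assume $J_\a = J\cap I_\a$ and compute $J_{\a+1} = J_\a\cup(J\setminus J_\a)_1$. By the inductive hypothesis $J\setminus J_\a = J\setminus(J\cap I_\a) = J\setminus I_\a = J\cap I^{\bullet\bullet}_\a = J\cap(I\setminus I_\a)$. The key lemma I would isolate first is: if $J$ is a lower subset of $I$ and $S\sbs J$, then $S_1$ computed in $J$ equals $S_1$ computed in $I$; indeed minimality of $s\in S$ only involves comparisons with other elements of $S$, and these comparisons are the same whether taken in $J$ or in $I$. Applying this with $S = J\cap(I\setminus I_\a)$, which is a subset of $J$, gives that $(J\setminus J_\a)_1 = \bigl(J\cap(I\setminus I_\a)\bigr)_1$ where the latter is taken inside $I$. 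It remains to show $\bigl(J\cap(I\setminus I_\a)\bigr)_1 = J\cap(I\setminus I_\a)_1 = J\cap I^{\bullet}_{\a+1}$. The inclusion ``$\supseteq$'' is clear: an element of $I\setminus I_\a$ that is minimal in all of $I\setminus I_\a$ is certainly minimal in the smaller set $J\cap(I\setminus I_\a)$, provided it lies in $J$. For ``$\subseteq$'', suppose $x\in J\cap(I\setminus I_\a)$ is minimal in $J\cap(I\setminus I_\a)$ but not minimal in $I\setminus I_\a$; then there is $y\in I\setminus I_\a$ with $y<x$. Since $x\in J$ and $J$ is a lower subset, $y\in J$, hence $y\in J\cap(I\setminus I_\a)$, contradicting minimality of $x$. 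This establishes the successor step, and the three cases together complete the induction.

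The only mild subtlety — and the one place to be careful — is making sure that the ``local'' nature of minimality is correctly invoked: the equation $J_{\a+1} = J_\a\cup(J\setminus J_\a)_1$ is the definition of the dual classical Krull filtration \emph{of the poset $J$}, so the minimal elements there are taken with respect to the order induced on $J$; one must pass through the key lemma to relate this to minimality in $I$ before the lower-subset hypothesis can be applied in the form ``$y<x$ and $x\in J$ imply $y\in J$''. Once that bookkeeping is set up, everything is a routine set-theoretic manipulation, so I expect no serious obstacle; the proof is short.
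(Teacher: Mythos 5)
Your proof is correct and takes essentially the same route as the paper: transfinite induction on $\a$, with all the content in the successor step, which is reduced to the identity $(J\setminus J_\b)_1 = J\cap(I\setminus I_\b)_1$ where the lower-subset hypothesis is used precisely to pass from minimality in $J\cap(I\setminus I_\b)$ to minimality in $I\setminus I_\b$. The one small difference is that you check both inclusions of that identity explicitly, whereas the paper records only the forward inclusion $(J\setminus J_\b)_1\sbs J\cap(I\setminus I_\b)_1$ and tacitly relies on the (easy, hypothesis-free) converse to justify the final chain of equalities.
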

\begin{proof}
It is obvious that $J_{0} = J\cap I_{0}$. Take any ordinal $\a>0$
and assume inductively that $J_{\b} = J\cap I_{\b}$ for every
$\b<\a$. If $\a$ is a limit ordinal, then one immediately infers
that $J_{\a} = J\cap I_{\a}$. Suppose that $\a = \b+1$ for some
$\b$. From the inductive hypothesis it follows easily that $J\setminus
J_{\b} = J\cap(I\setminus I_{\b})$ and then $(J\setminus
J_{\b})_1\sbs J\cap(I\setminus I_{\b})_1$, because $J$ is a lower subset of $I$. As a result we obtain:
\[
J_{\b+1} = J_{\b}\cup(J\setminus J_{\b})_1 = (J\cap
I_{\b})\cup[J\cap(I\setminus I_{\b})_1] =
J\cap[I_{\b}\cup(I\setminus I_{\b})_1] = J\cap I_{\b+1},
\]
as wanted.
\end{proof}

If $I$ is any partially ordered {\em class\/}, Gary Brookfield
defines in \cite{Brook:1} the {\em minimum length function\/}
$\map{\l_I}I{\Ord}$ as follows: for every $i\in I$
\[
\l_I(i) \defug \min\{\l(j)\mid \l\text{ is a length function on
}I\},
\]
where a {\em length function\/} on $I$ is any strictly increasing
function from $I$ to $\Ord$. If it exists, $\l_I$ itself is a length
function. It turns out that if $I$ is an artinian po{\em set\/},
then $I$ admits a length function and $\l_I$ can be defined
recursively as follows: for every $i\in I$
\begin{equation}\label{length}
\l_I(i) = \begin{cases} 0\text{ if $i$ is a minimal element of $I$}, \\
                                               \sup\{\l_I(j)+1\mid
j<i\}\text{ otherwise }
            \end{cases}
\end{equation}
(see \cite[Proposition 3.9]{Brook:1}).

\begin{pro}\label{lengthlayer}
Let $I$ be an artinian poset, whose dual classical Krull dimension
is $\xi$, and let $i\in I$. Then for every ordinal $\a$ we have
\begin{equation}\label{lengthlayereq}
\l_I(i) = \a\text{ if and only if } i\in I^{\bullet}_{\a+1}.
\end{equation}
Consequently $\l_I(I)$ is an ordinal and
\begin{equation}\label{eq dclKdim}
    \l_I(I) = \xi.
\end{equation}
\end{pro}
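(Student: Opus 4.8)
The plan is to prove the equivalence \eqref{lengthlayereq} by transfinite induction on $\a$, exploiting the recursive description \eqref{length} of $\l_I$ together with the recursive definition of the layers $I^{\bullet}_{\a+1}$. First I would observe that, since $\{I^{\bullet}_{\b+1}\mid \b<\xi\}$ is a partition of $I$, each $i\in I$ lies in exactly one layer; thus it suffices to show that $i\in I^{\bullet}_{\a+1}$ forces $\l_I(i)=\a$, as the converse follows by the partition property once all the forward implications are in place. Unwinding definitions, $i\in I^{\bullet}_{\a+1} = (I\setminus I_\a)_1$ means precisely that $i\notin I_\a = \bigcup_{\b<\a}I^{\bullet}_{\b+1}$ but every element strictly below $i$ does lie in $I_\a$, i.e. $\{<i\}\sbs I_\a$.

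The inductive step then runs as follows. Fix $\a$ and assume \eqref{lengthlayereq} holds for all ordinals $\b<\a$. Suppose $i\in I^{\bullet}_{\a+1}$. For each $j<i$ we have $j\in I_\a$, so $j\in I^{\bullet}_{\b+1}$ for some $\b<\a$, whence $\l_I(j)=\b<\a$ by the inductive hypothesis; this gives $\sup\{\l_I(j)+1\mid j<i\}\le\a$. For the reverse inequality I would argue that for every $\b<\a$ there exists some $j<i$ with $\l_I(j)\ge\b$: indeed, since $i\notin I_{\b+1}$ and $I_{\b+1}=I_\b\cup(I\setminus I_\b)_1$, the fact that $i\notin(I\setminus I_\b)_1$ while $i\in I\setminus I_\b$ means $i$ is not minimal in $I\setminus I_\b$, so there is $j<i$ with $j\in I\setminus I_\b$; by the partition property $j\in I^{\bullet}_{\g+1}$ for some $\g\ge\b$, hence $\l_I(j)=\g\ge\b$. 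Therefore $\sup\{\l_I(j)+1\mid j<i\}\ge\b+1$ for all $\b<\a$, which yields $\ge\a$ (handling limit and successor $\a$ uniformly, noting $i$ is not minimal when $\a>0$ since $\{<i\}$ meets every earlier layer, and the base case $\a=0$ is immediate because $I^{\bullet}_{1}=I_1$ is the set of minimal elements). Combining, $\l_I(i)=\a$.

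Once \eqref{lengthlayereq} is established, the consequences are formal. The map $\l_I$ sends $I$ onto $\{\a\mid I^{\bullet}_{\a+1}\ne\vu\} = \{\a\mid \a<\xi\}=\xi$, since the nonempty layers are exactly those indexed by $\a<\xi$ (the dual classical Krull dimension $\xi$ being the least ordinal with $I_{\xi+1}=I_\xi$, equivalently with $I^{\bullet}_{\xi+1}=\vu$, and all earlier layers nonempty because otherwise the filtration would already have stabilized). Hence $\l_I(I)=\xi$, which is \eqref{eq dclKdim}, and in particular $\l_I(I)$ is an ordinal.

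The main obstacle I anticipate is the reverse inequality $\l_I(i)\ge\a$ in the inductive step, specifically making the argument that $i$ dominates elements of arbitrarily high layer-index below $\a$ clean and uniform across successor and limit stages; the key is the identity $I\setminus I_{\b+1} = (I\setminus I_\b)\setminus(I\setminus I_\b)_1$, which says that passing from level $\b$ to level $\b+1$ strips off exactly the minimal elements of the complement, so membership of $i$ in $I\setminus I_{\b+1}$ for all $\b<\a$ forces a strictly descending structure below $i$ reaching into every layer indexed by ordinals less than $\a$. Everything else is bookkeeping with the partition $\{I^{\bullet}_{\b+1}\}$ and the definition of $\xi$.
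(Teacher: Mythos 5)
Your proof is correct and is built on the same backbone as the paper's (transfinite induction on $\a$, the fact that the layers $I^{\bullet}_{\b+1}$ partition $I$, and the recursive description \eqref{length} of $\l_I$), but the organization differs in two ways worth noting. First, you observe that it suffices to prove only the implication $i\in I^{\bullet}_{\a+1}\Rightarrow\l_I(i)=\a$, with the converse following automatically from the partition property; the paper proves both directions by separate arguments, using strict monotonicity of $\l_I$ and a contradiction for the forward one. Second, for the inequality $\l_I(i)\ge\a$ the paper argues by contradiction (if $\l_I(i)=\b<\a$, the inductive hypothesis would put $i$ in the wrong layer), whereas you give a direct, constructive argument: from $i\notin I_{\b+1}$ you extract, for each $\b<\a$, an explicit $j<i$ lying in a layer with index at least $\b$. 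This buys you more than the paper's proof does at this stage, since your construction is essentially the content of Corollary \ref{lengthlayerr}, which the paper instead derives afterward by citing Brookfield's results. The deduction of \eqref{eq dclKdim} from \eqref{lengthlayereq} is the same in both.
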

\begin{proof}
Denoting by $P(\a)$ the statement \eqref{lengthlayereq}, we see that
$P(0)$ is obviously true. Given an ordinal $\a>0$, assume that $P(\b)$
is true whenever $\b<\a$. Suppose that $\l_I(i) = \a$ and let $\g$
be the unique ordinal such that $i\in I^{\bullet}_{\g+1}$.
Necessarily $\a\le\g$ by the inductive hypothesis, therefore $i\in
I^{\bullet\bullet}_{\a}$. Assume that $i\not\in I^{\bullet}_{\a+1}$,
that is, $i$ is not a minimal element of $I^{\bullet\bullet}_{\a}$.
Then there would be some $j\in I^{\bullet\bullet}_{\a}$ such that
$j<i$ and hence $\l_I(j)<\l_I(i) = \a$. Using the inductive
hypothesis we would get $j\in I^{\bullet}_{\l_I(j)+1}\cap
I^{\bullet\bullet}_{\a} = \emptyset$: a contradiction. Hence $i\in
I^{\bullet}_{\a+1}$. Conversely, suppose that the latter condition
holds. If $j<i$, then $j\in I_\a$ and so there is some $\b<\a$ such
that $j\in I^{\bullet}_{\b+1}$. As a consequence it follows from the
inductive hypothesis that $\l_I(j) = \b<\a$ and hence $\l_I(j)+1\le
\a$, showing that $\l_I(i)\le\a$. It is not the case that
$\l_I(i)<\a$ otherwise, again from the inductive hypothesis we would
get $i\in I^{\bullet}_{\a+1}\cap I^{\bullet}_{\l_I(i)+1} =
\emptyset$. We conclude that $\l_I(i) = \a$, namely that $P(\a)$ holds and this shows the first part of the proposition.

Now, by the assumption we have that
\[
I = \bigcup_{\a<\xi}I^{\bullet}_{\a+1}.
\]
If $\a<\xi$, namely $\a\in\xi$, then $I^{\bullet}_{\a+1}$ is not
empty and, by the above, $\l_I(i) = \a$ for every $i\in
I^{\bullet}_{\a+1}$. Thus $\xi\sbs\l_I(I)$. Conversely, if
$\a\in\l_I(I)$, that is $\a = \l_I(i)$ for some $i\in I$, again by
the above we must have that $i\in I^{\bullet}_{\a+1}$, therefore
$\a<\xi$. As a result $\l_I(I)\sbs\xi$, which proves the equality
\eqref{eq dclKdim}.
\end{proof}

\begin{notation}\label{notation lambda}
If $I$ is an artinian poset and $i\in I$, we shall denote by $\l(i)$
the ordinal $\l_I(i)+1$; in other words $\l(i)$ will be the unique
successor ordinal such that $i\in I^{\bullet}_{\l(i)}$. Of course,
the map $i\mapsto \l(i)$ defines a particular length function
$\map{\l}I{\Ord}$; we call it the \emph{canonical length function},
since it suits our future purposes better than the minimal length
function.
\end{notation}

According to \cite[Corollary 3.5]{Brook:1}, if $I$ is an artinian
poset and $i\in I$, then $\l_I(j) =  \l_{\{\le i\}}(j)$ for every
$j\in \{\le i\}$; thus, combining Proposition \ref{lengthlayer} with
\cite[Proposition 3.6]{Brook:1} we obtain the following result.

\begin{cor}\label{lengthlayerr}
Let $I$ be an artinian poset and let $i\in I$. Then for every
ordinal $\a<\l(i)$ there exists an element $j\in I^{\bullet}_{\a+1}$
such that $j<i$.
\end{cor}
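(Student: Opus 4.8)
The plan is to reduce everything to the principal lower subset $J \defug \{\le i\}$, which is again an artinian poset and has $i$ as its greatest element, and then to transport information between $J$ and $I$ using the down-set invariance of length recalled just above, namely \cite[Corollary 3.5]{Brook:1}, together with Proposition \ref{lengthlayer}. With these two tools the statement follows almost mechanically.

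Concretely, I would first apply \cite[Corollary 3.5]{Brook:1} to get $\l_J(j) = \l_I(j)$ for every $j \in J$; in particular $\l_J(i) = \l_I(i)$. Since $\l_J$ is strictly increasing and $i$ is the greatest element of $J$, every $j \in J$ with $j \ne i$ satisfies $\l_J(j) < \l_J(i)$, so by \eqref{eq dclKdim} applied to $J$ the dual classical Krull dimension of $J$ equals $\l_J(J) = \l_J(i)+1 = \l(i)$. Hence, exactly as in the last part of the proof of Proposition \ref{lengthlayer} (which is also the content of \cite[Proposition 3.6]{Brook:1}), the layer $J^{\bullet}_{\a+1}$ is nonempty for every ordinal $\a < \l(i)$. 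Now fix such an $\a$, choose $j \in J^{\bullet}_{\a+1}$, and read off $\l_J(j) = \a$ from \eqref{lengthlayereq} applied to $J$; then $\l_I(j) = \a$ by \cite[Corollary 3.5]{Brook:1}, and hence $j \in I^{\bullet}_{\a+1}$ by \eqref{lengthlayereq} applied to $I$. Finally $j \in J$ gives $j \le i$, and since $\l_I(j) = \a < \l_I(i)$ one has $j \ne i$, so $j < i$, as required.

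The point demanding care — rather than a genuine obstacle — is the bookkeeping identifying the dual classical Krull dimension of $\{\le i\}$ with $\l(i)$, which is exactly what guarantees that the relevant layer of $J$ is nonempty; everything else is a direct translation through Proposition \ref{lengthlayer}, given Brookfield's invariance $\l_I|_J = \l_J$. Should a self-contained argument avoiding $J$ be preferred, one can instead run a transfinite induction on $\l_I(i)$ using the recursion \eqref{length}: when $\l_I(i) > 0$ and $\a < \l_I(i) = \sup\{\l_I(k)+1 \mid k<i\}$, pick $k < i$ with $\a \le \l_I(k)$; if $\l_I(k) = \a$ take $j = k$, and otherwise $\l_I(k) < \l_I(i)$, so the inductive hypothesis applied to $k$ produces $j < k < i$ with $\l_I(j) = \a$. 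The only subtlety here is that the supremum in \eqref{length} need not be attained, which is precisely why one may have to descend through the auxiliary element $k$; in either route one concludes by rewriting $\l_I(j) = \a$ as $j \in I^{\bullet}_{\a+1}$ via \eqref{lengthlayereq}.
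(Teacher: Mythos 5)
Your approach is precisely the one the paper indicates — route everything through the principal lower set $J = \{\le i\}$, use Brookfield's Corollary~3.5 to transport the minimum length function, identify the dual classical Krull dimension of $J$ with $\l(i)$, and translate the nonemptiness of the layers of $J$ back to $I$ via Proposition~\ref{lengthlayer}. Steps (1)--(5) of your argument are a correct and careful unwinding of the paper's terse citation chain, and the self-contained inductive alternative you sketch at the end is also sound.

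There is, however, one genuine slip in the final step: you write ``since $\l_I(j) = \a < \l_I(i)$ one has $j \ne i$'', but the hypothesis of the corollary gives only $\a < \l(i) = \l_I(i)+1$, i.e.\ $\a \le \l_I(i)$, not $\a < \l_I(i)$. When $\a = \l_I(i)$ the element $j \in J^{\bullet}_{\a+1}$ your construction produces is $j = i$ itself (indeed $J^{\bullet}_{\l(i)} = \{i\}$ since $i$ is the greatest element of $J$), so the conclusion $j<i$ fails. In fact this boundary case cannot be repaired: any $j<i$ in $I^{\bullet}_{\a+1}$ would have $\l_I(j) = \a = \l_I(i)$ while $j<i$ forces $\l_I(j) < \l_I(i)$ by strict monotonicity, a contradiction. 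So the corollary as printed is slightly too strong; it should require $\a+1<\l(i)$ (equivalently $\a<\l_I(i)$), or else conclude only $j \le i$ — and indeed every later application in the paper only invokes one of these two weaker forms. Under either correction your argument closes correctly. The honest thing would have been to flag this discrepancy rather than silently tighten $\a<\l(i)$ to $\a<\l_I(i)$.
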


\begin{re}\label{rem wellord}
It is quite natural that sometimes authors working in different areas of Mathematics concentrate the interest on the same object. As often happened, and continues to happen, according tho the specific area in which it is considered that object gets different names. This is the case for posets which satisfy DCC: ring theorists call them artinian posets, as we do, while set theorists, in particular those who investigate partially ordered sets, call them \emph{well-founded posets} and call \emph{well quasi-ordered}, or \emph{partially well-ordered} the well-founded posets without infinite antichains (see \cite{Harzheim:1}, for instance).
\end{re}

\section{The natural partial order of $\Simp_{R}$ when $R$ is a
semiartinian regular ring.}

We recall that if $R$ is any regular ring, then $\Soc(R_{R}) =
\Soc({_{R}R})$; in fact, every minimal right (or left) ideal of $R$
is generated by an idempotent and, for every idempotent $e\in R$, we
have that $eR_{R}$ is simple if and only if $_{R}Re$ is simple. By a
straightforward induction it follows also that $\Soc_{\a}(R_{R}) =
\Soc_{\a}({_{R}R})$ for every ordinal $\a$. Thus, when dealing with
a regular ring $R$, there will be no ambiguity in using the
notations $\Soc(R)$ and $\Soc_{\a}(R)$.

Throughout this section, if not otherwise specified, $R$ will be a
given semiartinian and regular ring with Loewy length $\xi$ and we
set
\[
L_\a \defug \Soc_{\a}(R)
\]
for every ordinal $\a$. As a first consequence it is easy to infer
that if $x\in R$, then
\[
h(xR) = \min\{\,\a\leqslant\xi\mid x\in L_{\a}\}
\]
 and we write $h(x)$ for $h(xR)$ (see the introduction for the definition of the length function $h$). As we anticipated in the introduction, by the regularity of $R$ the
correspondence $U\mapsto r_R(U)$ defines an order isomorphism from
the set $\Simp_{R}$, equipped with the natural partial order
introduced in \cite{Bac:15}, and the set $\Prim_{R}$ of all
primitive ideals ordered by inclusion; this latter is then an
artinian poset in which every maximal chain has a maximum. The
hypothesis of regularity of $R$ allows to give the following
characterizations of the natural partial order of $\Simp_{R}$, in
addition to those we gave in \cite[Theorem 2.2]{Bac:15}.

\begin{theo}\label{orderreg}
Let $R$ be a semiartinian and regular ring and let $U,V$ be simple
right $R$-modules such that $\a+1 = h(U)<\b+1 = h(V)$. Then the
following conditions are equivalent:
\begin{enumerate}
\item $U\prec V$.
\item If $y\in L_{\b+1}\setminus L_{\b}$ is such that $(yR+L_{\b+1})/L_{\b}\is V$, then
\[
U^n\lesssim (yR+L_{\a+1})/L_{\a} \quad\text{for every positive
integer }n.
\]
\item If $y\in L_{\b+1}\setminus L_{\b}$ is such that
$(yR+L_{\b+1})/L_{\b}\is V$, then for every positive integer $n$
there is $x\in L_{\a+1}\setminus L_{\a}$ such that
$(xR+L_{\a+1})/L_{\a}\is U$ and
\[
(xR)^n\lesssim yR
\]
(here $(xR)^n$ stands for the direct sum of $n$ copies of $xR$).
\item If $y\in L_{\b+1}\setminus L_{\b}$ is such that
$(yR+L_{\b+1})/L_{\b}\is V$, then there is $x\in L_{\a+1}\setminus
L_{\a}$ such that $(xR+L_{\a+1})/L_{\a}\is U$ and
\[
RxR\sbs RyR.
\]
\end{enumerate}
The above elements $x,y$ can be chosen to be idempotent.
\end{theo}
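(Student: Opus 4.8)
The plan is to prove the cycle $(1)\Rightarrow(2)\Rightarrow(3)\Rightarrow(4)\Rightarrow(1)$, exploiting the regularity of $R$ at every turn so that all the submodules in sight are generated by idempotents. First recall the characterization of the natural partial order from \cite[Theorem 2.2]{Bac:15}: for $U,V$ simple with $h(U)=\a+1<\b+1=h(V)$, the relation $U\prec V$ should be expressible by saying that $U$ appears (with arbitrarily high multiplicity) among the simple subquotients of $yR$ for a suitable representative $y$ of $V$ in the $(\b+1)$-st Loewy layer, or equivalently that $r_R(U)\supseteq r_R(V)$ fails to be proper — i.e. $r_R(V)\sbs r_R(U)$ in the reverse sense appropriate to a $V$-peak ideal. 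I would begin the proof by restating precisely which of the equivalent forms from \cite{Bac:15} I take as the working definition of $U\prec V$, so the first implication reduces to unwinding that definition in the regular setting.

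For $(1)\Rightarrow(2)$: fix $y\in L_{\b+1}\setminus L_{\b}$ with $(yR+L_{\b+1})/L_{\b}\is V$. Since $R$ is regular, $(yR+L_{\b})/L_{\b}$ is a simple direct summand of $R/L_{\b}$, and its full preimage $yR+L_{\b}$ is a direct summand of $R$ containing $L_{\b}$; thus $(yR+L_{\b})/L_{\a}$ is a semiartinian module of Loewy length $\b+1-\a$ over $R/L_\a$ whose top layer is $V$. The module $M\defug(yR+L_{\a+1})/L_{\a}=\Soc_{\a+1}((yR+L_{\b})/L_{\a})$ is then an $\Soc$-homogeneous-in-layers object whose first socle is a semisimple module; I claim that $U\prec V$ forces $U^{(n)}\lesssim M$ for all $n$. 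The cleanest route is: $U\prec V$ means $r_R(U)\psbs r_R(V)$, hence $r_R(V)\cdot(\text{something})$ generates inside $yR$ a submodule with $U$ in its socle, and by regularity one can split off copies; the "arbitrarily large $n$" comes from the fact that if only finitely many copies of $U$ occurred, the annihilator relation would degenerate (this is exactly the argument behind $I(U)=r_R(U)$ being a $U$-peak ideal). I would cite \cite[Theorem 1.3]{BacDic:1} together with \cite{Bac:15} here rather than re-prove it.

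For $(2)\Rightarrow(3)$: this is where regularity does the real work. Given the embedding $U^{(n)}\lesssim(yR+L_{\a+1})/L_{\a}$, lift it: $(yR+L_{\a+1})/L_{\a}$ sits inside $yR/L_{\a}\cap yR$ as a direct summand (regularity again), so $U^{(n)}$ embeds as a direct summand of $yR/(L_\a\cap yR)$, and pulling back along the projection $yR\twoheadrightarrow yR/(L_\a\cap yR)$ one finds a finitely generated — hence, by regularity, direct-summand — submodule $P\sbs yR$ with $P\is U^{(n)}\oplus(L_\a\cap yR\cap P)$ and $(P+L_\a)/L_\a\is U^{(n)}$. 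Now take $x$ to be an idempotent generator of one of the $n$ simple summands of $P$ lying over the top: then $xR\is U$ modulo $L_\a$, $x\in L_{\a+1}\setminus L_\a$, and $(xR)^{(n)}\lesssim P\lesssim yR$. The only subtlety is arranging all $n$ summands to be mutually isomorphic and simultaneously embeddable, which is automatic since they are all copies of the fixed $U$ and $yR$ is a direct summand of $R_R$ so has the necessary internal cancellation/refinement properties of a projective module over a regular ring. I expect this to be the main obstacle: making the lift of $U^{(n)}\lesssim(yR+L_{\a+1})/L_\a$ through the Loewy filtration canonical enough that the resulting $x$ genuinely satisfies $(xR)^n\lesssim yR$ and not merely $(xR)^n\lesssim yR/L_\a$ — one must use that over a regular ring a simple module in the socle of a factor $R/L_\a$ is actually a summand of a finitely generated projective submodule of $R$ itself.

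For $(3)\Rightarrow(4)$: take $n=1$ to get an idempotent $x\in L_{\a+1}\setminus L_\a$ with $(xR+L_{\a+1})/L_\a\is U$ and $xR\lesssim yR$. Since $xR$ embeds in $yR$ and both are generated by idempotents, $xR$ is isomorphic to a direct summand $eyR$ of $yR$ with $e$ idempotent; then $RxR=ReyR\sbs RyR$, and replacing $x$ by $ey$ (still a suitable idempotent representing $U$, after possibly adjusting mod $L_\a$) gives the two-sided containment. Finally $(4)\Rightarrow(1)$: from $RxR\sbs RyR$ one gets $r_R(RyR)\sbs r_R(RxR)$; but since $x$ represents $U$ in layer $\a+1$ and $R$ is regular, $r_R(U)$ is the largest ideal $H$ with $x\notin H$ among those contributing to layer $\a+1$ — more precisely $r_R(U)=\{\,z\in R\mid xz\in L_\a\,\}$-style description — and one checks $RyR\nsbs r_R(U)$ because $h(y)=\b+1>\a+1$ forces $y\notin r_R(U)$-related ideals; this yields $r_R(V)\sbs r_R(U)$ strictly (strictness from $h(U)\ne h(V)$), i.e. $U\prec V$. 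The parenthetical final sentence "the elements $x,y$ can be chosen idempotent" is then immediate: at every stage above the chosen submodules of $R_R$ were simple summands, hence generated by idempotents, and we only ever needed one-sided generators, so we just record that the $x$ (and the original $y$, which we may take idempotent since $(yR+L_\b)/L_\b$ is a summand of $R/L_\b$ lifted to a summand of $R$) produced in $(3)$ and reused in $(4)$ are idempotent throughout.
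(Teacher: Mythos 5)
Your cyclic plan $(1)\Rightarrow(2)\Rightarrow(3)\Rightarrow(4)\Rightarrow(1)$ matches the paper's, and your $(3)\Rightarrow(4)$ is essentially the paper's argument. But two of the other three steps are genuinely broken, not just abbreviated.

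The decisive gap is in $(2)\Rightarrow(3)$. After lifting $U^{(n)}\lesssim(yR+L_{\a+1})/L_{\a}$ to a finitely generated direct summand $P\sbs yR$ and decomposing $P=P_1\oplus\cdots\oplus P_n\oplus P'$ with each $P_i/P_iL_{\a}\is U$ and $P'\sbs L_{\a}$, you pick $x$ with $xR\is P_1$ and assert $(xR)^{(n)}\lesssim P\lesssim yR$. That last step does not follow: the $P_i$ all agree \emph{modulo} $L_{\a}$, but each $P_i\cap L_{\a}$ is just some finitely generated projective submodule of $L_{\a}$, and there is no reason for the $P_i$ to be pairwise isomorphic, hence no reason for $(P_1)^{(n)}$ to embed in $P$. (Your remark that the summands are ``all copies of the fixed $U$'' is true only after passing to $R/L_{\a}$; it is precisely the ambient lift that is not unique.) The paper's proof is an induction on $n$ in which the representative $x$ is allowed to \emph{change} with $n$ --- this is what the phrasing of $(3)$ is carefully set up to permit. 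Concretely: given $(uR)^{n}\lesssim yR$, split $yR=y'R\oplus y''R$ with $y'R\is(uR)^{n}$; since $y'R\sbs L_{\b}$, $y''R$ still carries $V$ on top, so $(2)$ applies to $y''$; then $U\lesssim y''R/y''L_{\a}$, and \cite[Proposition 2.20]{Good:3} splits $uR=xR\oplus x'R$ with $xR\lesssim y''R$ and $x'R\sbs L_{\a}$, giving $(xR)^{n+1}=(xR)^{n}\oplus xR\lesssim(uR)^{n}\oplus xR\lesssim y'R\oplus y''R=yR$. A one-shot lift cannot replace this.

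In $(4)\Rightarrow(1)$ you conclude ``$r_R(V)\sbs r_R(U)$ strictly, i.e.\ $U\prec V$,'' which has the inclusion the wrong way round: in a regular semiartinian ring $U\preccurlyeq V$ if and only if $r_R(U)\sbs r_R(V)$, so what you have sketched would rather support $V\prec U$. The paper avoids annihilators here entirely and works with traces: from $RxR\sbs RyR$ one gets $\Tr_{R/L_{\a}}(U)\sbs\Tr_{R/L_{\a}}\bigl((yR+L_{\a+1})/L_{\a}\bigr)$, and since $U$ is $R/L_{\a}$-projective this forces $\Hom_{R}\bigl(U,(yR+L_{\a+1})/L_{\a}\bigr)\ne 0$, which by \cite[Proposition 2.1]{Bac:15} is exactly $U\prec V$. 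Finally, your $(1)\Rightarrow(2)$ is a pointer to citations rather than an argument; the actual content in the paper is a short contradiction: if the $U$-trace $B$ of $A=(yR+L_{\a+1})/L_{\a}$ were finitely generated, then $A=B\oplus C$ with $\Hom_{R}(U,C)=0$, yet $BL_{\b}=B$ forces $V\is A/AL_{\b}\is C/CL_{\b}$, so $V$ still sits on top of a summand into which $U$ does not map, contradicting $U\prec V$.
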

\begin{proof}
(1)$\Rightarrow$(2) Assume (1), take $y\in L_{\b+1}\setminus L_{\b}$
with $(yR+L_{\b+1})/L_{\b}\is V$, set $A = (yR+L_{\a+1})/L_{\a}$ and note that $U\lesssim A$ by \cite[Theorem
2.2]{Bac:15}. Let $B = A\cap\Tr_{R/L_\a}(U)$ and suppose that $B$ is
finitely generated. Then $A = B\oplus C$ for some $C\le A$ and there
is an idempotent $z\in R$ such that $C = (zR+L_{\a+1})/L_{\a}$.
Observing that $B = BL_\b$, we infer that $V\is A/AL_\b \is
(B/BL_\b)\oplus(C/CL_\b) = (C/CL_\b) \is (zR+L_{\b+1})/L_{\b}$; on
the other hand $\H{U}{(zR+L_{\a+1})/L_{\a}}{R}= 0$ by the above and
this leads to a contradiction with (1), taking \cite[Theorem
2.2]{Bac:15} into account. Thus (2) holds.

(2)$\Rightarrow$(3) Suppose (2), let $y$ be as in (3) and choose
$u\in L_{\a+1}\setminus L_{\a}$ with
$uR/uL_{\a}\is(uR+L_{\a+1})/L_{\a}\is U$. As $U\lesssim
(yR+L_{\a+1})/L_{\a}\is yR/yL_{\a}$, it follows from
\cite[Proposition 2.20]{Good:3} that $uR = xR\oplus x'R$, where
$xR\lesssim yR$ and $x'R\sbs L_\a$. Thus $xR/xL_{\a}\is U$ and (3)
is true with $n = 1$. Let $n\ge 1$ and assume that $(uR)^n\lesssim
yR$ for some $u\in R$ such that $uR/uL_{\a}\is U$. Then $yR =
y'R\oplus y''R$, where $y'R\is (uR)^n\sbs L_{\b}$ and therefore
$y''R/y''L_{\b}\is V$. By the inductive hypothesis $uR/uL_{\a}\is
U\lesssim y''R/y''L_{\a}$ and, using again \cite[Proposition
2.20]{Good:3}we infer that $uR = xR\oplus x'R$, where $xR\lesssim
y''R$ and $x'R\sbs L_\a$. As a result we get
\[
(xR)^{n+1} = (xR)^n\oplus xR \lesssim (uR)^n\oplus xR \lesssim
y'R\oplus y''R = yR
\]
and $xR/xL_{\a}\is U$.

(3)$\Rightarrow$(4) Let $x,y$ be as in (3), with $n = 1$. Then the
regularity of $R$ implies that there is an $R$-module epimorphism
from $yR$ to $xR$, hence $RxR = \Tr_R(xR) \sbs \Tr_R(yR) = RyR$.

(4)$\Rightarrow$(1) Take $x,y$ as in (4) and observe that,
consequently,
\begin{gather*}
\Tr_{R/L_a}(U) = (R/L_a)(x+L_\a)(R/L_a) \\
\sbs (R/L_a)(y+L_\a)(R/L_a) = \Tr_{R/L_a}(yR+L_{\a+1})/L_{\a}).
\end{gather*}
Inasmuch as $U$ is $R/L_a$-projective, we infer that
$\H{U}{(yR+L_{\a+1})/L_{\a}}{R}$$\ne 0$ and hence $U\prec V$ by
\cite[Proposition 2.1]{Bac:15}.
\end{proof}

It is a trivial observation that the map $U\mapsto h(U)$ defines a
length function on $\Simp_R$ and
\begin{equation}\label{eq length}
    \l(U) \le h(U)\quad \text{for all }U\in\Simp_R,
\end{equation}
where $U\mapsto\l(U)$ is the \emph{canonical} length function on
$\Simp_R$ (Notation \ref{notation lambda}). The inequality in \eqref{eq length} may be strict. For
example, given any successor ordinal $\xi$, there exists a regular
and semiartinian ring $R$ with Loewy length $\xi$ and having all
primitive factors artinian (see \cite{DungSmith:1} and
\cite{Bac:12}); in this case every element of $\Simp_R$ is
maximal (see \cite[Corollary 4.8]{Bac:15}), that is $\Simp_R$ is an
antichain and, if $\xi>1$, for every ordinal $\a$ such that
$1\le\a<\xi$ there are infinitely many $U\in\Simp_R$ with $h(U) =
\a$, while $\l(U) = 1$ \emph{for every} $U\in\Simp_R$. Thus, while simple projective modules are always minimal elements of $\Simp_R$, there may exist non-projective minimal simple modules (see also Example \ref{ex indec2compconn}, Section \ref{sect conncomp}).

We now investigate when the inequality \eqref{eq length} is actually an equality. First a general result.

\begin{prodef}\label{pro lambda=h}
If $R$ is a regular and semiartinian ring $R$ then, with the above notations, the following conditions are equivalent:
\begin{enumerate}
   \item $\l(U) = h(U)$ for every $U\in\Simp_R$.
  \item For every ordinal $\a$ the following equality holds:
  \begin{equation}\label{eq lambda=h}
  \left(\Simp_{R}\right)_{\a} = \{U\in\Simp_{R}\mid UL_{\a} = U\}.
  \end{equation}
  \item For every ordinal $\a$ the following equality holds:
  \begin{equation}\label{eq lambda=hh}
  \Prosimp_{R/L_{\a}} = \left(\Simp_{R/L_{\a}}\right)_{1}.
  \end{equation}
\end{enumerate}
If any, and hence all of the above conditions holds, then we say that $R$ is \emph{well behaved}.
\end{prodef}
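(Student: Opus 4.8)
\medskip

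The plan is to translate the three conditions into statements about the sublevel sets of the two length functions $\l$ and $h$ on the artinian poset $\Simp_R$. The basic dictionary is this: for a simple right $R$-module $U$ and an ordinal $\a$, one has $UL_\a = U$ if and only if $h(U)\le\a$ (immediate from $h(U) = \min\{\b\mid UL_\b = U\}$ together with $L_\b\sbs L_{\b'}$ whenever $\b\le\b'$, so that $UL_\b = U$ forces $UL_{\b'} = U$), and $U\in(\Simp_R)_\a$ if and only if $\l(U)\le\a$ (this is Proposition~\ref{lengthlayer} rephrased: $(\Simp_R)_\a = \bigcup_{\b<\a}(\Simp_R)^{\bullet}_{\b+1} = \{U\mid\l_I(U)<\a\}$, and $\l(U) = \l_I(U)+1$). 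Consequently \eqref{eq lambda=h} says exactly that $\{U\mid\l(U)\le\a\} = \{U\mid h(U)\le\a\}$, so (2) holds for every $\a$ if and only if $\l(U) = h(U)$ for every $U$: the implication $(1)\Rightarrow(2)$ is then trivial, and for $(2)\Rightarrow(1)$ one specializes \eqref{eq lambda=h} at $\a = \l(U)$ to get $h(U)\le\l(U)$ and combines this with the general inequality \eqref{eq length}. Thus $(1)\Leftrightarrow(2)$ is essentially bookkeeping.

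For the remaining implications I would first record two facts about the factor ring $R/L_\a$, which is again semiartinian and regular. \textbf{(i)} Since for a regular ring the natural partial order on simple modules coincides with inclusion of annihilators, and $\Prim_{R/L_\a} = \{P\in\Prim_R\mid L_\a\sbs P\}$ is an upper subset of $(\Prim_R,\sbs)$, one identifies $\Simp_{R/L_\a}$, equipped with its natural partial order, with the upper subset $\{U\in\Simp_R\mid UL_\a = 0\} = \{U\in\Simp_R\mid h(U)>\a\}$ of $\Simp_R$. \textbf{(ii)} A simple module $U$ with $UL_\a = 0$ is $R/L_\a$-projective if and only if $h(U) = \a+1$: the ``if'' part is quoted in the Introduction from \cite[Theorem~1.3]{BacDic:1}, while for ``only if'' a simple projective right module over the regular ring $S := R/L_\a$ is isomorphic to $\bar eS$ for an idempotent $\bar e$ with $\bar eS$ a minimal right ideal, so $\bar e\in\Soc(S) = L_{\a+1}/L_\a$; then $\bar eS\cdot\Soc(S)$ is a nonzero submodule of the simple module $\bar eS$, hence equals $\bar eS$, and reading this back over $R$ gives $UL_{\a+1} = U$, i.e. $h(U)\le\a+1$, whereas $h(U)>\a$, so $h(U) = \a+1$. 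In short, inside $\Simp_R$ we have $\Simp_{R/L_\a} = \{U\mid h(U)>\a\}$ as ordered sets and $\Prosimp_{R/L_\a} = \{U\mid h(U) = \a+1\}$. Granting now $\l = h$, the dictionary gives $(\Simp_R)^{\bullet\bullet}_\a = \{U\mid\l(U)>\a\} = \{U\mid h(U)>\a\} = \Simp_{R/L_\a}$ as posets, whence $(\Simp_{R/L_\a})_1 = ((\Simp_R)^{\bullet\bullet}_\a)_1 = (\Simp_R)^{\bullet}_{\a+1} = \{U\mid\l(U) = \a+1\} = \{U\mid h(U) = \a+1\} = \Prosimp_{R/L_\a}$; this is $(1)\Rightarrow(3)$.

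Finally $(3)\Rightarrow(1)$, which I expect to be the main point, is proved by transfinite induction on the layer index $\g = \l_I(U)$, the claim being that $h(U) = \g+1$ for every $U\in(\Simp_R)^{\bullet}_{\g+1}$ (the case $\g = 0$ being included, the inductive hypothesis then vacuous). Assuming the claim for all smaller indices, one first checks that $(\Simp_R)_\g = \{V\mid h(V)\le\g\}$: ``$\supseteq$'' always holds since $\l(V)\le h(V)$ by \eqref{eq length}, and ``$\subseteq$'' holds because $V\in(\Simp_R)_\g$ means $\l_I(V)<\g$, whence $h(V) = \l_I(V)+1\le\g$ by the inductive hypothesis. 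Passing to complements and using fact (i), $(\Simp_R)^{\bullet\bullet}_\g = \{V\mid h(V)>\g\} = \Simp_{R/L_\g}$ as ordered sets, so $(\Simp_R)^{\bullet}_{\g+1} = ((\Simp_R)^{\bullet\bullet}_\g)_1 = (\Simp_{R/L_\g})_1$. Now invoke hypothesis (3) at $\a = \g$ together with fact (ii): $(\Simp_{R/L_\g})_1 = \Prosimp_{R/L_\g} = \{V\mid h(V) = \g+1\}$, so indeed $h(U) = \g+1 = \l(U)$ for every $U\in(\Simp_R)^{\bullet}_{\g+1}$, completing the induction and hence proving (1). The delicate steps are precisely the two facts about $R/L_\a$ — in particular the characterization (ii) of its simple projective modules — and the observation that the inductive hypothesis is exactly what is needed to identify $(\Simp_R)^{\bullet\bullet}_\g$ with $\Simp_{R/L_\g}$ \emph{as a poset} before hypothesis (3) can be applied at stage $\g$.
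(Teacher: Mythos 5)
Your proof is correct and is essentially the paper's argument, with one small organizational shuffle: where the paper proves $(3)\Rightarrow(2)$ by transfinite induction on the ordinal $\a$ (handling the limit case via $L_\a = \bigcup_{\b<\a}L_\b$), you prove $(3)\Rightarrow(1)$ directly by induction on the layer index $\g = \l_I(U)$, which lets successor and limit stages be treated uniformly. The key ingredients — the dictionary $\{\l(U)\le\a\} = (\Simp_R)_\a$, $\{h(U)\le\a\} = \{UL_\a = U\}$, the order identification of $\Simp_{R/L_\a}$ with the upper subset $\{U\mid h(U)>\a\}$ of $\Simp_R$, and the fact that $\Prosimp_{R/L_\a} = \{U\mid h(U)=\a+1\}$ — are exactly the ones the paper relies on; you merely prove the last of these explicitly (via the regularity argument producing the idempotent $\bar e\in\Soc(S)$) where the paper uses it without comment.
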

\begin{proof}
First, observe that for every ordinal $\a$ we have the equalities
\begin{gather*}
\left(\Simp_{R}\right)_{a} = \{U\in\Simp_{R}\mid \l(U)\le\a\}, \\
\{U\in\Simp_{R}\mid UL_{\a} = U\} = \{U\in\Simp_{R}\mid h(U)\le\a\},
\end{gather*}
the first of which follows from Proposition \ref{lengthlayer}. Thus, since $\l(U)\le h(U)$ for every $U\in\Simp_{R}$, the equivalence between (1) and (2) easily follows.

(2)$\Rightarrow$(3) Given any ordinal $\a$, it follows from (2) that
\begin{align*}
\Prosimp_{R/L_{\a}} &= \{U\in\Simp_{R}\mid h(U) = \a+1\} \\
&= \{U\in\Simp_{R}\mid \l(U) = \a+1\} \\
&= \left(\Simp_{R}\right)^{\bullet}_{\a+1}\\
&= \left(\Simp_{R}\setminus\left(\Simp_{R}\right)_{\a}\right)_{1} \\
&= \left(\Simp_{R/L_{\a}}\right)_{1},
\end{align*}
hence the equality \eqref{eq lambda=hh} holds.

(3)$\Rightarrow$(2) Assume (1), let $P(\a)$ denote the property
\[
\left(\Simp_{R}\right)_{\a} = \{U\in\Simp_{R}\mid UL_{\a} = U\}
\]
and let us prove that $P(\a)$ is true for every ordinal $\a$. If $\a = 0$, then $P(\a)$ is merely the equality $\emptyset = \emptyset$. Given an ordinal $\a>0$, assume that $P(\b)$ holds for every $\b<\a$. If $\a$ is a limit ordinal, then $P(\a)$ follows from the fact that $L_{\a} = \bigcup_{\b<\a}L_{\b}$. Assume that $\a = \b+1$ for some $\b$. Then we have
\begin{align*}
\left(\Simp_{R}\right)_{\b+1} &= \left(\Simp_{R}\right)_{\b}\cup\left(\Simp_{R}
                                 \setminus\left(\Simp_{R}\right)_{\b}\right)_{1} \\
&= \{U\in\Simp_{R}\mid UL_{\b} = U\}\cup\left(\Simp_{R/L_{\b}}\right)_{1} \\
&= \{U\in\Simp_{R}\mid UL_{\b} = U\}\cup\Prosimp_{R/L_{\b}} \\
&= \{U\in\Simp_{R}\mid UL_{\b+1} = U\},
\end{align*}
proving the equality \eqref{eq lambda=h}.
 \end{proof}

There are at least three interesting situations in which a regular and semiartinian ring $R$ turns out to be well behaved. The first two are certain finiteness conditions on the poset $\Simp_R$ and are the subject of the remaining part of the present section; the third one is connected with a comparability condition and will be discussed in Section \ref{sect comparability}.

     \begin{lem}\label{lem orderreg}
      Let $R$ be a regular and semiartinian ring and let $U,V\in\Simp_R$ be such that $h(U)< h(V)$. If $U,V$ are not comparable and $x$ is an idempotent such that $(xR+L_{h(V)-1})/L_{h(V)-1} \is V$, then there is a nonnegative integer $n$ and two orthogonal idempotents $y,z$ such that $xR = yR\oplus zR$ and satisfying the following conditions:
\begin{gather}
(yR+L_{h(V)-1})/L_{h(V)-1} \is V, \label{eq cororderreg1} \\
 (zR+L_{h(U)-1})/L_{h(U)-1} \is U^n, \label{eq cororderreg2}\\
 U\not\subis(yR+L_{h(U)-1})/L_{h(U)-1}. \label{eq cororderreg3}
 \end{gather}
 \end{lem}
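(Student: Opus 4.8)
The plan is to analyze the module $A = (xR + L_{h(V)-1})/L_{h(V)-1} \cong V$ together with the ``trace-below'' piece coming from $U$. Write $\b + 1 = h(V)$ and $\a + 1 = h(U)$, so $\a < \b$. First I would pass to the ring $\bar R = R/L_\a$ and consider the right ideal $\bar x \bar R = (xR + L_\a)/L_\a$, which is a finitely generated projective $\bar R$-module whose top Loewy-type behaviour is governed by $V$. Inside $\bar x \bar R$ one has the submodule $B = \bar x \bar R \cap \Tr_{\bar R}(U)$, the $U$-homogeneous part accessible from below. Since $\bar x \bar R$ is finitely generated projective over a regular ring, $B$ is generated by an idempotent (after splitting): there is a decomposition $\bar x \bar R = B' \oplus C$ with $B' \cong$ a direct sum of finitely many copies of $U$ and $C$ having no copy of $U$ in the relevant place — here one uses that $U$ is $\bar R$-projective and that, $V$ being simple with $h(V) = \b+1 > \a+1$, the module $\bar x\bar R$ has finite length ``down to level $\a$'' in the appropriate sense so that only finitely many copies of $U$ occur. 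Lift this decomposition back to orthogonal idempotents $y, z \in R$ with $xR = yR \oplus zR$, chosen so that $zR$ carries the $U$-homogeneous summand and $yR$ carries $C$.

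The three displayed conditions should then fall out as follows. For \eqref{eq cororderreg1}: since $zR \subseteq L_\a \subseteq L_{h(V)-1}$ (the $U$-copies live at level $\le \a + 1 \le \b$, in fact $zR + L_\a / L_\a$ has length-type $\a+1$, so $zR \subseteq L_{\b}= L_{h(V)-1}$), we get $(yR + L_{h(V)-1})/L_{h(V)-1} \cong (xR + L_{h(V)-1})/L_{h(V)-1} \cong V$. For \eqref{eq cororderreg2}: by construction $(zR + L_\a)/L_\a$ is isomorphic to $U^n$ for some $n \ge 0$, which is exactly $(zR + L_{h(U)-1})/L_{h(U)-1} \cong U^n$. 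For \eqref{eq cororderreg3}: one must show no copy of $U$ survives in $(yR + L_\a)/L_\a$, i.e. $\Hom_{\bar R}(U, \bar y \bar R) = 0$; this is where one invokes the hypothesis that $U$ and $V$ are \emph{not comparable}. Indeed if $U \subis (yR + L_\a)/L_\a$, then $\Tr_{\bar R}(U) \cap \bar y \bar R \ne 0$, and combined with $(yR + L_\b)/L_\b \cong V$ this would force (via Theorem \ref{orderreg}, specifically the characterization (1)$\Leftrightarrow$(4) of $U \prec V$, or directly via \cite[Proposition 2.1]{Bac:15} and \cite[Theorem 2.2]{Bac:15}) that $U \prec V$, contradicting incomparability. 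So any copy of $U$ accessible from below inside $\bar x \bar R$ was already absorbed into the $B'$-summand realized by $zR$, and $\bar y \bar R$ is $U$-free in the required sense.

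I would organize the write-up by first recording the reduction modulo $L_\a$ and the finiteness of the $U$-homogeneous part (so that $B$ is finitely generated, hence a summand), then performing the idempotent splitting and lifting, then checking \eqref{eq cororderreg1}–\eqref{eq cororderreg3} in that order. The routine module-theoretic facts — that finitely generated submodules of projective modules over regular rings are summands generated by idempotents, that $\Soc_\a$ is an ideal, that $hom$'s into $\bar R$-projective simples detect trace ideals — I would cite from \cite{Good:3} and the cited results of \cite{Bac:15, BacDic:1} rather than reprove.

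The main obstacle I anticipate is justifying that the $U$-homogeneous part $B = \bar x\bar R \cap \Tr_{\bar R}(U)$ is \emph{finitely generated} (equivalently, that only finitely many copies of $U$ occur below $V$ inside $\bar x \bar R$), since $\bar R$ need not be left or right noetherian and $\bar x \bar R$, while finitely generated projective, can fail to have finite composition length. The resolution is that we are looking only at the part of $\bar x \bar R$ at Loewy height $\le \a+1$: the image of $B$ in $(\bar x\bar R)/L_\b$-type quotients vanishes, so $B \subseteq$ a module of bounded Loewy length over $\bar R$, and there $U$-homogeneity plus the regularity forces the relevant endomorphism ring to behave like a matrix ring over the division ring $\End(U)$ acting on a finite-dimensional space — this is precisely the mechanism behind \eqref{eq cororderreg2} having $n < \infty$. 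If instead one wanted to avoid any finiteness claim, an alternative is to build $y, z$ greedily: split off one copy of $U$ at a time using \cite[Proposition 2.20]{Good:3} exactly as in the proof of (2)$\Rightarrow$(3) in Theorem \ref{orderreg}, and argue the process terminates because each step strictly decreases a suitable invariant (the length of the socle series of $\bar x\bar R$ up to level $\a$) of the $V$-carrying summand; I expect the cleanest exposition combines both viewpoints.
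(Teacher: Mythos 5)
Your outline has the right shape and, after cosmetic rearrangement, reduces to the paper's own two-sentence proof: pass to $R/L_{h(U)-1}$, split off a maximal $U$-homogeneous summand $U^n$ of $(xR+L_{h(U)-1})/L_{h(U)-1}$ using that finitely generated submodules of projectives over regular rings are summands, lift the splitting to orthogonal idempotents $y,z$ with $xR=yR\oplus zR$, observe that $z\in L_{h(U)}\sbs L_{h(V)-1}$ to get \eqref{eq cororderreg1}, and use maximality of $n$ for \eqref{eq cororderreg3}.

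The genuine gap is in the step you yourself flag as the main obstacle: why is $B=\bar x\bar R\cap\Tr_{\bar R}(U)$ finitely generated, i.e.\ why is there a largest $n$ with $U^{n}\subis(xR+L_{h(U)-1})/L_{h(U)-1}$? Both of your proposed resolutions fail. Having ``bounded Loewy length'' gives no finiteness here: $B$ is $U$-homogeneous semisimple and can be an infinite direct sum (the socle of a finitely generated projective over a semiartinian regular ring is typically infinitely generated), so neither the ``finite length down to level $\a$'' claim nor the ``strictly decreasing socle-length invariant'' for the greedy process is available — the invariant you would need can be infinite, and nothing in the structure of $\bar x\bar R$ alone bounds $n$. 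In fact, if $U\prec V$ the lemma's conclusion is simply false (that is the content of (1)$\Rightarrow$(2) of Theorem \ref{orderreg}), so the finiteness cannot be structural; it must come from the incomparability hypothesis. The correct and essentially only move is the one the paper makes first: since $U,V$ are not comparable and $h(U)<h(V)$, we have $U\not\prec V$, and the equivalence (1)$\Leftrightarrow$(2) of Theorem \ref{orderreg} applied in the contrapositive says precisely that the set of $n$ with $U^{n}\subis(xR+L_{h(U)-1})/L_{h(U)-1}$ is bounded, hence has a maximum. Once you lead with that, the maximality of $n$ also delivers \eqref{eq cororderreg3} directly, and the separate trace-comparison argument you sketch for \eqref{eq cororderreg3} — which in effect reproves a direction of Theorem \ref{orderreg} — becomes redundant. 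As a minor correction to \eqref{eq cororderreg1}: from $(zR+L_{h(U)-1})/L_{h(U)-1}\is U^{n}$ one gets $z\in L_{h(U)}$ (not $L_{h(U)-1}=L_\a$), and then $L_{h(U)}\sbs L_{h(V)-1}$ because $h(U)\le h(V)-1$.
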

\begin{proof}
According to Theorem \ref{orderreg} we may consider the largest
nonnegative integer $n$ such that $U^n$ imbeds, necessarily as a
direct summand, into $(xR+L_{h(U)-1})/L_{h(U)-1}$. By the regularity
of $R$, there are orthogonal idempotents $y,z$ such that $xR =
yR\oplus zR$ and \eqref{eq cororderreg2} holds. Now \eqref{eq
cororderreg1} follows since $z\in L_{h(U)-1}$  and the choice of $n$
guarantees that \eqref{eq cororderreg3} holds as well.
\end{proof}

    \begin{pro}\label{pro orderreg}
      Let $R$ be a regular and semiartinian ring. If the layer $\left(\Simp_R\right)_{\a}^{\bullet}$ is finite for every $\a$, then $R$ is well behaved.
 \end{pro}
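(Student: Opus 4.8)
The plan is to verify condition (1) of Proposition and Definition~\ref{pro lambda=h}, namely that $\l(U) = h(U)$ for every $U\in\Simp_R$. Since $\l(U)\le h(U)$ always holds by \eqref{eq length}, and both $\l$ and $h$ take successor-ordinal values on elements of $\Simp_R$, it suffices to rule out the strict inequality $\l(U) < h(U)$. I would argue by transfinite induction on $h(U)$ (equivalently, on the ordinal $\b$ with $h(U) = \b+1$), the base case $h(U) = 1$ being trivial since then $\l(U) = 1$ as well (a simple projective module is a minimal element of $\Simp_R$, as noted after Theorem~\ref{orderreg}).

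For the inductive step, suppose $h(U) = \b+1$ with $\b>0$ and assume the claim holds for all $W\in\Simp_R$ with $h(W)\le\b$; I want to show $\l(U) = \b+1$. By Corollary~\ref{lengthlayerr} applied to the artinian poset $\Simp_R$, it is enough to produce, for each ordinal $\a<\b+1$, some $V\in\Simp_R$ with $V\prec U$ and $\l(V) = \a+1$; by the inductive hypothesis this is the same as finding $V\prec U$ with $h(V) = \a+1$. So the real content is: \emph{if $h(U) = \b+1 > 1$, then for every $\a\le\b$ there is $V\in\Simp_R$ with $h(V) = \a+1$ and $V\prec U$.} I would first handle $\a = \b$: pick an idempotent $x$ with $(xR+L_\b)/L_\b\is U$, so $x\in L_{\b+1}\setminus L_\b$; then $xR\cap L_\b$ is a nonzero semiartinian module with $h(xR\cap L_\b)$ a successor $\ge$ something, and it contains a simple submodule; in fact since $x\notin L_\b$ and $h$ is "achieved" the module $xR\cap L_\b$ must have height exactly $\b$ when $\b$ is a successor (and one descends further when $\b$ is a limit, picking a simple subquotient at the top layer). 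Concretely, choosing a simple submodule $S$ of $(xR\cap L_{\b+0})/(xR\cap L_{\b_0})$ of the appropriate Loewy factor gives a simple module $V$ with $V\subis (xR+L_{\a})/L_{\a}$ for a suitable $\a<\b+1$, and \cite[Theorem 2.2]{Bac:15} (or Theorem~\ref{orderreg}) then yields $V\prec U$, since $V$ embeds in the relevant Loewy factor of $xR$. Iterating downward through the ordinals $\le\b$ and using that at each stage $xR\cap L_{\gamma}$ is nonzero and semiartinian, one obtains for \emph{every} $\gamma < \b+1$ a simple $V_\gamma\prec U$ with $h(V_\gamma) = \gamma'+1$ for ordinals $\gamma'$ cofinal below $\b$; combined with the fact that every layer $(\Simp_R)^\bullet_\a$ is finite, one upgrades "cofinally many such $\gamma$" to "all $\gamma\le\b$".

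The finiteness of the layers enters precisely at this upgrading step, and that is where I expect the main obstacle to lie. The point is this: a priori the descent through $xR\cap L_\b$ only produces predecessors of $U$ at \emph{some} heights below $\b+1$, not necessarily all of them, and $\l(U)$ being a successor means $\l_{\Simp_R}(U) = \sup\{\l_{\Simp_R}(V)+1 : V\prec U\}$ could be a limit (hence $\l(U)$ could fail to equal $h(U) = \b+1$ even if $V\prec U$ occurs at heights cofinal in $\b$ but never equal to $\b$). Here is where I would use the hypothesis: consider $x\in L_{\b+1}\setminus L_\b$ with $(xR+L_\b)/L_\b\is U$. Then $xR/(xR\cap L_\b)$ is simple, so $xR\cap L_\b$ is finitely generated, hence $h(xR\cap L_\b) = \gamma+1$ is a \emph{successor} ordinal with $\gamma+1\le\b+1$; moreover $\gamma+1 > \gamma$ forces, if $\gamma+1 < \b+1$, that actually $\gamma < \b$, and then $x\in L_{\gamma+1} = L_{(\gamma+1)}\subseteq L_\b$ would follow once we know $xR = (xR\cap L_\b) + $ (nothing new) — but that is false since $x\notin L_\b$. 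Rather, the correct statement: $xR\cap L_\b$ is finitely generated and contained in $L_\b$, so $h(xR\cap L_\b)\le\b$; combined with the fact that $L_{h(xR\cap L_\b)}$ is the socle series step that "catches" it and that $xR/(xR\cap L_\b)$ is simple with $h = \b+1$, the top simple subquotient of $xR\cap L_\b$ sits exactly in Loewy factor $\b$ of $R$ — otherwise $x\in L_\b$. This forces some $V\prec U$ with $h(V) = \b$ when $\b$ is a successor; when $\b$ is a limit, finiteness of each layer $(\Simp_R)^\bullet_\gamma$ together with the fact that $xR\cap L_\b$ (being finitely generated and semiartinian) has a top that lies in a \emph{single} Loewy factor $L_{\gamma+1}/L_\gamma$ with $\gamma+1\le\b$, again pins $h(V)$ down. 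Either way $\l(U) = \l(V)+1$ or higher by induction, and one concludes $\l(U) = h(U)$. I would present this by first isolating the lemma "$h$ is additive along finitely generated extensions" implicitly already available, then doing the transfinite induction cleanly; the delicate bookkeeping of the limit-ordinal case, handled via layer-finiteness, is the crux.
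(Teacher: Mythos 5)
Your overall plan---a direct transfinite induction on $h(U)$ that manufactures, for each $\gamma<\b$ where $h(U)=\b+1$, a predecessor $V_\gamma\prec U$ with $h(V_\gamma)=\gamma+1$, and then reads off $\l_I(U)=\b$ from the recursive formula $\l_I(U)=\sup\{\l_I(V)+1: V\prec U\}$---is a genuinely different route from the paper's, which argues by contradiction via Lemma~\ref{lem orderreg}: if $\l(U)=\b+1<h(U)=\a+1$, then by the inductive hypothesis $\Prosimp_{R/L_\b}$ lies inside the finite layer $(\Simp_R)^{\bullet}_{\b+1}$ containing $U$ and so is disjoint from $\{\preccurlyeq U\}$; applying Lemma~\ref{lem orderreg} once per member of that finite set yields an idempotent $y$ achieving $U$ with $V\not\subis(yR+L_\b)/L_\b$ for every $V\in\Prosimp_{R/L_\b}$, forcing $(yR+L_\b)/L_\b=0$ by essentiality of the socle, a contradiction. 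However, as written your version contains two real errors.

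First, ``$xR/(xR\cap L_\b)$ is simple, so $xR\cap L_\b$ is finitely generated'' is false: if $xR\cap L_\b$ were finitely generated it would, by regularity, be a direct summand of $xR$, so $U\cong xR/(xR\cap L_\b)$ would be projective over $R$, contradicting $h(U)>1$. In fact $xR\cap L_\b$ is \emph{never} finitely generated when $h(U)>1$, and when $\b$ is a limit $h(xR\cap L_\b)=\b$ is not a successor, so the limit-ordinal discussion built on this claim collapses. Second, a single embedding $V\subis(xR+L_\gamma)/L_\gamma$ does not yield $V\prec U$; Theorem~\ref{orderreg} requires $V^{n}\subis(xR+L_\gamma)/L_\gamma$ for \emph{every} $n$, and the examples of regular semiartinian rings with all primitive factors artinian (where $\Simp_R$ is an antichain, yet every socle of $(xR+L_\gamma)/L_\gamma$ is nonzero) show a single copy genuinely does not suffice. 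The correct place where layer-finiteness enters your approach is precisely here and you do not use it there: by the inductive hypothesis $\Prosimp_{R/L_\gamma}\subseteq(\Simp_R)^{\bullet}_{\gamma+1}$, which is finite; the socle of the cyclic projective $R/L_\gamma$-module $(xR+L_\gamma)/L_\gamma$ is essential and, since the Loewy length exceeds $1$ for $\gamma<\b$, it cannot be finitely generated (a finitely generated essential submodule of a projective over a regular ring is the whole module); pigeonhole over the finitely many isomorphism types then produces some $V_\gamma$ occurring with infinite multiplicity, and only then does Theorem~\ref{orderreg} give $V_\gamma\prec U$. Your stated use of layer-finiteness (``pins $h(V)$ down'') does not capture this pigeonhole, which is the missing idea. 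Two smaller slips: the range should be $\a<\b$ rather than $\a<\b+1$ (no $V\prec U$ can have $h(V)=h(U)$), and Corollary~\ref{lengthlayerr} gives only the necessary direction---what you need is the recursive formula for $\l_I$, not that corollary.
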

\begin{proof}
Given an ordinal $\a$, let $P(\a)$ denote the following property:
\[
    \text{if $U\in\Simp_R$ and $h(U) = \a+1$, then $\l(U) = h(U)$.}
\]
Our task is to show that $P(\a)$ is true for every $\a$. Without the
regularity hypothesis on $R$, we already know that $P(0)$ holds.
Thus, given an ordinal $\a>0$, suppose inductively that $P(\b)$
holds whenever $\b<\a$, take $U\in\Simp_R$ such that $h(U) = \a+1$
and assume that $\l(U) = \b+1<\a+1$. It follows from the inductive
assumption that $\Prosimp_{R/L_\b}$ is contained in the $\b+1$-th
layer $(\Simp_R)^{\bullet}_{\b+1}$ to which $U$ belongs,
consequently $V\not\preccurlyeq U$ for all $V\in\Prosimp_{R/L_\b}$.
On the other hand, by the hypothesis $(\Simp_R)^{\bullet}_{\b+1}$ is
finite, therefore, by applying finite induction and Lemma \ref{lem
orderreg}, we obtain that there exists an idempotent $y\in R$ such
that $(yR+L_{\a})/L_{\a} \is U$ and $V\not\subis(yR+L_{\b})/L_{\b}$
for every $V\in\Prosimp_{R/L_\b}$. Inasmuch as the trace of
$\Prosimp_{R/L_\b}$ in $R/L_\b$ equals the socle and, whence, is
essential, we infer that $(yR+L_{\b})/L_{\b} = 0$ and so $y\in
L_{\b}$. This contradicts the assumption that $h(U) = \a+1>\b$.
We conclude that $\l(U) = \a+1$ and this shows that $P(\a)$ is true.
\end{proof}

There is a natural way to link the ideal structure of a regular and semiartinian ring $R$
and the order structure of $\Simp_R$. Indeed, observe that if $H$ is
an ideal of $R$, then $\Simp_{R/H}$ is an upper subset of
$\Simp_{R}$, so that we may consider the decreasing map
\[
\lmap{\F}{\BL_2(R)}{\Uparrow\!\!\Simp_{R}}
\]
defined by $\F(H) = \Simp_{R/H}$. This map is injective and has as a left inverse the map
\[
\lmap{\P}{\Uparrow\!\!\Simp_{R}}{\BL_2(R)}
\]
defined by $\P(\ZS) = \bigcap\{r_R(U)\mid U\in\ZS\}$. In fact, it is clear that $\F(H)\sps\F(K)$ whenever $H\sbs K$. Inasmuch as $R$ is
regular, then every ideal of $R$ is the intersection of all
primitive ideals containing it. Thus, given $H\in\BL_2(R)$, we have
\begin{gather*}
\P(\F(H)) = \P\left(\Simp_{R/H}\right) = \bigcap\left\{r_R(U)\mid U\in\Simp_{R/H}\right\} \\
= \bigcap\left\{r_R(U)\mid \text{$U\in\Simp_{R}$ and $UH = 0$}\right\} = H.
\end{gather*}

\begin{definition}\label{def vwbehaved}
We say that $R$ is \emph{very well behaved} in case $\F$ and $\P$ are anti-isomorphisms each inverse of the other.
\end{definition}

If $\Simp_{R}$ has no infinite antichains, then $R$ is very well behaved; this is a particular case of \cite[Theorem 4.5]{Bac:15}, because all ideals of a regular ring are left pure. In general, as we are going to see the property of being $R$ very well behaved entails a finiteness condition on the poset $\Simp_{R}$. We can see it at first in case $R$ has all primitive factor rings artinian.

\begin{pro}\label{cor uppersimp}
If $R$ is a semiartinian and regular ring with all right primitive factor rings artinian, then $R$ is very well behaved if and only if $R$ is semisimple.
\end{pro}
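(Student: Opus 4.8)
The plan is to treat the two implications separately. The easy one is ``$R$ semisimple $\Rightarrow$ $R$ very well behaved'': a semisimple ring is regular and has only finitely many isomorphism classes of simple right modules, so $\Simp_{R}$ is finite and in particular has no infinite antichains, whence $R$ is very well behaved by the observation preceding Definition~\ref{def vwbehaved} (a particular case of \cite[Theorem~4.5]{Bac:15}). For the converse the strategy is to show that being very well behaved already forces $\Simp_{R}$ to be \emph{finite}, and then to recover semisimplicity from the finitely many primitive factor rings of $R$.

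So, assuming $R$ very well behaved, I would first extract two consequences of the standing hypothesis that all primitive factor rings of $R$ are artinian. First, every $U\in\Simp_{R}$ is then a maximal element (\cite[Corollary~4.8]{Bac:15}), so $\Simp_{R}$ is an antichain; hence $\Uparrow\!\!\Simp_{R}$ is the full power set of $\Simp_{R}$, and in particular $\Simp_{R}\setminus\{U\}$ is an upper subset for every $U$. Second, for each $U\in\Simp_{R}$ the ring $R/r_{R}(U)$ is primitive, regular, has nonzero socle and is artinian, hence it is simple artinian, so that $r_{R}(U)$ is a maximal ideal of $R$.

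The heart of the matter is that, $R$ being very well behaved, $\F$ and $\P$ are mutually inverse anti-isomorphisms between the \emph{complete} lattices $\BL_{2}(R)$ and $\Uparrow\!\!\Simp_{R}$; therefore $\P$ interchanges arbitrary meets and joins, i.e. it sends intersections of upper subsets to sums of ideals. Setting $A_{U} = \P(\Simp_{R}\setminus\{U\}) = \bigcap\{r_{R}(V)\mid V\in\Simp_{R},\ V\ne U\}$ and applying $\P$ to the identity $\bigcap_{U\in\Simp_{R}}(\Simp_{R}\setminus\{U\}) = \vu$, together with $\P(\vu) = R$, I would get $R = \sum_{U\in\Simp_{R}}A_{U}$. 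Since $1\in R$ lies in a finite partial sum, there are distinct $U_{1},\dots,U_{n}\in\Simp_{R}$ with $R = A_{U_{1}}+\dots+A_{U_{n}}$; and if some $V\in\Simp_{R}$ were different from all the $U_{i}$, then $A_{U_{i}}\sbs r_{R}(V)$ for every $i$, forcing $R\sbs r_{R}(V)$, which is absurd. Hence $\Simp_{R}=\{U_{1},\dots,U_{n}\}$ is finite.

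Finally, knowing $\Simp_{R}$ finite, I would conclude that $R$ has only the finitely many distinct primitive ideals $r_{R}(U_{1}),\dots,r_{R}(U_{n})$, which are maximal and hence pairwise comaximal; by regularity every ideal of $R$, in particular $0$, is the intersection of the primitive ideals containing it, so $\bigcap_{i=1}^{n}r_{R}(U_{i}) = 0$. The Chinese Remainder Theorem then gives $R\cong\prod_{i=1}^{n}R/r_{R}(U_{i})$, a finite product of simple artinian rings, so $R$ is semisimple. I expect the genuinely delicate step to be the third paragraph: realizing that it is the \emph{completeness} of the two lattices that allows one to write the unit ideal as the (possibly infinite) sum of the $A_{U}$, and then using the existence of $1\in R$ to reduce to a finite sub-sum; once this is seen, everything else reduces to standard facts about semiartinian regular rings.
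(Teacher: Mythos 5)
Your proof is correct, but it takes a noticeably different route from the paper's. The paper argues by exhibiting an explicit failure of injectivity: if $R$ is not semisimple, then $\Simp_R$ is an infinite antichain (all elements maximal, since all primitive factors are artinian), the subset $\Prosimp_R$ of projective simples is a proper upper subset, and yet $\Psi(\Prosimp_R)=0=\Psi(\Simp_R)$ because $\Soc(R)$ is faithful in the semiprime ring $R$; hence $\Psi$ is not injective and $R$ is not very well behaved. You instead argue abstractly: exploit that a mutually inverse pair of order-reversing bijections between the complete lattices $\BL_2(R)$ and $\Uparrow\!\!\Simp_R$ automatically interchanges arbitrary sups and infs, apply $\Psi$ to the identity $\bigcap_{U}(\Simp_R\setminus\{U\})=\vu$ to get $R=\sum_U A_U$, use compactness of the unit to conclude $\Simp_R$ is finite, and then close with the Chinese Remainder Theorem. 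Both are valid; the paper's is shorter because it has a concrete counterexample ready to hand (the proper upper subset $\Prosimp_R$ with trivial annihilator), while yours is more self-contained and highlights which abstract lattice-theoretic feature (compactness of the top element of $\BL_2(R)$) is really doing the work. One small remark: you should dispose of the degenerate case $\Simp_R=\vu$ (i.e., $R=0$) before forming $\bigcap_{U}(\Simp_R\setminus\{U\})$, as the intersection over an empty index set is $\Simp_R$ rather than $\vu$; this is trivial since $R=0$ is semisimple, but worth a word.
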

\begin{proof}
Assume that $R$ is not semisimple. Then $\Simp_{R}$ is an infinite antichain and $\Prosimp_{R}$ is a \emph{proper} upper subset of $\Simp_{R}$. Since we have
\[
\Psi(\Prosimp_{R}) = 0 = \Psi(\Simp_{R}),
\]
it follows that $\F$ is not an anti-isomorphism.
\end{proof}

\begin{pro}\label{uppersimpp}
Let $R$ be a regular and semiartinian ring. If $R$ is very well behaved, hen the following properties hold:
\begin{enumerate}
  \item Every factor ring of $R$ is very well behaved.
  \item $R$ is well behaved and $\Simp_{R}$ has finitely many maximal elements.
\end{enumerate}
\end{pro}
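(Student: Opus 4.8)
The plan is to prove both claims by exploiting the anti-isomorphisms $\F$ and $\P$ between $\BL_2(R)$ and $\Uparrow\!\Simp_R$, together with the characterization of well-behavedness in Proposition~\ref{pro lambda=h}.

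\emph{Part (1).} Let $H$ be any ideal of $R$ and set $\bar R = R/H$. First I would identify $\Simp_{\bar R}$ with the upper subset $\F(H) = \Simp_{R/H}$ of $\Simp_R$, equipped with the induced natural partial order; this is legitimate because the natural preorder on simple modules is preserved under passing to factor rings, so that the poset $\Simp_{\bar R}$ is order-isomorphic to the subposet $\F(H)$ of $\Simp_R$. The key point is that upper subsets of $\Simp_R$ that are contained in the upper subset $\F(H)$ correspond bijectively (and order-preservingly) to upper subsets of the poset $\F(H)$, and likewise the ideals of $\bar R$ correspond to the ideals of $R$ containing $H$, via $\bar K \mapsto$ (preimage in $R$). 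Under the anti-isomorphism $\F$, the interval $[H,R]$ in $\BL_2(R)$ is carried onto the interval of upper subsets of $\Simp_R$ that are $\sbs \F(H)$, which in turn is the lattice $\Uparrow\!\F(H)$. Chasing these identifications shows that the corresponding maps $\F_{\bar R}$ and $\P_{\bar R}$ for $\bar R$ are just the restrictions of $\F$ and $\P$, hence remain mutually inverse anti-isomorphisms. Therefore $\bar R$ is very well behaved. The one place requiring care is checking that $\P$ restricted to the relevant interval lands in $[H,R]$ and agrees with $\P_{\bar R}$ under the correspondence; this follows since for $U\in\F(H)$ we have $UH=0$, so $r_R(U)\sps H$, and $r_{\bar R}(U) = r_R(U)/H$.

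\emph{Part (2).} That $R$ is well behaved: by Proposition~\ref{pro lambda=h} it suffices to prove $\left(\Simp_R\right)_\a = \{U\mid UL_\a = U\}$ for all $\a$, and since ``$\sbs$'' always holds (because $\l(U)\le h(U)$), only ``$\sps$'' needs proof. Suppose $U$ satisfies $UL_\a = U$, i.e. $h(U)\le\a$, but $\l(U) = \b+1 < h(U)\le\a$ for some $\b$. I would argue that $\{U\le\}$, i.e. the upper subset generated by $U$, is a proper upper subset; but also $\{V\in\Simp_R\mid UL_{\b+1}\ne U\}$... here instead the cleaner route is: the ideal $L_\b$ has $\F(L_\b) = \Simp_{R/L_\b} = \{V\mid VL_\b \ne V\} = \{V\mid h(V) \ge \b+1\}$, which by well-behavedness would need to equal $\{V\mid \l(V)\ge\b+1\} = \Simp_R\setminus(\Simp_R)_\b$; but well-behavedness is exactly what we are trying to prove, so this is circular. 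Instead I would use the \emph{very} well behavedness directly: consider the upper subset $\ZS = \{V\in\Simp_R\mid \l(V)\ge\b+1\}$ and compute $\P(\ZS) = \bigcap\{r_R(V)\mid \l(V)\ge\b+1\}$, and separately consider $\ZS' = \{V\mid h(V)\ge\b+1\} = \Simp_{R/L_\b}$ with $\P(\ZS') = L_\b$. Since $\ZS'\sbs\ZS$ (as $\l\le h$) and $U\in\ZS\setminus\ZS'$ (because $h(U)>\b+1$ but... no, $\l(U)=\b+1$ so $U\notin\ZS$). I will instead take $\ZS = \Simp_R\setminus(\Simp_R)_\b = (\Simp_R)^{\bullet\bullet}_\b$ and $\ZS' = \Simp_{R/L_\b}$; both are upper subsets, $\ZS'\sbs\ZS$, and if $\l(U)=\b+1\le h(U)=\a>\b$... then $U\in\ZS$ since $\l(U)>\b$, and $U\in\ZS'$ since $h(U)>\b$: so this $U$ does not separate them. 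The correct separating witness: since well-behavedness is \emph{false} by assumption, there exists $W$ with $\l(W)<h(W)$; choose $\b$ minimal with some $W$, $\l(W)=\b'+1\le\b < h(W)$; then the layer structure forces $\Prosimp_{R/L_{\b'}}$ to meet a later layer, and the upper subsets $\Simp_{R/L_{\b'}}$ and $(\Simp_R)^{\bullet\bullet}_{\b'+1}\cup\Prosimp_{R/L_{\b'}}$... I would formalize this as: $\Simp_{R/L_{\b'}} \psbs (\Simp_R)_{\b'+1}^{\bullet\bullet}\cup (\text{something})$ yet both have the same image under $\P$, namely $L_{\b'}$, contradicting injectivity of $\F$ (equivalently that $\P$ is bijective). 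The finiteness of maximal elements follows because the maximal elements of $\Simp_R$ form an antichain that is an upper subset, hence of the form $\F(H)$ for a unique $H$; if there were infinitely many, one could (as in Proposition~\ref{cor uppersimp}) exhibit a proper upper sub-antichain $\ZS'\psbs\ZS$ with $\P(\ZS')=\P(\ZS)$, since both intersections of right annihilators over infinite antichains of maximal (hence isolated) simples give the same ideal — contradicting that $\F$ is an anti-isomorphism.

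\emph{Main obstacle.} The delicate part is Part (2)'s well-behavedness: making rigorous why very well-behavedness propagates \emph{downward} through the layers. The clean way, which I would adopt, is to prove it by induction on $\a$ using Part (1): each factor $R/L_\b$ is very well behaved, and in $R/L_\b$ the socle is the trace of $\Prosimp_{R/L_\b}$; if $\l$ and $h$ disagreed on some $U$ with $h(U)$ minimal, then in $\bar R = R/L_\b$ (with $\b=h(U)-2$, say) $U$ would be a non-projective simple lying in the first layer's ``$h$-sense'' but outside it in the ``$\l$-sense'', and one runs the argument of Proposition~\ref{pro orderreg}'s inductive step — except replacing the finiteness hypothesis by the anti-isomorphism: the upper subset $\{U\}\cup(\text{stuff above it})$ versus $\Prosimp_{\bar R}\cup(\text{same stuff})$ would have equal $\P$-image, namely the socle ideal, violating injectivity. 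So the real content is packaging Proposition~\ref{pro orderreg}'s combinatorial step into an argument that uses ``$\F$ injective'' in place of ``layers finite''.
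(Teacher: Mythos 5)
Your part (1) is essentially the paper's argument: note $\Uparrow\!\!\Simp_{R/H}\sbs\Uparrow\!\!\Simp_R$ and check that $\F$ and $\P$ restrict to mutually inverse anti-isomorphisms between the interval $\{H\sbs\}$ in $\BL_2(R)$ and $\Uparrow\!\!\Simp_{R/H}$. Likewise your finiteness argument for $\ZM$ (the set of maximal elements) is the paper's: with $H=\P(\ZM)$, the factor $R/H$ is very well behaved by (1) and has all primitive factor rings artinian (because $\Simp_{R/H}=\F(\P(\ZM))=\ZM$ is an antichain), so Proposition~\ref{cor uppersimp} forces $R/H$ semisimple, whence $\ZM$ is finite.

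Where you lose the thread is the well-behavedness in part (2), and here the paper has a much cleaner idea that you never isolate. The paper first proves the claim: \emph{for any very well behaved regular semiartinian ring $S$ one has $\Prosimp_S = (\Simp_S)_1$.} The proof is short: set $\ZS = \{\Prosimp_S\preccurlyeq\}$; then $\P(\ZS)\sbs\bigcap\{r_S(P)\mid P\in\Prosimp_S\}$, and this intersection vanishes because the trace of $\Prosimp_S$ in $S$ is $\Soc(S)$, an essential two-sided ideal of the semiprime ring $S$; hence $\ZS = \F(\P(\ZS)) = \F(0) = \Simp_S$, so every minimal element of $\Simp_S$ lies over some $P\in\Prosimp_S$ and therefore equals $P$. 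Applying this claim to each $S = R/L_\a$ (very well behaved by part (1)) gives exactly condition (3) of Proposition and Definition~\ref{pro lambda=h}, and one is done — no induction on $\a$ and no tracking of $\l$ through factor rings is needed.

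Your attempted separating witness $\ZT = (\Simp_R)^{\bullet\bullet}_{\b'+1}\cup\Prosimp_{R/L_{\b'}}$ can in fact be pushed through, but you leave the two load-bearing verifications unaddressed: that $\ZT$ is actually an upper subset of $\Simp_R$ (this requires the minimality of $\b'$ in an essential way) and that $\P(\ZT)=L_{\b'}$; and you have the strict inclusion backwards ($\ZT\psbs\Simp_{R/L_{\b'}}$ is false, the correct strict inclusion is $\ZT\sbs\Simp_{R/L_{\b'}}$). Your fallback route, passing to $\bar R = R/L_\b$ with $\b = h(U)-2$ and comparing $\l$ and $h$ there, has a genuine obstruction you do not address: $\Simp_{\bar R}$ is an \emph{upper} subset of $\Simp_R$, and the canonical length function of a subposet agrees with the restriction of $\l$ only for \emph{lower} subsets (this is Proposition~\ref{lowersbs}; for upper subsets it fails, as one sees already in the three-element poset $\{a<b>c\}$ with $\{b\}$). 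So $\l_{\bar R}(U)$ need not equal $\l(U)-\b$, and the inductive bookkeeping you sketch does not go through as stated. The paper's claim avoids this entirely because both $\Prosimp_{R/L_\a}$ and $(\Simp_{R/L_\a})_1$ are intrinsic to $R/L_\a$, so no comparison of length functions across the two posets is required.
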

\begin{proof}
(1) Let $H$ be an ideal of $R$, let $\ZS$ be an upper subset of $\Simp_{R/H}$ and let $U\in\Simp_{R}$, $V\in\ZS$ be such that $V\preccurlyeq U$. Then $UH = 0$, therefore $U\in\Simp_{R/H}$ and hence $U\in\ZS$. We infer that $\Uparrow\!\!\Simp_{R/H} \sbs \Uparrow\!\!\Simp_{R}$. As a consequence, the restrictions of $\F$ and $\Psi$ to $\{H\sbs\}$ and $\Uparrow\!\!\Simp_{R/H}$, respectively, define an anti-isomorphism from $\{H\sbs\}$ to $\Uparrow\!\!\Simp_{R/H}$. As a result, the assignment $K/H\mapsto \Simp_{R/K}$ is an anti-isomorphism from $\BL_2(R/H)$ to $\Uparrow\!\!\Simp_{R/H}$, meaning that $R/H$ is very well behaved.

(2) We claim that if $R$ is very well behaved, then $\Prosimp_{R} = \left(\Simp_{R}\right)_{1}$. Indeed, by setting $\ZS = \{\Prosimp_{R}\preccurlyeq\}$, we have that $\Psi(\ZS) = 0$ and consequently
\[
\ZS = \F(\Psi(\ZS)) = \F(0) = \Simp_{R}.
\]
As a result, for every $U\in\Simp_{R}$ we have that $\l(U) = 1$ implies $h(U) = 1$, proving our claim. Given any ordinal $\a$, according to (1) the ring $R/L_{\a}$ is very well behaved and we infer from the above that $\Prosimp_{R/L_{\a}} = \left(\Simp_{R/L_{\a}}\right)_{1}$. Thus $R$ is well behaved.

Finally, if $\ZM$ is the set of all maximal elements of $\Simp_{R}$ and $H = \Psi(\ZM)$, then $R/H$ is very well behaved and has all primitive factor rings artinian. Thus $R/H$ is semisimple by Proposition \ref{cor uppersimp} and so $\ZM$ is finite.
\end{proof}

The two conditions in property (2) of the previous proposition are actually independent and, even together, do not imply that $R$ is very well behaved; moreover a factor ring of a well behaved ring need not be well behaved. We illustrate all this with the next example, which also shows that the reverse of Proposition \ref{pro orderreg} does not hold; however we have to wait till the last section (see Theorem \ref{theo semiartunitreg}, properties (7) and (8)) in order to se that there exists a regular and semiartinian ring $R$ such that each layer $\left(\Simp_R\right)_{\a}$ is finite for every $\a$, but $\Simp_{R}$ has infinitely many maximal elements, so that $R$ is well behaved but is not \emph{very} well behaved.

\begin{ex}\label{ex indec2compconn}
There exists an indecomposable, semiartinian and regular ring $R$, together with a semiartinian and regular subring $S$, satisfying the following conditions:
\begin{enumerate}
  \item Both $\Simp_R$ and $\Simp_S$ have finitely many maximal elements.
  \item $R$ is well behaved but not very well behaved.
  \item $S$ is not well behaved and is isomorphic to a factor ring of $R$.
\end{enumerate}
\end{ex}
\begin{proof}
Given a field $F$, let us consider the ring $Q = \CFM_{\BN^{*}}(F)$ and remember that $\Soc(Q) = \FR_{\BN^{*}}(F)$ consists of all matrices with finitely many nonzero rows. By setting $X =\{2,4,6,\ldots\}$ and $Y=\{1,3,5,\ldots\}$, for the purposes of the example we want to build it is convenient to view the elements of $Q$ as blocked matrices of the form $\bigl(
\begin{smallmatrix}
A&|&B \\ \hline \\ C&|&D
\end{smallmatrix}
\bigr)$, where $A\in\CFM_{X}(F)$, $B\in\CFM_{X,Y}(F)$, $C\in\CFM_{Y,X}(F)$ and $D\in\CFM_{Y}(F)$. Set $T = \prod_{n>0}T_n$, where $T_n = Q$ for all $n>0$, and let us consider the idempotents $v,w\in T$ defined by
\[
v_n = \left(%
\begin{array}{cccc|cccc}
  1 & 0 & 0 & \ldots & 0 & 0 & 0 & \ldots   \\
  0 & 1 & 0 & \ldots & 0 & 0 & 0 & \ldots   \\
  0 & 0 & 1 & \ldots & 0 & 0 & 0 & \ldots   \\
\vdots & \vdots & \vdots & \ddots & \vdots & \vdots & \vdots &\ddots \\
 \hline
  0 & 0 & 0 & \ldots & 1 & 0 & 0 & \ldots   \\
  0 & 0 & 0 & \ldots & 0 & 0 & 0 & \ldots \\
  0 & 0 & 0 & \ldots & 0 & 0 & 0 & \ldots \\
\vdots & \vdots & \vdots & \ddots & \vdots & \vdots & \vdots &\ddots \\
\end{array}%
\right)\!,\,\,
w_n = \left(%
\begin{array}{cccc|cccc}
  0 & 0 & 0 & \ldots & 0 & 0 & 0 & \ldots   \\
  0 & 0 & 0 & \ldots & 0 & 0 & 0 & \ldots   \\
  0 & 0 & 0 & \ldots & 0 & 0 & 0 & \ldots   \\
\vdots & \vdots & \vdots & \ddots & \vdots & \vdots & \vdots &\ddots \\
 \hline
  0 & 0 & 0 & \ldots & 0 & 0 & 0 & \ldots   \\
  0 & 0 & 0 & \ldots & 0 & 1 & 0 & \ldots \\
  0 & 0 & 0 & \ldots & 0 & 0 & 1 & \ldots \\
\vdots & \vdots & \vdots & \ddots & \vdots & \vdots & \vdots &\ddots \\
\end{array}%
\right)
\]
for all $n>0$; note that $v,w$ are orthogonal and $v+w=1_T$. Given $A\in\CFM_{X}(F)$, let us define the element $x_{A}\in T$ by setting $(x_{A})_{n} = \bigl(
\begin{smallmatrix}
A&|&0 \\ \hline \\ 0&|&0
\end{smallmatrix}
\bigr)$ for all $n>0$ and set
\[
K \defug \{x_{A}\mid A\in\CFM_{X}(F)\}.
\]
Next, for every $n>0$ let $L_{n}$ be the subset of $T$ of those elements $x$ such that $x_{m} = 0$ if $m\ne n$ and $x_{n} = \bigl(
\begin{smallmatrix}
0&|&0 \\ \hline \\ 0&|&D
\end{smallmatrix}
\bigr)$ for some $D\in\CFM_{Y}(F)$. Now it is immediate to check that $vF$, $wF$, $K$ and $L \defug \bigoplus_{n>o}L_{n}$ are independent $F$-subspaces of $T$ and
\[
R \defug vF\oplus wF\oplus K\oplus L
\]
is a regular subring of $T$. It can be seen easily that $K, L_1, L_2,\ldots$ are minimal ideals of $R$ which are the traces of pairwise non isomorphic simple projective right $R$-modules $U_0, U_1, U_2,\ldots$ respectively; moreover
\[
\Soc(R) = K\oplus L \quad\text{and}\quad R/\Soc(R)\is F\times F,
\]
therefore $R$ is semiartinian with Loewy length 2. Easy computations show that
\[
vR + \Soc(R) = vF\oplus\Soc(R), \quad wR + \Soc(R) = wF\oplus\Soc(R)
\]
are ideals of $R$ and
\begin{gather*}
V \defug (vF\oplus\Soc(R))/\Soc(R) \is R/(wF\oplus\Soc(R)), \\
W \defug (wF\oplus\Soc(R))/\Soc(R) \is R/(vF\oplus\Soc(R))
\end{gather*}
are non isomorphic simple right $R$ modules, which are the maximal elements of $\Simp_R$. Now observe that, given $n>0$, $a,b\in F$, $k\in K$ and $l\in L$, the element $x = va+wb+k+l$ annihilates $U_n$ if and only if $b=0$ and $l_n = -(va)_n$. We infer that $r_R(U_n)\sbs
r_R(W)$ but $r_R(U_n)\nsbs r_R(V)$. On the other side $x$ annihilates $U_0$ if and only if $a=0$ and $k = 0$, so that $r_R(U_0)\sbs r_R(V)$ but $r_R(U_0)\nsbs r_R(W)$. This shows that the Hasse diagram of $\Simp_R$ is
\begin{equation*}
\begin{split}
\begin{picture}(150,70)(0,0)
\putbb{0}{40}{$V$}
\putl{0}{35}{0}{-1}{22}
\putbb{0}{0}{$U_0$} \putbb{30}{0}{$U_1$} \putbb{60}{0}{$U_2$}
\putbb{85}{0}{$\ldots$} \putbb{120}{0}{$U_n$}
\putbb{150}{0}{$\ldots$} \putbb{85}{20}{$\ldots$}
\putbb{130}{20}{$\ldots$} \putbb{85}{40}{$W$}
\putl{80}{35}{-2}{-1}{45} \putl{85}{35}{-1}{-1}{22}
\putl{90}{35}{1}{-1}{22}
\end{picture}
\end{split}
\end{equation*}
and $R$ is well behaved. If we take $\ZS = \{W,U_1,U_2,\ldots\}$, then $\ZS$ is an upper subset of $\Simp_R$ and $r_R(\ZS) = K$. However $\Simp_{R/K} = \{V,W,U_1,U_2,\ldots\}\psps\ZS$, therefore $R$ is not very well behaved.

Next, let us consider the subring
\[
S \defug vF\oplus wF\oplus L
\]
of $R$, which is clearly isomorphic to the factor ring $R/K$. We see easily that in the poset $\Simp_S$ we have $\l(V) = 1$, but $h(V) = 2$. Thus $S$ is not well behaved, yet $\Simp_S$ has finitely many maximal elements. Finally, both $R$ and $S$ are indecomposable rings, because $0$ and $1$ are the only central idempotents of $R$.
\end{proof}

\section{Connected components of $\Simp_R$.}\label{sect conncomp}

Let $I$ be a poset. Given $i,j\in I$, let us write $i\bowtie j$ to mean that either $i\le j$, or $i\ge j$, and write $i\sim j$ to mean that there are $k_0,k_1,\ldots,k_n\in I$ such that
\[
i = k_0\bowtie k_1\bowtie\cdots\bowtie k_n = j.
\]
Then $\sim$ is the smallest equivalence relation in $I$ containing
the partial order of $I$. The elements of $I/\!\!\!\sim$ are called
the \emph{connected components} of $I$; let us call the
\emph{canonical partition} of $\Simp_R$ the factor set
$\Simp_R/\!\!\!\sim$\,\,. There is a natural link between the
connected components of $\Simp_R$ and central idempotents of $R$.
First note that, without any assumption on the ring $R$, for every
complete set $\{e_1,\ldots,e_n\}$ of pairwise orthogonal and
\emph{central} idempotents of $R$ the set
\begin{equation}\label{eq centralpartition}
    \left\{\Simp_{e_1R},\ldots,\Simp_{e_nR}\right\}
\end{equation}
is a  partition of $\Simp_R$; in our present context, in which $R$
is semiartinian and regular, this partition is always coarser or
equal to the canonical partition. To see this, it is sufficient to
note that if $U\in\Simp_{e_iR}$ and $V\in\Simp_{e_jR}$ with $i\ne
j$, then $r_R(U)\nsbs r_R(V)$, meaning that $U\bowtie V$ is false and therefore $U\sim V$ is false too. In particular, if $\Simp_R$ consists of a single connected
component, then $R$ is indecomposable as ring, while the converse
may fail; in fact Example \ref{ex indec2compconn} displays two indecomposable semiartinian and regular rings $R$ and $S$ for which both $\Simp_R$ and $\Simp_S$ consist of two connected
components.

As we are going to see, if $\Prosimp_R$ is finite, then there is a
complete set $\{e_1,\ldots,e_n\}$ of pairwise orthogonal and central
idempotents of $R$ such that \eqref{eq centralpartition} coincides
with the canonical partition.

\begin{pro}\label{pro finmin}
Let $R$ be a semiartinian and regular ring. Then $\Simp_R$ has
finitely many minimal elements if and only if $\Prosimp_R$ is
finite. If it is the case, then $\Prosimp_R$ coincides with the set
of all minimal elements of $\Simp_R$ and there is a complete set
$\{e_1,\ldots,e_n\}$ of pairwise orthogonal and \emph{central}
idempotents such that \eqref{eq centralpartition} coincides with the
canonical partition; in particular each $e_iR$ is an indecomposable
ring.
\end{pro}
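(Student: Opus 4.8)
The ``only if'' direction is immediate: a simple projective right module is always a minimal element of $\Simp_R$ (as recalled in the discussion after Theorem~\ref{orderreg}), so $\Prosimp_R$ is contained in the set of minimal elements of $\Simp_R$; if the latter is finite, so is $\Prosimp_R$. For the converse I would argue as follows. Assume $\Prosimp_R=\{P_1,\dots,P_m\}$ is finite and let $B_i=\Tr_R(P_i)$ be the corresponding homogeneous component of the socle, so that $\Soc(R)=B_1\oplus\dots\oplus B_m$ is a finite direct sum of idempotent two-sided ideals and, $R$ being right semiartinian, $\Soc(R)$ is an essential right ideal of the semiprime ring $R$; hence $r_R(\Soc(R))=0$.

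The decisive point is that finiteness of $\Prosimp_R$ collapses an \emph{a priori} infinite intersection of primitive ideals to a finite one. Indeed, since each $P_i$ is projective one has $r_R(P_i)=r_R(\Tr_R(P_i))=r_R(B_i)$, whence $\bigcap_{i=1}^m r_R(P_i)=r_R\bigl(\sum_{i=1}^m B_i\bigr)=r_R(\Soc(R))=0$. Now for an arbitrary $U\in\Simp_R$ the ideal $r_R(U)$ is primitive, hence prime, and $\prod_{i=1}^m r_R(P_i)\sbs\bigcap_{i=1}^m r_R(P_i)=0\sbs r_R(U)$; by primeness $r_R(P_{i_0})\sbs r_R(U)$ for some $i_0$, i.e. $P_{i_0}\preccurlyeq U$. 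Thus \emph{every} simple right $R$-module lies above some $P_i$. Two consequences follow. If $U$ is minimal in $\Simp_R$, then $P_{i_0}\preccurlyeq U$ forces $U=P_{i_0}$, so the minimal elements of $\Simp_R$ are exactly $P_1,\dots,P_m$; this proves that $\Simp_R$ has finitely many minimal elements and that they are precisely $\Prosimp_R$. Moreover, since $\Simp_R$ is artinian each of its connected components contains a minimal element, hence some $P_i$, so there are at most $m$ components $\CC_1,\dots,\CC_n$; writing $\Lambda_k=\{\,i:P_i\in\CC_k\,\}$ one gets a partition $\{1,\dots,m\}=\Lambda_1\sqcup\dots\sqcup\Lambda_n$ with $\CC_k=\{\,U\in\Simp_R: P_i\preccurlyeq U\text{ for some }i\in\Lambda_k\,\}$ (for ``$\sbs$'' use that every $U\in\CC_k$ lies above some $P_j$, which is then forced into $\CC_k$).

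To produce the central idempotents, set $\CB_k=\bigoplus_{i\in\Lambda_k}B_i$ and $J_k=r_R(\CB_k)=\bigcap_{i\in\Lambda_k}r_R(P_i)$, a \emph{finite} intersection of prime ideals, so that again $\bigcap_{k=1}^n J_k=r_R(\Soc(R))=0$. The ideals $J_1,\dots,J_n$ are pairwise comaximal: if $J_k+J_l$ were proper it would lie in some primitive ideal $r_R(W)$, and primeness of $r_R(W)$ together with the finiteness of each $\Lambda_k$ would give $r_R(P_i)\sbs r_R(W)\sps r_R(P_j)$ for some $i\in\Lambda_k$, $j\in\Lambda_l$, hence $P_i\preccurlyeq W$ and $P_j\preccurlyeq W$, hence $P_i\sim P_j$ and $k=l$. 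By the Chinese Remainder Theorem $R\cong\prod_{k=1}^n R/J_k$, giving a complete set $\{e_1,\dots,e_n\}$ of pairwise orthogonal central idempotents with $e_kR\cong R/J_k$. Finally $\Simp_{e_kR}=\{\,U\in\Simp_R:UJ_k=0\,\}=\{\,U:r_R(P_i)\sbs r_R(U)\text{ for some }i\in\Lambda_k\,\}=\CC_k$ (using once more that $r_R(U)$ is prime and $\Lambda_k$ finite), so $\{\Simp_{e_1R},\dots,\Simp_{e_nR}\}$ is exactly the canonical partition of $\Simp_R$; since each $\CC_k$, with the induced order, is a single connected poset, each $e_kR$ is an indecomposable ring.

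I do not expect a serious obstacle: the only genuinely non‑formal step is the observation in the second paragraph that finiteness of $\Prosimp_R$ turns the annihilator $\bigcap_{U\in\Simp_R}r_R(U)$ into the finite intersection $\bigcap_{i\le m}r_R(P_i)=r_R(\Soc(R))=0$, so that the primeness of the primitive ideals $r_R(U)$ and $r_R(W)$ can be exploited. The remaining ingredients — the homogeneous decomposition of $\Soc(R)$ and the fact that it has exactly $m=|\Prosimp_R|$ components, the equality $r_R(\Soc(R))=0$, the identity $r_R(P_i)=r_R(\Tr_R(P_i))$ for projective $P_i$, and the Chinese Remainder step — are routine bookkeeping.
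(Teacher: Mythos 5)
Your proof is correct, and it follows a genuinely different route from the paper's. The paper works ``from below'': it repeatedly invokes Lemma~\ref{lem orderreg} and finite induction to manufacture idempotents $y_U$ in the appropriate Loewy layer with $\Hom_R(P,y_UR)=0$ for all $P\in\Prosimp_R$ outside the relevant connected component, then sets $R_i=\sum\{Ry_UR\}$ and proves $R_iR_j=0$ for $i\ne j$ by passing to socles and using projectivity and essentiality of $\Soc(R)$. Your argument instead works ``from above'': the only finiteness input you use is that $\bigcap_{i\le m}r_R(P_i)=r_R(\Soc R)=0$ (a finite intersection once $\Prosimp_R$ is finite), after which primeness of the right primitive ideals $r_R(U)$ does all the work --- first to show every $U\in\Simp_R$ sits over some $P_i$ (hence the minimal elements are exactly $\Prosimp_R$), then to show the ideals $J_k=\bigcap_{i\in\Lambda_k}r_R(P_i)$ are pairwise comaximal, so the Chinese Remainder Theorem delivers the central idempotents directly, with $\Simp_{e_kR}=\CC_k$ dropping out by one more application of primeness. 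Both proofs ultimately hinge on the essentiality of $\Soc(R)$; the paper uses it to contradict $yR\cap\Soc(R)=0$, you use it via $r_R(\Soc R)=0$ in the semiprime ring $R$. Your approach avoids the Loewy-layer bookkeeping and Lemma~\ref{lem orderreg} altogether and is arguably more self-contained at this point in the paper, though it is somewhat less ``constructive'' in that the idempotents $e_k$ are produced abstractly by CRT rather than as finite sums of elements $y_U$ located in explicit socle layers. Two small points worth spelling out in a final write-up: the $J_k$ are proper (else $\CB_k\subseteq\CB_kR=0$), and the noncommutative CRT applies because the $J_k$ are two-sided and pairwise comaximal, giving $J_k+\bigcap_{l\ne k}J_l=R$ via $R=\prod_{l\ne k}(J_k+J_l)\subseteq J_k+\prod_{l\ne k}J_l$.
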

\begin{proof}
We already know that $\Prosimp_R$ is always contained in the set of
minimal elements of $\Simp_R$, thus the ``only if'' part is obvious.
Suppose that $\Prosimp_R$ is finite, let $U$ be a minimal element of
$\Simp_R$ and suppose that $U$ is not projective. Then $h(U) = \a+1$
for some $\a>0$ and, by applying finite induction and Lemma \ref{lem
orderreg}, we infer that there is some $y\in L_{\a+1}$ such that
$yR/yL_\a \is U$ and $\H P{yR}R = 0$ for every $P\in\Prosimp_R$. But
this means that $yR\cap\Soc(R) = 0$, which is a contradiction since
$\Soc(R)$ is essential as a right ideal and $y\ne 0$.

Assume now that $\Prosimp_R$ is finite and let
$\{\mathbf{S}_{1},\ldots,\mathbf{S}_{n}\}$ be the canonical
partition of $\Simp_R$. For every $i\in\{1,\ldots,n\}$ and
$U\in\mathbf{S}_i$, by applying again finite induction and Lemma
\ref{lem orderreg} we can choose an idempotent $y_U\in L_{h(U)}$
which satisfies the following conditions:
\begin{gather*}
y_UR/y_UL_{h(U)-1} \is U,  \\
\H P{y_UR}R = 0\quad \text{for all $P\in\Prosimp_R$ such that
$P\nin\mathbf{S}_i$}.
\end{gather*}
We may then consider the ideal $R_i = \sum\{Ry_UR\mid
U\in\mathbf{S}_i\}$ and it is clear that $U = U(Ry_UR) = UR_i$. We
claim that $R$ decomposes as
\begin{equation}\label{eq decompR}
    R = R_1\oplus\cdots\oplus R_n.
\end{equation}
First, since $R$ is regular, in order to prove that the sum
$R_1+\cdots+R_n$ is direct it is sufficient to show that if $i\ne
j$, then $R_iR_j = 0$. Thus, take $U\in\mathbf{S}_i$ and
$V\in\mathbf{S}_j$ with $i\ne j$. If $K$ is a simple right ideal
contained in $y_UR$, then $K\is P$ for a unique $P\in\Prosimp_R$.
Necessarily $P\in\mathbf{S}_i$ and therefore $\H P{y_VR}R = 0$. By
using the fact that $\Soc(R)$ is projective, we infer that
\[
\Soc(Ry_UR)\Soc(Ry_VR) = \Soc(Ry_UR)\cap\Soc(Ry_VR) = 0
\]
and hence $(Ry_UR)(Ry_VR) = (Ry_UR)\cap(Ry_VR) = 0$ by the
essentiality of the socle. Finally, since $U =
U(R_1\oplus\cdots\oplus R_n)$ for every simple module $U_R$, we
conclude that the equality \eqref{eq decompR} holds.
There is a complete set $\{e_1,\ldots,e_n\}$ of pairwise
orthogonal and central idempotents such that $e_iR = R_i$ for al $i$
and it follows from the above that $\Simp_{e_iR} = \mathbf{S}_i$ for
al $i$.
\end{proof}

\begin{re}\label{re nonregular}
It is worth of note that the assumption of regularity of the ring
$R$ cannot be dropped in Proposition \ref{pro finmin}. Indeed, with
\cite[Example 4.8]{Bac:15} we presented an indecomposable Artinian
algebra $R$ for which $\Simp_R$ consists of two connected
components; yet, $\Simp_R$ is finite.
\end{re}

\section{Comparability.}\label{sect comparability}

We keep the same setting and notations of the previous section.  In the literature
on regular rings we find two conditions involving comparability
between principal right ideals which play a central role in the
structure theory of these rings. Precisely, a regular ring $R$
satisfies the \emph{comparability} axiom if, given $x,y\in R$, one
has that either $xR\lesssim yR$ or $yR\lesssim xR$, while $R$
satisfies the \emph{general comparability} axiom if, given $x,y\in
R$, there exists some central idempotent $e$ such that $exR\lesssim
eyR$ and $(1-e)yR\lesssim (1-e)xR$ (see \cite{Good:3}). An
additional axiom, which makes sense when $R$ is semiartinian and
regular, was introduced in \cite{BacCiamp:14}: $R$ satisfies the
\emph{restricted comparability} axiom if, given $x,y\in R$, the condition $h(x)< h(y)$ implies that $xR\lesssim yR$. Comparability implies general comparability. If $R$ is a regular and semiartinian ring satisfying comparability, then it satisfies also restricted comparability. Indeed, if $x,y\in R$ with $h(x)< h(y)$, it is not the case that $yR\lesssim xR$ otherwise, since $x\in L_{h(x)}$, it would follow that $y\in L_{h(x)}$ too, that is $h(y)\le h(x)$. Thus $xR\lesssim yR$. As we know from
Theorem \ref{orderreg}, the natural partial order of $\Simp_R$ can
be expressed in terms of the existence of an imbedding between
certain principal right ideals; thus, it appears quite natural to
ask if, given a semiartinian and regular ring $R$, there is any
relationship between the above axioms and properties of the poset
$\Simp_R$. The results which follow give some answer to this
question.

\begin{pro}\label{comparability}
Let $R$ be a semiartinian and regular ring. Then $R$ satisfies the
restricted comparability axiom if and only if the following
condition holds:
\begin{equation}\label{eq lengthOK}
    \text{for every $U,V\in\Simp_R$, if $h(U)<h(V)$, then $U\prec V$.}
\end{equation}
In particular $R$ satisfies the comparability axiom if and only if $\Simp_R$ is a chain. If $R$ satisfies the restricted comparability axiom, then $R$ is well behaved.
\end{pro}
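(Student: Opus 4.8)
Here is how I would approach Proposition~\ref{comparability}. The plan is to establish the stated equivalence first and then read the other two assertions off it.

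For the ``only if'' part of the equivalence, suppose $R$ satisfies restricted comparability and let $U,V\in\Simp_R$ with $h(U)=\a+1<\b+1=h(V)$. Choose $e\in L_{\a+1}\setminus L_\a$ and $f\in L_{\b+1}\setminus L_\b$ with $eR/eL_\a\is U$ and $fR/fL_\b\is V$ (possible since $U$ is $R/L_\a$-projective and $V$ is $R/L_\b$-projective, so each is represented by a minimal right ideal modulo the corresponding Loewy submodule). Then $h(e)=\a+1<\b+1=h(f)$, so $eR\lesssim fR$ by restricted comparability; since $eR$ is projective over the regular ring $R$ this embedding splits, $fR=e'R\oplus f''R$ with $e'R\is eR$ and $e'$ idempotent, and reducing modulo $L_\a$ — using that $zR\cap L_\a=zL_\a$ for all $z$, because $L_\a$ is an ideal of $R$ — gives a nonzero homomorphism $U\is e'R/e'L_\a\to(fR+L_{\a+1})/L_\a$. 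As $U$ is $R/L_\a$-projective and $fR/fL_\b\is V$, Theorem~\ref{orderreg} together with \cite[Proposition~2.1]{Bac:15} (exactly as in the proof of that theorem) yields $U\prec V$, which is \eqref{eq lengthOK}.

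For the ``if'' part, assume \eqref{eq lengthOK}; I must show $h(x)<h(y)$ forces $xR\lesssim yR$. Replacing $x,y$ by idempotents generating the same right ideals, take $x=e$, $y=f$ idempotents with $e\in L_{\a+1}\setminus L_\a$, $f\in L_{\b+1}\setminus L_\b$, $\a<\b$. Since $eR=\Soc_{\a+1}(eR)$, every embedding $eR\hookrightarrow fR$ has image in $\Soc_{\a+1}(fR)=fL_{\a+1}$, so it suffices to embed $eR$ into $fL_{\a+1}$, and I would prove by induction on the successor ordinal $\a+1$ the stronger statement: \emph{for every idempotent $e$ with $h(e)\le\a+1$ and every idempotent $f$ with $h(f)>\a+1$ one has $(eR)^k\lesssim fL_{\a+1}$ for all $k\ge1$.} For the inductive step, write $eR/eL_\a\is\bigoplus_iU_i^{m_i}$, a finite sum of simples with $h(U_i)=\a+1$, and lift this decomposition to $eR$ (idempotents lift modulo every ideal of a regular ring, since regular rings are exchange rings), getting $eR=\bigoplus_{i,j}e_{ij}R$ with $e_{ij}R/e_{ij}L_\a\is U_i$. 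Pick a simple summand $V$ of the semisimple module $fR/fL_\b$ (so $h(V)=\b+1$) and, again by idempotent lifting inside $\End(fR)=fRf$, replace $fR$ by a direct summand $f'R$ with $f'R/f'L_\b\is V$. By \eqref{eq lengthOK}, $U_i\prec V$ for every $i$ and $W\prec V$ for every simple $W$ with $h(W)\le\a$; applying the mechanism of Theorem~\ref{orderreg}(3) summand by summand splits each $e_{ij}R=x_{ij}R\oplus x'_{ij}R$ with $x_{ij}R\lesssim f'R$ and $x'_{ij}R\sbs L_\a$, so that $eR\is A\oplus B$ with $A=\bigoplus x_{ij}R$ and $B=\bigoplus x'_{ij}R$ finitely generated of Loewy length $\le\a$. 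Iterating Theorem~\ref{orderreg}(3) — each removal of an $h=\a+1$ summand from $f'R$ leaves the top Loewy factor $V$ intact, so the theorem keeps applying — embeds $A$, in fact $A^k$ for all $k$, into $f'R$ and hence into $fL_{\a+1}$, and the inductive hypothesis at $\a$ does the same for $B$. The remaining task is to combine these into a single embedding $A\oplus B\hookrightarrow fL_{\a+1}$, for which one must use the refinement property of $\CV(R)$ to ``make room''; \emph{this combination step — together with the bookkeeping required when $\a$ is a limit ordinal, so that $eL_\a$, and with it the deeper analogues of $B$, need not be finitely generated — is the principal technical obstacle of the proposition.}

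Granting the equivalence, the comparability statement follows easily. If $R$ satisfies comparability it satisfies restricted comparability, hence \eqref{eq lengthOK}; and if $U,V\in\Simp_R$ had $h(U)=h(V)=\a+1$, choosing idempotents $e,f\in L_{\a+1}\setminus L_\a$ with $eR/eL_\a\is U$, $fR/fL_\a\is V$ and applying comparability, say $eR\lesssim fR$, one gets on reducing modulo $L_\a$ a nonzero submodule of the \emph{simple} module $fR/fL_\a\is V$ isomorphic to $U$, whence $U=V$; thus $h$ is injective on $\Simp_R$, and combined with \eqref{eq lengthOK} this makes any two distinct simples comparable, i.e.\ $\Simp_R$ is a chain. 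Conversely, if $\Simp_R$ is a chain then $h$ (being strictly increasing) is injective on it and \eqref{eq lengthOK} holds trivially, so $R$ satisfies restricted comparability; this settles all pairs $x,y$ with $h(x)\ne h(y)$, while for $h(x)=h(y)=\a+1$ there is a unique simple $U$ at that level, so $xR/xL_\a\is U^m$ and $yR/yL_\a\is U^n$ with $m,n\ge1$, and one obtains $xR\lesssim yR$ or $yR\lesssim xR$ by an induction on $\a$ of the same flavour as in the ``if'' part above, now comparing the finitely generated modules $xR$ and $yR$ layer by layer. Finally, to see that restricted comparability implies that $R$ is well behaved, assume \eqref{eq lengthOK} and suppose for a contradiction that $\l(U_0)<h(U_0)$ for some $U_0\in\Simp_R$ with $h(U_0)$ minimal among such. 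Writing $\l(U_0)=\b+1$ we get $h(U_0)>\b+1>\b$, so $U_0L_\b=0\ne U_0$, hence $L_\b\ne R$ and $R/L_\b\ne0$, so there is $V_0\in\Prosimp_{R/L_\b}$, i.e.\ $h(V_0)=\b+1<h(U_0)$. By \eqref{eq lengthOK}, $V_0\prec U_0$, so $\l(V_0)<\l(U_0)=\b+1$ since $\l$ is a length function; but by minimality of $h(U_0)$ we have $\l(V_0)=h(V_0)=\b+1$, a contradiction. Hence $\l=h$ on $\Simp_R$.
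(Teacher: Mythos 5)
The centre of this proposition is the ``if'' direction of the equivalence --- showing that \eqref{eq lengthOK} forces restricted comparability --- and here you have a genuine, self-acknowledged gap. After splitting $eR\is A\oplus B$ with $A\lesssim f'R$ (the top-layer part) and $B$ finitely generated inside $L_\a$, you cannot assemble the embedding of $A$ and the inductively obtained embedding of $B$ into a single embedding of $A\oplus B$ into $fL_{\a+1}$ with disjoint images, and no finiteness control survives when $\a$ is a limit. The refinement property of $\CV(R)$ does not resolve this: it refines two decompositions of one module, it does not force two a priori unrelated embeddings to miss each other.

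The paper avoids both difficulties by a quotient reduction rather than a direct decomposition. It proves by transfinite induction on $\a$ the statement $P(\a)$: if $\a+1 = h(x)<h(y)$, then $xR\lesssim yR$. The base $P(0)$ is the semisimple-socle argument you also give. For the step at $\a>0$, it observes that $R/L_\a$ still satisfies \eqref{eq lengthOK}, that $x+L_\a\in\Soc(R/L_\a)$ while $y+L_\a$ is not, and applies the $P(0)$ argument \emph{to the ring $R/L_\a$} to get $xR/xL_\a\lesssim yR/yL_\a$; then \cite[Proposition~2.20]{Good:3} lifts this to $xR=x'R\oplus x''R$, $yR=y'R\oplus y''R$ with $x'R\is y'R$ and $x''\in L_\a$. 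The two pieces thus arrive already embedded in complementary summands of $yR$, so there is never any room to make; and since $h(x'')$ is a successor ordinal $\le\a$ while $h(y'')=h(y)$, the inductive hypothesis gives $x''R\lesssim y''R$, hence $xR\lesssim yR$. Because $h$ of a nonzero finitely generated module is always a successor, no separate limit case arises. This quotient-then-lift step is the missing idea, and its absence also undermines your sketch of the comparability clause (which reuses the ``if''-argument at a single level); the paper in any case treats that clause differently, noting that $\BL_2(R)$ is a chain and citing \cite[Proposition~4]{BacCiamp:14}. Your ``only if'' and well-behaved arguments are correct --- the latter is the paper's transfinite induction recast as a minimal counterexample --- though for the ``only if'' part there is a shorter route: since $(xR)^n\is zR$ for an idempotent $z$ with $h(z)=h(x)$, restricted comparability gives condition (3) of Theorem~\ref{orderreg} directly.
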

\begin{proof}
The ``only if'' part follows immediately from Theorem \ref{orderreg}. In order to prove the ``if'' part, we first observe that, for every ordinal $\a$, the Loewy chain of the ring $R/L_{\a}$ is $(L_{\g}/L_{\a})_{\a\le\g}$ and each primitive ideal of $R/L_{\a}$ has the form $P/L_{\a}$ for a unique primitive ideal $P$ of $R$. Consequently, if $R$ satisfies \eqref{eq lengthOK}, then the same holds for $R/L_{\a}$. Given an ordinal $\a$, let $P(\a)$ denote the sentence
\[
\text{``\,\,If $x,y\in R$ and $\a+1 = h(x)<h(y)$, then $xR\lesssim yR$\,\,''}\,.
\]
Then the proof of the first part of the proposition will be complete once we have shown that $P(\a)$ is true for every ordinal $\a$. Let $y\in R$ be such that $h(y) = \b+1$. Then
there is a decomposition $yR = y_1R\oplus\cdots\oplus y_nR$, where
each $y_iR/y_iL_\b$ is simple and $h(y_iR/y_iL_\b) = \b+1$. If $x\in
L_1 = \Soc(R)$, namely $h(x) = 1$, then $xR = P_1\oplus\cdots\oplus
P_m$, where each $P_j$ is simple with $h(P_j) = 1$. Thus, given
$j\in\{1,\ldots,m\}$ and $i\in\{1,\ldots,n\}$, it follows from the
assumption that $P_j\prec y_iR/y_iL_\b$ and we infer from Theorem
\ref{orderreg} that $P_j^k\lesssim y_iR$ for every positive integer
$k$. This is enough to infer that $xR\lesssim yR$ and so the statement $P(0)$ is true. Next, given an ordinal $a>0$, assume that $P(\b)$ is true for every $\b<\a$ and take $x,y\in R$ such that $\a+1 = h(x)<h(y)$. Then $0\ne x+L_{\a}\in L_{\a+1}/L_{\a} = \Soc(R/L_{\a})$, while $y+L_{\a}\nin \Soc(R/L_{\a})$. Since the ring $R/L_{\a}$ satisfies
\eqref{eq lengthOK}, we can apply the above argument and infer that $xR/xL_{\a}\lesssim yR/yL_{\a}$. It follows from
\cite[Proposition 2.20]{Good:3} that there are $x',x''\in xR$, $y',y''\in yR$ and decompositions
\[
xR = x'R\oplus x''R, \qquad yR = y'R\oplus y''R,
\]
where $x'R\is y'R$ and $x''\in L_{\a}$. Necessarily $h(y'') = h(y)$ and, since $h(x'')\le\a$, it follows that $h(x'')<h(x)<h(y) = h(y'')$. From the inductive hypothesis we infer that $x''R\lesssim y''R$ and therefore $xR\lesssim yR$. We conclude that $P(\a)$ is true.

If $R$ satisfies the comparability axiom, then $\BL_2(R)$ is a chain
by \cite[Proposition 8.5]{Good:3}. Consequently $\Prim_R$ is a chain
as well and so is $\Simp_R$. Conversely, if this latter condition
holds, then $\BL_2(R)$ is a chain because every ideal of $R$ is the
intersection of primitive ideals. The proof that, consequently, $R$
satisfies the comparability axiom is identical to the proof of
\cite[Proposition 4]{BacCiamp:14}.

Assume that $R$ satisfies the restricted comparability axiom. If
$U\in\Simp_R$ and $h(U) = 1$, then $U$ is minimal and so $\l(U) =
1$. Given a successor ordinal $\a+1$, assume that $\l(U) = h(U)$
whenever $h(U)<\a+1$, let $U\in\Simp_R$ be such that $h(U) = \a+1$
and suppose that $\l(U) < h(U)$. Inasmuch as $\l(U)$ is a successor
ordinal less than the Loewy length of $R$, there exists
$V\in\Simp_R$ such that $h(V) = \l(U)$ and, from the inductive
hypothesis, we have that $\l(V) = h(V) = \l(U)$. Thus $U$ and $V$
are not comparable. On the other hand, let $x\in L_{\a+1}\setminus
L_{\a}$ be such that $xR/xL_{\a+1}\is U$ and chose $y\in
L_{h(V)}\setminus L_{h(V)-1}$ such that $yR/yL_{h(V)-1}\is V$. Since
$h(V)<\a+1$, by the hypothesis $yR\lesssim xR$ and we infer that $\H
V{xR/xL_{h(V)-1}}R \ne 0$. It follows from Theorem \ref{orderreg}
that $V\prec U$: a contradiction. We conclude that $\l(U) = h(U)$
and the proof is complete.
\end{proof}

\begin{pro}\label{gencomparability}
Let $R$ be a semiartinian and regular ring. If $R$ satisfies the
general comparability axiom, then $\Simp_R$ is the union of pairwise
disjoint maximal chains. Conversely, if $\Simp_R$ is the union of
\emph{finitely many} pairwise disjoint maxi\-mal chains, then $R$
satisfies the general comparability axiom and is well behaved.
\end{pro}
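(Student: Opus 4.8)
The plan is to establish the first assertion in the stronger form that \emph{every connected component of $\Simp_R$ is a chain}; a connected component which is a chain is automatically a maximal chain of $\Simp_R$, and distinct components are disjoint, so this presents $\Simp_R$ as a union of pairwise disjoint maximal chains. For the converse, the hypothesis means that $\Simp_R$ has finitely many connected components, each of them a chain, and the proof of that case reduces to Propositions \ref{pro finmin} and \ref{comparability}.

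\emph{Proof of the first assertion.} Fix a connected component $\ZS$ and distinct $U,V\in\ZS$, and choose idempotents $u,v$ with $uR/uL_{h(U)-1}\is U$ and $vR/vL_{h(V)-1}\is V$. Apply general comparability to $u,v$: there is a central idempotent $e$ with $euR\lesssim evR$ and $(1-e)vR\lesssim(1-e)uR$. The partition $\{\Simp_{eR},\Simp_{(1-e)R}\}$ of $\Simp_R$ is coarser than the canonical partition (Section \ref{sect conncomp}), so each of its two blocks is a union of connected components; hence $U$ and $V$ lie in the same block. We argue the case $Ue=U=Ve$; the case $Ue=0=Ve$ is identical after replacing $u,v$ by $(1-e)u,(1-e)v$ and using $(1-e)vR\lesssim(1-e)uR$ in place of $euR\lesssim evR$, and yields $V\prec U$. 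Since $U\nin\Simp_{(1-e)R}$, the module $(1-e)uR$ has no simple subquotient isomorphic to $U$, so $(1-e)uR\sbs L_{h(U)-1}$ and therefore $euR/euL_{h(U)-1}\is U$; likewise $evR/evL_{h(V)-1}\is V$. Writing $evR\is euR\oplus M$ with $M$ finitely generated projective (this is the meaning of $euR\lesssim evR$ over a regular ring) and tensoring with $R/L_{h(U)-1}$, we see that $U$ is a direct summand of $evR/evL_{h(U)-1}$; in particular the latter is nonzero. But $evR/evL_{h(V)-1}\is V$ forces $evR\sbs L_{h(V)}$, so $evR/evL_{h(U)-1}$ would vanish if $h(U)>h(V)$ and would be $\is V$ if $h(U)=h(V)$; since $U\ne V$ embeds in it, $h(U)<h(V)$. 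Now a nonzero homomorphism $U\to evR/evL_{h(U)-1}$ together with $h(U)<h(V)$ yields $U\prec V$ by \cite[Proposition 2.1]{Bac:15} (or by Theorem \ref{orderreg}). Hence any two elements of $\ZS$ are comparable, so $\ZS$ is a chain.

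\emph{Proof of the converse.} Let $\ZS_1,\dots,\ZS_n$ be the connected components of $\Simp_R$, each a chain. Each $\ZS_i$ has a least element, and these are exactly the minimal elements of $\Simp_R$, so $\Simp_R$ has finitely many minimal elements; by Proposition \ref{pro finmin} there is a complete set $\{e_1,\dots,e_n\}$ of pairwise orthogonal central idempotents with $\Simp_{e_iR}=\ZS_i$ and each $e_iR$ indecomposable. Thus $R\is e_1R\times\cdots\times e_nR$, each factor $e_iR$ is regular and semiartinian, and $\Simp_{e_iR}=\ZS_i$ is a chain; by Proposition \ref{comparability} every $e_iR$ satisfies the comparability axiom and is well behaved. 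A finite direct product of rings satisfying comparability satisfies general comparability (given $x,y$, for each $i$ pick whichever of $x_iR_i\lesssim y_iR_i$, $y_iR_i\lesssim x_iR_i$ holds and assemble the corresponding central idempotent blockwise), and a finite direct product of well behaved rings is well behaved: $\Simp_R$ is the disjoint union of the $\Simp_{e_iR}$ with no comparabilities between distinct blocks, so the canonical length function of $\Simp_R$ restricts to that of each $\Simp_{e_iR}$, while $\Soc_{\a}(R_R)=\bigoplus_i\Soc_{\a}(e_iR)$ for every ordinal $\a$ shows that $h$ agrees, on each block, with the ordinal computed inside $e_iR$. Hence $R$ satisfies general comparability and is well behaved.

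The main obstacle is the first assertion, specifically the two linked steps: seeing that two simple modules lying in the same connected component of $\Simp_R$ cannot be separated by a central idempotent, so that the idempotent furnished by general comparability is forced to fix (or to annihilate) both of them; and then extracting the order relation $U\prec V$ from the comparison $euR\lesssim evR$ by means of the Loewy-layer description of the natural order in Theorem \ref{orderreg}. The converse is routine once Propositions \ref{pro finmin} and \ref{comparability} are available; there one need only check that general comparability and ``well behaved'' are both stable under finite direct products.
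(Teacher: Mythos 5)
Your converse argument is essentially the paper's: reduce to finitely many connected components via Proposition \ref{pro finmin}, apply Proposition \ref{comparability} blockwise, and check that general comparability and ``well behaved'' pass to finite direct products (the paper words the latter check exactly as you do, via $\Soc_\a(R)=\bigoplus_i\Soc_\a(R_i)$).

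For the direct implication, however, your route is genuinely different from, and more self-contained than, the paper's. The paper disposes of that direction in two lines by citing \cite[Theorem~8.20]{Good:3} (that prime factor rings of a regular ring with general comparability satisfy comparability) together with Proposition \ref{comparability}, noting that $r_R(U)$ is prime; the chain structure is then imported wholesale from Goodearl. You instead prove, from first principles, the sharper statement that every connected component of $\Simp_R$ is a chain: apply general comparability to idempotents $u,v$ representing $U,V$; use the Section \ref{sect conncomp} observation that a central idempotent cannot separate two simples in the same connected component to force $U$ and $V$ to lie together under $e$ or under $1-e$; cut down by the relevant central idempotent and pass to Loewy layers to compare $h(U)$ and $h(V)$ and obtain a nonzero map $U\to (evR+L_{h(U)})/L_{h(U)-1}$; and conclude $U\prec V$ via \cite[Proposition 2.1]{Bac:15}. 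What the paper's citation buys is brevity; what your argument buys is independence from Goodearl's structure theory for prime spectra, replacing it with the Loewy-layer machinery already set up in Theorem \ref{orderreg}. One small point of exposition: the step ``$(1-e)uR$ has no simple subquotient isomorphic to $U$, so $(1-e)uR\subset L_{h(U)-1}$'' is correct but deserves the intermediate observation that $(1-e)uR/(1-e)uL_{h(U)-1}$ is a direct summand of the simple module $uR/uL_{h(U)-1}\cong U$, hence is either $0$ or $\cong U$; the second possibility is what the subquotient condition rules out.
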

\begin{proof}
Inasmuch as $\Simp_R$ is artinian, it is sufficient to show that
$\{U\preccurlyeq\}$ is a chain whenever $U$ is a minimal element of
$\Simp_R$. However this follows from \cite[Theorem 8.20]{Good:3}, combined with Proposition \ref{comparability},
since $r_R(U)$ is a prime ideal for every $U\in\Simp_R$.

Conversely, assume that $\Simp_R$ is the union of \emph{finitely
many} pairwise disjoint maximal chains
$\{\mathbf{S}_{1},\ldots,\mathbf{S}_{n}\}$, which are necessarily
the connected components of $\Simp_R$. Then $\Prosimp_R$ is finite
and, according to Proposition \ref{pro finmin}, $R$ decomposes as in
\eqref{eq decompR}, where every $R_i$ is a semiartinian and regular
ring such that $\Simp_{R_i}$ is a chain. By Proposition
\ref{comparability} every $R_i$ satisfies the comparability axiom,
therefore $R$ satisfies the general comparability axiom. Now,
observe that if $U\in\Simp_R$, then $U = UR_i$ for a unique $i$, while $UR_j = 0$ if $j\ne i$.
Consequently, since each ring $R_i$ well behaved by Proposition
\ref{comparability} and
\[
\Soc_\a(R) = \Soc_\a(R_1)\oplus\cdots\oplus\Soc_\a(R_n)
\]
for every ordinal $\a$, it
is an easy matter to conclude that $R$ is well behaved.
\end{proof}

The following example shows that it is not possible to remove the
finiteness condition from Proposition \ref{gencomparability}.

\begin{ex}\label{ex nongeneralcompar}
There exists a semiartinian and regular ring $R$, with Loewy length
2 and all primitive factors artinian (hence $\Simp_R$ is the union
of pairwise disjoint maximal chains), which does not satisfy the
general comparability axiom.
\end{ex}
\begin{proof}
Given a field $F$, set $R_n = \BM_2(F)$ for every positive integer
$n$ and consider the following regular subring of the direct product
$T = \prod_{n>0}R_n$:
\[
R = K\oplus L\oplus\left(\bigoplus_{n>0}R_n\right),
\]
where
\begin{gather*}
K = \left\{k\in T\mid \text{there is $a\in F$ such that $k_n =
\bigl(
\begin{smallmatrix}
a&0 \\0&0
\end{smallmatrix}
\bigr)$ for all $n>0$}\right\}, \\
L = \left\{l\in T\mid \text{there is $a\in F$ such that $l_n =
\bigl(
\begin{smallmatrix}
0&0 \\0&a
\end{smallmatrix}
\bigr)$ for all $n>0$}\right\}.
\end{gather*}
We observe that
\[
\Soc(R) = \bigoplus_{n>0}R_n \quad\text{and}\quad R/\Soc(R)\is
F\times F,
\]
therefore $R$ is semiartinian with Loewy length 2 and has all
primitive factors artinian. If we set
\[
u = \bigl(\bigl(
\begin{smallmatrix}
1&0 \\0&0
\end{smallmatrix}
\bigr), \bigl(
\begin{smallmatrix}
1&0 \\0&0
\end{smallmatrix}
\bigr), \ldots\bigr)\,\,, \qquad v = \bigl(\bigl(
\begin{smallmatrix}
0&0 \\0&1
\end{smallmatrix}
\bigr), \bigl(
\begin{smallmatrix}
0&0 \\0&1
\end{smallmatrix}
\bigr), \ldots\bigr),
\]
then
\[
U = (uR+\Soc(R))/\Soc(R) \quad\text{and}\quad V =
(vR+\Soc(R))/\Soc(R)
\]
are non-isomorphic simple $R$-modules and $\Simp_{R/\Soc(R)} =
\{U,V\}$. Now an idempotent $e\in\Soc(R)$ is central if and only if
all its nonzero coordinates equal $\bigl(
\begin{smallmatrix}
1&0 \\0&1
\end{smallmatrix}
\bigr)$, while all remaining central idempotents of $R$ are of the
form $1-e$, where $e$ is a central idempotent of $\Soc(R)$. If $e$
is a central idempotent of $\Soc(R)$, then it is clear that $euR \is
evR$, but if $(1-e)vR$ were subisomorphic to $(1-e)uR$, since
$(1-e)v$ and $(1-e)u$ do not belong to $\Soc(R)$, we would get
\[
V = ((1-e)vR+\Soc(R))/\Soc(R) \subis ((1-e)uR+\Soc(R))/\Soc(R) = U,
\]
hence a contradiction; similarly, $(1-e)uR$ is not subisomorphic to
$(1-e)vR$. We conclude that $R$ does not satisfy the general
comparability axiom.
\end{proof}

\section{A very special well ordered chain of subrings of $\CFM_{X}(D)$.}\label{sect wochsbr}

With this section we begin the setup which will bring us to the construction of regular and semiartinian rings, starting from an artinian poset. We set the scenario by taking a ring $D$ (although our final concern will be the case in which $D$ is a division ring, unless otherwise stated we do not assume anything about $D$, apart associativity and presence of a multiplicative identity), a \underline{transfinite ordinal} $X$ and the ring $Q = \CFM_X(D)$ of all $X\times X$-matrices with entries in $D$ whose columns have finite support.

\begin{notations}
With the above setting, we adopt the following notations:
\begin{itemize}
    \item We denote by $\mathbf{0}$ and $\mathbf{1}$ the zero and the unital matrices respectively.
     \item   If $\za\in Q$ and $x,y\in X$, we use the
symbol $\za(x,y)$ to denote the entry at the intersection of the
$x$-th row with the $y$-th column of $\za$ (i. e. the $(x,y)$-entry of $\za$), instead of the more
traditional symbol $\za_{xy}$; since we often use more complex
arrays, other than single letters, in order to designate the position of the entries of the matrices we deal with, our choice should guarantee a better readability. If $Y,Z\sbs X$, then $\za(Y,Z)$ is be the $(Y,Z)$-block of $\za$, that is the
    submatrix $(\za(x,y))_{y\in Y, z\in Z}$ of $\za$.
      \item For every $Y\sbs X$, we denote with $\ze_Y$ the
idempotent diagonal matrix such that $\ze_Y(x,x)$ is $1$ if $x\in Y$
and is $0$ otherwise. If $x,y\in X$, we write $\ze_x$ instead of $\ze_{\{x\}}$, while $\ze_{x,y}$ stands for the matrix whose $(x,y)$-entry is $1$ and all others are zero; so, in particular $\ze_{x} = \ze_{x,x}$.
    \item $\BF\BR_X(D)$ and $\BF\BM_X(D)$ denote respectively the subset of $Q$ of all matrices having only finitely many nonzero rows and the subset of $Q$ of all matrices having only finitely many nonzero entries.
\end{itemize}
\end{notations}

$\BF\BR_X(D)$ is an ideal of $Q$ which is of a special
interest for us; as a right ideal, it is generated by the set
$\{\ze_{x}\mid x\in X\}$ of pairwise orthogonal idempotents and we have the equalities
\begin{gather}
\ze_{x}Q = \ze_{x}\BF\BR_X(D), \label{eq FR1} \\
\BF\BR_X(D) = \bigoplus\{\ze_{y}Q \mid y\in X\} = \BF\BR_X(D)\ze_{x}\BF\BR_X(D) \label{eq FR2}.
\end{gather}
Moreover $\BF\BR_X(D)$ is fully invariant; this follows from a more general result of Del R\`{\i}o and Sim\`{o}n (see \cite[Lemma, 7]{DelRioSimon:01}) although, for the case $X = \omega$, it was a byproduct of a theorem of Camillo (see \cite{Camillo:01} and \cite{AbramsSimon:01}).
As a consequence, if $R$ is any ring and $\map{\f,\psi}QR$ are two ring isomorphisms, then $\f(\BF\BR_X(D))$$ = \psi(\BF\BR_X(D))$; let's say that the elements of this latter ideal are the \emph{finite-ranked} elements of $R$.
If we consider a free module $M_D$ with a basis of cardinality $|X|$, the map which
assigns to each endomorphism of $M$ its associated matrix with
respect to $B$ is a ring isomorphism from $\End(M_D)$ to $Q$, which
restricts to an isomorphism from the ideal of finite rank
endomorphisms to the ideal $\BF\BR_X(D)$. If $D$ is a division ring
(thus $M_D$ is a vector space), then it is well known that $Q$ is
regular, left selfinjective and  $\BF\BR_X(D) = \Soc(Q)$. We shall consider $D$ as a
subring of $Q$ by identifying each element of $D$ with the
corresponding scalar matrix in $Q$. We call $D$-{\em subring\/} of $Q$ every (not necessarily unital) subring $S$ which is closed with respect
to both right and left multiplication by elements of $D$, namely it
is a $(D,D)$-submodule of $Q$. Of course, if $S$ is a $D$-subring of
$Q$, then $S$ is a unital subring if and only if $D\sbs S$; moreover
every ideal of $Q$ is a $D$-subring, while not every subring (unital
or not) is a $D$-subring. As far as $\BF\BM_{X}(D)$ is concerned, it is a \emph{left} ideal of $Q$, which is not a right ideal, and for every $x\in X$ the following hold:
\begin{gather}
Q\ze_{x} = \BF\BM_X(D)\ze_{x}, \label{eq FM1} \\
\BF\BM_X(D) = \bigoplus\{Q\ze_{y} \mid y\in X\} = \BF\BM_X(D)\ze_{x}\BF\BM_X(D). \label{eq FM2}
\end{gather}
In the sequel it will be useful to bear in mind the obvious observation that every matrix in $\BF\BM_{X}(D)$ is a finite sum of matrices of the form $d\ze_{x,y} = \ze_{x,y}d$ for $d\in D$ and $x,y\in X$.

Finally we observe that both $\BF\BR_X(D)$ and $\BF\BM_{X}(D)$ are pure as left ideals of $Q$; indeed, if $\mathbf{0}\ne\za\in\BF\BR_X(D)$ and $Y$ is the subset of $X$ of
those $x$ such that the $x$-th row of $\za$ is not zero, then
$\ze_Y\in\BF\BM_X(D)$ and $\za = \ze_Y\za$.

Given any ordinal $\xi\le X$, our program in this section is to define a family $(Q_\a)_{\a\le\xi}$ of unital subrings of $Q$ having the following features: (a) if $\a<\xi$, then $Q_\a$ is isomorphic to $Q$, (b) by denoting with $F_\a$ the ideal of $Q_\a$ of all finite-ranked elements when $\a<\xi$, then $Q_{\b}\cap F_{\a} = 0$ whenever $\a<\b\le\xi$. Our construction heavily bears on ordinal arithmetic; however, since ordinal arithmetic is not so frequently used in ring theory, we think useful to list here some of the basic facts we
shall use, omitting their proof (see \cite{Jech:1} or
\cite{LevyAz:1}, for example).

First recall that every ordinal $\a$ is just the set whose elements
are all ordinals $\b$ such that $\b<\a$; in particular
$\a\not\in\a$, while $\b<\a$ exactly means  $\b\in\a$. An initial ordinal, that is an ordinal $\al$ such
that $|\a|<|\al|$ for every ordinal $\a<\al$, is called a cardinal
number; for every set $X$ there is a unique cardinal $\al$ such that
$|X| = |\al|$ and one writes $|X| = \al$.

Ordinal addition, multiplication and exponentiation are defined as
follows: given an ordinal $\a$,
\begin{gather*}
\a+0 = \a,\quad \a+1 = \a\cup\{\a\}, \\
\a+(\b+1) = (\a+\b)+1 \quad\text{ for every ordinal }\b, \\
\a+\b = \sup\{\a+\g\mid \g<\b\} \quad\text{ for every limit ordinal
}\b\ne 0;
\end{gather*}
\begin{gather*}
\a\bullet 0 = 0, \\
\a\bullet(\b+1) = (\a\bullet\b)+\a \quad\text{ for every ordinal }\b, \\
\a\bullet\b = \sup\{\a\bullet\g\mid \g<\b\} \quad\text{ for every
limit ordinal }\b\ne 0;
\end{gather*}
\begin{gather*}
\a^{0} = 1, \\
\a^{\b+1} = \a^{\b}\bullet\a \quad\text{ for every ordinal }\b, \\
\a^{\b} = \sup\{\a^{\g}\mid \g<\b\} \quad\text{ for every limit
ordinal }\b\ne 0.
\end{gather*}
Ordinal arithmetic differs deeply from arithmetic of cardinals. For
example, if\linebreak $\omega = \al_{0}$, as ordinal exponential we
have that $2^\omega = \omega$, while $2^\omega$ is
uncountable\linebreak if we consider cardinal exponentiation. Since
in our work we always use \underbar{ordinal}
\underbar{exponentiation}, there will be no conflict with notations.
Note that $\a\bullet\b$ is isomorphic, as a well ordered set, to the
direct product $\a\times\b$ with the antilexicographic ordering.
If $\a$ and $\b$ are ordinals such that $\a<\b$, then there exists a
unique ordinal $\b-\a$ such that $\b = \a+(\b-\a)$. It follows that if $\a<\b<\g$, then $(\b-\a)+(\g-\b) = \g-\a$. Addition and
multiplication are both associative but are not commutative;
multiplication is distributive on the left with respect to addition,
but not on the right. All ordinals (resp. all nonzero ordinals) are
left cancellable with respect to addition (resp. multiplication),
but need not be right cancellable. If $\a,\b,\g$ are ordinals, then
$\a<\b$ if and only if $\g+\a<\g+\b$; if, in addition, $\g\ne 0$,
then $\a<\b$ if and only if $\g\bullet\a<\g\bullet\b$.

Using the definitions and induction it is easy to show that:
\[
0\bullet\a = 0 = \a\bullet 0\quad \text{ and }\quad1\bullet\a = \a =
\a\bullet 1\quad \text{ for every ordinal }\a;
\]
moreover, if $1<\a$ and $1<\b$, then $\a<\a\bullet\b$ and
$\b<\a\bullet\b$.

\begin{pro}\label{expmonoton}
If $\a$, $\b$, $\g$ are ordinals with $1<\g$,  then $\a<\b$ if and only if $\g^{\a}<\g^{\b}$.
\end{pro}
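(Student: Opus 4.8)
The statement to prove is Proposition~\ref{expmonoton}: for ordinals $\a,\b,\g$ with $1<\g$, one has $\a<\b$ if and only if $\g^{\a}<\g^{\b}$.

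The plan is to prove the forward implication $\a<\b\Rightarrow\g^{\a}<\g^{\b}$ by transfinite induction on $\b$, and then deduce the reverse implication formally from the forward one together with trichotomy. First I would dispose of the reverse direction assuming the forward one: suppose $\g^{\a}<\g^{\b}$; by trichotomy either $\a=\b$, $\a>\b$, or $\a<\b$. The case $\a=\b$ is impossible since it would give $\g^{\a}<\g^{\a}$. The case $\a>\b$ is impossible since, by the forward implication applied to $\b<\a$, it would give $\g^{\b}<\g^{\a}$, contradicting $\g^{\a}<\g^{\b}$ together with antisymmetry. Hence $\a<\b$.

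For the forward implication, I would argue by transfinite induction on $\b$, assuming the implication holds for all ordinals smaller than $\b$ (in place of $\b$), and proving it for $\b$. Fix $\a<\b$. If $\b$ is a successor, say $\b=\d+1$, then $\a\le\d$. By definition $\g^{\b}=\g^{\d}\bullet\g$. Since $1<\g$ we have $1<\g^{\d}$ (this follows by an easy separate induction, or from the fact, quoted just before the statement, that $1<\mu$ and $1<\nu$ imply $\mu<\mu\bullet\nu$ applied with the observation $\g^{\d}\ge\g>1$ for $\d\ge 1$, and $\g^{0}=1$ handled directly), hence $\g^{\d}<\g^{\d}\bullet\g=\g^{\b}$ using the displayed fact $\nu<\mu\bullet\nu$ for $1<\mu,1<\nu$. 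If $\a=\d$ we are done. If $\a<\d$, the inductive hypothesis gives $\g^{\a}<\g^{\d}$, and combining with $\g^{\d}<\g^{\b}$ yields $\g^{\a}<\g^{\b}$. If $\b$ is a limit ordinal, then $\a<\b$ means $\a+1<\b$ (since $\b$ is a limit), so $\g^{\a+1}$ occurs among the ordinals $\g^{\mu}$ with $\mu<\b$; since $\g^{\b}=\sup\{\g^{\mu}\mid\mu<\b\}$ we get $\g^{\a+1}\le\g^{\b}$. Applying the successor case already handled (with $\b$ replaced by $\a+1$, legitimate because $\a+1\le\b$ and $\a<\a+1$) gives $\g^{\a}<\g^{\a+1}\le\g^{\b}$, as wanted.

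I do not anticipate a genuine obstacle here; the proof is a routine transfinite induction once one is careful about the limit-ordinal bookkeeping (in particular the point that $\sup$ in the definition of $\g^{\b}$ for limit $\b$ need not be attained, so one must pass through $\g^{\a+1}$ rather than trying to compare $\g^{\a}$ with some largest term directly). The only minor preliminary is the auxiliary fact $1<\g\Rightarrow 1<\g^{\d}$ for every $\d$; this is itself a short induction ($\g^{0}=1$ is the base for the statement $1\le\g^{\d}$, while strict inequality $1<\g^{\d}$ holds for $\d\ge 1$ because $\g^{1}=\g>1$ and the sequence is nondecreasing), and all the arithmetic facts it rests on — monotonicity of multiplication, $\nu<\mu\bullet\nu$ when $1<\mu,1<\nu$, and $\g^{\b}=\sup_{\mu<\b}\g^{\mu}$ for limit $\b$ — are among those recalled in the paragraphs immediately preceding the statement, so they may be used freely.
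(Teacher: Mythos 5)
Your proof is correct, but there is in fact nothing in the paper to compare it with: Proposition~\ref{expmonoton} is listed among the facts of ordinal arithmetic that the author records without proof, deferring to the cited set-theory texts. Your transfinite induction on $\b$ — splitting into successor and limit cases, routing the limit case through $\g^{\a}<\g^{\a+1}\le\g^{\b}$, and deducing the converse by trichotomy from the forward direction — is exactly the standard argument one would find in those references, and the preliminary observation that $1<\g^{\d}$ for $\d\ge 1$ is handled sensibly (though you could also let it fall out of the main induction by applying the inductive hypothesis to the pair $0<\d$, avoiding a separate lemma). One small slip in the write-up: for $\g^{\d}<\g^{\d}\bullet\g$ you cite the inequality in the form $\nu<\mu\bullet\nu$, but taking $\mu=\g^{\d}$ and $\nu=\g$ that form yields $\g<\g^{\d}\bullet\g$; what you actually need (and what the paper also records) is the companion inequality $\mu<\mu\bullet\nu$ with the same $\mu,\nu$. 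This is purely a labelling issue, since both inequalities are stated in the paper and hold under the hypothesis $1<\mu,\,1<\nu$; the mathematics of the step is fine.
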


It is immediate from the definition that $\b$ is a limit ordinal if and only if $\a+\b$ is limit for every $\a$.

\begin{pro}\label{prodexplimit}
Given two ordinals $\a$ and $\b>0$, then both $\a\bullet\b$ and
$\a^{\b}$ are limit ordinals in case either $\a$ or $\b$ is limit.
\end{pro}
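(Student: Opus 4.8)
The plan is to read off the whole assertion from the recursive definitions of $\bullet$ and of ordinal exponentiation, using only facts already recorded above: the remark that $\mu+\nu$ is a limit ordinal as soon as $\nu$ is, the strict monotonicity of $\a\mapsto\g\bullet\a$ for $\g\ne 0$, and Proposition \ref{expmonoton} (strict monotonicity of $\a\mapsto\g^{\a}$ for $\g>1$). The statement for $\a^{\b}$ will be deduced from the statement for $\a\bullet\b$, so I would prove the product case first. Throughout we may assume $\a\ge 2$; for $\a\le 1$ the assertion is degenerate and is set aside.

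For the product $\a\bullet\b$ with $\b>0$, argue by cases on $\b$. If $\b=\g+1$ is a successor, the hypothesis forces $\a$ to be a limit ordinal, and $\a\bullet\b=(\a\bullet\g)+\a$ is a limit ordinal by the quoted remark, being a sum whose right summand is limit. If $\b$ is a limit ordinal, then $\a\bullet\b=\sup\{\a\bullet\g\mid\g<\b\}$ by definition; since $1<\b$ this supremum is $\ge\a\bullet 1=\a\ge 2$, hence nonzero. It is not a successor either: if it equalled $\d+1$, then $\d$ would not be an upper bound of the family, so $\d<\a\bullet\g$ for some $\g<\b$; but $\b$ limit gives $\g+1<\b$, and $\a\ne 0$ gives $\a\bullet\g<\a\bullet(\g+1)\le\a\bullet\b=\d+1$, whence $\d+1\le\a\bullet\g<\d+1$, a contradiction. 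So $\a\bullet\b$ is a limit ordinal.

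For the power $\a^{\b}$ with $\a\ge 2$ and $\b>0$, split again on $\b$. If $\b$ is a limit ordinal, then $\a^{\b}=\sup\{\a^{\g}\mid\g<\b\}$, and the argument of the previous paragraph applies verbatim with Proposition \ref{expmonoton} replacing monotonicity of multiplication: the supremum is $\ge\a^{1}=\a\ge 2$, hence nonzero, and it cannot be a successor $\d+1$, for one would obtain $\g<\b$ with $\d<\a^{\g}$, then $\g+1<\b$ and $\a^{\g}<\a^{\g+1}\le\a^{\b}=\d+1$, absurd. If instead $\b=\g+1$ is a successor, the hypothesis forces $\a$ to be a limit ordinal and $\a^{\b}=\a^{\g}\bullet\a$; here $\a^{\g}\ge\a^{0}=1$ and $\a$ is a limit ordinal, so $\a^{\g}\bullet\a$ is a limit ordinal by the product case already established. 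This completes the proof.

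There is no genuine obstacle: the proof is pure bookkeeping. The only things requiring care are (i) tracking which monotonicity facts need $\a\ne 0$ respectively $\a>1$, and (ii) establishing the product statement before the exponential one, since the successor-exponent case of $\a^{\b}$ is not self-contained but is reduced to the product statement.
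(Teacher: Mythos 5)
The paper does not prove this proposition: it appears in the block of ``basic facts [\dots] omitting their proof'' of ordinal arithmetic, with the reader referred to Jech or Levy. So there is no paper proof to compare against, and the proposal has to be judged on its own merits.

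On its own merits the argument is correct and is the natural one from the recursive definitions of $\bullet$ and exponentiation. You are also right to set aside $\a\le 1$ at the outset: as literally stated the proposition fails there (for instance $1^{\omega}=1$ and $0\bullet\omega=0$ are not limit ordinals), and in all of the paper's applications $\a$ is an infinite cardinal, so this restriction matches the intended scope. Your reduction --- product case first, then exponentiation, with the successor-exponent case $\a^{\g+1}=\a^{\g}\bullet\a$ delegated to the product case --- is sound. The one loose stitch is exactly where you reduce $\a^{\g+1}=\a^{\g}\bullet\a$: when $\g=0$ the left factor is $\a^{0}=1$, which lies outside the range $\a\ge 2$ you imposed when proving the limit-$\b$ case of the product. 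This is harmless ($\g=0$ is just $\b=1$, where $\a^{\b}=\a$ is limit by hypothesis), but for full rigor you should either note that the product argument goes through verbatim for left factor $1$ (since $1\bullet\b=\b$), or split off $\g=0$ explicitly before invoking the product case.
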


\begin{pro}\label{rulexp}
Given three ordinals $\a$, $\b$, $\g\ne 0$, the following equality
holds:
    \begin{equation}\label{eqrulexp}
    \g^{\a+\b} = \g^{\a}\bullet\g^{\b}.
    \end{equation}
    \end{pro}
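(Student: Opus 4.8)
The plan is to fix $\a$ and the nonzero ordinal $\g$ and to prove \eqref{eqrulexp} by transfinite induction on $\b$; the alternative route of exhibiting an explicit order isomorphism between the two well orders involved seems more cumbersome for exponentiation, so I would avoid it. Before starting the induction I would dispose of the trivial case $\g = 1$, in which $\g^{\eta} = 1$ for every ordinal $\eta$ and both sides of \eqref{eqrulexp} equal $1$. From now on one assumes $\g>1$; then Proposition \ref{expmonoton} tells us that $\eta\mapsto\g^{\eta}$ is strictly increasing, and since $\g^{\a}\ne 0$ left multiplication by $\g^{\a}$ is strictly increasing as well. These two monotonicity facts, together with the recursive clauses defining ordinal addition, multiplication and exponentiation, are the only tools the argument needs.

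The cases $\b = 0$ and $\b$ a successor are routine and I would dispatch them first. For $\b = 0$ one has $\g^{\a+0} = \g^{\a} = \g^{\a}\bullet 1 = \g^{\a}\bullet\g^{0}$. For $\b = \d+1$, using the inductive hypothesis $\g^{\a+\d} = \g^{\a}\bullet\g^{\d}$, the recursion for $+$ and for exponentiation, and the associativity of ordinal multiplication, one computes
\[
\g^{\a+(\d+1)} = \g^{(\a+\d)+1} = \g^{\a+\d}\bullet\g = (\g^{\a}\bullet\g^{\d})\bullet\g = \g^{\a}\bullet(\g^{\d}\bullet\g) = \g^{\a}\bullet\g^{\d+1}.
\]

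The limit case carries the weight of the proof and is the step I expect to be the main obstacle --- not for any depth, but because one must carefully pass from a supremum indexed by \emph{all} ordinals below a limit ordinal to a supremum indexed by a cofinal subset. Assume $\b\ne 0$ is a limit ordinal and that \eqref{eqrulexp} holds for every $\d<\b$. Then $\a+\b$ is a limit ordinal (by the remark recorded just before Proposition \ref{prodexplimit}), so by the defining clause of exponentiation at limit ordinals $\g^{\a+\b} = \sup\{\g^{\eta}\mid\eta<\a+\b\}$. Since $\a+\b = \sup\{\a+\d\mid\d<\b\}$ and $\b$ has no greatest element, the family $(\a+\d)_{\d<\b}$ is cofinal in $\a+\b$; combining this with the monotonicity of $\eta\mapsto\g^{\eta}$ yields $\g^{\a+\b} = \sup\{\g^{\a+\d}\mid\d<\b\}$, which by the inductive hypothesis equals $\sup\{\g^{\a}\bullet\g^{\d}\mid\d<\b\}$. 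On the other hand $\g^{\b}$ is a limit ordinal by Proposition \ref{prodexplimit} (applied with base $\g$ and limit exponent $\b$), so $\g^{\a}\bullet\g^{\b} = \sup\{\g^{\a}\bullet\mu\mid\mu<\g^{\b}\}$; arguing in the same way, the family $(\g^{\d})_{\d<\b}$ is cofinal in $\g^{\b} = \sup\{\g^{\d}\mid\d<\b\}$, and the monotonicity of left multiplication by $\g^{\a}$ gives $\g^{\a}\bullet\g^{\b} = \sup\{\g^{\a}\bullet\g^{\d}\mid\d<\b\}$. The two suprema coincide, hence $\g^{\a+\b} = \g^{\a}\bullet\g^{\b}$, which closes the induction and proves the proposition.
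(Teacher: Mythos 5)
The paper states Proposition \ref{rulexp} without proof, explicitly deferring to standard set-theory references (Jech, Levy) for all the ordinal-arithmetic facts in this block, so there is no argument in the paper to compare against. Your proof by transfinite induction on $\b$ is correct: the reduction to $\g>1$ is the right preliminary step (and also guards the subsequent appeal to Proposition \ref{prodexplimit}, which fails for $\g=1$), the base and successor cases are routine applications of the recursive clauses, and in the limit case you correctly identify and justify the two cofinality substitutions needed to equate $\sup\{\g^{\eta}\mid\eta<\a+\b\}$ and $\sup\{\g^{\a}\bullet\mu\mid\mu<\g^{\b}\}$ with the common value $\sup\{\g^{\a}\bullet\g^{\d}\mid\d<\b\}$. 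This is the standard derivation of the identity.
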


Division with unique quotient and remainder between ordinals is
possible ``on the left'', as stated in the proposition which follows.
This possibility is actually the key of our construction;
we will make an extensive use of it without an explicit
mention.

\begin{pro}\label{ruleuclid}
Given two ordinals $\a$, $\b_1$ with $\b_1\ne 0$, there are unique
ordinals $\g$, $\a_1$ (called respectively the quotient and the remainder of
the division of $\a$ by $\b_1$) such that
\begin{equation}\label{eqruleuclid}
    \a = \b_1\bullet\g + \a_1 \quad\text{and}\quad\a_1<\b_1.
\end{equation}
\end{pro}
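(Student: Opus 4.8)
The plan is to prove existence and uniqueness separately, relying only on the properties of ordinal addition and multiplication recalled above. For existence, I would work with the collection $C = \{\d\mid\b_1\bullet\d\le\a\}$ and first check that it is a \emph{set}: if $\d>\a$ then $\d = 1\bullet\d\le\b_1\bullet\d$, using $1\bullet\d = \d$ and the monotonicity of multiplication in the second factor (legitimate since $1\le\b_1$ and $\b_1\ne 0$), so $\b_1\bullet\d>\a$ and $\d\notin C$; hence $C\sbs\a+1$. Clearly $0\in C$. Moreover $C$ is closed under suprema of its subsets: given $S\sbs C$ and $\d = \sup S$, if $\d = 0$ then $\d\in C$ trivially; if $\d$ is a successor then $\sup S$ is attained, so $\d\in S\sbs C$; and if $\d$ is a nonzero limit then for each $\g<\d$ there is $\s\in S$ with $\g\le\s$, whence $\b_1\bullet\g\le\b_1\bullet\s\le\a$, so that the clause $\b_1\bullet\d = \sup\{\b_1\bullet\g\mid\g<\d\}$ yields $\b_1\bullet\d\le\a$. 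Therefore $C$ has a greatest element $\g$.

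Since $\g+1\notin C$, we have $\b_1\bullet\g\le\a<\b_1\bullet(\g+1) = \b_1\bullet\g+\b_1$. If $\b_1\bullet\g = \a$ put $\a_1 = 0$; otherwise $\b_1\bullet\g<\a$ and I set $\a_1 = \a-(\b_1\bullet\g)$ by left subtraction. In both cases $\a = \b_1\bullet\g+\a_1$, and from $\b_1\bullet\g+\a_1 = \a<\b_1\bullet\g+\b_1$ together with the left cancellability of addition we get $\a_1<\b_1$, which proves existence. For uniqueness, suppose $\a = \b_1\bullet\g+\a_1 = \b_1\bullet\g'+\a_1'$ with $\a_1,\a_1'<\b_1$ and, without loss of generality, $\g\le\g'$. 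Were $\g<\g'$, then $\g+1\le\g'$ would give $\b_1\bullet\g'\ge\b_1\bullet(\g+1) = \b_1\bullet\g+\b_1>\b_1\bullet\g+\a_1 = \a$, contradicting $\b_1\bullet\g'\le\b_1\bullet\g'+\a_1' = \a$. Hence $\g = \g'$, and then left cancellability of addition applied to $\b_1\bullet\g+\a_1 = \b_1\bullet\g+\a_1'$ yields $\a_1 = \a_1'$.

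The one delicate point is the existence of a greatest $\g$ with $\b_1\bullet\g\le\a$; this is exactly where the continuity (limit) clause in the recursive definition of $\bullet$ is needed, together with the boundedness of $C$, which in turn uses the identity $1\bullet\d = \d$. Once this is in place, the rest is routine manipulation with left cancellation and left subtraction.
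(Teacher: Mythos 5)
Your proof is correct and is the standard argument found in set-theory textbooks; the paper itself does not supply a proof of this proposition, explicitly deferring it (and the surrounding ordinal-arithmetic facts) to the cited references Jech and Levy, so there is nothing to compare against in the paper's own exposition. One small slip in wording: when you justify $1\bullet\d\le\b_1\bullet\d$ you appeal to ``monotonicity of multiplication in the second factor,'' but you are in fact varying the \emph{first} (left) factor from $1$ to $\b_1$ while $\d$ stays fixed; this is the weak monotonicity of ordinal multiplication in its left argument (not strict, as $1\bullet\omega = 2\bullet\omega$ shows), and the hypothesis $\b_1\ne 0$ plays no role in that inequality. The rest of the argument — the bound $C\sbs\a+1$, closure of $C$ under suprema via the limit clause of $\bullet$, existence of $\g = \max C$, production of $\a_1$ by left subtraction, the estimate $\a_1<\b_1$ from strict left-additive monotonicity, and uniqueness via the contradiction from $\g<\g'$ — is all sound and uses exactly the facts the paper lists.
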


\begin{re}\label{remeuclid}
Let $\b_{1}, \b_{2}$ be nonzero ordinals. Given an ordinal $\a<\b_1\bullet\b_2$, it follows from Proposition \ref{ruleuclid} that there is a unique ordinal $\g$ such that $\a$ belongs to the right open interval
\[
[\b_1\bullet\g, \b_1\bullet\g+\b_1) =
\{\b_1\bullet\g+\a_1\mid\a_1<\b_1\};
\]
necessarily $\g<\b_2$, for if $\g\ge\b_2$, then
$\a<\b_1\bullet\b_2 \le\b_1\bullet\b_2+\a_1\le\b_1\bullet\g+\a_1 = \a$ and hence a contradiction. Thus the set $\{[\b_1\bullet\g, \b_1\bullet\g+\b_1)\mid\g<\b_2\}$ is a partition of $\b_1\bullet\b_2$. Also note that, for every $\g<\b_2$, the assignment $\a_1\mapsto \b_1\bullet\g+\a_1$ defines a bijection from $\b_1$ to $[\b_1\bullet\g, \b_1\bullet\g+\b_1)$. These observations will be crucial for the construction which is the objective of our work.
\end{re}

Another feature we shall rely on is the following $n$-th iterate of Proposition \ref{ruleuclid}.

\begin{pro}\label{ruleuclidd}
Let $\b_1,\ldots,\b_n$ be nonzero ordinals. For every ordinal $\a$
there are unique ordinals $\g$ and $\a_k<\b_{n-k+1}$ for $k = 1,\ldots,n$ such that
    \begin{equation}
    \a = \b_1\bullet\cdots\bullet\b_{n}\bullet\g +
    \b_1\bullet\cdots\bullet\b_{n-1}\bullet\a_1 + \cdots +
    \b_1\bullet\a_{n-1} + \a_n
    \end{equation}
    and $\g$ is the quotient of the division of $\a$ by
    $\b_1\bullet\cdots\bullet\b_{n}$.
    If $\b_{n+1}$ is another ordinal such that
    $\a<\b_1\bullet\cdots\bullet\b_{n}\bullet\b_{n+1}$, then
    $\g<\b_{n+1}$.
    \end{pro}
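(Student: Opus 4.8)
The plan is to argue by induction on $n$, the base case $n=1$ being precisely Proposition \ref{ruleuclid}. For the inductive step I would assume the statement for every family of $n-1$ nonzero ordinals, fix nonzero $\b_1,\ldots,\b_n$, and first apply Proposition \ref{ruleuclid} to divide $\a$ by $\b_1$: this gives unique ordinals $\b$ and $\a_n<\b_1$ with $\a=\b_1\bullet\b+\a_n$, and since $\b_1=\b_{n-n+1}$ the bound on $\a_n$ already has the shape required in the conclusion. Next I would feed the quotient $\b$ into the inductive hypothesis applied to the $n-1$ nonzero ordinals $\b_2,\ldots,\b_n$ (whose $j$-th term is $\b_{j+1}$); this yields unique ordinals $\g,\a_1,\ldots,\a_{n-1}$ with $\a_k<\b_{n-k+1}$ for $k=1,\ldots,n-1$, with $\g$ the quotient of $\b$ by $\b_2\bullet\cdots\bullet\b_n$, and
\[
\b = \b_2\bullet\cdots\bullet\b_n\bullet\g + \b_2\bullet\cdots\bullet\b_{n-1}\bullet\a_1 + \cdots + \b_2\bullet\a_{n-2} + \a_{n-1}.
\]
Multiplying this identity on the left by $\b_1$ (using associativity of ordinal multiplication and its left distributivity over finite sums) and adding $\a_n$ produces the desired expansion of $\a$, with all the bounds $\a_k<\b_{n-k+1}$, $k=1,\ldots,n$, in force.

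It then remains to check that the $\g$ so obtained is the quotient of the division of $\a$ by $\b_1\bullet\cdots\bullet\b_n$; equivalently, that the ``remainder'' $\r$ (the sum of the terms after $\b_1\bullet\cdots\bullet\b_n\bullet\g$) satisfies $\r<\b_1\bullet\cdots\bullet\b_n$, after which the uniqueness clause of Proposition \ref{ruleuclid} gives the claim. I would factor $\b_1$ out of the first $n-1$ summands of $\r$ to write $\r=\b_1\bullet\mu+\a_n$, where $\mu=\b_2\bullet\cdots\bullet\b_{n-1}\bullet\a_1+\cdots+\b_2\bullet\a_{n-2}+\a_{n-1}$; by left cancellability of ordinal addition, $\mu$ is the remainder of $\b$ divided by $\b_2\bullet\cdots\bullet\b_n$, so $\mu+1\le\b_2\bullet\cdots\bullet\b_n$, whence
\[
\r=\b_1\bullet\mu+\a_n<\b_1\bullet\mu+\b_1=\b_1\bullet(\mu+1)\le\b_1\bullet\b_2\bullet\cdots\bullet\b_n,
\]
the strict step using $\a_n<\b_1$ and left monotonicity of addition, the last step using monotonicity of left multiplication by the nonzero ordinal $\b_1$. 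The final assertion is then the same argument as in Remark \ref{remeuclid}: if $\b_{n+1}$ satisfied $\a<\b_1\bullet\cdots\bullet\b_n\bullet\b_{n+1}$ but $\g\ge\b_{n+1}$, then $\a<\b_1\bullet\cdots\bullet\b_n\bullet\b_{n+1}\le\b_1\bullet\cdots\bullet\b_n\bullet\g\le\a$ (here $\b_1\bullet\cdots\bullet\b_n\ne0$, being a product of nonzero ordinals), a contradiction.

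For uniqueness I would suppose $\a$ also has a representation with data $\g^{*},\a_1^{*},\ldots,\a_n^{*}$ satisfying the bounds, rewrite both representations in the form $\b_1\bullet L+\a_n$ and $\b_1\bullet L^{*}+\a_n^{*}$ with $\a_n,\a_n^{*}<\b_1$ and $L,L^{*}$ the evident ordinals built from $\g,\a_1,\ldots,\a_{n-1}$ (resp.\ $\g^{*},\a_1^{*},\ldots,\a_{n-1}^{*}$) and $\b_2,\ldots,\b_n$, invoke uniqueness of division by $\b_1$ (Proposition \ref{ruleuclid}) to get $\a_n=\a_n^{*}$ and $L=L^{*}$, and then invoke the uniqueness part of the inductive hypothesis applied to the ordinal $L=L^{*}$ with $\b_2,\ldots,\b_n$ to conclude $\g=\g^{*}$ and $\a_k=\a_k^{*}$ for $k\le n-1$. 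The main obstacle here is bookkeeping rather than mathematics: one must carefully track the index shift $\b_j\mapsto\b_{j+1}$ when passing between the $n$-ary and $(n-1)$-ary cases, and be scrupulous about using ordinal arithmetic only on the side where it is valid (multiplication distributes and is monotone on the correct side, addition is left-cancellable, and the relevant finite products of nonzero ordinals are nonzero).
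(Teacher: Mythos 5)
Your proof is correct, and for the \emph{existence} part it runs the induction in the opposite direction from the paper. The paper first divides $\a$ by the full product $\b_1\bullet\cdots\bullet\b_{n+1}$ to peel off the outer quotient $\g$ and remainder $\d$, then applies the inductive hypothesis to $\d$ with $\b_1,\ldots,\b_n$; this makes the quotient claim automatic, but the bound $\a_1<\b_{n+1}$ on the innermost new digit has to be extracted from the extra clause of the inductive statement (namely $\d<\b_1\bullet\cdots\bullet\b_n\bullet\b_{n+1}\Rightarrow$ IH-quotient $<\b_{n+1}$). You instead divide by $\b_1$ first to peel off the innermost digit $\a_n$, apply the inductive hypothesis to the quotient $\b$ with $\b_2,\ldots,\b_n$, and multiply back by $\b_1$ using left distributivity; this gives all the bounds $\a_k<\b_{n-k+1}$ directly, but at the cost of a separate (correctly executed) remainder estimate $\r<\b_1\bullet\cdots\bullet\b_n$ to identify $\g$ as the quotient. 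The two routes are mirror images; in fact, the paper's own \emph{uniqueness} argument peels off the $\b_1$-digit first exactly as you do, so your proof is the more symmetric of the two. Both are valid, with comparable bookkeeping.
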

    \begin{proof}
    If $n = 1$, the first statement is merely Proposition
    \ref{ruleuclid} together with Remark \ref{remeuclid}. Suppose
inductively that
    the statement is true for some $n\ge 1$ and consider $n+1$ ordinals
    $\b_1,\ldots,\b_{n+1}$. Given $\a$, by Proposition
    \ref{ruleuclid} there are unique $\g$ and
    $\d<\b_1\bullet\cdots\bullet\b_{n+1}$ such that
    \begin{equation}\label{qq}
    \a = \b_1\bullet\cdots\bullet\b_{n+1}\bullet\g+\d.
    \end{equation}
    By the inductive hypothesis, there are
    $\a_1<\b_{n+1},\a_2<\b_{n}\ldots,\a_{n+1}<\b_{1}$ such that
    \[
    \d = \b_1\bullet\cdots\bullet\b_{n}\bullet\a_{1} +
    \b_1\bullet\cdots\bullet\b_{n-1}\bullet\a_2 + \cdots +
    \b_1\bullet\a_{n} + \a_{n+1}.
    \]
    As a result
    \[
    \a = \b_1\bullet\cdots\bullet\b_{n+1}\bullet\g +
    \b_1\bullet\cdots\bullet\b_{n}\bullet\a_{1} + \cdots +
    \b_1\bullet\a_{n} + \a_{n+1}.
    \]
    Suppose that also
    \[
    \a = \b_1\bullet\cdots\bullet\b_{n+1}\bullet\g' +
    \b_1\bullet\cdots\bullet\b_{n}\bullet\a'_{1} + \cdots +
    \b_1\bullet\a'_{n} + \a'_{n+1},
    \]
    where $\a'_1<\b_{n+1},\ldots,\a'_{n+1}<\b_{1}$. Using the left
distributivity of
    multiplication with respect to the addition, we infer from
    uniqueness of the quotient and remainder of the division of $\a$ by
    $\b_1$ that $\a_{n+1} = \a'_{n+1}$ and
    \begin{gather*}
    \b_2\bullet\cdots\bullet\b_{n+1}\bullet\g +
    \b_2\bullet\cdots\bullet\b_{n}\bullet\a_{1}+\cdots+\a_{n}
    \\
    = \b_2\bullet\cdots\bullet\b_{n+1}\bullet\g' +
    \b_2\bullet\cdots\bullet\b_{n}\bullet\a'_{1}+\cdots+\a'_{n}.
    \end{gather*}
    Again from the inductive hypothesis it follows that $\g = \g'$
    and $\a_k = \a'_k$ for $1\le k\le n$. Concerning the last statement,
if $\b_{n+1}$ is
    another ordinal such that
    $\a<\b_1\bullet\cdots\bullet\b_{n}\bullet\b_{n+1}$, then it follows
from Proposition
    \ref{ruleuclid} and Remark \ref{remeuclid} that $\g<\b_{n+1}$ and the
proof is
    complete.
    \end{proof}

        \begin{pro}\label{cardaleph}
    Given an ordinal $\xi$ and an \underbar{infinite} cardinal $\al$ such
that
    $\xi\le\al$, if $0<\a\le\xi$ then, as ordinal exponential,
    \[
    |\al^{\a}| = \al.
    \]
       \end{pro}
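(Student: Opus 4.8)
The plan is a transfinite induction on $\a$, establishing that $|\al^{\a}| = \al$ for every ordinal $\a$ with $0 < \a \le \xi$. Observe first that any such $\a$ automatically satisfies $\a \le \al$, hence $|\a| \le \al$, since $\al$ is a cardinal; this bound will be needed in the limit step. The base case $\a = 1$ is immediate from the recursion defining ordinal exponentiation: $\al^{1} = \al^{0}\bullet\al = 1\bullet\al = \al$.

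For the successor step I would treat $\a = \b+1$ with $1 \le \b$, the possibility $\b = 0$ being exactly the base case already handled. Since $\b < \a \le \xi$, the inductive hypothesis applies and gives $|\al^{\b}| = \al$. Now $\al^{\b+1} = \al^{\b}\bullet\al$, and, as recorded above, this ordinal is order-isomorphic to the direct product $\al^{\b}\times\al$ with the antilexicographic order; therefore $|\al^{\a}| = |\al^{\b}|\cdot|\al| = \al\cdot\al = \al$, the last equality being the standard fact of cardinal arithmetic that an infinite cardinal is idempotent under multiplication.

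For the limit step, let $\a \le \xi$ be a limit ordinal. Then $\al^{\a} = \sup\{\al^{\g}\mid \g<\a\} = \bigcup_{\g<\a}\al^{\g}$. Splitting off $\g = 0$ (where $\al^{0} = 1$), for each $\g$ with $1 \le \g < \a$ we have $0 < \g < \xi$, so the inductive hypothesis gives $|\al^{\g}| = \al$; thus $\al^{\a}$ is a union of at most $|\a| \le \al$ sets, each of cardinality at most $\al$, whence $|\al^{\a}| \le \al\cdot\al = \al$. Conversely, since $1 < \a$ and $1 < \al$, Proposition \ref{expmonoton} yields $\al = \al^{1} < \al^{\a}$, so $|\al^{\a}| \ge \al$; equality follows.

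The whole argument is routine; the only points requiring care are isolating the case $\g = 0$, so that the inductive hypothesis (which presupposes a positive exponent) may legitimately be invoked, and, in the limit step, using $\a \le \xi \le \al$ to control the number of terms in the union. I do not anticipate any serious obstacle.
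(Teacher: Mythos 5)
Your proof is correct and follows essentially the same transfinite induction as the paper: the successor step ($|\al^{\b+1}|=|\al^{\b}\times\al|=\al$) is identical, and the limit step differs only in that you make the two-sided bound explicit (upper bound via a union of $\le\al$ sets each of size $\al$, lower bound via Proposition \ref{expmonoton}) whereas the paper compresses this into the single line $|\al^{\a}| = \sup(\{|\al^{\b}|\mid \b<\a\}\cup\{|\a|\}) = \al$.
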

    \begin{proof}
    The equality being obvious if $\a = 1$, suppose that $1<\a\le\xi$
    and $ |\al^{\b}| = \al$ for all nonzero $\b<\a$. If $\a = \b+1$ for
    some $\b$, then
    \[
    |\al^{\a}| = |\al^{\b+1}| = |\al^{\b}\bullet\al| = |\al^{\b}\times\al|
    = |\al\times\al| = \al.
    \]
    If $\a$ is limit, then $\al^{\a} = \sup\{\al^{\b}\mid\b<\a\} =
    \bigcup\{\al^{\b}\mid\b<\a\}$, therefore
    \[
    |\al^{\a}| = \sup(\{|\al^{\b}|\mid \b<\a\}\cup\{|\a|\}) = \al.
    \]
        \end{proof}

In order to obtain results which are general enough to be readily used in the subsequent sections,
throughout the remaining part of this section we assume that
\[
X = \al^{\xi}\bullet\bet,
\]
where $\al$ is a given \underbar{infinite} cardinal, $\bet$ is a second nonzero cardinal such that $\bet\le\al$ and $\xi$ is an ordinal such that $\xi\le\al$. We want to stress that we are using ordinal exponentiation and multiplication. It is clear from Proposition \ref{cardaleph} that $|X| = \al$.

    We say that a partition $\CP$ of $X$ is an $\al$-{\em
partition\/} if $|Y| = \al$ for all $Y\in\CP$; we denote by
$\BP_{\al}(X)$ the set of all such partitions. Given a cardinal
$\al'\le\al$, we say that a partition $\CQ$ of $X$ is $\al'$-{\em
coarser\/} than a partition $\CP\in\BP_{\al}(X)$ if each element of
$\CQ$ is the union of $\al'$ elements of $\CP$; if it is the case,
then it is clear that $\CQ\in\BP_{\al}(X)$ and $\al'\le|\CP|$. Using
the natural ordering and the arithmetical properties of ordinals we
can define a sequence of partitions $\{\CP_{\a}\mid 0<\a\le\xi\}$ of
the set $X$, in such a way that each $\CP_{\a}$ is an
$\al$-partition and $\CP_{\b}$ is $\al$-coarser than $\CP_{\a}$
whenever $0<\a<\b$. Precisely, for every $\a\le\xi$ and
$\l<\al^{\xi-\a}\bullet\bet$ let us consider in $X$ the right open
interval
\[
    X_{\a,\l} \defug [\al^{\a}\bullet\l,\,\,\, \al^{\a}\bullet\l +
    \al^{\a}) = \{\al^{\a}\bullet\l+\r\mid\r<\al^{\a}\}.
\]
Observing that $X =
\al^{\a}\bullet\left(\al^{\xi-\a}\bullet\bet\right)$, according to
Remark \ref{remeuclid} the set
    \begin{equation}\label{partition}
    \CP_{\a} \defug \{X_{\a,\l}\mid \l<\al^{\xi-\a}\bullet\bet\}
    \end{equation}
is a partition of $X$ and $|X_{\a,\l}| = |\al^{\a}|$, hence
$|X_{\a,\l}| = \al$ by Proposition \ref{cardaleph}. Thus $\CP_{\a}$
is an $\al$-partition of $X$. An element $x\in X$ belongs to $X_{\a,\l}$ if and only if $\l$ is the quotient of the division (on the left) of $x$ by $\al^{\a}$. We can extend the definition of the partition $\CP_{\a}$ to the case $\a = 0$ by observing that $X_{0,\l} = \{\l\}$ for every $\l<\al^{\xi}\bullet\bet$. Thus $\CP_{0}$ is just the trivial partition of $X$ in which each member is a singleton.

\begin{lem}\label{lpartpart}
If $\a<\b\le\xi$, then $\CP_\b$ is $\al$-coarser than $\CP_\a$; specifically
\begin{equation}\label{partpart}
  X_{\b,\l} =
\bigcup\{X_{\a,\,\,\al^{\b-\a}\bullet\l+\mu}\mid\mu<\al^{\b-\a}\}
\end{equation}
for every $\l <\al^{  \xi-\b}\bullet\bet$.
\end{lem}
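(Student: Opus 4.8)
The plan is to prove the identity \eqref{partpart} directly from the definitions of $X_{\b,\l}$ and $X_{\a,\mu}$, using only the rules of ordinal arithmetic recalled in this section (left distributivity, Proposition \ref{rulexp}, and the uniqueness statement in Proposition \ref{ruleuclid}/Remark \ref{remeuclid}). Once \eqref{partpart} is established, the assertion that $\CP_\b$ is $\al$-coarser than $\CP_\a$ is immediate: the right-hand side of \eqref{partpart} exhibits $X_{\b,\l}$ as a union of $|\al^{\b-\a}| = \al$ (by Proposition \ref{cardaleph}) members of $\CP_\a$, and these members are pairwise disjoint since $\CP_\a$ is a partition.

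First I would unwind the left-hand side. By definition $X_{\b,\l} = [\al^{\b}\bullet\l,\, \al^{\b}\bullet\l + \al^{\b})$. The key algebraic observation is that, by Proposition \ref{rulexp}, $\al^{\b} = \al^{\a}\bullet\al^{\b-\a}$ (recall $\b = \a + (\b-\a)$). Hence
\[
\al^{\b}\bullet\l = \al^{\a}\bullet\al^{\b-\a}\bullet\l
\qquad\text{and}\qquad
\al^{\b} = \al^{\a}\bullet\al^{\b-\a},
\]
using associativity of ordinal multiplication. So $X_{\b,\l}$ is the interval of ordinals $x$ with $\al^{\a}\bullet(\al^{\b-\a}\bullet\l) \le x < \al^{\a}\bullet(\al^{\b-\a}\bullet\l) + \al^{\a}\bullet\al^{\b-\a}$, i.e. $\al^{\a}\bullet(\al^{\b-\a}\bullet\l) \le x < \al^{\a}\bullet(\al^{\b-\a}\bullet\l + \al^{\b-\a})$ by left distributivity.

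Next I would identify this interval as a union of $X_{\a,\mu}$'s. By Remark \ref{remeuclid} applied with $\b_1 = \al^{\a}$ and $\b_2 = \al^{\b-\a}$ (translated by the fixed ordinal $\al^{\b-\a}\bullet\l$), the set of $\mu$ with $\al^{\b-\a}\bullet\l \le \mu < \al^{\b-\a}\bullet\l + \al^{\b-\a}$, equivalently $\mu = \al^{\b-\a}\bullet\l + \mu'$ with $\mu' < \al^{\b-\a}$, indexes a partition of exactly that interval into the blocks $[\al^{\a}\bullet\mu,\, \al^{\a}\bullet\mu + \al^{\a}) = X_{\a,\mu}$. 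Writing $\mu = \al^{\b-\a}\bullet\l + \mu'$ with $\mu'<\al^{\b-\a}$ (renaming $\mu'$ as $\mu$) gives precisely \eqref{partpart}. Equivalently, one can verify set equality by the division algorithm directly: $x\in X_{\b,\l}$ iff $\l$ is the quotient of $x$ on division by $\al^{\b}$; writing $x = \al^{\a}\bullet\mu + \r$ with $\r<\al^{\a}$ (so $x\in X_{\a,\mu}$), one checks via Proposition \ref{ruleuclidd} that the quotient of $x$ by $\al^{\b} = \al^{\a}\bullet\al^{\b-\a}$ equals the quotient of $\mu$ by $\al^{\b-\a}$, so $x\in X_{\b,\l}$ iff $\mu\in[\al^{\b-\a}\bullet\l,\, \al^{\b-\a}\bullet\l+\al^{\b-\a})$, which is the index set appearing on the right of \eqref{partpart}.

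I expect the only real obstacle to be bookkeeping with the non-commutativity of ordinal multiplication: one must be careful that the exponent identity is $\al^{\a}\bullet\al^{\b-\a}$ (in that order, matching $\b = \a + (\b-\a)$ rather than $(\b-\a)+\a$), and that left distributivity—not right distributivity, which fails—is what legitimizes factoring $\al^{\a}$ out of $\al^{\a}\bullet\g + \al^{\a}$. Also note $\b-\a$ makes sense because $\a<\b$, and $\l<\al^{\xi-\b}\bullet\bet$ together with $(\b-\a)+(\xi-\b) = \xi-\a$ ensures all the $\mu$'s appearing on the right are $<\al^{\xi-\a}\bullet\bet$, so the blocks $X_{\a,\mu}$ are genuinely members of $\CP_\a$. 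None of this is deep; it is purely a matter of applying the quoted arithmetic carefully.
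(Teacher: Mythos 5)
Your proposal is correct and follows essentially the same route as the paper's proof: decompose $\al^{\b}=\al^{\a}\bullet\al^{\b-\a}$, then compare quotients under left division by $\al^{\a}$ via Proposition~\ref{ruleuclidd} / Remark~\ref{remeuclid}, the paper phrasing it as two explicit inclusions while you package it as a translated instance of the Euclidean-division partition. One small extra merit of your version is the explicit check that $\al^{\b-\a}\bullet\l+\mu<\al^{\xi-\a}\bullet\bet$, which ensures the blocks on the right-hand side are genuine members of $\CP_\a$; the paper leaves that implicit.
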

\begin{proof}
Given $\l <\al^{  \xi-\b}\bullet\bet$, suppose that $x\in
X_{\b,\l}$, namely  $x = \al^{\b}\bullet\l+\r$ for some
$\r<\al^{\b}$. Then it follows from Proposition \ref{ruleuclidd}
that there are unique $\mu<\al^{\b-\a}$ and $\s<\al^{\a}$ such that
\[
x = \al^{\a}\bullet\al^{\b-\a}\bullet\l + \al^{\a}\bullet\mu + \s
\in X_{\a,\,\,\al^{\b-\a}\bullet\l+\mu}.
\]
Conversely, take any $\mu<\al^{\b-\a}$ and observe that
\[
X_{\a,\,\,\al^{\b-\a}\bullet\l+\mu} =
[\al^{\b}\bullet\l+\al^{\a}\bullet\mu,
\al^{\b}\bullet\l+\al^{\a}\bullet\mu + \al^{\a}).
\]
Obviously
$\al^{\b}\bullet\l\le\al^{\b}\bullet\l+\al^{\a}\bullet\mu$; on the
other hand, since $\mu<\al^{\b-\a}$ and $\al^{\b-\a}$ is a limit
ordinal by Proposition \ref{prodexplimit}, then $\mu+1<\al^{\b-\a}$
and consequently
\begin{gather*}
\al^{\b}\bullet\l+\al^{\a}\bullet\mu + \al^{\a} =
\al^{\b}\bullet\l+\al^{\a}\bullet(\mu + 1) \\
< \al^{\b}\bullet\l + \al^{\a}\bullet\al^{\b-\a} = \al^{\b}\bullet\l
+\al^{\b}.
\end{gather*}
This shows that $X_{\a,\,\,\al^{\b-\a}\bullet\l+\mu}\sbs X_{\b,\l}$,
as wanted.
\end{proof}

\begin{notation}\label{notquorem}
Given $x\in X$ and $\a\le\xi$, we shall denote by $x_{\a,q}$ and
$x_{\a,r}$ respectively the quotient and the remainder of the (left)
division of $x$ by $\al^{\a}$, namely the unique ordinals such that
$x_{\a,r}<\al^{\a}$ and
\begin{equation}\label{decomp}
x = \al^{\a}\bullet x_{\a,q} + x_{\a,r}.
\end{equation}
Note that $x_{\a,q}<\al^{\xi-\a}\bullet\bet$ by Proposition
\ref{ruleuclidd}.
\end{notation}

Let us consider the ring $Q = \CFM_X(D)$ and, for every $\a\le\xi$,
let us consider the subset $Q_{\a}$ of $Q$ consisting of those
matrices $\za$ satisfying the following condition:
\begin{equation}\label{magicmatrix}
\za(x,y) = \d(x_{\a,r},y_{\a,r})\,\za(\al^{\a}\bullet
x_{\a,q},\al^{\a}\bullet y_{\a,q}) \quad\text{ for all }x,y\in X
\end{equation}
(here and in the sequel $\d$ stands for the ``Kronecker delta''
function). Thus $Q_{\a}$ consists of those matrices $\za\in Q$ such
that, for every $\l,\mu<\al^{\xi-\a}\bullet\bet$, the block
$\za(X_{\a,\l},X_{\a,\mu})$ is a scalar
$\al^{\a}\times\al^{\a}$-matrix. It is clear that $Q_{0} = Q$ and $D\sbs
Q_{\a}$ for all $\a$.

\begin{theo}\label{pro-chainflr}
Given an ordinal $\xi>0$, a cardinal $\bet>0$ and a ring $D$, let
$\al$ be the first \underbar{infinite} cardinal such that
$\sup\{|\xi|,\bet\}\le\al$, set $X = \al^{\xi}\bullet\bet$ and
consider the ring $Q = \CFM_X(D)$. Then, with the above notations,
the following properties hold:
\begin{enumerate}
    \item For every $\a\le\xi$ there is a unital monomorphism
$\f_\a\colon\CFM_{\al^{\xi-\a}\bullet\bet}(D)$$\to Q$ of rings such
that $\Imm(\f_\a) = Q_{\a}$; in particular, if $\a<\xi$, then
$Q_{\a}$ is a unital $D$-subring of $Q$ isomorphic to $Q$.
    \item Given $\a\le\xi$, let us consider the $D$-subrings $F_\a =
\f_\a(\FR_{\al^{\xi-\a}\bullet\bet}(D))$ of $Q_{\a}$ and $G_\a =
\f_\a(\FM_{\al^{\xi-\a}\bullet\bet}(D))$ of $Q_{\a}$. Then a matrix $\zb\in Q_{\a}$
belongs to $F_\a$ if and only if it satisfies the following
condition:
\begin{align*}
    (\star)\quad &\text{there are
$\l_1,\ldots,\l_n\in\al^{\xi-\a}\bullet\bet$ such that if the
$x$-th row of $\zb$ is} \\
&\text{not zero, then $x\in X_{\a,\l_1}\cup\cdots\cup X_{\a,\l_n}$},
\end{align*}
while $\zb$ belongs to $G_\a$ if and only if it satisfies the following
condition:
\begin{align*}
    (\star\star)\quad &\text{there are
$\l_1,\ldots,\l_n\in\al^{\xi-\a}\bullet\bet$ such that if the entry
$\zb(x,y)$ of $\zb$ is not} \\
&\text{zero, then $x,y\in X_{\a,\l_1}\cup\cdots\cup X_{\a,\l_n}$}.
\end{align*}
\item If $\a<\b\le\xi$, then $Q_\b\sbs Q_\a$\,. \item If
$\a_1<\ldots<\a_n<\b\le\xi$, then
\[
\left[F_{\a_1}+\cdots+F_{\a_n}\right]\cap Q_{\b} = 0;
\]
consequently the set $\{F_{\a}\mid \a\le\xi\}$ of $(D,D)$-submodules
of $Q$ is independent and so is, in turn, the set $\{G_{\a}\mid \a\le\xi\}$.
\end{enumerate}
\end{theo}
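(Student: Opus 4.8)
The plan is to realize each $\f_\a$ as a \emph{block blow-up} map attached to the partition $\CP_\a$, and then to read off the four assertions from the way this map interacts with the left divisions of ordinals recorded in Notation~\ref{notquorem}. So fix $\a\le\xi$, and for a column-finite matrix $A$ over $D$ indexed by $\al^{\xi-\a}\bullet\bet$, with entries $A(\l,\mu)$, set
\[
\f_\a(A)(x,y)\defug\d(x_{\a,r},y_{\a,r})\,A(x_{\a,q},y_{\a,q})\qquad(x,y\in X);
\]
block by block with respect to $\CP_\a$ this means that $\f_\a(A)(X_{\a,\l},X_{\a,\mu})$ is the scalar $\al^\a\times\al^\a$ matrix with constant $A(\l,\mu)$. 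First I would check that $\f_\a(A)\in Q$: the $y$-column of $\f_\a(A)$ is supported on those $x$ with $x_{\a,r}=y_{\a,r}$ and $x_{\a,q}$ in the (finite) support of the $y_{\a,q}$-column of $A$, hence is finite. Additivity and $D$-bilinearity of $\f_\a$ are immediate; the only point of substance is multiplicativity, where expanding $(\f_\a(A)\f_\a(B))(x,y)=\sum_z\d(x_{\a,r},z_{\a,r})\,\d(z_{\a,r},y_{\a,r})\,A(x_{\a,q},z_{\a,q})\,B(z_{\a,q},y_{\a,q})$ one notes that the Kronecker factors force $x_{\a,r}=z_{\a,r}=y_{\a,r}$ and that, as $z$ runs over the elements with that fixed remainder, $z\mapsto z_{\a,q}$ is a bijection onto $\al^{\xi-\a}\bullet\bet$, so the sum collapses to $\d(x_{\a,r},y_{\a,r})(AB)(x_{\a,q},y_{\a,q})=\f_\a(AB)(x,y)$. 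Injectivity is clear since $A(\l,\mu)=\f_\a(A)(\al^\a\bullet\l,\al^\a\bullet\mu)$, the identity matrix maps to $\mathbf{1}$, and condition~\eqref{magicmatrix} says precisely that a matrix $\za\in Q$ lies in $Q_\a$ iff $\za=\f_\a\bigl((\za(\al^\a\bullet\l,\al^\a\bullet\mu))_{\l,\mu}\bigr)$ (this last matrix being column-finite for the same reason as above); hence $\Imm(\f_\a)=Q_\a$. Finally, for $\a<\xi$ we have $0<\xi-\a\le\xi$, so $|\al^{\xi-\a}|=\al$ by Proposition~\ref{cardaleph}, whence $|\al^{\xi-\a}\bullet\bet|=\al=|X|$ and $\CFM_{\al^{\xi-\a}\bullet\bet}(D)\cong\CFM_X(D)=Q$. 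This gives (1).

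For (2) I would write $\zb=\f_\a(A)$ with $A=(\zb(\al^\a\bullet\l,\al^\a\bullet\mu))_{\l,\mu}$ as above. The $\l$-th row of $A$ is zero exactly when the $(\al^\a\bullet\l)$-th row of $\zb$ is zero, while the $x$-th row of $\zb$ is zero whenever the $x_{\a,q}$-th row of $A$ is zero; so $A$ has finitely many nonzero rows, say $\l_1,\dots,\l_n$, iff all nonzero rows of $\zb$ lie in $X_{\a,\l_1}\cup\dots\cup X_{\a,\l_n}$, i.e. iff $\zb$ satisfies $(\star)$. Since $F_\a=\f_\a(\FR_{\al^{\xi-\a}\bullet\bet}(D))$, this is the asserted description of $F_\a$; running the same argument with ``nonzero entries'' in place of ``nonzero rows'' gives the description of $G_\a=\f_\a(\FM_{\al^{\xi-\a}\bullet\bet}(D))$ by $(\star\star)$.

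For (3), fix $\a<\b\le\xi$ and $\za\in Q_\b$; I would verify condition~\eqref{magicmatrix} for $\a$ directly. Write $x_{\a,q}=\al^{\b-\a}\bullet q_x+r_x$ with $r_x<\al^{\b-\a}$. Using $\al^\b=\al^\a\bullet\al^{\b-\a}$ (Proposition~\ref{rulexp}) together with uniqueness of left quotient and remainder one finds $x_{\b,q}=q_x$ and $x_{\b,r}=\al^\a\bullet r_x+x_{\a,r}$, and that $\al^\a\bullet x_{\a,q}$ has $\b$-quotient $q_x$ and $\b$-remainder $\al^\a\bullet r_x$; hence $\d(x_{\b,r},y_{\b,r})=\d(r_x,r_y)\,\d(x_{\a,r},y_{\a,r})$. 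Applying~\eqref{magicmatrix} for $Q_\b$ both to $(x,y)$ and to $(\al^\a\bullet x_{\a,q},\al^\a\bullet y_{\a,q})$, and combining with this $\d$-factorization, yields $\za(x,y)=\d(x_{\a,r},y_{\a,r})\,\za(\al^\a\bullet x_{\a,q},\al^\a\bullet y_{\a,q})$, so $\za\in Q_\a$ and $Q_\b\sbs Q_\a$. (Alternatively, by Lemma~\ref{lpartpart} each $\CP_\a$-block of a matrix in $Q_\b$ is a sub-block of a scalar $\al^\b\times\al^\b$ block and so is again scalar.)

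For (4), suppose $\za=\zb_1+\dots+\zb_n\in Q_\b$ with $\zb_i\in F_{\a_i}$ and $\a_1<\dots<\a_n<\b\le\xi$. By (2) each $\zb_i$ has its nonzero rows inside finitely many blocks of $\CP_{\a_i}$, and by Lemma~\ref{lpartpart} $\CP_{\a_n}$ is $\al$-coarser than each $\CP_{\a_i}$, so each of those blocks lies in a single block of $\CP_{\a_n}$; therefore all nonzero rows of $\za$ lie in finitely many blocks of $\CP_{\a_n}$. On the other hand, if $\za\ne\mathbf{0}$ then some block $\za(X_{\b,\l},X_{\b,\mu})$ is a nonzero scalar matrix, so every row indexed by an element of $X_{\b,\l}$ is a nonzero row of $\za$; but $X_{\b,\l}$ is, by Lemma~\ref{lpartpart}, the union of the pairwise distinct --- hence infinitely many --- blocks $X_{\a_n,\,\al^{\b-\a_n}\bullet\l+\mu}$, $\mu<\al^{\b-\a_n}$, of $\CP_{\a_n}$, and a set meeting infinitely many blocks of a partition cannot lie inside finitely many of them. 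So $\za=\mathbf{0}$, proving $[F_{\a_1}+\dots+F_{\a_n}]\cap Q_\b=0$. Independence of $\{F_\a\mid\a\le\xi\}$ then follows by induction on the number of summands: from $\za_{\a_1}+\dots+\za_{\a_m}=\mathbf{0}$ with $\a_1<\dots<\a_m$ and $\za_{\a_i}\in F_{\a_i}$ one gets $\za_{\a_m}\in[F_{\a_1}+\dots+F_{\a_{m-1}}]\cap Q_{\a_m}=0$, and one iterates. Since a matrix with finitely many nonzero entries has finitely many nonzero rows, $G_\a\sbs F_\a$ for every $\a$, so the independence of $\{G_\a\mid\a\le\xi\}$ is inherited. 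I expect the main bookkeeping hurdle to be keeping the nested left divisions straight --- chiefly in the multiplicativity check of (1) and in the verification of~\eqref{magicmatrix} in (3); everything else is a routine unwinding of definitions.
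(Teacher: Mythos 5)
Your proposal is correct and follows essentially the same route as the paper: the same explicit formula $\f_\a(A)(x,y)=\d(x_{\a,r},y_{\a,r})A(x_{\a,q},y_{\a,q})$, the same inverse construction $A(\l,\mu)=\zb(\al^\a\bullet\l,\al^\a\bullet\mu)$ for surjectivity onto $Q_\a$, the same reading of $(\star)$ and $(\star\star)$ block by block, the same left-division bookkeeping for (3), and the same counting argument (finitely many versus $\al$ many blocks of $\CP_{\a_n}$) for (4). The only small additions you make — spelling out that $\f_\a(A)$ is column-finite, deriving the ``consequently'' independence claims by induction, and noting $G_\a\sbs F_\a$ — are correct and fill in steps the paper leaves implicit.
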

\begin{proof}
(1) Given $\a\le\xi$, let us define the map
$\map{\f_\a}{\CFM_{\al^{\xi-\a}\bullet\bet}(D)}{Q}$ as follows: for
all $x,y\in X$
\begin{equation}\label{a}
\f_{\a}(\za)(x,y) = \d(x_{\a,r},y_{\a,r})\,\za(x_{\a,q},y_{\a,q}).
\end{equation}
Then, given $\za\in \CFM_{\al^{\xi-\a}\bullet\bet}(D)$ and $x,y\in
X$, we have that
\begin{align*}
\f_{\a}(\za)(x,y) &=
\d(x_{\a,r},y_{\a,r})\,\d(0,0)\,\za(x_{\a,q},y_{\a,q}) \\
                  &= \d(x_{\a,r},y_{\a,r})\,\f_{\a}(\za)(\al^{\a}\bullet
                        x_{\a,q},\al^{\a}\bullet y_{\a,q}),
\end{align*}
therefore $\Imm(\f_{\a})\sbs Q_{\a}$. Conversely, given $\zb\in
Q_{\a}$, let $\za\in \CFM_{\al^{\xi-\a}\bullet\bet}(D)$ be the
matrix defined by $\za(\l,\mu) =
\zb(\al^{\a}\bullet\l,\al^{\a}\bullet\mu)$ for all
$\l,\mu<\al^{\xi-\a}\bullet\bet$. Then for every $x,y\in X$ we
have
\begin{align*}
\f_{\a}(\za)(x,y) &= \d(x_{\a,r},y_{\a,r})\,\za(x_{\a,q},y_{\a,q}) \\
                  &= \d(x_{\a,r},y_{\a,r})\, \zb(\al^{\a}\bullet
x_{\a,q},\al^{\a}\bullet y_{\a,q}) \\
                  &= \zb(x,y);
\end{align*}
consequently $\zb = \f_{\a}(\za)$ and hence $Q_{\a} =
\Imm(\f_{\a})$. It is clear that $\f_{\a}(1) = 1$ and $\f_{\a}$ is a
homomorphism of additive groups. Given $\za, \zb\in
\CFM_{\al^{\xi-\a}\bullet\bet}(D)$, for all $x, y\in X$ we have that
\begin{align*}\label{ringmorph}
\f_{\a}(\za\zb)(x,y) &=
\d(x_{\a,r},y_{\a,r})\,(\za\zb)(x_{\a,q},y_{\a,q}) \\
 &= \sum_{\mu<\al^{\xi-\a}\bullet\bet}
\d(x_{\a,r},y_{\a,r})\,\za(x_{\a,q},\mu)\,\zb(\mu,y_{\a,q}) \\
&= \sum_{\substack{\mu<\al^{\xi-\a}\bullet\bet\\\r<\al^{\a}}}
\d(x_{\a,r},\r)\,\d(\r,y_{\a,r})\,
\za(x_{\a,q},\mu)\,\zb(\mu,y_{\a,q}) \\
&= \sum_{z\in X} \d(x_{\a,r},z_{\a,r})\,\d(z_{\a,r},y_{\a,r})\,
\za(x_{\a,q},z_{\a,q})\,\zb(z_{\a,q},y_{\a,q}) \\
&= \left(\f_{\a}(\za)\,\f_{\a}(\zb)\right)(x,y),
\end{align*}
hence $\f_{\a}$ is a ring homomorphism. Finally, if $\za\in
\CFM_{\al^{\xi-\a}\bullet\bet}(D)$ and $\za(\l,\mu)\ne 0$ for some
$\l,\mu<\al^{\xi-\a}\bullet\bet$, then $(\f_{\a}(\za))(x,y) \ne 0$
whenever $x = \al^{\a}\bullet\l+\r$ and $y = \al^{\a}\bullet\mu+\r$
for some $\r<\al^{\a}$; this shows that $\f_{\a}$ is injective. As a
result, if $\a<\xi$,  since $|\al^{\xi-\a}\bullet\bet| = \al$ by
Proposition \ref{cardaleph}, we have that $Q_{\a}\is
\CFM_{\al^{\xi-\a}\bullet\bet}(D)\is Q$.

(2) Let $\zb\in Q_\a$ and take $\za\in
\CFM_{\al^{\xi-\a}\bullet\bet}(D)$ such that $\zb = \f_\a(\za)$. If
$\zb\in F_{\a}$, that is $\za\in\FR_{\al^{\xi-\a}\bullet\bet}(D)$,
then there are $\l_1,\ldots,\l_n\in\al^{\xi-\a}\bullet\bet$ such
that the $\l$-th row of $\za$ is not zero only if $\l = \l_i$ for
some $i$. Consequently, if $x,y\in X$, by \eqref{a} we see that
$\zb(x,y)\ne 0$ only if $\za(x_{\a,q},y_{\a,q}) \ne 0$, only if
$x_{\a,q} = \l_i$ for some $i$, only if $x\in
X_{\a,\l_1}\cup\cdots\cup X_{\a,\l_n}$. Similarly, if $\zb\in G_{\a}$, namely $\za\in\FM_{\al^{\xi-\a}\bullet\bet}(D)$,
then there are $\l_1,\ldots,\l_n\in\al^{\xi-\a}\bullet\bet$ such
that the $(\l,\mu)$-entry of $\za$ is not zero only if $\l = \l_i$ and $\mu = \l_j$ for
some $i,j$. Consequently, if $x,y\in X$, again from \eqref{a} we see that
$\zb(x,y)\ne 0$ only if $\za(x_{\a,q},y_{\a,q}) \ne 0$, only if
$x_{\a,q} = \l_i$ and $y_{\a,q} = \l_j$ for some $i,j$, only if $x,y\in
X_{\a,\l_1}\cup\cdots\cup X_{\a,\l_n}$. Conversely, assume that
$\zb$ satisfies ($\star$) and let $\l,\mu<\al^{\xi-\a}\bullet\bet$
be such that $\za(\l,\,\mu) \ne 0$. By taking $x =
\al^{\a}\bullet\l$ and $y = \al^{\a}\bullet\mu$, we infer from \eqref{a} that $\zb(x,y) = \d(0,0)\,\za(\l,\mu) =
\za(\l,\mu) \ne 0$, therefore $x\in X_{\a,\l_i}$ for some $i$ and
hence $\l = x_{\a,q} = \l_i$. Thus $\za$ has only a finite number of
nonzero rows and so $\zb\in F_{\a}$. A similar argument shows that if $\zb$ satisfies ($\star\star$), then $\za$ has only a finite number of
nonzero entries and so $\zb\in G_{\a}$.

(3) Suppose that $\a<\b\le\xi$ and let $\za\in Q_\b$. Since $X =
\al^{\a}\bullet\al^{\b-\a}\bullet(\al^{\xi-\b}\bullet\bet)$, it
follows from Proposition \ref{ruleuclidd} that for every $x\in X$
there is a unique $x'<\al^{\b-\a}$ such that
\[
x = \al^{\b}\bullet x_{\b,q} + \al^{\a}\bullet x' + x_{\a,r},
\]
from which
\[
x_{\a,q} = \al^{\b-\a}\bullet x_{\b,q} + x' \quad\text{ and }
x_{\b,r} = \al^{\a}\bullet x' + x_{\a,r}.
\]
As a result, since $\za\in Q_\b$, for every $x,y\in X$ we have the
following equalities:
\begin{align*}
\za(x,y)&= \d(x_{\b,r},y_{\b,r})\,\za(\al^{\b}\bullet x_{\b,q},
           \al^{\b}\bullet y_{\b,q}) \\
          &= \d(x',y')\,\d(x_{\a,r},y_{\a,r})\,
          \za(\al^{\b}\bullet x_{\b,q},
            \al^{\b}\bullet y_{\b,q}) \\
          &= \d(x_{\a,r},y_{\a,r})\,\d(\al^{\a}\bullet x',\al^{\a}\bullet
y')\,
          \za(\al^{\b}\bullet x_{\b,q},
            \al^{\b}\bullet y_{\b,q}) \\
          &= \d(x_{\a,r},y_{\a,r})\,
          \za(\al^{\b}\bullet x_{\b,q}+\al^{\a}\bullet x',
           \al^{\b}\bullet y_{\b,q}+\al^{\a}\bullet y') \\
          &= \d(x_{\a,r},y_{\a,r})\,\za(\al^{\a}\bullet x_{\a,q},
          \al^{\a}\bullet y_{\a,q}).
\end{align*}
Thus \eqref{magicmatrix} holds and hence $\za\in Q_{\a}$.

(4) Assume that $\a_1<\ldots<\a_n<\b\le\xi$ and that there are
non-zero elements $\za_1\in F_{\a_1},\ldots,\za_n\in F_{\a_n},\zb\in
Q_{\b}$ such that
\[
\za_1 + \cdots + \za_n = \zb.
\]
If $x_0,y_0\in X$ are such that $\zb(x_0,y_0) \ne 0$, then there are
$Y,Z\in\CP_\b$ such that $x_0\in Y$, $y_0\in Z$ and all the rows of
the block $\zb(Y,Z)$ are nonzero. Note that $Y$ is the union of a
subset $\CY$ of $\CP_{\a_n}$ of cardinality $\al$, because $\CP_\b$
is $\al$-coarser than $\CP_{\a_n}$; thus, by the above, for each
$U\in\CY$ and each $x\in U$ the $x$-th row of $\zb$ is not zero. On
the other hand the assumptions on $\za_1,\ldots,\za_n$, together
with the previously shown property (2) and the fact that
$\CP_{\a_{i+1}}$ is $\al$-coarser than $\CP_{\a_i}$ for $1\le i<n$,
imply that there are $Y_1,\ldots,Y_k\in\CP_{\a_n}$ such that the
$x$-th row of any $\za_i$ is not zero only if $x\in Y_1\cup\ldots
\cup Y_k$. As a result the $x$-th row of $\zb$ is not zero only if
$x\in Y_1\cup\ldots \cup Y_k$: a contradiction since $|\CY|=\al$ is
infinite.
\end{proof}

\begin{re}\label{elementidempot}
Given $\a<\xi$, we have the set $\{\ze_{\l}\mid
\l\in\al^{\xi-\a}\bullet\bet\}$ of pairwise orthogonal idempotents
which generates $\FR_{\al^{\xi-\a}\bullet\bet}(D)$ as a right ideal
of $\CFM_{\al^{\xi-\a}\bullet\bet}(D)$; each $\ze_{\l}$
gene\-rates $\FR_{\al^{\xi-\a}\bullet\bet}(D)$ as a (two-sided)
ideal. As a result, because of the embedding $\f_{\a}$, we have the
set
\[
\{\ze_{X_{\a,\l}} = \f_{\a}(\ze_{\l})\mid
\l\in\al^{\xi-\a}\bullet\bet\} = \{\ze_{Y}\mid Y\in\CP_{\a}\}
\]
of pairwise orthogonal idempotents of the ring $Q_{\a}$. For every $Y\in\CP_{\a}$ we have
the equalities
\begin{gather*}
\ze_{Y}Q_{\a} = \ze_{Y}F_{\a}, \\
F_{\a} = \bigoplus\{\ze_{Z}Q_{\a}\mid Z\in\CP_{\a}\} = F_{\a}\ze_{Y}F_{\a}
\end{gather*}
and, similarly,
\begin{gather*}
Q_{\a}\ze_{Y} = G_{\a}\ze_{Y},  \\
G_{\a} = \bigoplus\{Q_{\a}\ze_{Z}\mid Z\in\CP_{\a}\} = G_{\a}\ze_{Y}G_{\a}
\end{gather*}
(see \eqref{eq FR1}, \eqref{eq FR2}, \eqref{eq FM1} and \eqref{eq FM2}).
\end{re}

\begin{remnotation}\label{elementidempot1}
For every $\a<\xi$, given $Y,Z\in\CP_{\a}$ we shall denote by $\ze_{Y,Z}$ the matrix such that the $(Y,Z)$-block is the unital $\al^{\a}\times\al^{\a}$-matrix, while all other entries are zero. As $X = X_{\a,\l}$ and $Y = X_{\a,\mu}$ for unique $\l,\mu\in\al^{\xi-\a}\bullet\bet$, then $\ze_{Y,Z} = \f_{\a}(\ze_{\l,\mu})$; more explicitly: for every $x,y\in X$
\[
\ze_{Y,Z}(x,y) = \left\{
                   \begin{array}{ll}
                     1, & \hbox{if $x = \al^{\a}\bullet\l+\r$, $x = \al^{\a}\bullet\mu+\r$ for some $\r<\al^{\a}$;} \\
                     0, & \hbox{otherwise.}
                   \end{array}
                 \right.
\]
Each matrix in $G_{\a}$ is a finite sum of matrices of the form $d\ze_{Y,Z} = \ze_{Y,Z}d$, for $d\in D$ and $Y,Z\in\CP_{\a}$.
\end{remnotation}

\section{Representing artinian partially ordered sets over $\CFM_X(D)$.}
\label{sectrepresartin}

Let us call a \emph{polarized (artinian) poset} an ordered pair $(I,I')$, where $I$ is an artinian poset and $I'$ is a lower subset of $I$. However, in order to simplify notation, from now on we shall use the single letter $I$ in order to designate a polarized artinian poset, while the symbol $I'$ will denote the prescribed lower subset of $I$. Starting from a polarized artinian poset $I$, a ring $D$ and an appropriately sized transfinite ordinal $X$, our main objective in the present section is to associate to each element $i\in I$ a (not necessarily unital) $D$-subring $H_i$ of $Q = \CFM_X(D)$, in such a way that $\CH = \{H_i\mid i\in I\}$ is independent as a set of $(D,D)$-submodules of $Q$ and present the following features: if $i$ is a maximal element of $I$, then $H_i$ is isomorphic to $D$; if $i$ is not maximal and belongs to $I'$ (resp. to $I\setminus I'$), then
$H_i$ is isomorphic to $\FR_X(D)$ (resp. to $\FM_X(D)$); moreover $H_i H_j = 0$ if and
only if $i,j$ are not comparable, while both $H_i H_j$ and $H_j H_i$ are nonzero and
are contained in $H_i$ if $i\le j$.

In order to reach this goal we need a preliminary setup, in which Theorem \ref{pro-chainflr} will
play a central role. This setup will concern just artinian posets; polarized artinian posets will enter the scene only after the setup is ready, so that the above rings $H_i$ can be introduced and we are able to prove that they have the above outlined behavior.

\begin{notations}\label{not main}
In what follows \emph{$I$ is a given artinian poset} and, by keeping the notations introduced in the previous sections, we set the following data and further notations:
\begin{itemize}
    \item   $\xi$ is the dual classical Krull dimension of $I$.
    \item $\CM$ is the set of all maximal chains of $I$; we consider the cardinal
$\bet \defug |\CM|$ and we choose a bijection $\chi\mapsto A_{\chi}$
from $\bet$ to $\CM$.
    \item For every $i\in I$, $\CM_i$ is the set of all maximal chains of $I$ which include $i$:
\[
\CM_i \defug \{A\in\CM\mid i\in A\}.
\]
    \item Given $i\in I$, the binary relation $\sim_{i}$ in $\CM_i$
defined by
\[
A\sim_{i} B \quad\text{\em if and only if }\quad A\cap\{\le i\} =
B\cap\{\le i\}
\]
is clearly an equivalence; set $\,\,\BD_i = \CM_i/\!\!\!\sim_{i}$
and note that there is an obvious one to one correspondence between
the elements of $\,\,\BD_i$ and the maximal chains of $\{\le i\}$.
    \item Denoting by $\al$ the first \underbar{infinite} cardinal such that $\al\ge\sup\{|I|,\bet\}$,
we consider the ordinal
\[
X \defug \al^{\xi+1}\bullet\bet\,.
\]
Note that $|X| = \al$ by Proposition \ref{cardaleph}.
    \item $\CP_\a$ is the partition of $X$ defined by \eqref{partition}, for
all $\a\le\xi+1$.
    \item Given $\chi<\bet$, $i\in I$, $\CA\in\BD_i$, we set
\begin{align*}
X_{\chi} &\defug X_{\xi+1,\chi},\quad\text{so
that}\quad\{X_{\chi}\mid
                 \chi<\bet\}= \CP_{\xi+1};\\
\bet_{\CA} &\defug \{\chi<\bet\mid A_{\chi}\in\CA\}; \\
X_{\CA} &\defug \bigcup\{X_{\chi}\mid \chi\in\bet_{\CA}\}=
\{\al^{\xi+1}\bullet\chi+\t\mid\chi\in\bet_{\CA},
\t<\al^{\xi+1}\}; \\
X_i &\defug \bigcup\{X_{\CA}\mid \CA\in\BD_i\} =
\bigcup\{X_{\chi}\mid A_{\chi}\in\CM_i\}.
\end{align*}
Note that, since $\l(i)<\xi+1$, every $X_{\chi}$ is a disjoint union
of $\al$ members of $\CP_{\l(i)}$, each of which has the form
$X_{\l(i),\l}$ for a unique $\l\in\al^{\xi+1-\l(i)}\bullet\bet$ (see
Lemma \ref{lpartpart}). Set
\begin{align*}
\L_{\CA} &\defug \{\l<\al^{\xi+1-\l(i)}\bullet\bet\mid
X_{\l(i),\l}\sbs X_{\CA}\}; \\
\CQ_{\CA} &\defug \{X_{\l(i),\l}\mid \l\in\L_{\CA}\} = \{Y\in\CP_{\l(i)}\mid Y\sbs X_{\CA}\}.
\end{align*}
As a consequence $|\L_{\CA}| = |\CQ_{\CA}| = \al$ and so
$\CQ_{\CA}\in\BP_{\al}(X_{\CA})$; moreover
\begin{equation}\label{prelem}
    X_{\CA} = \{\al^{\l(i)}\bullet\l+\r\mid\l\in\L_{\CA},\r<\al^{\l(i)}\} = \bigcup\CQ_{\CA}.
\end{equation}
\end{itemize}
\end{notations}

\begin{lem}\label{lemlem}
Given $i\in I$ and $\CA\in\BD_i$, with the above notations we have
\begin{equation}\label{llem}
  \L_{\CA} = \{\al^{\xi+1-\l(i)}\bullet\chi+\s\mid
\chi\in\bet_{\CA},\s<\al^{\xi+1-\l(i)}\}.
\end{equation}
\end{lem}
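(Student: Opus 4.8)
The plan is to unwind the definitions and reduce the claimed equality to a single application of the partition property in Remark \ref{remeuclid}, applied at the appropriate level of the nested partitions $\CP_\a$. By definition, $\L_{\CA} = \{\l < \al^{\xi+1-\l(i)}\bullet\bet \mid X_{\l(i),\l}\sbs X_{\CA}\}$, and $X_{\CA} = \bigcup\{X_\chi \mid \chi\in\bet_{\CA}\}$ where $X_\chi = X_{\xi+1,\chi}$. Since each $X_\chi$ is a block of the coarsest partition $\CP_{\xi+1}$ and $\CP_{\l(i)}$ is $\al$-finer (by Lemma \ref{lpartpart}, using $\l(i)\le\xi<\xi+1$), a block $X_{\l(i),\l}$ of $\CP_{\l(i)}$ is contained in $X_{\CA}$ if and only if it is contained in exactly one of the blocks $X_\chi$ with $\chi\in\bet_{\CA}$ — the blocks of $\CP_{\xi+1}$ being pairwise disjoint. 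Thus the condition ``$X_{\l(i),\l}\sbs X_{\CA}$'' is equivalent to ``there is $\chi\in\bet_{\CA}$ with $X_{\l(i),\l}\sbs X_{\xi+1,\chi}$''.

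Next I would invoke Lemma \ref{lpartpart} in the precise form \eqref{partpart}, taking $\a = \l(i)$ and $\b = \xi+1$: for each $\chi<\bet = \al^{\xi+1-(\xi+1)}\bullet\bet$ we have
\[
X_{\xi+1,\chi} = \bigcup\{X_{\l(i),\,\al^{\xi+1-\l(i)}\bullet\chi+\mu}\mid \mu<\al^{\xi+1-\l(i)}\}.
\]
Because the blocks $X_{\l(i),\l}$, as $\l$ ranges over $\al^{\xi+1-\l(i)}\bullet\bet$, form a partition of $X$, the index $\l$ of a $\CP_{\l(i)}$-block is uniquely determined, and $X_{\l(i),\l}\sbs X_{\xi+1,\chi}$ holds if and only if $\l = \al^{\xi+1-\l(i)}\bullet\chi+\mu$ for some $\mu<\al^{\xi+1-\l(i)}$. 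Combining this with the previous paragraph gives
\[
\L_{\CA} = \{\al^{\xi+1-\l(i)}\bullet\chi+\s \mid \chi\in\bet_{\CA},\ \s<\al^{\xi+1-\l(i)}\},
\]
which is exactly \eqref{llem}. To present this cleanly I would argue the two inclusions separately: ``$\sps$'' is immediate from \eqref{partpart}, and ``$\sbs$'' uses that for $\l\in\L_{\CA}$ the division of $\l$ by $\al^{\xi+1-\l(i)}$ (Proposition \ref{ruleuclid}) yields a unique quotient $\chi$ and remainder $\s$, and one checks that $\chi$ must lie in $\bet_{\CA}$ by disjointness of the $X_\chi$.

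I do not anticipate a serious obstacle here; the lemma is essentially bookkeeping about the nested partitions. The only point requiring a little care is making sure the hypothesis of Lemma \ref{lpartpart} is met, i.e.\ that $\l(i) < \xi+1$ strictly — this is guaranteed since $\l(i) = \l_I(i)+1$ and $\l_I(i)<\xi$ by Proposition \ref{lengthlayer} (the canonical length function takes values in $\xi$), so $\l(i)\le\xi<\xi+1$; one should also note the degenerate possibility $\l(i)=\xi+1$ does not occur. A secondary point is to keep the exponents straight: $\al^{\xi+1-\l(i)}$ is the ``coarsening factor'' between level $\l(i)$ and level $\xi+1$, consistent with $\al^{\b-\a}$ appearing in \eqref{partpart} when $\a=\l(i)$, $\b=\xi+1$, and $(\xi+1)-\l(i) = \xi+1-\l(i)$ by left-cancellation of ordinal subtraction. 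Once these are in place the proof is a two-line composition of Lemma \ref{lpartpart} with the disjointness of $\CP_{\xi+1}$.
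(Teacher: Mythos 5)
Your plan is correct and matches the paper's proof: both reduce to the single application of Lemma~\ref{lpartpart} (formula~\eqref{partpart}) with $\a=\l(i)$, $\b=\xi+1$, and then invoke disjointness of the $\CP_{\xi+1}$-blocks to convert ``$X_{\l(i),\l}\sbs X_{\CA}$'' into ``$\chi\in\bet_{\CA}$''. The paper's version is just a terser rendering of the same argument, noting $X_{\l(i),\l}\sbs X_{\chi}$ from~\eqref{partpart} and then observing $X_{\l(i),\l}\sbs X_{\CA}$ iff $X_{\chi}\sbs X_{\CA}$.
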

\begin{proof}
Let $\l = \al^{\xi+1-\l(i)}\bullet\chi+\s$ for some $\chi<\bet$ and
$\s<\al^{\xi+1-\l(i)}$. Then it follows from \eqref{partpart} that
$X_{\l(i),\l}\sbs X_{\xi+1,\chi} = X_{\chi}$. Consequently
$X_{\l(i),\l}\sbs X_{\CA}$ if and only if $X_{\chi}\sbs X_{\CA}$,
namely $\l\in\L_{\CA}$ if and only if $\chi\in\bet_{\CA}$.
\end{proof}

\begin{pro}\label{pro max}
Two elements $i,j\in I$ are comparable if and only if $X_{i}\cap X_{j}\ne\emptyset$. Consequently, if every maximal chain of $I$ is bounded by a maximal element and $\ZM(I)$ denotes the set of all maximal elements of $I$, then the set $\{X_{m}\mid m\in\ZM(I)\}$ is a partition of $X$.
\end{pro}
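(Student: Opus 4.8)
The plan is to translate the statement about comparability of $i,j$ into a statement about the index sets $\bet_\CA$ that define $X_i$ and $X_j$, using Lemma \ref{lemlem} and the observation from Notations \ref{not main} that $X_i=\bigcup\{X_\chi\mid A_\chi\in\CM_i\}$. Since the $X_\chi$ ($\chi<\bet$) are pairwise disjoint (they are the blocks of the partition $\CP_{\xi+1}$), we have $X_i\cap X_j\ne\emptyset$ if and only if there exists $\chi<\bet$ with $A_\chi\in\CM_i\cap\CM_j$, i.e. if and only if $\CM_i\cap\CM_j\ne\emptyset$. So the whole proposition reduces to the purely combinatorial fact: \emph{two elements of an artinian poset $I$ are comparable if and only if some maximal chain of $I$ contains both of them.}

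For that combinatorial fact, the ``if'' direction is trivial: a chain is totally ordered, so if $i,j$ both lie in a common chain they are comparable. For ``only if'', suppose $i\le j$ (the case $j\le i$ is symmetric). I would first produce \emph{some} chain $C_0$ containing both $i$ and $j$ — for instance $C_0=\{i,j\}$ if $i<j$, or $C_0=\{i\}$ if $i=j$ — and then invoke the fact that in any poset every chain is contained in a maximal chain. The existence of maximal chains above a given chain is a standard Zorn's lemma argument (the union of a tower of chains is a chain); here one does not even need the full strength of Zorn, because $I$ is artinian: starting from $\{i,j\}$ one can greedily extend downward below $i$ and upward above $j$ and between/around, and the descending part terminates by DCC, though the cleanest write-up is still just ``every chain extends to a maximal chain.'' This gives a maximal chain $A$ of $I$ with $i,j\in A$, hence $A_\chi=A$ for some $\chi<\bet$, so $X_\chi\subseteq X_i\cap X_j$ and in particular $X_i\cap X_j\ne\emptyset$.

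For the second assertion, assume every maximal chain of $I$ is bounded above by a maximal element, and let $\ZM(I)$ be the set of maximal elements. Each $A\in\CM$ has a greatest element, which must be a maximal element $m$ of $I$, and then $A\in\CM_m$; moreover this $m$ is unique for $A$ since $A$ has a unique greatest element. Thus $\{\CM_m\mid m\in\ZM(I)\}$ is a partition of $\CM$, equivalently $\{\bet_m\mid m\in\ZM(I)\}$ is a partition of $\bet$ (identifying $\chi$ with $A_\chi$). Since $X=\bigcup_{\chi<\bet}X_\chi$ is the disjoint union of the blocks $X_\chi$ of $\CP_{\xi+1}$, applying $\bigcup_{\chi\in\bet_m}$ to each block of this partition of $\bet$ shows that $\{X_m\mid m\in\ZM(I)\}$ is a partition of $X$: the blocks are nonempty (each $X_\chi\ne\emptyset$), pairwise disjoint (distinct $\bet_m$ are disjoint and the $X_\chi$ are disjoint), and cover $X$.

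The main obstacle, such as it is, is bookkeeping rather than depth: one must be careful that ``maximal chain of $I$ containing $i$'' corresponds exactly to ``$A_\chi\in\CM_i$'', that the $X_\chi$ really are disjoint and nonempty (which follows from Remark \ref{remeuclid} and $|X_\chi|=|\al^{\xi+1}|=\al$), and — for the second part — that the hypothesis ``every maximal chain is bounded by a maximal element'' is genuinely needed to guarantee each $A\in\CM$ lands in exactly one $\CM_m$; without it a maximal chain need not have a greatest element and the $\CM_m$ would fail to cover $\CM$.
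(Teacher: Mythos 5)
Your proof is correct and follows essentially the same approach as the paper's: reduce $X_i\cap X_j\ne\vu$ to $\CM_i\cap\CM_j\ne\vu$, then invoke the Hausdorff Maximal Principle to identify the latter with comparability of $i,j$, and derive the second assertion from the observation that every maximal chain contains a unique maximal element of $I$. The only difference is cosmetic — the paper gets disjointness of the $X_m$ by citing the just-proven first statement (distinct maximal elements are incomparable), whereas you argue it directly via uniqueness of the greatest element of a maximal chain — so there is nothing to correct.
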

\begin{proof}
First note that $X_{i}\cap X_{j}\ne\emptyset$ if and only if there is $\chi\in\bet$ such that $X_{\chi}\sbs X_{i}\cap X_{j}$, if and only if there is $\chi\in\bet$ such that $A_{\chi}\in \CM_{i}\cap \CM_{j}$. By the Hausdorff Maximal Principle the latter condition holds if and only if $i$ and $j$ are comparable. Assume now that every maximal chain of $I$ is bounded by a maximal element. Given $\chi\in\bet$, there is $m\in\ZM(I)$ such that $m\in A_{\chi}$; hence $A_{\chi}\in\CM_{m}$ and so $X_{\chi}\sbs X_{m}$. Since the sets $X_{\chi}$ are the members of the partition $\CP_{\xi+1}$ of $X$ and each $X_{m}$ is a union of such sets, the last statement of the proposition follows from the above proven first statement.
\end{proof}

Given $i\in I$ and $\CA, \CA'\in\BD_i$, let us choose $A'\in\CA'$
and let us consider the map
\[
\lmap{f_{\CA'\CA}}{\CA}{\CA'}
\]
defined by
\[
f_{\CA'\CA}(A) = (A'\cap\{\le i\})\cup(A\cap\{i\le\}).
\]
Since $A'\cap\{\le i\} = A''\cap\{\le i\}$ for all $A',A''\in\CA'$,
we see that $f_{\CA'\CA}$ does not depend on the choice of the chain
$A'\in\CA'$. Straightforward computations show that
\begin{equation}\label{ro}
\text{ for all } \CA,\CA',\CA''\in\BD_i\,\,, \quad f_{\CA\CA} =
1_{\CA}\quad\text{and}\quad f_{\CA''\CA'}\,f_{\CA'\CA} =
f_{\CA''\CA};
\end{equation}
in particular each $f_{\CA'\CA}$ is a bijection. Observe that
$f_{\CA'\CA}$ induces the bijection
\[
\lmap{g_{\CA'\CA}}{\bet_{\CA}}{\bet_{\CA'}}
\]
defined as follows: if $\chi\in\bet_{\CA}$, then $g_{\CA'\CA}(\chi)$
is the unique element of $\bet_{\CA'}$ such that
$A_{g_{\CA'\CA}(\chi)} = f_{\CA'\CA}(A_{\chi})$. It follows
immediately from \eqref{ro} that
\begin{equation}\label{roo}
\text{ for all } \CA,\CA',\CA''\in\BD_i \quad g_{\CA\CA} =
1_{\bet_{\CA}}\quad\text{and}\quad g_{\CA''\CA'}\,g_{\CA'\CA} =
g_{\CA''\CA}.
\end{equation}

\begin{lem}\label{lem-partition2}
With the above notations, if $i,j\in I$, then the following hold:
\begin{enumerate}
\item Given $\CA\in\BD_i$ and $\CB\in\BD_j$, if $\CA\cap\CB \ne \emptyset$, then either $\CB\sbs\CA$ or $\CA\sbs\CB$.
\item If $i$ and $j$ are not comparable, then $\CA\cap\CB =
\emptyset$ whenever $\CA\in\BD_i$ and $\CB\in\BD_j$.
\item If $i<j$, then every $\CA\in \BD_i$ contains some $\CB\in \BD_j$. Moreover, if $\CB'\in \BD_j$ and $\CA\cap\CB' \ne \emptyset$, then $\CB'\sbs\CA$.
\item  Assume that $i<j$. If $\CA, \CA'\in\BD_i$, $\CB\in\BD_j$ and $\CB\sbs\CA$,
then $f_{\CA'\CA}(\CB) \in\BD_j$ and, by setting $\CB' =
f_{\CA'\CA}(\CB)$, for all $B\in\CB$ we have
\[
f_{\CA'\CA}(B) = f_{\CB'\CB}(B).
\]
Consequently $g_{\CA'\CA}(\chi) = g_{\CB'\CB}(\chi)$ for all
$\chi\in\bet_{\CB}\sbs \bet_{\CA}$.
\end{enumerate}
\end{lem}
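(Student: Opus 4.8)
The plan is to prove the four parts of Lemma \ref{lem-partition2} in order, exploiting the fact that an element of $\BD_i$ is essentially the same data as a maximal chain of $\{\le i\}$, via the bijection noted in Notations \ref{not main}. Concretely, for $\CA\in\BD_i$ write $C_{\CA} \defug A\cap\{\le i\}$ for any $A\in\CA$; this is well defined and is a maximal chain of $\{\le i\}$, and $\CA\mapsto C_{\CA}$ is the bijection $\BD_i\leftrightarrow\{\text{maximal chains of }\{\le i\}\}$. The guiding principle throughout is: for $A\in\CM$ and $k\in I$ one has $A\cap\{\le k\}$ is a maximal chain of $\{\le k\}$ \emph{precisely when} $k\in A$ (here the Hausdorff Maximal Principle / maximality of $A$ does the work), and then $A\in\CM_k$; moreover if $k\le i$ and $A\in\CM_i$, then $A\cap\{\le k\} = (A\cap\{\le i\})\cap\{\le k\}$ depends only on $C_{\CA}$. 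I would record these two observations as a preliminary remark and then use them mechanically.

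For part (1): suppose $A\in\CA\cap\CB$, so $A\in\CM_i\cap\CM_j$, hence $i,j\in A$ and therefore $i$ and $j$ are comparable; without loss of generality $i\le j$. Then $\{\le i\}\sbs\{\le j\}$, so $C_{\CA} = A\cap\{\le i\} = C_{\CB}\cap\{\le i\}$ is determined by $C_{\CB}$. I claim $\CB\sbs\CA$: if $B\in\CB$ then $C_{\CB} = C_{\CB'}$ where $\CB' $ is the class of $B$; but the class of $B$ \emph{in }$\BD_j$ is $\CB$, so $B\cap\{\le j\} = C_{\CB}$, whence $B\cap\{\le i\} = C_{\CB}\cap\{\le i\} = C_{\CA} = A\cap\{\le i\}$, i.e.\ $B\sim_i A$, i.e.\ $B\in\CA$. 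Part (2) is then immediate: if $\CA\cap\CB\ne\emptyset$ the argument above forces $i,j$ comparable, contradicting the hypothesis, so $\CA\cap\CB=\emptyset$. Part (3), first assertion: given $\CA\in\BD_i$ pick any $A\in\CA$; since $i<j$ is \emph{not} automatic to say $j\in A$, I instead argue as follows — pick a maximal chain $A$ of $I$ extending the chain $C_{\CA}\cup\{j\}$ (which is a chain because every element of $C_{\CA}$ is $\le i<j$); such $A$ exists by Hausdorff, and $A\in\CM_j$, and $A\cap\{\le i\}\sps C_{\CA}$ while maximality of $C_{\CA}$ in $\{\le i\}$ gives $A\cap\{\le i\} = C_{\CA}$, so $A\in\CA$; thus the class $\CB$ of $A$ in $\BD_j$ lies in $\CA$ by part (1). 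For the ``moreover'' in (3): if $\CB'\in\BD_j$ with $\CA\cap\CB'\ne\emptyset$, apply part (1) with the roles giving either $\CB'\sbs\CA$ or $\CA\sbs\CB'$; the second is impossible since $\CA$ contains chains of $I$ not passing through $j$ while every chain in $\CB'$ passes through $j$ — more cleanly, $|C_{\CA}| < |C_{\CB'}|$ in the sense that $C_{\CA} = C_{\CB'}\cap\{\le i\}$ and $j\in C_{\CB'}\setminus\{\le i\}$, so $\CB'\ne\CA$ and hence $\CB'\psbs\CA$, in particular $\CB'\sbs\CA$.

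For part (4): assume $i<j$, $\CA,\CA'\in\BD_i$, $\CB\in\BD_j$ with $\CB\sbs\CA$. First, $f_{\CA'\CA}(\CB)\in\BD_j$: for $B\in\CB$, $f_{\CA'\CA}(B) = (C_{\CA'})\cup(B\cap\{i\le\})$; since $j\in B$ and $i\le j$ we have $j\in B\cap\{i\le\}$, so $j\in f_{\CA'\CA}(B)$, so $f_{\CA'\CA}(B)\in\CM_j$, and because $f_{\CA'\CA}$ restricted to $\CB$ is injective with image inside a single $\sim_j$-class (all these chains share the same intersection with $\{\le j\}$, namely $C_{\CA'}\cup(\{i\le\}\cap\{\le j\}\cap B)$; one checks this intersection is independent of $B\in\CB$ because $B\cap\{\le j\} = C_{\CB}$ is fixed), $f_{\CA'\CA}(\CB)$ is exactly one class $\CB'\in\BD_j$. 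Then the identity $f_{\CA'\CA}(B) = f_{\CB'\CB}(B)$ for $B\in\CB$ is a direct computation: $f_{\CB'\CB}(B) = (A'_j\cap\{\le j\})\cup(B\cap\{j\le\})$ for $A'_j\in\CB'$, and one substitutes $A'_j = f_{\CA'\CA}(B_0)$ for some $B_0\in\CB$ and uses $\{\le i\}\sbs\{\le j\}$, $\{j\le\}\sbs\{i\le\}$, together with $B_0\cap\{\le j\} = B\cap\{\le j\}$, to match the two expressions. The statement about $g$ follows by applying the $A_{(-)}$ indexing to the chain identity.

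I expect no single deep obstacle here — the whole lemma is a careful bookkeeping exercise in maximal chains — but the \emph{main thing to get right} is part (4), specifically verifying that $f_{\CA'\CA}$ does carry the whole class $\CB$ into a single class $\CB'\in\BD_j$ rather than scattering it; the crux is that for $B,B_1\in\CB$ the chains $f_{\CA'\CA}(B)$ and $f_{\CA'\CA}(B_1)$ have the same intersection with $\{\le j\}$, which reduces to: $B\cap(\{i\le\}\cap\{\le j\}) = B_1\cap(\{i\le\}\cap\{\le j\})$, and this holds because $\{i\le\}\cap\{\le j\}\sbs\{\le j\}$ and $B\cap\{\le j\} = C_{\CB} = B_1\cap\{\le j\}$ since $B\sim_j B_1$. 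Once that compatibility is in hand, everything else in (4) — and the $g$-version — is formal manipulation using \eqref{ro} and \eqref{roo} and the definitions of $\bet_{\CA}$, $\bet_{\CB}$.
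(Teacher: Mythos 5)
Your approach matches the paper's: you work through the bijection $\CA\mapsto C_{\CA}=A\cap\{\le i\}$ between $\BD_i$ and the maximal chains of $\{\le i\}$, and parts (1)--(3) come out essentially as in the paper. (In the ``moreover'' of (3), your detour to rule out $\CA\sbs\CB'$ is unnecessary and its logic is shaky — the two alternatives in (1) are not mutually exclusive, and $\CA$ and $\CB'$ can in fact coincide as subsets of $\CM$, for instance when $I$ is a chain — but the argument of (1) applied with $i<j$ directly yields $\CB'\sbs\CA$, so the conclusion stands.)

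The genuine gap is in (4). You verify that $f_{\CA'\CA}$ maps $\CB$ \emph{into} a single $\sim_j$-class $\CB'$, by checking that all the images $f_{\CA'\CA}(B)$, $B\in\CB$, share the same intersection with $\{\le j\}$. But the assertion $f_{\CA'\CA}(\CB)\in\BD_j$ is stronger: it requires $f_{\CA'\CA}(\CB)=\CB'$, not merely $f_{\CA'\CA}(\CB)\sbs\CB'$, and you pass from ``image inside one class'' to ``is exactly one class'' without justification — injectivity of $f_{\CA'\CA}|_{\CB}$ together with the image lying in one class does not give surjectivity onto that class. You also flag the wrong worry as the crux: preventing the image from scattering across several classes is the easy part, and the surjectivity is what actually takes an argument. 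The paper supplies it by composing with the inverse map: $f_{\CA\CA'}(\CB')$ likewise lands in some $\CB''\in\BD_j$ with $\CB''\sbs\CA$, and then by \eqref{ro} one has $\CB = f_{\CA\CA'}(f_{\CA'\CA}(\CB))\sbs f_{\CA\CA'}(\CB')\sbs\CB''$, which forces $\CB=\CB''$ and hence $f_{\CA'\CA}(\CB)=\CB'$. You cite \eqref{ro} at the end of your sketch, so the needed tool is on the table; it just has to be actually deployed at this step.
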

\begin{proof}
(1) and (2). Let $A\in\CA\cap\CB$. Then $i,j\in A$, say $i\le j$. If
$B\in\CB$, that is $B\sim_{j}A$, then necessarily $i\in B$ and
$B\sim_{i}A$. This shows that $\CB\sbs\CA$. Similarly $j\le i$
implies $\CA\sbs\CB$.

(3) Suppose that $i<j$ and let $\CA\in\BD_i$. Given $A\in\CA$, by the Hausdorff's Maximal Principle there is some $B\in\CM$ such that $(A\cap\{\le i\})\cup\{j\}\sbs B$; since $B\sim_{i}A$, then $B\in\CA$. If $\CB$ is the unique
element of $\BD_j$ such that $B\in\CB$, then $\CB\sbs\CA$. Next, let $\CB'\in \BD_j$ and assume that there is some $A\in \CA\cap\CB'$. Then for every $B\in\CB'$ we have that $B\sim_{j}A$ and, since $i\in A$, we infer that $B\sim_{i}A$ as well and therefore $B\in\CA$, proving that $\CB'\sbs\CA$.

(4) Suppose that $i<j$ and let $\CA, \CA'\in\BD_i$, $\CB\in\BD_j$ be
such that $\CB\sbs\CA$. By the definition of $f_{\CA'\CA}$ it is
clear that $f_{\CA'\CA}(\CB)\sbs\CB'$ for some $\CB'\in \BD_j$ and,
according to (1), we must have $\CB'\sbs\CA'$. Similarly, there is
$\CB''\in\BD_j$ such that $f_{\CA\CA'}(\CB')\sbs\CB''\sbs\CA$. On
the other hand we have
\[
\CB = f_{\CA\CA'}(f_{\CA'\CA}(\CB)) \sbs f_{\CA\CA'}(\CB')
\sbs\CB'';
\]
this forces $\CB = \CB''$ and consequently $f_{\CA'\CA}(\CB) =
\CB'$. Finally, choose any $B'\in\CB'$. If $B\in\CB$, it follows
from the above that $B\cap[i,j] = B'\cap[i,j]$, therefore
\[
f_{\CB'\CB}(B) = (B'\cap\{\le j\})\cup(B\cap\{j\le\}) = (B'\cap\{\le
i\})\cup(B\cap\{i\le\}) = f_{\CA'\CA}(B),
\]
as wanted.
\end{proof}

\begin{re}\label{remarklem-partition2}
Let $i,j\in I$ be such that $i<j$ and, according to Lemma \ref{lem-partition2},
take $\CA\in\BD_i$, $\CB\in\BD_j$ such that $\CB\sbs\CA$. If $Y\in\CQ_{\CB}$,
then $Y$ is the union of $\al$ elements of $\CQ_{\CA}$. In fact, since $\l(i)<\l(j)$
and $\CQ_{\CB}\sbs\CP_{\l(j)}$, then $Y$ is the union of $\al$ elements of $\CP_{\l(i)}$.
But if $Z\in \CP_{\l(i)}$ and $Z\sbs Y$, then $Z\sbs Y\sbs X_{\CB}\sbs X_{\CA}$ and so $Z\in\CQ_{\CA}$.
\end{re}

The next step toward our construction is to define, for every $i\in
I$, appropriate families of bijections
\[
(\lmap{t_{\CA'\CA}}{X_{\CA}}{X_{\CA'}})_{\CA, \CA'\in\BD_i}
\quad\text{ and }\quad (\lmap{t_{\CA}}{X_{\CA}}{X})_{\CA\in\BD_i}
\]
such that
\begin{equation}\label{tCA}
t_{\CA''\CA} = t_{\CA''\CA'}\,t_{\CA'\CA},\quad t_{\CA\CA} =
1_{X_{\CA}}\quad\text{ and }\quad t_{\CA} = t_{\CA'}\,t_{\CA'\CA}
\end{equation}
for all $\CA, \CA', \CA''\in\BD_i$. First observe that, for any
$\CA\in\BD_i$, by the definition of $X_{\CA}$ we have $x\in X_{\CA}$
if and only if $x_{\xi+1,q}\in\bet_{\CA}$ (see Notations
\ref{notquorem}). Thus, given $\CA, \CA'\in\BD_i$, for every $x\in
X_{\CA}$ we can define
\[
t_{\CA'\CA}(x) \defug \al^{\xi+1}\bullet
g_{\CA'\CA}(x_{\xi+1,q})+x_{\xi+1,r},
\]
noting that the second member actually belongs to $X_{\CA'}$.
Straightforward computations with the use of \eqref{roo} show that
the first two equalities of \eqref{tCA} hold for every
$\CA,\CA',\CA''\in\BD_i$ and so each $t_{\CA'\CA}$ is a bijection.
It is clear that $t_{\CA'\CA}$ restricts to a bijection from
$X_{\chi}$ to $X_{g_{\CA'\CA}(\chi)}$ for all $\chi\in\bet_{\CA}$;
moreover from (4) of Lemma \ref{lem-partition2} we obtain the
following corollary.

\begin{cor}\label{lem-partition3}
Assume that $i<j$. If $\CA,\CA'\in\BD_i$, $\CB\in\BD_j$ and
$\CB\sbs\CA$, by setting $\CB' = f_{\CA'\CA}(\CB)$, for every $x\in
X_{\CB}$ we have
\[
t_{\CA'\CA}(x) = t_{\CB'\CB}(x).
\]
\end{cor}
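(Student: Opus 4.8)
The plan is to reduce everything to part (4) of Lemma~\ref{lem-partition2}, where the genuine content lies; the present statement will then be just a transcription of that fact through the purely arithmetical definition of the maps $t_{\CA'\CA}$. No appeal to levels or to Remark~\ref{remarklem-partition2} should be needed, since the formula for $t_{\CA'\CA}$ uses only the division of $x$ by $\al^{\xi+1}$, regardless of whether $\CA\in\BD_i$ or $\CA\in\BD_j$.

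First I would check that both sides of the claimed identity are defined on all of $X_{\CB}$. Recall that $x\in X_{\CA}$ precisely when $x_{\xi+1,q}\in\bet_{\CA}$, and likewise $x\in X_{\CB}$ precisely when $x_{\xi+1,q}\in\bet_{\CB}$. Since $\CB\sbs\CA$ gives $\bet_{\CB}\sbs\bet_{\CA}$, and hence $X_{\CB}\sbs X_{\CA}$, any $x\in X_{\CB}$ satisfies $x_{\xi+1,q}\in\bet_{\CB}\sbs\bet_{\CA}$, so $t_{\CA'\CA}(x)$ is defined; and $\CB' = f_{\CA'\CA}(\CB)$ lies in $\BD_j$ by Lemma~\ref{lem-partition2}(4), so $g_{\CB'\CB}$, and therefore $t_{\CB'\CB}(x)$, is defined as well.

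Next I would simply substitute into the defining formula for $t$. For $x\in X_{\CB}$ one has
\[
t_{\CA'\CA}(x) = \al^{\xi+1}\bullet g_{\CA'\CA}(x_{\xi+1,q})+x_{\xi+1,r},\qquad t_{\CB'\CB}(x) = \al^{\xi+1}\bullet g_{\CB'\CB}(x_{\xi+1,q})+x_{\xi+1,r},
\]
so that the two values differ at most in their quotient parts $g_{\CA'\CA}(x_{\xi+1,q})$ and $g_{\CB'\CB}(x_{\xi+1,q})$. But $x_{\xi+1,q}\in\bet_{\CB}$, and the final assertion of Lemma~\ref{lem-partition2}(4) says exactly that $g_{\CA'\CA}(\chi) = g_{\CB'\CB}(\chi)$ for every $\chi\in\bet_{\CB}$. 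Taking $\chi = x_{\xi+1,q}$ yields $g_{\CA'\CA}(x_{\xi+1,q}) = g_{\CB'\CB}(x_{\xi+1,q})$, and hence $t_{\CA'\CA}(x) = t_{\CB'\CB}(x)$, as desired. I do not anticipate any real obstacle: the Hausdorff-Maximal-Principle argument identifying $f_{\CA'\CA}$ restricted to $\CB$ with $f_{\CB'\CB}$ has already been carried out in Lemma~\ref{lem-partition2}, and the only point that needs a moment's care is the domain check of the first paragraph, confirming that both sides make sense for every $x\in X_{\CB}$.
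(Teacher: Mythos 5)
Your proof is correct and follows the same line as the paper: the corollary is precisely the translation, via the explicit formula $t_{\CA'\CA}(x) = \al^{\xi+1}\bullet g_{\CA'\CA}(x_{\xi+1,q})+x_{\xi+1,r}$, of the final assertion $g_{\CA'\CA}(\chi) = g_{\CB'\CB}(\chi)$ for $\chi\in\bet_{\CB}$ in Lemma~\ref{lem-partition2}(4). The domain check you include is a minor but welcome bit of care that the paper leaves implicit.
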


Next, given $i\in I$ and $\CA,\CA'\in\BD_i$, let us consider the bijection
\[
\lmap{k_{\CA'\CA}}{\L_{\CA}}{\L_{\CA'}}
\]
defined by
\[
k_{\CA'\CA}(\al^{\xi+1-\l(i)}\bullet\chi+\s) =
\al^{\xi+1-\l(i)}\bullet g_{\CA'\CA}(\chi)+\s
\]
for all $\chi\in\bet_{\CA}$ and $\s<\al^{\xi+1-\l(i)}$ (see Lemma
\ref{lemlem}). Again from \eqref{roo} we infer that
\begin{equation}\label{rooo}
\text{ for all } \CA,\CA',\CA''\in\BD_i \quad k_{\CA\CA} =
1_{\L_{\CA}}\quad\text{and}\quad k_{\CA''\CA'}\,k_{\CA'\CA} =
k_{\CA''\CA}.
\end{equation}

Now, let us choose an equivalence class $\CA_{i}\in\BD_i$ and a bijection
\[
\lmap{k_{\CA_{i}}}{\L_{\CA_{i}}}{\al^{\xi+1-\l(i)}\bullet\bet}
\]
(this can be done since both $\L_{\CA_{i}}$ and
$\al^{\xi+1-\l(i)}\bullet\bet$ have cardinality $\al$ by Lemma
\ref{lemlem} and Proposition \ref{cardaleph}) and, for each
$\CA\in\BD_i$, let us consider the bijection
\[
\lmap{k_{\CA}\defug
k_{\CA_{i}}\,k_{\CA_{i}\CA}}{\L_{\CA}}{\al^{\xi+1-\l(i)}\bullet\bet},
\]
namely
\[
k_{\CA}(\al^{\xi+1-\l(i)}\bullet\chi+\s) =
k_{\CA_{i}}(\al^{\xi+1-\l(i)}\bullet g_{\CA_{i}\CA}(\chi)+\s)
\]
for all $\chi\in\bet_{\CA}$ and $\s<\al^{\xi+1-\l(i)}$ (see again
Lemma \ref{lemlem}). Finally, let us define the map
$\map{t_{\CA}}{X_{\CA}}{X}$ by setting
\[
t_{\CA}(\al^{\l(i)}\bullet\l+\r) = \al^{\l(i)}\bullet k_{\CA}(\l)+\r
\]
for every $\l\in\L_{\CA}$ and $\r<\al^{\l(i)}$ (see \eqref{prelem}).
Using Proposition \ref{ruleuclidd} and the fact that $k_{\CA}$ is a
bijection it is easy to see that $t_{\CA}$ is a bijection. We claim
that
\[
t_{\CA} = t_{\CA_{i}}t_{\CA_{i}\CA}.
\]
Indeed, taking \eqref{prelem} and Lemma \ref{lemlem} into account, let
$\chi\in\bet_{\CA}$, $\s<\al^{\xi+1-\l(i)}$, $\r<\al^{\l(i)}$ and
consider $\l = \al^{\xi+1-\l(i)}\bullet\chi+\s$. Then we have:
\begin{align*}
t_{\CA}\left(\al^{\l(i)}\bullet\l+\r\right)
        &= \al^{\l(i)}\bullet k_{\CA}(\l)+\r \\
        &= \al^{\l(i)}\bullet k_{\CA_{i}}\left(\al^{\xi+1-\l(i)}\bullet g_{\CA_{i}\CA}(\chi)+\s\right)+\r \\
        &= t_{\CA_{i}}\left(\al^{\l(i)}\bullet\left(\al^{\xi+1-\l(i)}\bullet
                      g_{\CA_{i}\CA}(\chi)+\s\right)+\r\right) \\
        &= t_{\CA_{i}}\left(\al^{\xi+1}\bullet
                      g_{\CA_{i}\CA}(\chi)+\al^{\l(i)}\bullet\s+\r\right) \\
        &= t_{\CA_{i}}t_{\CA_{i}\CA}\left(\al^{\xi+1}\bullet\chi+\al^{\l(i)}\bullet\s+\r\right) \\
        &= t_{\CA_{i}}t_{\CA_{i}\CA}\left(\al^{\l(i)}\bullet\al^{\xi+1-\l(i)}
                        \bullet\chi+\al^{\l(i)}\bullet\s+\r\right) \\
        &= t_{\CA_{i}}t_{\CA_{i}\CA}\left(\al^{\l(i)}\bullet\l+\r\right),
\end{align*}
proving our claim. Now, let $\CA,\CA'\in\BD_{i}$. Since the first
two equalities of \eqref{tCA} hold for every
$\CA,\CA',\CA''\in\BD_i$, from $t_{\CA'} =
t_{\CA_{i}}t_{\CA_{i}\CA'}$ we infer that $t_{\CA_{i}} =
t_{\CA'}t_{\CA'\CA_{i}}$; consequently
\[
t_{\CA} = t_{\CA_{i}}t_{\CA_{i}\CA} =
t_{\CA'}t_{\CA'\CA_{i}}t_{\CA_{i}\CA} = t_{\CA'}t_{\CA'\CA}
\]
and therefore the third equality of \eqref{tCA} holds for all $\CA,
\CA'\in\BD_{i}$.

\begin{re}\label{remarkt}
    Because of the definition of $t_{\CA}$, the assignment
\[
X_{\l(i),\l}\mapsto t_{\CA}(X_{\l(i),\l}) = X_{\l(i),k_{\CA}(\l)}
\]
for $\l\in\L_{\CA}$ defines a bijection from $\CQ_{\CA}$ to
$\CP_{\l(i)} = \{X_{\l(i),\l}\mid\l<\al^{\xi+1-\l(i)}\bullet\bet\}$.
Consequently, by \eqref{tCA} the assignment
\[
X_{\l(i),\l}\mapsto t_{\CA'\CA}(X_{\l(i),\l})
\]
gives a bijection from $\CQ_{\CA}$ to $\CQ_{\CA'}$.
\end{re}

As in Section 1, for a given ring $D$ let us consider the ring $Q =
\CFM_{X}(D)$ and, for each $\a<\xi+1$, let $Q_{\a}$ be the subring
of $Q$ consisting of those matrices $\za$ satisfying
\eqref{magicmatrix}. For each $i\in I$ let us denote by $S_i$ the
subset of $Q$ of those matrices $\za$ such that
\begin{equation}\label{mamagicmatrix}
  \ze_{X_{\CA}}\za = \ze_{X_{\CA}}\za\ze_{X_{\CA}} = \za\ze_{X_{\CA}}
  \quad\text{ for all }\CA\in\BD_{i}
\end{equation}
and, if $\CA, \CA'\in\BD_i$, then
\[
\za(x,y) = \za(t_{\CA'\CA}(x),t_{\CA'\CA}(y))
\]
for all $x,y\in X_{\CA}$. Roughly speaking, $S_i$ consists of those
matrices which have zero entries outside the
$(X_{\CA},X_{\CA})$-blocks for $\CA\in\BD_i$ (which are mutually
disjoint) and, if $\CA,\CA'\in\BD_i$, the
$(X_{\CA'},X_{\CA'})$-block coincides with the
$(X_{\CA},X_{\CA})$-block ``\,up to the bijection $t_{\CA'\CA}$\,''. As
we are going to see, if we consider the idempotent diagonal matrix
$\ze_{X_i}$, then $S_i$ is actually a unital $D$-subring of
$\ze_{X_i}Q\ze_{X_i}$ isomorphic to $Q$.

\begin{pro}\label{subrings}
With the above notations, for every $i\in I$ there is a unital
$D$-linear ring mono\-morphism
$\map{\psi_i}{Q}{\ze_{X_i}Q\ze_{X_i}}$ such that
\begin{equation}\label{sbrs}
\psi_i(Q) = S_i
\end{equation}
and
\begin{equation}\label{sbrss}
\psi_i(Q_\a) \sbs S_i\cap Q_\a \quad\text{ for all }\a\le\l(i).
\end{equation}
Moreover, for every $i,j\in I$ the following properties hold:
\begin{enumerate}
\item $S_iS_j = 0$ if and only if $i,j$ are not comparable. \item
If $i\le j$, then $S_iS_j\cup S_jS_i\sbs S_i$.
\end{enumerate}
\end{pro}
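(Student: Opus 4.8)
The plan is to write down the embedding $\psi_i$ by hand and then verify each assertion by tracking how the bijections $t_{\CA}$, $t_{\CA'\CA}$ and the blocks $X_{\CA}$ interact. Recall that for fixed $i\in I$ the sets $\{X_{\CA}\mid\CA\in\BD_i\}$ are pairwise disjoint with union $X_i$, since the $\bet_{\CA}$ ($\CA\in\BD_i$) partition $\{\chi<\bet\mid A_{\chi}\in\CM_i\}$, and each $t_{\CA}\colon X_{\CA}\to X$ is a bijection with $t_{\CA}=t_{\CA'}t_{\CA'\CA}$ by \eqref{tCA}. Define $\psi_i\colon Q\to Q$ by $\psi_i(\za)(x,y)=\za(t_{\CA}(x),t_{\CA}(y))$ when $x,y$ lie in a common (necessarily unique) $X_{\CA}$, $\CA\in\BD_i$, and $\psi_i(\za)(x,y)=0$ otherwise. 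Column-finiteness of $\psi_i(\za)$ follows from that of $\za$ together with bijectivity of the $t_{\CA}$, so $\psi_i$ maps $Q$ into $\ze_{X_i}Q\ze_{X_i}$; it is visibly additive and $D$-linear, and $\psi_i(\mathbf{1})=\ze_{X_i}$ because each $t_{\CA}$ is injective. Multiplicativity is checked entrywise: a summand of $(\psi_i(\za)\psi_i(\zb))(x,y)=\sum_z\psi_i(\za)(x,z)\psi_i(\zb)(z,y)$ can be nonzero only if $x,z\in X_{\CA}$ and $z,y\in X_{\CB}$ for some $\CA,\CB\in\BD_i$, which forces $\CA=\CB$ by disjointness; substituting $w=t_{\CA}(z)$ identifies the sum with $(\za\zb)(t_{\CA}(x),t_{\CA}(y))$. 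Injectivity is clear by reading off the $(X_{\CA_i},X_{\CA_i})$-block through $t_{\CA_i}$.

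Next I would identify $\psi_i(Q)=S_i$. The inclusion $\psi_i(Q)\sbs S_i$ is a direct check of the defining conditions: \eqref{mamagicmatrix} holds because $\psi_i(\za)$ is supported on the mutually disjoint blocks $X_{\CA}\times X_{\CA}$, and the identity $\psi_i(\za)(x,y)=\psi_i(\za)(t_{\CA'\CA}(x),t_{\CA'\CA}(y))$ for $x,y\in X_{\CA}$ is exactly the relation $t_{\CA'}t_{\CA'\CA}=t_{\CA}$ of \eqref{tCA}. Conversely, given $\za\in S_i$ put $\zb(u,v)=\za(t_{\CA_i}^{-1}(u),t_{\CA_i}^{-1}(v))$; using $\za=\za\ze_{X_{\CA_i}}=\ze_{X_{\CA_i}}\za\ze_{X_{\CA_i}}$ one sees $\zb\in Q$, and since $t_{\CA_i}^{-1}t_{\CA}=t_{\CA_i\CA}$ the compatibility condition for $\za$ (taken with $\CA'=\CA_i$) gives $\psi_i(\zb)=\za$. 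Because $\psi_i(Q_\a)\sbs\psi_i(Q)=S_i$ always, the statement $\psi_i(Q_\a)\sbs S_i\cap Q_\a$ for $\a\le\l(i)$ reduces to $\psi_i(Q_\a)\sbs Q_\a$. This is where the ordinal bookkeeping enters: for $x\in X_{\l(i),\l}\sbs X_{\CA}$, writing $x=\al^{\l(i)}\bullet\l+\r$ and dividing $\r$ by $\al^{\a}$ (legitimate since $\a\le\l(i)$, so $\al^{\l(i)}=\al^{\a}\bullet\al^{\l(i)-\a}$ by Proposition \ref{rulexp}), one obtains the two key identities $t_{\CA}(x)_{\a,r}=x_{\a,r}$ and $t_{\CA}(\al^{\a}\bullet x_{\a,q})=\al^{\a}\bullet t_{\CA}(x)_{\a,q}$; moreover $x$ and $\al^{\a}\bullet x_{\a,q}$ always lie in the same member $X_{\chi}$ of $\CP_{\xi+1}$, hence in the same $X_{\CA}$, or else both outside $X_i$. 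Feeding these into the defining relation \eqref{magicmatrix} of $Q_\a$ and into the definition of $\psi_i$ yields $\psi_i(\za)(x,y)=\d(x_{\a,r},y_{\a,r})\,\psi_i(\za)(\al^{\a}\bullet x_{\a,q},\al^{\a}\bullet y_{\a,q})$ in all cases.

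For (1): since every entry of an element of $S_i$ (resp.\ $S_j$) lies in $X_i\times X_i$ (resp.\ $X_j\times X_j$), the product $\za\zb$ of $\za\in S_i$, $\zb\in S_j$ involves only a summation over $z\in X_i\cap X_j$, so $X_i\cap X_j=\emptyset$ forces $S_iS_j=\mathbf{0}$; conversely, when $i,j$ are comparable $\ze_{X_i}\ze_{X_j}=\ze_{X_i\cap X_j}\ne\mathbf{0}$ lies in $S_iS_j$ (as $\ze_{X_i}=\psi_i(\mathbf{1})\in S_i$), and by Proposition \ref{pro max} comparability is equivalent to $X_i\cap X_j\ne\emptyset$. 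For (2) we may assume $i<j$ (if $i=j$ then $S_i=S_j=\psi_i(Q)$ is a subring). Let $\za\in S_i$, $\zb\in S_j$. A nonzero summand of $(\za\zb)(x,y)=\sum_z\za(x,z)\zb(z,y)$ requires $x,z\in X_{\CA}$ ($\CA\in\BD_i$) and $z,y\in X_{\CB}$ ($\CB\in\BD_j$); then $X_{\CA}\cap X_{\CB}\ne\emptyset$, whence $\CB\sbs\CA$ by Lemma \ref{lem-partition2}(3), and in fact $z,y\in X_{\CB}\sbs X_{\CA}$, so $\za\zb$ is supported on the blocks $X_{\CA}\times X_{\CA}$, i.e.\ it satisfies \eqref{mamagicmatrix}; the same applies to $\zb\za$. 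For the compatibility condition, fix $\CA,\CA'\in\BD_i$ and $x,y\in X_{\CA}$: using that $t_{\CA'\CA}$ carries each $X_{\CB}$ with $\CB\in\BD_j$, $\CB\sbs\CA$, bijectively onto $X_{\CB'}$ where $\CB'=f_{\CA'\CA}(\CB)\sbs\CA'$ (Lemma \ref{lem-partition2}(4)), that $f_{\CA'\CA}$ (with inverse $f_{\CA\CA'}$, by \eqref{ro}) bijects $\{\CB\in\BD_j\mid\CB\sbs\CA\}$ onto $\{\CB'\in\BD_j\mid\CB'\sbs\CA'\}$, and that $t_{\CA'\CA}$ agrees with $t_{\CB'\CB}$ on $X_{\CB}$ (Corollary \ref{lem-partition3}), one substitutes $w=t_{\CA'\CA}(z)$ in $(\za\zb)(t_{\CA'\CA}(x),t_{\CA'\CA}(y))$ and applies the compatibility conditions for $\za\in S_i$ and $\zb\in S_j$ to recover $(\za\zb)(x,y)$; when $y$ lies in no such $X_{\CB}$ both sides vanish. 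The computation for $\zb\za$ is the mirror image. The main obstacle throughout is precisely this bookkeeping — ensuring the bijections $t_{\CA}$, $t_{\CA'\CA}$ and the class-level maps $f_{\CA'\CA}$, $g_{\CA'\CA}$ restrict compatibly to the relevant sub-blocks, and that the ordinal decomposition at level $\al^{\a}$ for $\a\le\l(i)$ is respected — for which Lemma \ref{lem-partition2}, Corollary \ref{lem-partition3}, the cocycle identities \eqref{tCA}, and the arithmetic facts of Section \ref{sect wochsbr} are exactly what is needed.
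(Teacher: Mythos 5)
Your proposal reproduces the paper's proof essentially line for line: the same explicit formula for $\psi_i$, the same entrywise verification of multiplicativity using disjointness of the $X_{\CA}$, the same construction of a preimage for surjectivity, the same ordinal decomposition (splitting $\r<\al^{\l(i)}$ at level $\al^{\a}$ and observing that $x$ and $\al^{\a}\bullet x_{\a,q}$ lie in a common block) to verify \eqref{sbrss}, and the same appeals to Proposition \ref{pro max}, Lemma \ref{lem-partition2} and Corollary \ref{lem-partition3} for (1) and (2). The only cosmetic differences are that you isolate the two identities $t_{\CA}(x)_{\a,r}=x_{\a,r}$ and $t_{\CA}(\al^{\a}\bullet x_{\a,q})=\al^{\a}\bullet t_{\CA}(x)_{\a,q}$ (which the paper computes but does not name) and that you build the preimage with the distinguished class $\CA_{i}$ rather than an arbitrary one; both are immaterial.
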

\begin{proof}
Given $i\in I$, let us define the map
\[
\lmap{\psi_i}{Q}{\ze_{X_i}Q\ze_{X_i}}
\]
as follows: given $\za\in Q$, for every $x,y\in X$
\[
\psi_i(\za)(x,y) =
\begin{cases}\za(t_{\CA}(x),t_{\CA}(y))\text{ if }x,y\in X_{\CA}\text{ for
some }\CA\in\BD_i, \\
0\text{ otherwise }.
\end{cases}
\]
It is clear that $\psi_i$ is an homomorphism of $(D,D)$-bimodules
and, by using \eqref{tCA}, we see easily that $\psi_i(Q) \sbs S_i$.
Let $\za\in Q$ and assume that $\za(u,v)\ne 0$ for some $u,v\in X$.
Given $\CA\in\BD_i$, we have $\psi_i(\za)(x,y)\ne 0$ for $x =
t_{\CA}^{-1}(u)$ and $y = t_{\CA}^{-1}(v)$; this shows that $\psi_i$
is a monomorphism. Next, let $\za, \zb\in Q$ and $x,y\in X$. If
$x,y\in X_{\CA}$ for some $\CA\in\BD_i$, using the fact that
$t_{\CA}$ is a bijection and recalling that the subsets $X_{\CA}$
are mutually disjoint for $\CA$ ranging in $\BD_i$ we get the
following:
\begin{align*}
\psi_i(\za\zb)(x,y) &= (\za\zb)(t_{\CA}(x),t_{\CA}(y)) =
      \sum_{u\in X}[(\za)(t_{\CA}(x),u)]\,[(\zb)(u,t_{\CA}(y))] \\
&= \sum_{z\in
X_{\CA}}[(\za)(t_{\CA}(x),t_{\CA}(z))]\,[(\zb)(t_{\CA}(z),
      t_{\CA}(y))] \\
&= \sum_{z\in X_{\CA}}[\psi_i(\za)(x,z)]\,[\psi_i(\zb)(z,y)] \\
&= \sum_{z\in X}[\psi_i(\za)(x,z)]\,[\psi_i(\zb)(z,y)] \\
&= \left(\psi_i(\za)\psi_i(\zb)\right)(x,y).
\end{align*}
If there is no $\CA\in\BD_i$ such that $x,y\in X_{\CA}$, through the
same guidelines we obtain that
\[
\left(\psi_i(\za)\psi_i(\zb)\right)(x,y) = \sum_{z\in
X}[\psi_i(\za)(x,z)]\,[\psi_i(\zb)(z,y)] = 0 = \psi_i(\za\zb)(x,y).
\]
Since $\psi_i(1) = \ze_{X_i}$, we conclude that $\psi_i$ is a unital
ring homomorphism. Finally, let $\zc\in S_i$ and define the matrix
$\za\in Q$ as follows: choose any $\CA\in\BD_i$ and, for every
$u,v\in X$, set
\[
\za(u,v) = \zc(t_{\CA}^{-1}(u),t_{\CA}^{-1}(v)).
\]
Using again \eqref{tCA} it is immediate to check that $\psi_i(\za) =
\zc$ and thus $\psi_i(Q) = S_i$.

In order to establish \eqref{sbrss}, given any $\a\le\l(i)$ and
$\zb\in Q_\a$, we must show that the matrix $\za = \psi_i(\zb)$
satisfies \eqref{magicmatrix}. First observe that, given any $x\in
X$, both $x$ and $\al^{\a}\bullet x_{\a,q}$ belong to the same
member $X_{\a,x_{\a,q}}$ of the partition $\CP_{\a}$; on the other hand, given
$\CA\in\BD_i$, since $\CP_{\xi+1}$ is coarser than $\CP_{\a}$ and
$X_{\CA}$ is a union of members of $\CP_{\xi+1}$, we have that either
$X_{\a,x_{\a,q}}\sbs X_{\CA}$ or $X_{\a,x_{\a,q}}\cap X_{\CA} =
\emptyset$. We infer that $x\in X_{\CA}$ if and only if
$\al^{\a}\bullet x_{\a,q}\in X_{\CA}$. Accordingly, given $x,y\in
X$, if there is no $\CA\in\BD_i$ such that $x,y\in X_{\CA}$, then
both members of the equality in \eqref{magicmatrix} are zero. Assume
that $x,y\in X_{\CA}$ for some $\CA\in\BD_i$ and note that,
according to Proposition \ref{ruleuclidd}, we have the
decompositions
\begin{gather*}
x = \al^{\a}\bullet\al^{\l(i)-\a}\bullet\al^{\xi+1-\l(i)} \bullet
x_1 + \al^{\a}\bullet\al^{\l(i)-\a}\bullet x_2 +
\al^{\a}\bullet x_3 + x_{4} \\
= \al^{\l(i)}\bullet\left(\al^{\xi+1-\l(i)} \bullet x_1 + x_2\right)
+ \al^{\a}\bullet x_3 + x_{4}
\end{gather*}
for unique $x_1<\bet$, $x_2<\al^{\xi+1-\l(i)}$,
$x_3<\al^{\l(i)-\a}$, $x_4<\al^{\a}$. By setting $x_{5} = \al^{\xi+1-\l(i)} \bullet x_1 + x_2$ and comparing with the
decomposition \eqref{decomp} we see that
\[
x_{\a,q} = \al^{\l(i)-\a}\bullet x_{5} + x_3\quad\text{ and }\quad
x_{\a,r} = x_4.
\]
We observe that $x\in X_{x_1} = X_{\xi+1,x_1}$, therefore $X_{x_1}\sbs X_{\CA}$ and
so $x_1\in\bet_{\CA}$. Consequently, it follows from Lemma \ref{lemlem} that $x_{5}\in\L_{\CA}$ and then we may consider the ordinal
\[
x_{6} = \al^{\l(i)-\a}\bullet k_{\CA}(x_{5}) + x_3.
\]
We now obtain that
\begin{align*}
t_{\CA}(x) &= t_{\CA}(\al^{\l(i)}\bullet x_{5} + \al^{\a}\bullet x_3
+
x_{\a,r}) \\
           &=\al^{\l(i)}\bullet k_{\CA}(x_{5}) + \al^{\a}\bullet x_3 +
x_{\a,r} \\
           &= \al^{\a}\bullet x_{6} + x_{\a,r}
\end{align*}
and a similar computation shows that
\[
t_{\CA}(\al^{\a}\bullet x_{\a,q}) = \al^{\a}\bullet x_{6}.
\]
After processing $y$ in the same way, from all above we infer
finally:
\begin{align*}
\za(x,y) &= \zb(t_{\CA}(x),t_{\CA}(y)) \\
         &= \zb(\al^{\a}\bullet x_{6}+x_{\a,r},
         \,\al^{\a}\bullet y_{6}+y_{\a,r}) \\
         &= \d(x_{\a,r},y_{\a,r})\,\zb(\al^{\a}\bullet x_{6},
         \,\al^{\a}\bullet y_{6}) \\
         &= \d(x_{\a,r},y_{\a,r})\,
         \zb(t_{\CA}(\al^{\a}\bullet x_{\a,q}),
         t_{\CA}(\al^{\a}\bullet y_{\a,q})) \\
         &= \d(x_{\a,r},y_{\a,r})\,
         \za(\al^{\a}\bullet x_{\a,q},
         \al^{\a}\bullet y_{\a,q}).
\end{align*}
This proves that $\za\in Q_{\a}$.

(1) Let $i,j\in I$ and assume that $i,j$ are not comparable. Then,
given $\CA\in\BD_i$ and $\CB\in\BD_j$, we have $\CA\cap\CB =
\emptyset$ by (3) of Lemma \ref{lem-partition2}, therefore $X_i\cap
X_j = \emptyset$. As a consequence, if $\za\in S_i$ and $\zb\in
S_j$, then $\za\zb =
\ze_{X_{i}}\za\ze_{X_{i}}\ze_{X_{j}}\zb\ze_{X_{j}} = 0$. If, on the
contrary, $i\le j$ and $A_{\chi}$ is any maximal chain such that
$i,j\in A_{\chi}$, then $X_{A_{\chi}}\sbs X_{i}\cap X_{j}$ and hence
$X_{i}\cap X_{j}\ne\emptyset$. Consequently $\mathbf{0} \ne
\ze_{X_{i}}\,\ze_{X_{j}} = \ze_{X_{j}}\,\ze_{X_{i}}\in S_iS_j\cap
S_jS_i$.

(2) Suppose that $i<j$, let $\za\in S_i$, $\zb\in S_j$ and assume
that $0\ne (\za\zb)(x,y) = \sum_{z\in X}\za(x,z)\zb(z,y)$ for some
$x,y\in X$. Then $\za(x,z) \ne 0 \ne \zb(z,y)$ for some $z\in X$ and
therefore $x,z\in X_{\CA}$, $z,y\in X_{\CB}$ for some $\CA\in\BD_i$,
$\CB\in\BD_j$; necessarily $\CB\sbs\CA$ in view of property (2) of
Lemma \ref{lem-partition2} and this shows that the matrix $\za\zb$
has zero entries outside the $(X_{\CA},X_{\CA})$-blocks for
$\CA\in\BD_i$. Suppose that $\CA,\CA'\in\BD_i$ and let us prove that
\begin{equation}\label{eqblockss}
(\za\zb)(x,y) = (\za\zb)(t_{\CA'\CA}(x),t_{\CA'\CA}(y))
\end{equation}
for all $x,y\in X_{\CA}$. By using \eqref{tCA} and (4) of Lemma
\ref{lem-partition2}, we see that there is no $\CB\in\BD_j$ such
that $y\in X_{\CB}$ if and only if there is no $\CB'\in\BD_j$ such
that $t_{\CA'\CA}(y)\in X_{\CB'}$; if it is the case, since $\zb\in
S_j$, both members of \eqref{eqblockss} are zero. Otherwise there is
$\CB\in\BD_j$ such that $y\in X_{\CB}$; necessarily $\CB\sbs\CA$ by
Lemma \ref{lem-partition2} and, by setting $\CB' = f_{\CA'\CA}(\CB)$
and using Corollary \ref{lem-partition3}, we may compute as follows:
\begin{align*}
(\za\zb)(x,y) &= \sum_{z\in X_{\CA}}\za(x,z)\,\zb(z,y) =
\sum_{z\in X_{\CB}}\za(x,z)\,\zb(z,y) \\
&= \sum_{z\in X_{\CB}} [\za(t_{\CA'\CA}(x),t_{\CA'\CA}(z))]\,
[\zb(t_{\CB'\CB}(z),t_{\CB'\CB}(y))] \\
&= \sum_{u\in X_{\CB'}}
[\za(t_{\CA'\CA}(x),u)]\,[\zb(u,t_{\CB'\CB}(y))] \\
&= \sum_{u\in X_{\CA'}}
[\za(t_{\CA'\CA}(x),u)]\,[\zb(u,t_{\CA'\CA}(y))] \\
&= (\za\zb)(t_{\CA'\CA}(x),t_{\CA'\CA}(y)).
\end{align*}
Thus \eqref{eqblockss} holds for all $x,y\in X_{\CA}$, showing that
$\za\zb\in S_i$. The proof that $\zb\za\in S_i$ is similar.
\end{proof}

We are now in a position to associate to a given polarized artinian poset $I$ the set $\CH = \{H_i\mid i\in I\}$ of (possibly non-unital) subrings of $Q$, satisfying the conditions we outlined at the beginning of the present section. For every $i\in I$ let us define
the $D$-subring $H_i$ of $Q$ as follows:
\[
H_i = \left\{
        \begin{array}{ll}
          \psi_i\left(F_{\l(i)}\right), & \hbox{if $i$
\underbar{is not} a maximal element of $I$ and $i\in I'$;} \\
          \psi_i\left(G_{\l(i)}\right), & \hbox{if $i$
\underbar{is not} a maximal element of $I$ and $i\nin I'$;} \\
          \psi_i\left(D\right) = \ze_{X_{i}}D, & \hbox{if $i$ \underbar{is} a maximal
element of $I$.}
        \end{array}
      \right.
 \]
(for each ordinal $\a\le\xi$, the non-unital $D$-subrings $F_{\a}$ and $G_{\a}$ of $Q$
are defined in (2) of Theorem \ref{pro-chainflr}). Of course $H_i\ne
H_j$ if $i\ne j$; also note that, apart from the trivial case in
which $I$ is a singleton, $H_i$ is not a unital subring of $Q$. It
is clear that $H_i$ has a multiplicative identity, given by
$\ze_{X_i}$, if and only if $i$ is a maximal element of $I$.

Given $i\in I$, we know that $G_{\l(i)}$ contains the set
$\{\ze_{Y}\mid Y\in\CP_{\l(i)}\}$ of pairwise orthogonal idempotents
which generate $F_{\l(i)}$ as a right ideal and $G_{\l(i)}$ as a left ideal of $Q_{\l(i)}$ (Remark
\ref{elementidempot}); the images of these idempotents,
under the action of the imbedding $\psi_i$, will be relevant in
order to analyze the features of the subrings $H_i$ and the way they
interact each other. Firstly we need to introduce two additional notations.

\begin{notations}\label{not varie}
Given $i\in I$,
$\CA\in\BD_{i}$, $V\in\CQ_{\CA}$ and $Y\in\CP_{\l(i)}$, we define the following subsets of $X_{i}$ (see Remark \ref{remarkt}):
\begin{gather*}
\overline{V} \defug
\bigcup\left\{t_{\CA'\CA}(V)\mid\CA'\in\BD_{i}\right\}, \\
Y(i) \defug
\bigcup\left\{t_{\CA'}^{-1}(Y)\mid\CA'\in\BD_{i}\right\}.
\end{gather*}
\end{notations}
Clearly $V = t_{\CA\CA}(V)\sbs\overline{V}$; moreover it follows from \eqref{tCA} that
\begin{equation}\label{overlined}
   \overline{V} =
\overline{t_{\CA'\CA}(V)} = \left(t_{\CA}(V)\right)(i) \quad\text{
for all $\CA,\CA'\in\BD_{i}$ and $V\in\CQ_{\CA}$},
\end{equation}
while
\begin{equation}\label{overlinedd}
    Y(i) = \overline{t_{\CA}^{-1}(Y)} = \overline{t_{\CA'}^{-1}(Y)}\quad \text{for all
 $\CA,\CA'\in\BD_{i}$ and $Y\in\CP_{\l(i)}$}.
\end{equation}
As a consequence we have the equalities
\begin{equation}\label{overlineddd}
 \{Y(i)\mid Y\in\CP_{\l(i)}\} = \{\overline{V}\mid
V\in\CQ_{\CA}\} = \{\overline{W}\mid W\in\CQ_{\CB}\}
\end{equation}
for all $\CA,\CB\in\BD_{i}$.

Due to the definition of $\psi_{i}$, for every
$Y\in\CP_{\l(i)}$ we have
\[
 \psi_{i}\left(\ze_{Y}\right) = \ze_{Y(i)}.
 \]

\begin{lem}\label{lem idempprim Hi}
With the above notations, $\left\{ \psi_{i}\left(\ze_{Y}\right) =
\ze_{Y(i)}\left| Y\in\CP_{\l(i)}\right.\right\}$ is a set of pairwise
orthogonal idempotents of $H_i$ and
\[
\{\ze_{Y(i)}\mid Y\in\CP_{\l(i)}\} = \{\ze_{\overline{V}}\mid
V\in\CQ_{\CA}\} = \{\ze_{\overline{W}}\mid W\in\CQ_{\CB}\}
\]
for every $\CA,\CB\in\BD_i$. Moreover, given $j\in I$, for every $Y\in\CP_{\l(i)}$ and $Z\in\CP_{\l(j)}$ the following hold:
\begin{enumerate}
  \item If $i,j$ are not comparable, then $Y(i)\cap Z(j) = \vu$.
  \item If $i<j$ and $Y(i)\cap Z(j) \ne \vu$, then $Y(i)\sbs Z(j)$.
\end{enumerate}
 \end{lem}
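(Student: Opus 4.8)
The idempotents $\psi_i(\ze_Y) = \ze_{Y(i)}$ are genuinely elements of $H_i$ in all three cases: if $i$ is not maximal, then $\ze_Y \in F_{\l(i)} \cap G_{\l(i)}$ by Remark \ref{elementidempot}, so its image lies in $H_i = \psi_i(F_{\l(i)})$ or $\psi_i(G_{\l(i)})$; if $i$ is maximal then $\CP_{\l(i)}$ has a single member $Y = X$ and $\ze_{Y(i)} = \ze_{X_i}$ is the identity of $H_i = \ze_{X_i}D$. Since $\psi_i$ is a ring homomorphism and the $\ze_Y$ ($Y \in \CP_{\l(i)}$) are pairwise orthogonal idempotents, so are their images. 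The displayed equality of the three sets is then just \eqref{overlineddd} applied under the bijection $Y \mapsto \ze_{Y(i)}$ (an idempotent diagonal matrix is determined by its support), reading $\{\ze_{Y(i)} \mid Y \in \CP_{\l(i)}\} = \{\ze_{W'} \mid W' \in \{Y(i)\mid Y\in\CP_{\l(i)}\}\}$ and using \eqref{overlineddd} to rewrite this set of supports as $\{\overline{V}\mid V\in\CQ_\CA\}$ and as $\{\overline{W}\mid W\in\CQ_\CB\}$.

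For part (1), suppose $i,j$ are not comparable and pick $Y \in \CP_{\l(i)}$, $Z \in \CP_{\l(j)}$. By definition $Y(i) \sbs X_i$ and $Z(j) \sbs X_j$, and Proposition \ref{pro max} gives $X_i \cap X_j = \vu$ (this is exactly the ``only if'' direction of that proposition), so $Y(i) \cap Z(j) = \vu$. For part (2), assume $i<j$ and that $Y(i)\cap Z(j)\ne\vu$; pick $x$ in the intersection. Since $x \in Y(i) = \bigcup_{\CA'\in\BD_i} t_{\CA'}^{-1}(Y)$, there is $\CA \in \BD_i$ with $t_{\CA}(x) \in Y$, hence $x$ lies in the member $t_{\CA}^{-1}(Y)$ of $\CQ_{\CA}$; similarly there is $\CB \in \BD_j$ with $x$ in the member $t_{\CB}^{-1}(Z)$ of $\CQ_{\CB}$. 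In particular $x \in X_{\CA}\cap X_{\CB}$, so $\CA\cap\CB\ne\vu$ and, since $i<j$, Lemma \ref{lem-partition2}(3) forces $\CB\sbs\CA$. By Remark \ref{remarklem-partition2} the member $t_{\CB}^{-1}(Z)$ of $\CQ_{\CB}$ is a union of $\al$ members of $\CQ_{\CA}$; since $x$ lies in $t_{\CA}^{-1}(Y)\in\CQ_\CA$ and the members of $\CQ_\CA$ are disjoint, we get $t_{\CA}^{-1}(Y)\sbs t_{\CB}^{-1}(Z)$.

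It remains to promote this inclusion of single members to the inclusion $Y(i)\sbs Z(j)$ of the \emph{unions} over the respective $\BD$-classes. The key point is the compatibility of the bijections $t_{\CA'\CA}$ with the $\CQ$-decompositions across the layers $\l(i)$ and $\l(j)$, recorded in Corollary \ref{lem-partition3}: if $\CA,\CA'\in\BD_i$, $\CB\in\BD_j$, $\CB\sbs\CA$ and $\CB' = f_{\CA'\CA}(\CB)$, then $t_{\CA'\CA}$ and $t_{\CB'\CB}$ agree on $X_{\CB}$. Using this together with $t_{\CA'} = t_\CA t_{\CA'\CA}^{-1}$ (from \eqref{tCA}) and $t_{\CB'} = t_\CB t_{\CB'\CB}^{-1}$, one checks that $t_{\CA'}^{-1}(Y) \sbs t_{\CB'}^{-1}(Z)$ for every $\CA'\in\BD_i$, where $\CB' = f_{\CA'\CA}(\CB)$ is the corresponding class of $\BD_j$ with $\CB'\sbs\CA'$. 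As $\CA'$ ranges over $\BD_i$, the class $\CB'$ ranges over all classes of $\BD_j$ contained in some class of $\BD_i$; by Lemma \ref{lem-partition2}(3) these are \emph{all} of $\BD_j$ (every $\CB'' \in \BD_j$ is contained in a unique $\CA'' \in \BD_i$ since $i<j$). Hence
\[
Y(i) = \bigcup_{\CA'\in\BD_i} t_{\CA'}^{-1}(Y) \sbs \bigcup_{\CB'\in\BD_j} t_{\CB'}^{-1}(Z) = Z(j),
\]
as wanted. I expect the bookkeeping in this last paragraph — tracking which class of $\BD_j$ corresponds to a given class of $\BD_i$ via $f_{\CA'\CA}$, and verifying the inclusion survives applying $t_{\CA'}^{-1}$ — to be the main obstacle; everything reduces to careful but routine use of \eqref{tCA}, Corollary \ref{lem-partition3}, and Remark \ref{remarklem-partition2}.
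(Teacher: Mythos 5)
Your treatment of parts (1) and (2) is correct and follows the same route as the paper: the paper folds (1) and (2) into one argument (if $Y(i)\cap Z(j)\ne\vu$ then $\CA\cap\CB\ne\vu$ for some $\CA\in\BD_i$, $\CB\in\BD_j$, hence $i,j$ are comparable), while you dispatch (1) separately via Proposition~\ref{pro max}; for (2) you both reduce to a single pair $t_{\CA}^{-1}(Y)\sbs t_{\CB}^{-1}(Z)$ and then propagate across $\BD_i$ using Lemma~\ref{lem-partition2}(4) and Corollary~\ref{lem-partition3}. One harmless overstatement in your last paragraph: you claim that as $\CA'$ ranges over $\BD_i$ the class $\CB'=f_{\CA'\CA}(\CB)$ sweeps out all of $\BD_j$. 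This is false in general --- for fixed $\CB$, the map $\CA'\mapsto f_{\CA'\CA}(\CB)$ picks exactly one class inside each $\CA'$ and is not surjective when $|\BD_i|<|\BD_j|$ (already for $i$ minimal with only one maximal chain below it). Fortunately this surjectivity is not needed: each $t_{\CB'}^{-1}(Z)\sbs Z(j)$ already gives $Y(i)\sbs Z(j)$.

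There is, however, a genuine error in your proof of the first statement. You assert that ``if $i$ is maximal then $\CP_{\l(i)}$ has a single member $Y=X$ and $\ze_{Y(i)}=\ze_{X_i}$.'' This is false: for every $i\in I$ one has $\l(i)\le\xi<\xi+1$, so $\CP_{\l(i)}$ is a nontrivial partition of $X$ into $\al^{\xi+1-\l(i)}\bullet\bet$ blocks of size $\al^{\l(i)}<|X|$, and for each $Y\in\CP_{\l(i)}$ the subset $Y(i)=\bigcup_{\CA'\in\BD_i}t_{\CA'}^{-1}(Y)$ is a proper subset of $X_i$ (each $t_{\CA'}$ being a bijection $X_{\CA'}\to X$ with $Y\subsetneq X$). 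Consequently $\ze_{Y(i)}\ne\ze_{X_i}$, and since when $i$ is maximal $H_i=\ze_{X_i}D$ has no idempotents other than $\mathbf{0}$ and $\ze_{X_i}$, the matrices $\ze_{Y(i)}$ do \emph{not} lie in $H_i$. (The lemma's wording is itself loose on this point --- it is only invoked in the sequel where the $H_i$-membership is used for non-maximal $i$, e.g.\ in Lemma~\ref{HsbsFr}, where your observation that $\ze_Y\in F_{\l(i)}\cap G_{\l(i)}$ is exactly right; for maximal $i$ one should read ``idempotents of $S_i$.'' But the paper's terse justification ``consequence of \eqref{overlineddd} and injectivity of $\psi_i$'' does not commit your concrete false claim about $\CP_{\l(i)}$.)
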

 \begin{proof}
 The first statement is a consequence of \eqref{overlineddd} and the fact that $\psi_{i}$ is injective. Given $j\in I$, assume that $Y(i)\cap Z(j) \ne \vu$. Then there are $\CA\in\BD_i$, $\CB\in\BD_j$ such that $t_{\CA}^{-1}(Y)\cap t_{\CB}^{-1}(Z) \ne \vu$. This implies that $X_{\CA}\cap X_{\CB} \ne \vu$ and hence $\CA\cap\CB\ne\vu$. As a result $i$ and $j$ are comparable by (2) of Lemma \ref{lem-partition2}, say $i<j$. Thus $\CP_{\l(i)}$ is coarser than $\CP_{\l(i)}$ and, since $t_{\CA}^{-1}(Y)\in\CP_{\l(i)}$ and $t_{\CB}^{-1}(Z)\in\CP_{\l(i)}$, we infer that $t_{\CA}^{-1}(Y)\sbs t_{\CB}^{-1}(Z)$. Given any $\CA'\in\BD_i$, by setting $\CB' =
  f_{\CA\CA'}(\CB)$, we have that $\CB'\in\BD_j$ by (4) of Lemma \ref{lem-partition2}. Thus, by using Corollary \ref{lem-partition3} we obtain
  \[
 t_{\CA'}^{-1}(Y) = t_{\CA'\CA}(t_{\CA}^{-1}(Y)) = t_{\CB'\CB}(t_{\CA}^{-1}(Y)) \sbs t_{\CB'\CB}(t_{\CB}^{-1}(Z)) =  t_{\CB'}^{-1}(Z) \sbs Z(j).
 \]
 We conclude that $Y(i) \sbs Z(j)$.
 \end{proof}

 \begin{lem}\label{HsbsFr}
 Assume that $i$ \underbar{is not} a maximal element of $I$. Then
 \begin{equation}\label{H_ii}
     H_{i} = \left\{
               \begin{array}{ll}
                 \bigoplus\left\{\ze_{Y(i)}H_{i}\left|
Y\in\CP_{\l(i)}\right.\right\}, & \hbox{if $i\in I'$;} \\
                 \bigoplus\left\{H_{i}\ze_{Y(i)}\left|
Y\in\CP_{\l(i)}\right.\right\}, & \hbox{if $i\nin I'$}
               \end{array}
             \right.
     \end{equation}
and
     \begin{equation}\label{H_i}
     H_{i} = H_{i}\,\ze_{Y(i)}H_{i}
     \end{equation}
for every $Y\in\CP_{\l(i)}$. Moreover, given $\za\in S_i$, if $i\in I'$, then
 the following conditions are equivalent:
 \begin{enumerate}
     \item $\za\in H_{i}$.
     \item $\ze_{X_{\CA}}\za\,\ze_{X_{\CA}}\in F_{\l(i)}$ for all
     $\CA\in\BD_i$.
     \item There exist $Y_{1},\ldots,Y_{n}\in\CP_{\l(i)}$ such that the
     $x$-th row of $\za$ is not zero only if $x\in
     Y_{1}(i)\cup\cdots\cup Y_{n}(i)$; equivalently
     \[
     \za = \left(\ze_{Y_{1}(i)}+\cdots+\ze_{Y_{n}(i)}\right)\za.
     \]
     \end{enumerate}
If, on the contrary, $i\nin I'$, then the above conditions (1), (2), in which $F_{\l(i)}$ is replaced by $G_{\l(i)}$, are equivalent to the following one:
\begin{enumerate}
\setcounter{enumi}{3}
     \item There exist $Y_{1},\ldots,Y_{n}\in\CP_{\l(i)}$ such that the entry $\za(x,y)$ of $\za$ is not zero only if $x,y\in
     Y_{1}(i)\cup\cdots\cup Y_{n}(i)$; equivalently
     \[
     \za = \left(\ze_{Y_{1}(i)}+\cdots+\ze_{Y_{n}(i)}\right)\za = \za\left(\ze_{Y_{1}(i)}+\cdots+\ze_{Y_{n}(i)}\right).
     \]
     \end{enumerate}
 \end{lem}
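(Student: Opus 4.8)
The plan is to exploit the isomorphism $\psi_i\colon Q\to S_i$ established in Proposition \ref{subrings} together with the structural description of $F_{\l(i)}$ and $G_{\l(i)}$ inside $Q_{\l(i)}$ provided by Theorem \ref{pro-chainflr}(2) and Remark \ref{elementidempot}. First I would recall that, by definition, $H_i = \psi_i(F_{\l(i)})$ when $i\in I'$ and $H_i=\psi_i(G_{\l(i)})$ when $i\notin I'$; since $\psi_i$ is a ring isomorphism onto $S_i$ carrying each $\ze_Y$ ($Y\in\CP_{\l(i)}$) to $\ze_{Y(i)}$, every module-theoretic identity among the $\ze_Y$ inside $F_{\l(i)}$ or $G_{\l(i)}$ transports verbatim to the corresponding identity among the $\ze_{Y(i)}$ inside $H_i$. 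Thus the decompositions \eqref{H_ii} are obtained by applying $\psi_i$ to the equalities $F_{\l(i)}=\bigoplus\{\ze_Z Q_{\l(i)}\mid Z\in\CP_{\l(i)}\}$ and $G_{\l(i)}=\bigoplus\{Q_{\l(i)}\ze_Z\mid Z\in\CP_{\l(i)}\}$ from Remark \ref{elementidempot}, while \eqref{H_i} is the image of $F_{\l(i)}=F_{\l(i)}\ze_Y F_{\l(i)}$ (resp. $G_{\l(i)}=G_{\l(i)}\ze_Y G_{\l(i)}$). Here one uses that $\ze_{Y(i)}=\psi_i(\ze_Y)\in H_i$ because $\ze_Y\in F_{\l(i)}\cap G_{\l(i)}$, and that multiplication in $S_i$ agrees with that in $Q$.

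For the equivalences, fix $\za\in S_i$ and write $\za=\psi_i(\zb)$ with $\zb\in Q$ uniquely determined. Then condition (1) says exactly $\zb\in F_{\l(i)}$ (resp. $\zb\in G_{\l(i)}$). The equivalence (1)$\Leftrightarrow$(3) (resp. (1)$\Leftrightarrow$(4)) will come from Theorem \ref{pro-chainflr}(2): $\zb\in F_{\l(i)}$ iff there are $Y_1,\dots,Y_n\in\CP_{\l(i)}$ such that any nonzero row of $\zb$ is indexed in $Y_1\cup\dots\cup Y_n$, equivalently $\zb=(\ze_{Y_1}+\dots+\ze_{Y_n})\zb$; applying $\psi_i$ and using $\psi_i(\ze_{Y_k})=\ze_{Y_k(i)}$ and the definition of $\psi_i$ (which maps the $(X_{\CA},X_{\CA})$-block of $\zb$, via $t_\CA$, to the part of $\za$ on $X_\CA$) turns ``nonzero rows of $\zb$ in $Y_1\cup\dots\cup Y_n$'' into ``nonzero rows of $\za$ in $\bigcup_k \bigcup_{\CA\in\BD_i} t_\CA^{-1}(Y_k)=\bigcup_k Y_k(i)$''. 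The $G$-case is identical but with the condition on entries $\zb(x,y)$, giving the two-sided relation $\za=(\sum_k\ze_{Y_k(i)})\za=\za(\sum_k\ze_{Y_k(i)})$. For (2), note $\ze_{X_\CA}\za\,\ze_{X_\CA}=\psi_i(\ze_Y \zb \ze_Y)$-type expressions are not quite right; rather, since $\za\in S_i$ one has $\ze_{X_\CA}\za\,\ze_{X_\CA}$ equal, up to the bijection $t_\CA$, to $\zb$ itself, so $\ze_{X_\CA}\za\,\ze_{X_\CA}\in F_{\l(i)}$ (as a matrix over $X$) iff $\zb\in F_{\l(i)}$; more carefully, $\ze_{X_\CA}\za\,\ze_{X_\CA}$ is supported on $X_\CA$ and its rows are nonzero precisely on $\bigcup_k t_\CA^{-1}(Y_k)\cap X_\CA$, finitely many members of $\CP_{\l(i)}$, which by Theorem \ref{pro-chainflr}(2) (applied inside $Q_{\l(i)}$, noting $X_\CA$ is a union of blocks of $\CP_{\l(i)}$) is the condition for membership in $F_{\l(i)}$. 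This gives (1)$\Leftrightarrow$(2).

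The main obstacle I anticipate is keeping the bookkeeping straight between the three partitions $\CP_{\l(i)}$, $\CP_{\xi+1}$ and the sets $X_\CA$, and verifying that ``$\za$ has a nonzero row at $x$'' translates correctly under $\psi_i$ through \emph{all} the bijections $t_\CA$ simultaneously — i.e. that the support condition in (3) genuinely involves $Y_k(i)=\bigcup_{\CA\in\BD_i}t_\CA^{-1}(Y_k)$ and not just one representative block. This is handled by recalling from Notations \ref{not varie} and \eqref{overlinedd} that $Y(i)=\overline{t_\CA^{-1}(Y)}$ is independent of $\CA$, so the union over $\CA\in\BD_i$ closes up exactly into the sets $Y_k(i)$; once that is in hand the equivalence of (3) with the clean matrix identity $\za=(\ze_{Y_1(i)}+\cdots+\ze_{Y_n(i)})\za$ is immediate from the definition of the idempotents $\ze_{Y_k(i)}$. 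A secondary point needing a line is that in the $i\notin I'$ case $\za\in S_i$ automatically forces the row-support and column-support conditions to coincide (both governed by the same finite set of blocks), which is why (4) can be stated with the two-sided relation; this follows because $\za=\ze_{X_i}\za\,\ze_{X_i}$ and the $(X_\CA,X_\CA)$-blocks are square.
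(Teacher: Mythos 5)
Your plan is essentially the paper's own proof: both obtain \eqref{H_ii} and \eqref{H_i} by pushing the identities of Remark~\ref{elementidempot} forward through the ring isomorphism $\psi_i$ (using $\psi_i(\ze_Y)=\ze_{Y(i)}$), and both obtain the three-way equivalence by translating the support criterion of Theorem~\ref{pro-chainflr}(2) through the bijections $t_{\CA}$ together with the identity $Y(i)=\overline{t_{\CA}^{-1}(Y)}$ from \eqref{overlinedd} — exactly the ``bookkeeping'' you correctly flag as the main obstacle. The only cosmetic difference is in routing (you argue $(1)\Leftrightarrow(2)$ and $(1)\Leftrightarrow(3)$, whereas the paper treats $(1)\Leftrightarrow(3)$ as immediate from \eqref{H_ii} and then proves $(2)\Leftrightarrow(3)$ directly). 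One small misstatement at the end: the two-sided form of condition (4) is not a consequence of ``$\za=\ze_{X_i}\za\ze_{X_i}$ and the $(X_{\CA},X_{\CA})$-blocks being square''; it is simply what $G_{\l(i)}\cong\FM(D)$ encodes — finitely many nonzero \emph{entries}, hence simultaneously a row- and a column-support condition — but this does not affect the soundness of your argument.
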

 \begin{proof}
 The first statement follows from  Remark \ref{elementidempot}, while the equivalence (1)$\Leftrightarrow$(3) is clear from \eqref{H_ii}. Next, suppose that $i\in I'$, assume (3) and let $\CA\in\BD_i$. Using
\eqref{overlinedd} we see that
\begin{align*}
[Y_{1}(i)\cup\cdots\cup Y_{n}(i)]\cap X_{\CA} &=
\left[\overline{t_{\CA}^{-1}(Y_{1})}\cup\cdots\cup
\overline{t_{\CA}^{-1}(Y_{n})}\right]\cap X_{\CA} \\
         &= t_{\CA}^{-1}(Y_{1})\cup\cdots\cup t_{\CA}^{-1}(Y_{n}).
         \end{align*}
         Since $t_{\CA}^{-1}(Y_{r})\in\CQ_{\CA}\sbs\CP_{\l(i)}$ for all
$r\in\{1,\ldots,n\}$, it follows from (2) of Theorem
\ref{pro-chainflr} that $\ze_{X_{\CA}}\za\,\ze_{X_{\CA}}\in
F_{\l(i)}$. Conversely, suppose (2) and let $x,y\in X$ be such that
$\za(x,y)\ne 0$. Then $x,y\in X_{\CA}$ for some $\CA\in\BD_i$ and,
by the assumption and (2) of Theorem \ref{pro-chainflr}, there are
$V_1,\ldots,V_n\in\CP_{\l(i)}$ such that $x\in V_1\cup\cdots\cup
V_n$; necessarily $V_1,\ldots,V_n\in\CQ_{\l(i)}$, because $x\in
X_{\CA}$. By setting $Y_r = t_{\CA}(V_r)\in\CP_{\l(i)}$ for
$r\in\{1,\ldots,n\}$, we conclude that $x\in Y_{1}(i)\cup\cdots\cup
Y_{n}(i)$, taking \eqref{overlinedd} into account.

The proof of the equivalence (2)$\Leftrightarrow$(3) is similar, by taking again Theorem
\ref{pro-chainflr} into account.
\end{proof}

\begin{re}
    Observe that, in general, we have $\ze_{X_{\CA}}H_i\,\ze_{X_{\CA}}\not\sbs H_i$,
unless $\BD_i = \{\CA\}$. If $i$ \underbar{is not} a maximal element
of $I$, then $H_i\sbs F_{\l(i)}$ if and only if $\BD_i$ is finite,
that is, if and only if $\{\le i\}$ has finitely many maximal
chains.
\end{re}

 \begin{lem}\label{nonzerochain}
 Let $j_1<\cdots<j_n$ be a finite chain of $I$ with $n>1$, let $\za_1\in
\nolinebreak H_{j_1}$,\linebreak\ldots,$\za_n\in H_{j_n}$ and choose
$\CA_1\in\BD_{j_1},\ldots,\CA_n\in\BD_{j_n}$ such that
 $\CA_1\sps\cdots\sps\CA_n$ (see (2) of Lemma
 \ref{lem-partition2}). If $\za_n\ne\mathbf{0}$, then the
$(X_{\CA_n}\times X_{\CA_n})$-block
 of $\za = \za_1+\cdots+\za_n$ is not zero; in particular
 there is some $x\in X_{\CA_n}$ such that the $x$-th row of $\za$
 is not zero and coincides with the $x$-th row of $\za_n$.
 \end{lem}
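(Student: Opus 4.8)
The plan is to produce a single index $x\in X_{\CA_n}$ such that the $x$-th row of $\za_k$ vanishes for every $k<n$, while the $x$-th row of $\za_n$ does not; then, since $\za=\za_1+\cdots+\za_n$, the $x$-th row of $\za$ coincides with the $x$-th row of $\za_n$, which is nonzero, and its nonzero entry will sit in a column lying in $X_{\CA_n}$, so the $(X_{\CA_n}\times X_{\CA_n})$-block of $\za$ is nonzero. Two preliminary observations: first, for every $k$ we have $\za_k\in H_{j_k}\sbs\psi_{j_k}(Q_{\l(j_k)})\sbs S_{j_k}\cap Q_{\l(j_k)}$, by the definition of $H_{j_k}$ (each of $F_{\l(j_k)}$, $G_{\l(j_k)}$, $D$ being contained in $Q_{\l(j_k)}$) together with \eqref{sbrss}; second, for $k<n$ the element $j_k$ is not maximal, since $j_k<j_{k+1}$, so Lemma \ref{HsbsFr} applies to $\za_k$. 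Moreover $\l(j_1)<\cdots<\l(j_n)$ because the canonical length function is strictly increasing, and $\CA_n\sbs\CA_k$, hence $X_{\CA_n}\sbs X_{\CA_k}$, for every $k$, because $\CA_1\sps\cdots\sps\CA_n$.

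First I would produce a ``good'' member of $\CP_{\l(j_n)}$. Since $\za_n\in S_{j_n}$ is nonzero and \eqref{mamagicmatrix} forces every $(X_{\CA'},X_{\CA'})$-block of $\za_n$ with $\CA'\in\BD_{j_n}$ to be the $t_{\CA'\CA_n}$-transform of the $(X_{\CA_n},X_{\CA_n})$-block, this last block is nonzero, say $\za_n(x_1,y_1)\ne\mathbf 0$ with $x_1,y_1\in X_{\CA_n}$. Write $x_1=\al^{\l(j_n)}\bullet\l_0+\r_0$, $y_1=\al^{\l(j_n)}\bullet\mu_0+\r_0'$ with $\r_0,\r_0'<\al^{\l(j_n)}$, and put $Y=X_{\l(j_n),\l_0}\in\CP_{\l(j_n)}$; since $X_{\CA_n}=\bigcup\CQ_{\CA_n}$ is a union of members of $\CP_{\l(j_n)}$ and $x_1,y_1\in X_{\CA_n}$, both $Y$ and $X_{\l(j_n),\mu_0}$ lie in $\CQ_{\CA_n}$. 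Because $\za_n\in Q_{\l(j_n)}$, condition \eqref{magicmatrix} together with $\za_n(x_1,y_1)\ne\mathbf 0$ forces $\r_0=\r_0'$ and $\za_n(\al^{\l(j_n)}\bullet\l_0,\al^{\l(j_n)}\bullet\mu_0)\ne\mathbf 0$, whence for \emph{every} $\r<\al^{\l(j_n)}$ the index $x=\al^{\l(j_n)}\bullet\l_0+\r$ lies in $Y$ and $\za_n(x,\al^{\l(j_n)}\bullet\mu_0+\r)\ne\mathbf 0$. Thus every row of $\za_n$ indexed by an element of $Y$ is nonzero and has a nonzero entry in a column of $X_{\CA_n}$.

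Next I would confine the bad rows. Fix $k<n$. By Lemma \ref{HsbsFr} there are $Y^{(k)}_1,\dots,Y^{(k)}_{m_k}\in\CP_{\l(j_k)}$ with every nonzero row of $\za_k$ lying in $Y^{(k)}_1(j_k)\cup\cdots\cup Y^{(k)}_{m_k}(j_k)$. Since $Y\sbs X_{\CA_n}\sbs X_{\CA_k}$, whereas $t_{\CA'}^{-1}(Y^{(k)}_s)\sbs X_{\CA'}$ is disjoint from $X_{\CA_k}$ for $\CA'\in\BD_{j_k}$ with $\CA'\ne\CA_k$, we get $Y\cap Y^{(k)}_s(j_k)=Y\cap t_{\CA_k}^{-1}(Y^{(k)}_s)$; as $t_{\CA_k}^{-1}(Y^{(k)}_s)\in\CQ_{\CA_k}\sbs\CP_{\l(j_k)}$ (Remark \ref{remarkt}) and $\CP_{\l(j_n)}$ is $\al$-coarser than $\CP_{\l(j_k)}$ (Lemma \ref{lpartpart}), this intersection is either all of $t_{\CA_k}^{-1}(Y^{(k)}_s)$ or empty. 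So, using \eqref{partpart} and Proposition \ref{rulexp} to rewrite $\al^{\l(j_n)}=\al^{\l(j_k)}\bullet\al^{\l(j_n)-\l(j_k)}$, the set of $\r<\al^{\l(j_n)}$ for which the $(\al^{\l(j_n)}\bullet\l_0+\r)$-th row of $\za_k$ is nonzero is a union of at most $m_k$ intervals of the form $[\al^{\l(j_k)}\bullet\eta,\ \al^{\l(j_k)}\bullet\eta+\al^{\l(j_k)})$ with $\eta<\al^{\l(j_n)-\l(j_k)}$. It remains to find $\r<\al^{\l(j_n)}$ outside all these intervals for every $k<n$.

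Finally comes the interval-dodging step, which I expect to be the main obstacle since it is essentially ordinal bookkeeping. Going downward from $k=n-1$ to $k=1$, I would build a decreasing chain $J_{n-1}\sps J_{n-2}\sps\cdots\sps J_1$ of right-open intervals with $J_k=[\al^{\l(j_k)}\bullet\eta_k,\ \al^{\l(j_k)}\bullet\eta_k+\al^{\l(j_k)})\sbs\al^{\l(j_n)}$ avoiding all the bad intervals of levels $\l(j_k),\dots,\l(j_{n-1})$. For $J_{n-1}$: by Remark \ref{remeuclid} and Proposition \ref{rulexp} the intervals $[\al^{\l(j_{n-1})}\bullet\eta,\ldots)$ with $\eta<\al^{\l(j_n)-\l(j_{n-1})}$ partition $\al^{\l(j_n)}$; only finitely many are bad of level $\l(j_{n-1})$ and $\al^{\l(j_n)-\l(j_{n-1})}\ge\al$ is infinite, so a good one exists. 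Passing from $J_{k+1}$ to $J_k$: partition $J_{k+1}=[\al^{\l(j_{k+1})}\bullet\eta_{k+1},\ldots)$ into its $\al^{\l(j_{k+1})-\l(j_k)}$ subintervals of the shape $[\al^{\l(j_k)}\bullet\eta,\ldots)$ (again Remark \ref{remeuclid} and Proposition \ref{rulexp}), discard the at most $m_k$ that are bad of level $\l(j_k)$, and keep any survivor, which exists since $\al^{\l(j_{k+1})-\l(j_k)}\ge\al$ is infinite; then $J_k\sbs J_{k+1}$ still avoids levels $\l(j_{k+1}),\dots,\l(j_{n-1})$. Now $J_1\ne\vu$; choosing any $\r\in J_1$ and setting $x=\al^{\l(j_n)}\bullet\l_0+\r\in Y\sbs X_{\CA_n}$, the $x$-th row of $\za_k$ vanishes for all $k<n$ while the $x$-th row of $\za_n$ is nonzero with a nonzero entry in a column of $X_{\CA_n}$; hence $\za$ has the same $x$-th row as $\za_n$ and its $(X_{\CA_n}\times X_{\CA_n})$-block is nonzero, which is the assertion.
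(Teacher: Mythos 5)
Your argument is correct, but the paper reaches the conclusion in a single step rather than through the $(n-1)$-stage interval-dodging you construct. Writing $Y_r$ for the set of $u\in X_{\CA_r}$ at which the $u$-th row of $\za_r$ is nonzero, the paper notes that for each $r<n$ the set $Y_r$ is a finite union of members of $\CQ_{\CA_r}\sbs\CP_{\l(j_r)}$ (this is your appeal to Lemma~\ref{HsbsFr}), and that, because $\CP_{\l(j_{n-1})}$ is $\al$-coarser than each $\CP_{\l(j_r)}$ with $r\le n-1$, the whole bad set $Y_1\cup\cdots\cup Y_{n-1}$ is already contained in a \emph{finite} union of members of $\CP_{\l(j_{n-1})}$. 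On the other hand $Y_n\ne\vu$ is a union of members of $\CP_{\l(j_n)}$, hence contains $\al$ members of $\CP_{\l(j_{n-1})}$; thus $Y_n\setminus(Y_1\cup\cdots\cup Y_{n-1})\ne\vu$ and any $x$ in this difference has $\za_r$-row zero for all $r<n$ while its $\za_n$-row is nonzero (with its nonzero entries in columns of $X_{\CA_n}$, since $\za_n=\ze_{X_{\CA_n}}\za_n\ze_{X_{\CA_n}}$). Your nested chain $J_{n-1}\sps\cdots\sps J_1$ in effect re-derives this coarsening observation level by level, at the cost of re-invoking Proposition~\ref{rulexp} and Remark~\ref{remeuclid} at each stage; the simplification worth absorbing is that coarsening every bad set at once up to the single level $\CP_{\l(j_{n-1})}$ reduces the whole argument to one cardinality comparison, finitely many members versus $\al$. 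Your version is more explicit in pinpointing the index $\r\in J_1$, but the two arguments carry the same mathematical content.
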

\begin{proof}
For each $r\in\{1,\ldots,n\}$ let us denote by $Y_r$ the subset of
those $u\in X_{\CA_r}$ such that the $u$-th row  of $\za_r$ is not
zero. For $r<n$ the element $j_r$ is not maximal, therefore it
follows from (2) of Theorem \ref{pro-chainflr} and Lemma
\ref{HsbsFr} that $Y_r$ is the (disjoint) union of finitely many
elements of $\CQ_{\CA_r}\sbs\CP_{\l(j_r)}$. Assume that
$\za_n\ne\mathbf{0}$. Then for every $\CA\in\BD_{j_n}$ the
$(X_{\CA}\times X_{\CA})$-block of $\za_n$ is not zero and hence, in
particular, $Y_n\ne\emptyset$. Since $\CP_{\l(j_s)}$ is
$\al$-coarser than $\CP_{\l(j_r)}$ when $r<s\le n$, in particular
$Y_n$ is the (disjoint) union of $\al$ elements of
$\CP_{\l(j_{n-1})}$; on the other hand, by the above
$Y_1\cup\cdots\cup Y_{n-1}$ is contained in the (disjoint) union of
finitely many elements of $\CP_{\l(j_{n-1})}$. Consequently
\[
Y_n\setminus(Y_1\cup\cdots\cup Y_{n-1})\ne\emptyset
\]
and therefore, if $x\in Y_n\setminus(Y_1\cup\cdots\cup Y_{n-1})$,
the $x$-th row of $\za$ coincides with the $x$-th row of $\za_n$,
which is not zero.
\end{proof}

 \begin{lem}\label{nonzerochain2}
 Let $J$ be a finite subset of $I$, let $\za = \sum_{j\in
J}\za_j$, where $\za_j\in H_j$ for $j\in J$, let $j_1<\cdots<j_n$ be
a maximal chain of $J$ and let
$\CA_1\in\BD_{j_1},\ldots,\CA_n\in\BD_{j_n}$ be as in Lemma
 \ref{nonzerochain}. Then the $(X_{\CA_n}\times X_{\CA_n})$-blocks of
$\za$ and $\za_{j_1}+\cdots+\za_{j_n}$ coincide.
 \end{lem}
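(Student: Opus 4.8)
The plan is to reduce the claim to showing that each ``extra'' summand is blind to the block $(X_{\CA_n}\times X_{\CA_n})$. Since $J$ is finite we may write
\[
\za-(\za_{j_1}+\cdots+\za_{j_n}) \;=\; \sum_{j\in J\setminus\{j_1,\ldots,j_n\}}\za_j,
\]
so it suffices to prove that for every $j\in J$ with $j\notin\{j_1,\ldots,j_n\}$ the $(X_{\CA_n}\times X_{\CA_n})$-block of $\za_j$ is zero; the desired coincidence of blocks then follows immediately by additivity.

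Fix such a $j$ and argue by contradiction, assuming $\za_j(x,y)\ne 0$ for some $x,y\in X_{\CA_n}$. As $\za_j\in H_j\sbs S_j$, and the matrices in $S_j$ have all their nonzero entries confined to the (mutually disjoint) diagonal blocks $(X_{\CB},X_{\CB})$ with $\CB\in\BD_j$ (condition \eqref{mamagicmatrix}), there is $\CB\in\BD_j$ with $x,y\in X_{\CB}$. Now $x$ lies in exactly one member $X_{\chi}$ of the partition $\CP_{\xi+1}=\{X_{\chi}\mid\chi<\bet\}$ of $X$, and both $X_{\CA_n}$ and $X_{\CB}$ are unions of such members; hence $x\in X_{\CA_n}\cap X_{\CB}$ forces $A_{\chi}\in\CA_n\cap\CB$, so in particular $\CA_n\cap\CB\ne\emptyset$.

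The last step uses the maximality of the chain $j_1<\cdots<j_n$ in $J$. Since $j\in J$ lies outside this chain, $\{j_1,\ldots,j_n\}\cup\{j\}$ is not a chain, so $j$ is incomparable to $j_k$ for some $k\le n$. By part (2) of Lemma \ref{lem-partition2} this forces $\CA_k\cap\CB=\emptyset$ (as $\CA_k\in\BD_{j_k}$ and $\CB\in\BD_j$). On the other hand the sets $\CA_r$ were chosen as in Lemma \ref{nonzerochain}, that is with $\CA_1\sps\cdots\sps\CA_n$, so $\CA_n\sbs\CA_k$; therefore $\emptyset\ne\CA_n\cap\CB\sbs\CA_k\cap\CB=\emptyset$, a contradiction. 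Hence the $(X_{\CA_n}\times X_{\CA_n})$-block of $\za_j$ vanishes, and the proof is complete.

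I expect the only genuinely delicate point to be the implication ``$\za_j$ has a nonzero entry inside the $X_{\CA_n}$-block $\Rightarrow\CA_n\cap\CB\ne\emptyset$'': it rests on the blocked shape of the elements of $S_j$ together with the fact that $X_{\CA_n}$ and $X_{\CB}$ are built from the blocks $X_{\chi}$ of one and the same finest relevant partition $\CP_{\xi+1}$, via the bijection $\chi\mapsto A_{\chi}$ between $\bet$ and $\CM$. Everything else is routine once one observes that maximality of the chain $j_1<\cdots<j_n$ is precisely the hypothesis needed to produce the incomparable $j_k$ and invoke Lemma \ref{lem-partition2}(2).
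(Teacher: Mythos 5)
Your proof is correct and uses essentially the same argument as the paper: both reduce to showing that $X_{\CA_n}\cap X_{\CB}\ne\emptyset$ forces $\CA_n\cap\CB\ne\emptyset$, then combine the nesting $\CA_1\sps\cdots\sps\CA_n$ with Lemma~\ref{lem-partition2}(2) and the maximality of the chain in $J$. You phrase it by contradiction where the paper argues directly (showing the hypothesis forces $j$ comparable to all $j_r$ and hence $j\in\{j_1,\dots,j_n\}$), but the content is identical.
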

\begin{proof}
 First note that, given $j\in J$, if there is some $\CB\in\BD_j$ such
that $X_{\CA_n}\cap X_{\CB}\ne\emptyset$, namely $\CA_n\cap
\CB\ne\emptyset$, then $\CA_r\cap\CB\ne\emptyset$ for all
$r\in\{1,\ldots,n\}$ and therefore it follows from Lemma
\ref{lem-partition2} that $j$ is comparable with every $j_r$. As a
result $j\in\{j_1,\ldots,j_n\}$, because this latter is a maximal
chain of $J$. This implies that if $j\in
J\setminus\{j_1,\ldots,j_n\}$, then the $(X_{\CA_n}\times
X_{\CA_n})$-block of every matrix in $H_{j}$ is zero and,
consequently, the $(X_{\CA_n}\times X_{\CA_n})$-blocks of $\za$ and
$\za_{j_1}+\cdots+\za_{j_n}$ coincide.
\end{proof}

\begin{theo}\label{lemposetring}
Let $I$ be a polarized artinian poset having at least two elements. With the above notations, $\CH = \{H_i\mid i\in I\}$ is an independent set of
$(D,D)$-submodules of $Q$ which satisfy the following conditions:
\begin{enumerate}
\item Every $H_i$ is a non-unital subring of $Q$; it has an
identity, given by $\ze_{X_i}$, if and only if $i$ is a maximal
element of $I$. \item $H_iH_j = 0$ if $i,j$ are not comparable;
\item Given $i\in I$, if $J\sbs\{i\le\}$ and
$\mathbf{0}\ne\za\in\bigoplus_{j\in J}H_{j}$, then
\[
0\ne H_i\za\sbs H_i \quad\text{and}\quad 0\ne \za H_i\sbs H_i;
\]
moreover there are $Y,Z\in\CP_{\l(i)}$ such that
\[
\mathbf{0}\ne\ze_{Y(i)}\za\in H_i \quad\text{and}\quad \mathbf{0}\ne
\za\ze_{Z(i)}\in H_i.
\]
\end{enumerate}
\end{theo}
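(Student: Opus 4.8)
The plan is to treat, in order, the independence of $\CH$, then statements (1), (2), and finally (3), where almost all the work lies. For independence I would argue by contradiction: if $\sum_{i\in F}\za_i=\mathbf 0$ with $F\sbs I$ finite, $\za_i\in H_i$ and $F_0=\{i\in F\mid\za_i\ne\mathbf 0\}\ne\vu$, pick a maximal chain $j_1<\cdots<j_n$ of the finite poset $F_0$ together with $\CA_1\sps\cdots\sps\CA_n$, $\CA_r\in\BD_{j_r}$, as in Lemma \ref{nonzerochain}. By Lemma \ref{nonzerochain2} the $(X_{\CA_n}\times X_{\CA_n})$-block of $\mathbf 0=\sum_{i\in F_0}\za_i$ equals that of $\za_{j_1}+\cdots+\za_{j_n}$, which is nonzero by Lemma \ref{nonzerochain} when $n>1$ (as $\za_{j_n}\ne\mathbf 0$) and also when $n=1$, since a nonzero element of $S_{j_1}$ has every $(X_\CB\times X_\CB)$-block ($\CB\in\BD_{j_1}$) nonzero; this contradiction proves independence, so $\bigoplus_{j\in J}H_j=\sum_{j\in J}H_j$ and each of its elements has a unique, finitely supported decomposition. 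For (1): by Proposition \ref{subrings} $\psi_i$ is injective and maps $Q$ isomorphically onto $S_i$, so it restricts to a ring isomorphism of $D$ (resp. $F_{\l(i)}$, $G_{\l(i)}$) onto $H_i$; if $i$ is maximal then $H_i\is D$ is unital with identity $\psi_i(\mathbf 1)=\ze_{X_i}$, and if $i$ is not maximal then $\l(i)<\xi+1$, so $\al^{\xi+1-\l(i)}\bullet\bet$ is infinite and neither $F_{\l(i)}$ nor $G_{\l(i)}$ (hence nor $H_i$) has a multiplicative identity. Statement (2) is immediate from Proposition \ref{subrings}(1): $H_iH_j\sbs S_iS_j=\mathbf 0$ when $i,j$ are incomparable.

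\textbf{Reducing (3).} Fix $i$, a set $J\sbs\{i\le\}$ and $\mathbf 0\ne\za=\sum_{j\in J}\za_j\in\bigoplus_{j\in J}H_j$; if $i$ is maximal then $J\sbs\{i\}$ and the statement is straightforward, so I assume $i$ is not maximal. Since $H_i\za=\sum_j H_i\za_j$ and $\za H_i=\sum_j\za_j H_i$ with $i\le j$ throughout, the two inclusions in (3) will follow once I show $H_iH_j\sbs H_i$ and $H_jH_i\sbs H_i$ for all $i\le j$; the case $j=i$ is clear as $H_i$ is a ring, so let $i<j$, $\zb\in H_i$, $\zg\in H_j$. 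By Proposition \ref{subrings}(2), $\zb\zg,\zg\zb\in S_i$, so by Lemma \ref{HsbsFr} it is enough to verify, for every $\CA\in\BD_i$, that $\ze_{X_\CA}\zb\zg\ze_{X_\CA}$ and $\ze_{X_\CA}\zg\zb\ze_{X_\CA}$ lie in $F_{\l(i)}$ (if $i\in I'$) or in $G_{\l(i)}$ (if $i\notin I'$). Using the block-diagonality of $\zb\in S_i$ and $\zg\in S_j$ and the fact (Lemma \ref{lem-partition2}) that every $\CB\in\BD_j$ meeting $\CA$ lies in $\CA$, I get $\ze_{X_\CA}\zb=\zb\ze_{X_\CA}=:\zb_\CA$ and $\ze_{X_\CA}\zg=\zg\ze_{X_\CA}=:\zg_\CA$, both supported inside $X_\CA\times X_\CA$, and $\ze_{X_\CA}\zb\zg\ze_{X_\CA}=\zb_\CA\zg_\CA$, $\ze_{X_\CA}\zg\zb\ze_{X_\CA}=\zg_\CA\zb_\CA$. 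Since $X_\CA$ is a union of blocks of $\CP_{\l(j)}$ and of $\CP_{\l(i)}$, one has $\ze_{X_\CA}\in Q_{\l(j)}\cap Q_{\l(i)}$; combining $H_j\sbs\psi_j(Q_{\l(j)})\sbs Q_{\l(j)}$ (using \eqref{sbrss}) with $Q_{\l(j)}\sbs Q_{\l(i)}$ (Theorem \ref{pro-chainflr}(3), since $\l(i)<\l(j)$), one obtains $\zg_\CA\in\ze_{X_\CA}Q_{\l(i)}\ze_{X_\CA}$, whereas $\zb_\CA\in\ze_{X_\CA}F_{\l(i)}\ze_{X_\CA}$ (resp. $\ze_{X_\CA}G_{\l(i)}\ze_{X_\CA}$) by Lemma \ref{HsbsFr}.

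\textbf{The two products.} If $i\in I'$, then $F_{\l(i)}$ is a two-sided ideal of $Q_{\l(i)}$, so both $\zb_\CA\zg_\CA$ and $\zg_\CA\zb_\CA$ lie in $F_{\l(i)}$ and the verification is finished. If $i\notin I'$, then $G_{\l(i)}$ is only a left ideal of $Q_{\l(i)}$, which still gives $\zg_\CA\zb_\CA\in G_{\l(i)}$ at once, hence $H_jH_i\sbs H_i$; but $\zb_\CA\zg_\CA$ is a \emph{right} multiple of an element of $G_{\l(i)}$ and needs an extra idea. Here I would use that $I'$ is a lower subset, so $i<j$ forces $j\notin I'$, whence $\zg$ is either the diagonal matrix $\ze_{X_j}d$ ($j$ maximal) or of ``$\FM$-type over $\CP_{\l(j)}$'' ($j$ not maximal). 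Writing $\zb_\CA=\zb_\CA\ze_W$ with $W$ a finite union of blocks of $\CQ_\CA$ (possible because $\zb_\CA\in G_{\l(i)}$), the scalar-block structure of matrices in $Q_{\l(j)}$ shows that $\ze_W\zg_\CA$ has only finitely many nonzero $\CP_{\l(i)}$-column-blocks; so $\zb_\CA\zg_\CA=\zb_\CA(\ze_W\zg_\CA)$ is a product of two finite-support matrices in $Q_{\l(i)}$, hence lies in $G_{\l(i)}$, and $H_iH_j\sbs H_i$ as well. This case --- where $\ze_{X_\CA}H_i\ze_{X_\CA}$ is merely a left ideal of $\ze_{X_\CA}Q_{\l(i)}\ze_{X_\CA}$ --- is the only genuinely non-formal point of the proof, and it is exactly where the lower-subset hypothesis on $I'$ is needed; everything else is bookkeeping with the block structure from Proposition \ref{subrings} and Lemma \ref{HsbsFr}.

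\textbf{The nonzero assertions.} Since $\ze_{Y(i)}\in H_i$ for every $Y\in\CP_{\l(i)}$ (Remark \ref{elementidempot} and Lemma \ref{lem idempprim Hi}), it suffices to produce $Y,Z\in\CP_{\l(i)}$ with $\ze_{Y(i)}\za\ne\mathbf 0\ne\za\ze_{Z(i)}$: then $\mathbf 0\ne\ze_{Y(i)}\za\in H_i$ and $\mathbf 0\ne\za\ze_{Z(i)}\in H_i$ by the inclusions above, and in particular $H_i\za\ne\mathbf 0\ne\za H_i$. Let $J_0=\{j\in J\mid\za_j\ne\mathbf 0\}$, a finite nonempty poset; take a maximal chain $j_1<\cdots<j_n$ of $J_0$ (ending in a maximal element of $J_0$) and $\CA_1\sps\cdots\sps\CA_n$, $\CA_r\in\BD_{j_r}$. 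By Lemmas \ref{nonzerochain} and \ref{nonzerochain2} (with $n=1$ handled as in the independence argument), there are $x,y\in X_{\CA_n}$ with $\za(x,y)\ne 0$. As $i\le j_n$, choose $\CA\in\BD_i$ with $X_{\CA_n}\sbs X_\CA$; then $x$ and $y$ lie in blocks $V,W\in\CQ_\CA$, and $Y:=t_\CA(V)$, $Z:=t_\CA(W)$ belong to $\CP_{\l(i)}$ with $V\sbs Y(i)$ and $W\sbs Z(i)$, so $(\ze_{Y(i)}\za)(x,y)=\za(x,y)\ne 0$ and $(\za\ze_{Z(i)})(x,y)=\za(x,y)\ne 0$, which completes the argument.
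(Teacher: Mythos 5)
Your proposal is correct and follows essentially the same route as the paper's proof: independence via Lemmas \ref{nonzerochain} and \ref{nonzerochain2}, (1)--(2) by direct inspection and Proposition \ref{subrings}, and the inclusions of (3) via Lemma \ref{HsbsFr} together with the fact that $F_{\l(i)}$ (resp.\ $G_{\l(i)}$) is a left ideal of $Q_{\l(i)}$, with the genuinely nontrivial case being $H_iH_j\sbs H_i$ when $i\notin I'$. At that one point the paper reduces to matrix units and performs the ordinal arithmetic explicitly, showing $\ze_{X_\CA}\ze_{Y(i)}\psi_j(\ze_{V,W})\ze_{X_\CA} = \ze_{t^{-1}_\CA(Y),Z}$ for a computed $Z\in\CP_{\l(i)}$, whereas you assert directly that $\ze_W\zg_\CA$ has only finitely many nonzero $\CP_{\l(i)}$-column-blocks; the assertion is true, but its truth rests precisely on the alignment of the nested partitions $\CP_{\l(i)}\sbs\CP_{\l(j)}$ with the diagonal of a scalar $\al^{\l(j)}\times\al^{\l(j)}$-block, which is exactly what the paper's ordinal computation verifies, so a fully detailed version of your argument would reproduce that computation. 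One small correction in the ``nonzero'' part: you should choose $\CA\in\BD_i$ \emph{together with} $\CA_1\sps\cdots\sps\CA_n$ (e.g.\ by first fixing a maximal chain of $I$ containing $\{i,j_1,\ldots,j_n\}$ and taking all classes from it, as the paper does), since for a freely chosen $\CA_n\in\BD_{j_n}$ there need not exist $\CA\in\BD_i$ with $\CA_n\sbs\CA$ when $i<j_1$.
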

\begin{proof}
Assume that $J$ is a finite subset of $I$, suppose that $\za = \sum_{j\in
J}\za_j$, where $\mathbf{0}\ne \za_j\in H_j$ for $j\in J$, let us
choose a maximal chain $j_1<\cdots<j_n$ of $J$ and let
$\CA_1\in\BD_{j_1},\ldots,\CA_n\in\BD_{j_n}$ be such that
 $\CA_1\sps\cdots\sps\CA_n$. Then by Lemma
\ref{nonzerochain2} the $(X_{\CA_n}\times X_{\CA_n})$-blocks of
$\za$ and $\za' = \za_{j_1}+\cdots+\za_{j_n}$ coincide and, on the
other hand, the $(X_{\CA_n}\times X_{\CA_n})$-block of $\za'$ is not
zero by Lemma \ref{nonzerochain}. As a consequence $\za
\ne\mathbf{0}$ and this proves the independence of $\CH$.

(1) If $i\in I$ and $I$ is not a maximal element, then $H_i\is
F_{\l(i)}\is \BF\BR_X(D)$ or $H_i\is
G_{\l(i)}\is \BF\BM_X(D)$ as rings, depending on the fact that $i$ is in $I'$ or not, therefore $H_i$ is a ring without an
identity. If, on the contrary, $i$ is a maximal element, then $H_i = \psi_i(D)\is D$ and $\ze_{X_i} = \psi_i(1)$ is an identity
for $H_i$. Now $X_i\ne X$, because $I$ has at least two elements,
consequently $H_i$ is not an unital subring of $Q$.

(2) follows from the property (1) of Proposition \ref{subrings},
since $H_i\sbs S_i$ for all $i\in I$.

(3) It is clearly sufficient to take $i,j\in I$ with $i<j$, two nonzero elements $\za\in H_j$, $\zb\in H_b$ and show that $\za\zb$ and $\zb\za$ are both in $H_i$.
First, according to Proposition \ref{subrings} we have that $\za\zb\in S_i$ and $\zb\za\in S_i$. Given $\CA\in\BD_i$, we have from
\eqref{mamagicmatrix} that
\begin{equation}\label{equa}
\ze_{X_{\CA}}(\za\zb)\ze_{X_{\CA}} =
\ze_{X_{\CA}}\za\ze_{X_{\CA}}\zb\ze_{X_{\CA}}.
\end{equation}
By Lemma \ref{HsbsFr} we have that either $\ze_{X_{\CA}}\zb\ze_{X_{\CA}}\in
F_{\l(i)}$, or $\ze_{X_{\CA}}\zb\ze_{X_{\CA}}\in
G_{\l(i)}$, according to the fact that $i\in I'$ or not.
Since $\ze_{X_{\CA}}\in Q_{\l(i)}$ and $\za\in
Q_{\l(j)}\sbs Q_{\l(i)}$ and both $F_{\l(i)}$ and $G_{\l(i)}$ are \emph{left} ideals of
$Q_{\l(i)}$, we infer that the first member of \eqref{equa} belongs
to $F_{\l(i)}$, or to $G_{\l(i)}$ respectively. As a result $\za\zb\in H_i$, again by Lemma
\ref{HsbsFr}. If $i\in I'$, since $F_{\l(i)}$ is also a \emph{right} ideal of
$Q_{\l(i)}$, the same argument as above shows that $\zb\za\in H_i$. Assume that $i\nin I'$, so that $j\nin I'$ as well. In order to show that $\zb\za\in H_i$ also in this case, it is sufficient to consider the case in which $\zb = \ze_{Y(i)}$ and $\za = \psi_{j}(\ze_{V,W})$ for some $Y\in\CP_{\l(i)}$ and $V,W\in\CP_{\l(j)}$ (see Remark and Notation \ref{elementidempot1}).
Since $\psi_{j}(\ze_{V,W}) = \psi_{j}(\ze_{V}\ze_{V,W}) = \psi_{j}(\ze_{V})\psi_{j}(\ze_{V,W}) = \ze_{V(j)}\psi_{j}(\ze_{V,W})$, if $Y(i)\cap V(j) = \vu$, then $\ze_{Y(i)}\psi_{j}(\ze_{V,W}) = \mathbf{0}$. Otherwise, according to (2) of Lemma \ref{lem idempprim Hi} we have that $Y(i)\sbs V(j)$. Given $\CA\in\BD_{i}$, we claim that
\begin{equation}\label{eq equa2}
    \ze_{X_{\CA}}\ze_{Y(i)}\psi_{j}(\ze_{V,W})\ze_{X_{\CA}} =
\ze_{t^{-1}_{\CA}(Y),Z}\in G_{\l(i)}
\end{equation}
for a suitable $Z\in\CP_{\l(i)}$; it will follow from Lemma \ref{HsbsFr} that $\zb\za =$\linebreak $ \ze_{Y(i)}\psi_{j}(\ze_{V,W})\in H_i$. Since $Y(i)\cap X_{\CA} = t^{-1}_{\CA}(Y)$, it follows from $Y(i)\sbs V(j)$ that $t^{-1}_{\CA}(Y)\sbs t^{-1}_{\CB}(V)$ for a necessarily unique $\CB\in\BD_{j}$ and, given $x,y\in X$, we have that
\[
[\ze_{Y(i)}\psi_{j}(\ze_{V,W})\ze_{X_{\CA}}](x,y) \ne 0 \quad \text{only if $x\in t^{-1}_{\CA}(Y)$}.
\]
There are $\chi<\al^{\xi+1-\l(i)}\bullet\beth$ and $\l,\mu<\al^{\xi+1-\l(j)}\bullet\beth$ such that $Y = X_{\l(i),\chi}$, $V = X_{\l(j),\l}$ and $W = X_{\l(j),\mu}$. Let $x\in t^{-1}_{\CA}(Y)$, so that $x = \al^{\l(i)}\bullet k^{-1}_{\CA}(\chi)+\t$ for a unique $\t<\al^{\l(i)}$. Since $x\in t^{-1}_{\CB}(V)$ as well, there is a unique $\s<\al^{\l(j)}$ such that $x = \al^{\l(j)}\bullet k^{-1}_{\CB}(\l)+\s$. Also, $\s = \al^{\l(i)}\bullet\s'+\t$ for a unique $\s'<\al^{\l(j)-\l(i)}$ (see Remark \ref{remeuclid}) and so
\[
x = \al^{\l(i)}\bullet\left(\al^{\l(j)-\l(i)}\bullet k^{-1}_{\CB}(\l)+\s'\right)+\t.
\]
Consequently, for every $y\in X_{\CA}$ we have
\begin{align*}
[\ze_{t^{-1}_{\CA}(Y)}\psi_{j}(\ze_{V,W})\ze_{X_{\CA}}](x,y) &= \psi_{j}(\ze_{V,W})(x,y) \\
&=
\left\{
  \begin{array}{ll}
    1, & \hbox{if $y = \al^{\l(j)}\bullet k^{-1}_{\CB}(\mu)+\s$;} \\
    0, & \hbox{otherwise.}
  \end{array}
\right. \\
&= \left\{
     \begin{array}{ll}
       1, & \hbox{$y =\al^{\l(i)}\bullet\left(\al^{\l(j)-\l(i)}\bullet k^{-1}_{\CB}(\mu)+\s'\right)+\t$;} \\
       0, & \hbox{otherwise.}
     \end{array}
   \right.
\end{align*}
If we take $Z = X_{\l(i),\nu}$, where $\nu = \al^{\l(j)-\l(i)}\bullet k^{-1}_{\CB}(\mu)+\s'$, we conclude that \eqref{eq equa2} holds.

As far as the last statement is concerned, suppose again that $\za = \sum_{j\in
J}\za_j$, where $J$ is a finite subset of $I$ and $\mathbf{0}\ne \za_j\in H_j$ for $j\in J$, let us consider a maximal chain $j_1<\cdots<j_n$ of $J$ and
take $\CA\in\BD_{i}$, $\CA_1\in\BD_{j_1},\ldots,\CA_n\in\BD_{j_n}$
such that $\CA\sps\CA_1\sps\cdots\sps\CA_n$. As seen in the first
part of the present proof, the $(X_{\CA_n}\times X_{\CA_n})$-block
of $\za$ is not zero. Let $x,y\in X_{\CA_{n}}$ be such that
$\za_{xy}\ne\mathbf{0}$. Since $X_{\CA_{n}}\sbs X_{\CA}$, there are
(necessarily unique) $V,W\in\CQ_{\CA}$ such that $x\in V$ and $y\in
W$. If $Y,Z$ are the unique elements of $\CP_{\l(i)}$ such that
$Y(i) = \overline{V}$ and $Z(i) = \overline{W}$ (see
\eqref{overlined}), then $\ze_{Y(i)}$ and $\ze_{Z(i)}\in H_i$; both
$\ze_{Y(i)}\za$ and $\za\ze_{Z(i)}$ are nonzero and belong to $H_i$.
\end{proof}

\section{The ring $D_{I}$.}\label{sect DI}

As in the second half of the previous section, we assume that a polarized artinian poset $I$ is given. The $D$-subrings $H_i$ (for $i\in I$) of $Q = \CFM_X(D)$ we
have introduced in the previous section can be used in a natural way as
building blocks to construct further $D$-subrings of $Q$,
this time starting from \emph{subsets} of $I$. Indeed, given a
subset $J\sbs I$, if we consider the $(D,D)$-submodule $H_J$ of $Q$
defined by
\[
H_J \defug \bigoplus_{j\in J}H_j,
\]
then it follows from Theorem \ref{lemposetring} that $H_J$ is a
$D$-subring; it may fail to be a unitary subring of $Q$ and it may
even lack multiplicative identity. Of course we set $H_{\emptyset} = 0$. If we define the subset $X_{J}$
by setting
\[
X_J \defug \bigcup\{X_{i}\mid i\in J\} = \bigcup\{X_{\chi}\mid
A_{\chi}\cap J\ne\emptyset\},
\]
then $X_{J}$ is the smallest subset of $X$ such that every matrix in
$H_{J}$ has zero entries outside the $(X_{J}\times X_{J})$-block. We observe that if a matrix $\zu \in Q$ acts as a multiplicative identity on $H_J$, then the following equalities hold as well:
\begin{equation}\label{eq multunitHJ}
    \zu\ze_{X_J} = \ze_{X_J} = \ze_{X_J}\zu.
\end{equation}
In fact, given $x\in X_J$, there are $j\in J$, $\CA\in\BD_j$ and
$Y\in\CQ_{\CA}$ such that $x\in Y$ (see \eqref{prelem}). Inasmuch as
$\ze_{\overline{Y}}\in H_j\sbs H_J$ by Lemma \ref{lem idempprim Hi}, we have that
$\ze_{\overline{Y}}\zu = \ze_{\overline{Y}} =
\zu\,\ze_{\overline{Y}}$ and, since $x\in {\overline{Y}}$, we infer that
$\zu(x,y) = \delta(x,y) = \zu(y,x)$ for every $y\in X$, which proves \eqref{eq multunitHJ}.
As a result, $H_J+\ze_{Z}\,D$ is the smallest $D$-subring of $Q$ which has a multiplicative identity (given by $\ze_{Z}$) and contains $H_J$ as an ideal; we denote it by $D_{I,J}$:
\begin{equation}\label{Jring}
D_{I,J} \defug H_J+\ze_{X_J}\,D.
\end{equation}
In case, $J=I$, we simply write $D_{I}$ instead of $D_{I,I}$. With the next result, we give necessary and
sufficient conditions under which $H_J = D_{I,J}$. As we shall see, in this context a relevant role is played by the set $\max{J}$ defined by
\[
\max{J} \defug \ZM(I)\cap\{J\le\} = \ZM(\{J\le\}),
\]
(recall that $\ZM(I)$ denotes the set of all maximal elements of $I$), namely the set of those maximal elements of $I$ which follow some element of $J$. Of course it may happen that
$J\not\sbs\{\le\max{J}\}$, in particular that $\max{J} =
\emptyset$. If every element of $I$ is bounded by a maximal element or, equivalently, all maximal chains of $I$ have a greatest element, then it is clear that  $X_{J}\sbs X_{\max{J}}$; this inclusion is an equality if and only if, given $m\in\max{J}$, every maximal chain of $I$ which is bounded by above by $m$ contains an element of $J$. Obviously this is the case if $\max{J}\sbs J$, in particular when $J$ is an upper subset of $I$; in this latter case it is clear that $\max{J} = \ZM(J)$.

\begin{def}\label{def finshelt}
We say that a subset $J$ of $I$ is \emph{finitely sheltered in} $I$ if the following three conditions hold:
\[
\max{J} \text{ is finite,} \quad J\sbs\{\le\max{J}\}\quad \text{ and }\quad \max{J}\sbs J.
\]
\end{def}

If $J$ is an upper subset of $I$, then $J$ is finitely sheltered in $I$ if and only if $J$ has a finite cofinal subset; in particular $I$ is finitely sheltered in $I$ exactly when $I$ has a finite cofinal subset and, if it is the case, then every subset of $I$ is finitely sheltered in $I$.

\begin{pro}\label{multunit}
If $\vu\ne J\sbs I$, then the following conditions are equivalent:
\begin{enumerate}
\item $H_J$ has a multiplicative identity.
\item $D_{I,J} = H_J$.
\item $H_J\cap (\ze_{X_{J}}D)\ne 0$.
\item $J$ is finitely sheltered in $I$.
\end{enumerate}
If any (and hence all) of these conditions holds and $\max{J} =
\{m_1,\ldots,m_r\}$, then
\begin{equation}\label{multuniteq}
  \ze_{X_J} =  \ze_{X_{\max{J}}} = \ze_{X_{m_{1}}}+\cdots+\ze_{X_{m_{r}}}.
\end{equation}
Consequently, either $D_{I,J} = H_J$, or the sum in \eqref{Jring} is direct.
\end{pro}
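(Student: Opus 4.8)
The plan is to establish the cyclic implications $(2)\Rightarrow(1)\Rightarrow(3)\Rightarrow(4)\Rightarrow(2)$ and to read off the displayed formula \eqref{multuniteq} and the concluding dichotomy as by-products. The first two steps are quick. For $(2)\Rightarrow(1)$ I simply recall that $D_{I,J}=H_J+\ze_{X_J}D$ was \emph{defined} to be a ring with identity $\ze_{X_J}$, so if it equals $H_J$ then $H_J$ has an identity. For $(1)\Rightarrow(3)$ I first note that every matrix of $H_J=\bigoplus_{j\in J}H_j$ is supported on $X_J\times X_J$, since $H_j\sbs S_j\sbs\ze_{X_j}Q\ze_{X_j}$ and $X_j\sbs X_J$; thus $H_J\sbs\ze_{X_J}Q\ze_{X_J}$, and if $\zu$ is a multiplicative identity for $H_J$, then combining $\zu=\ze_{X_J}\zu\ze_{X_J}$ with the relations $\zu\ze_{X_J}=\ze_{X_J}=\ze_{X_J}\zu$ from \eqref{eq multunitHJ} forces $\zu=\ze_{X_J}$. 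Since $J\ne\vu$ gives $\ze_{X_J}\ne\mathbf{0}$, this puts $\ze_{X_J}$ in $H_J\cap(\ze_{X_J}D)$.

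The substance is in $(3)\Rightarrow(4)$. I would assume $\mathbf{0}\ne\ze_{X_J}d\in H_J$ and write $\ze_{X_J}d=\sum_{j\in J_0}\za_j$ with $J_0\sbs J$ finite and $\za_j\in H_j$. Let $J_0^{\star}$, $J_0^{\circ}$ be the maximal, respectively non-maximal, elements of $I$ lying in $J_0$. For $m\in J_0^{\star}$ we have $\za_m\in H_m=\ze_{X_m}D$, so $\za_\star\defug\sum_{m\in J_0^{\star}}\za_m$ is diagonal, hence so is $\za_\circ\defug\ze_{X_J}d-\za_\star=\sum_{j\in J_0^{\circ}}\za_j$. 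For each non-maximal $j$, Lemma~\ref{HsbsFr} (conditions (3), resp.\ (4)) together with Theorem~\ref{pro-chainflr}(2) shows that within each $X_{\CA}$ ($\CA\in\BD_j$) the matrix $\za_j$ has its nonzero rows inside at most $n_j$ members of $\CQ_{\CA}\sbs\CP_{\l(j)}$, with $n_j$ independent of $\CA$; therefore the diagonal of $\za_j$, and \emph{a fortiori} of $\za_\circ$, meets each block $X_\chi\in\CP_{\xi+1}$ in a union of finitely many members of the partitions $\CP_{\l(j)}$ with $j\in J_0^{\circ}$. Since $j$ is non-maximal we have $\l(j)\le\xi<\xi+1$, so each such member is an interval of $X_\chi$ of order type at most $\al^{\xi}$, and a finite union of such intervals has order type at most $\al^{\xi}\bullet k<\al^{\xi}\bullet\al=\al^{\xi+1}$ by Proposition~\ref{rulexp} and Lemma~\ref{lpartpart}; hence it is a proper subset of $X_\chi$. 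On the other hand $X_J$, every $X_m$, and therefore $X_J\setminus\bigcup_{m\in J_0^{\star}}X_m$, are unions of members of $\CP_{\xi+1}$, and on this last set the diagonal of $\za_\circ$ is constantly $d\ne\mathbf{0}$; were it nonempty it would contain some $X_\chi$ in full, contradicting the previous sentence. So $X_J=\bigcup_{m\in J_0^{\star}}X_m$, the union being disjoint since distinct maximal elements are incomparable (Proposition~\ref{pro max}), with $J_0^{\star}\sbs J\cap\ZM(I)$.

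From this I would extract finite shelteredness together with $J_0^{\star}=\max{J}$. Given $j\in J$, the nonempty set $X_j$ is a union of members of $\CP_{\xi+1}$ contained in $X_J$, so some $X_\chi\sbs X_j$ lies inside some $X_m$ with $m\in J_0^{\star}$; then $j,m\in A_\chi$ and $m$ maximal give $j\le m$, whence $J\sbs\{\le J_0^{\star}\}\sbs\{\le\max{J}\}$ (note $J_0^{\star}\sbs\ZM(I)\cap\{J\le\}=\max{J}$). Conversely, for $m'\in\max{J}$ I pick $j'\in J$ with $j'\le m'$ and extend $\{j'\}$ by the Hausdorff Maximal Principle to a maximal chain $A$ of the lower subset $\{\le m'\}$; then $m'\in A$ because $m'$ is the greatest element of $\{\le m'\}$, and $A$ is a maximal chain of $I$ because $\{\le m'\}$ is lower and $m'$ is maximal in $I$, say $A=A_\chi$. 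Now $\vu\ne X_\chi\sbs X_{j'}\cap X_{m'}\sbs X_J\cap X_{m'}$, which forces $m'\in J_0^{\star}$ by disjointness of the $X_m$. Hence $\max{J}=J_0^{\star}$ is finite and contained in $J$, i.e.\ $J$ is finitely sheltered, and $X_J=\bigcup_{m\in\max{J}}X_m$ disjointly, which is precisely \eqref{multuniteq} since $\ze_{X_{\max{J}}}=\sum_{m\in\max{J}}\ze_{X_m}$.

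Finally, for $(4)\Rightarrow(2)$, when $J$ is finitely sheltered the discussion preceding the statement already gives $X_J=X_{\max{J}}$ (using $\max{J}\sbs J$ and that every maximal chain of $I$ meeting $J$ reaches its greatest element, which then lies in $\max{J}$); hence $\ze_{X_J}=\sum_{m\in\max{J}}\ze_{X_m}\in\bigoplus_{m\in\max{J}}H_m\sbs H_J$ by Theorem~\ref{lemposetring}, so $D_{I,J}=H_J+\ze_{X_J}D=H_J$. The closing sentence is then formal: since $(2)\Leftrightarrow(3)$, either $H_J\cap(\ze_{X_J}D)\ne 0$ and $D_{I,J}=H_J$, or $H_J\cap(\ze_{X_J}D)=0$, i.e.\ the sum in \eqref{Jring} is direct. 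I expect the delicate point to be the bookkeeping in $(3)\Rightarrow(4)$: one must argue carefully that the non-maximal summands $\za_j$, living in the ``finite-rank-like'' subrings $\psi_j(F_{\l(j)})$ or the ``finite-support-like'' subrings $\psi_j(G_{\l(j)})$, can only perturb an order-theoretically negligible part of each top-layer block $X_\chi$, so that the bulk of $X_J$ has to be supplied by the finitely many maximal elements already present in $J_0$.
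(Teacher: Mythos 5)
Your proof is correct, and for the easy links of the cycle you follow essentially the paper's track, but for the substantive implication $(3)\Rightarrow(4)$ you take a genuinely different route. The paper writes $\zd=\sum_{i\in F}\zd_i\in\ze_{X_J}D$ with $F\subseteq J$ finite, invokes Lemma~\ref{nonzerochain2} to isolate, for each maximal chain $i_1<\cdots<i_r$ of $F$ and compatible classes $\CA_1\supset\cdots\supset\CA_r$, the block $(X_{\CA_r}\times X_{\CA_r})$ of $\zd$ as coming only from $\zd_{i_1}+\cdots+\zd_{i_r}$, and then rules out $i_r$ being non\-maximal by a cardinality count: finitely many members of $\CP_{\l(i_r)}$ cannot carry a nonzero scalar block on $X_{\CA_r}$, which is a union of $\al$-many such members. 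You instead split $J_0$ into its maximal part $J_0^{\star}$ and non-maximal part $J_0^{\circ}$, note that $\za_{\circ}=\ze_{X_J}d-\za_{\star}$ is diagonal, and show its support inside each $X_{\chi}\in\CP_{\xi+1}$ is covered by finitely many cells of the lower partitions $\CP_{\l(j)}$ and hence is a proper subset of $X_{\chi}$; you then deduce $X_J=\bigcup_{m\in J_0^{\star}}X_m$, from which both $\max J=J_0^{\star}$ and \eqref{multuniteq} drop out. This bypasses Lemma~\ref{nonzerochain2} and the chain decomposition entirely. The one delicate spot is the order-type bound: the inequality ``a finite union of $k$ intervals of order type $\le\al^{\xi}$ has order type $\le\al^{\xi}\bullet k$'' is not automatic for overlapping subsets; it holds here because cells from the nested partitions $\CP_{\l(j)}$ ($\l(j)\le\xi$) are pairwise nested or disjoint, so one may pass to a disjoint subcollection of the same size before summing (or, even more simply and closer to the paper's counting, observe that every such cell lies inside a single member of $\CP_{\xi}$ while $X_{\chi}$ is a disjoint union of $\al$-many $\CP_{\xi}$-cells). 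Spelling out that reduction would make the step airtight. Both routes are valid: the paper's is shorter given its preparatory lemmas, while yours is more self-contained and makes the role of the diagonal transparent.
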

\begin{proof}
The equivalence between (1) and (2) follows from the previous observation, while the implication
(2)$\Rightarrow$(3) is obvious.

  (3)$\Rightarrow$(4). Suppose that there is a finite subset $F\sbs J$ and nonzero matrices
  $\zd_{i}\in H_{i}$, for $i\in F$, such that
  \begin{equation}\label{bobobo}
     \mathbf{0} \ne\zd = \sum_{i\in F}\zd_{i}\in \ze_{X_{J}}D
\end{equation}
and let us prove first that $X_F = X_J$. Clearly $F\sbs J$ implies that $X_F\sbs X_J$. On the other hand, given $x\in X_J$, since the $x$-th row of $\zd$ is not zero then, for some $i\in F$, the $x$-th row of $\zd_i$ is not zero. This means that there exists $\CA\in\BD_i$ such that $x\in X_{\CA}\sbs X_i\sbs X_F$. Thus $X_J\sbs X_F$. Next, given any maximal chain $i_1<\ldots<i_r$ of $F$, we claim that $i_r$ must be a maximal element of $I$, so that $\max{F}\sbs F$. Indeed, if $\CA_1\in\BD_{i_1},\ldots,\CA_r\in\BD_{i_r}$ are such that $\CA_1\sps\cdots\sps\CA_r$ (see (2) of Lemma \ref{lem-partition2}), then it follows from Lemma \ref{nonzerochain2} that the $(X_{\CA_r}\times X_{\CA_r})$-blocks of $\zd$ and $\zd_{i_1}+\cdots+\zd_{i_r}$ coincide. If $i_r$ is not maximal then, with the help of (2) of Lemma \ref{HsbsFr} and (2) of Theorem \ref{pro-chainflr}, we see that there are $Y_1,\ldots,Y_s\in \CQ_{\CA_r}\sbs\CP_{\l(i_r)}$ such that, given $x\in X_{\CA_r}$, the $x$-th row of $\zd$ is not zero only if $x\in Y_1\cup\cdots\cup Y_s$. Since the $(X_{\CA_r}\times X_{\CA_r})$-block of $\zd$ is a nonzero scalar matrix and $X_{\CA_r}$ is the union of $\al$ elements of $\CP_{\l(i_r)}$, we have a contradiction and our claim is proved.

Now, let $j\in J$ and take any $x\in X_j$. Then there is a unique
$\chi\in\bet$ such that $x\in X_{\chi}$ and $j\in A_{\chi}$. As $X_J
= X_F$, we infer that $X_{\chi}\sbs X_F$ and so $A_{\chi}\cap
F\ne\emptyset$. Thus $A_{\chi}\cap F$ is a maximal chain of $F$
which is bounded from above by an element $m\in\max{F}$, as we have seen previously. As a result $A_{\chi}$
itself is bounded from above by $m$ and this proves that
$J\sbs\{\le\max{F}\}\sbs\{\le\max{J}\}$.

  Finally, let us show that $\max{J}\sbs F$, from which it will follow that $\max{J} = \max{F}$ and so $\max{J}$ is finite.
  Assume, on the contrary, that there is some maximal element $m$ of $I$ such that $m\not\in F$ but $j<m$
for some $j\in J$. As a consequence, according to Lemma
\ref{lem-partition2} there is some $\CB\in\BD_{m}$ which is
contained in some $\CA\in\BD_{j}$ and hence $X_{\CB}\sbs X_{\CA}\sbs
X_j\sbs X_J$. We observe that if $i_{1},\ldots,i_{r}$ are those
elements of $F$ such that the $(X_{\CB},X_{\CB})$-blocks of
$\zd_{i_{1}},\ldots,\zd_{i_{r}}$ are not zero, then $r\ge 1$ and
 $i_{1},\ldots,i_{r}\in\{<m\}$. Indeed, if $t\in\{1,\ldots,r\}$, then there is some $\CC\in\BD_{i_{t}}$ such that
 $\CC\cap\CB\ne\emptyset$ and hence $i_{t}$ and $m$ are related by Lemma \ref{lem-partition2}; since $m$ is maximal in $I$, then necessarily $i_{t}<m$. We have now
 \[
 \mathbf{0}\ne\zd_{X_{\CB}\,X_{\CB}} =
 (\zd_{i_{1}})_{X_{\CB}\,X_{\CB}}+\cdots+(\zd_{i_{r}})_{X_{\CB}\,X_{\CB}}
 \]
 and, inasmuch as $i_{1},\ldots,i_{r}$ are not maximal elements of $I$ and
 $\l(i_{1}),\ldots,\l(i_{r})$$<\l(m)$, we infer from  (2) of Lemma \ref{HsbsFr} and (2) of Theorem \ref{pro-chainflr} that there are
 $Y_{1},\ldots,Y_{s}\in \CQ_{\CB}\sbs\CP_{\l(m)}$ such that, given
$x\in X_{\CB}$, the
 $x$-th row of $\zd_{X_{\CB}\,X_{\CB}}$ is not zero only if
$x\in
 Y_{1}\cup\cdots\cup Y_{s}$. This leads to a contradiction; in fact, since
 $\zd\in \ze_{X_{J}}D$ and $X_{\CB}\sbs X_{J}$, for every $x\in X_{\CB}$ the
 $x$-th row of $\zd$ is not zero and $X_{\CB}$ is the union of $\al$
 members of $\CP_{\l(m)}$. This shows that $\max{J}\sbs F$, as
wanted.

    (4)$\Rightarrow$(1) Assume (4) and set $\max{J} =
\{m_1,\ldots,m_r\}$. Then it follows from Proposition
\ref{pro max} that $\{X_{m_{1}},\ldots,X_{m_{r}}\}$ is a partition of $X_J$. Thus \eqref{multuniteq} holds and, since $\ze_{X_{m_{t}}}\in H_{m_t}$ for all $t\in\{1,\ldots,r\}$, it follows that $\ze_{X_{J}} \in H_J$ and therefore $D_{I,J} = H_J$.
\end{proof}

Formally speaking, the assignment $I\mapsto D_I$ cannot be considered as a map from the class of all pairs $(I,I')$, where $I$ is an artinian posets and $I'$ is a lower subset of $I$, to the class of $D$-rings. In fact, the construction that leads us to the ring $D_I$ bears first on the choice of a bijection $\Xi\colon\chi\mapsto A_{\chi}$ from the cardinal $\beth = |\CM|$ to the set $\CM$ of all maximal chains of $I$, next on the choice of a family $\CF = (\CA_{i})_{i\in I}$, where each $\CA_{i}$ is an equivalence class modulo $\sim_{i}$ and finally on the choice of a family of bijections $\CK = (\map{k_{\CA_{i}}}{\L_{\CA_{i}}}{\al^{\xi+1-\l(i)}\bullet\bet})_{i\in I}$. Thus the ring $D_I$ strictly depends on the ordered quintuple $(I,I',\Xi,\CF,\CK)$, so that our construction realizes actually a function from the class of all such quintuples. As one might expect, if we take a second quintuple $(J,J',\Pi,\CG,\CL)$ from this class, every order isomorphism $\map fIJ$ such that $J' = f(I')$ induces a canonical $D$-ring isomorphism from $D_I$ to $D_J$. This is not immediately obvious and, in what follows, we show how it works. Let $\CN$ be the set of all maximal chains of $J$, so that $|\CM| = \beth = |\CN|$, and write  let $B_{\chi} = \Pi(\chi)$ for every $\chi\in\beth$. As we did with Notations \ref{not main}, for every $j\in J$ we may consider the equivalence relation $\sim_{j}$ in the set $\CN_j$ of all maximal chains of $J$ which contain $j$ and we get the corresponding quotient set $\BE_{j}$. Note that $f$ induces an obvious bijection $\map{\overline{f}}{\BD_{i}}{\BE_{f(i)}}$ For every $\CB\in\BE_{j}$ we have the set $\beth'_{\CB} = \{\chi\in\beth\mid B_{\chi}\in\CB\}$ and we can define the subsets $X'_{\CB}$, $X'_{j}$ of $X$, as well as the sets $\Lambda'_{\CB}$, $\CQ'_{\CB}$ and the map $\map{t'_{\CB'\CB}}{X'_{\CB}}{X'_{\CB'}}$ for every $\CB,\CB'\in\BE_{j}$ exactly as we did with $X_{\CA}$, $X_{i}$, $\Lambda_{\CA}$, $\CQ_{\CA}$ and $\map{t_{\CA'\CA}}{X_{\CA}}{X_{\CA'}}$ for every $i\in I$ and $\CA,\CA'\in\BD_{i}$. Next, for every $j\in J$ let us choose $\CB_{j}\in\BE_{j}$ and a bijection
\[
\lmap{k'_{\CB_{j}}}{\L'_{\CB_{j}}}{\al^{\xi+1-\l(j)}\bullet\bet}
\]
and, for every $\CB,\CB'\in\BE_{j}$, let us consider the bijections
\[
\lmap{t'_{\CB}}{X'_{\CB}}{X}, \quad \lmap{t'_{\CB'\CB}}{X'_{\CB}}{X'_{\CB'}},
\]
defined in the same fashion as $t_{\CA}$ and $t_{\CA'\CA}$. Thus, for every $j\in J$ we can define the $D$-ring $S'_{j}$, analogous to $S_{i}$ for $i\in I$, and the $D$-ring monomorphism
\[
\lmap{\psi'_j}{Q}{\ze_{X'_j}Q\ze_{X'_j}}
\]
analogous to $\psi_i$, as in Proposition \ref{subrings}, so that $S'_{j} = \psi'_j(Q)$. Through a slight notational transgression, we may view $\psi_i$ and $\psi'_j$ as isomorphisms from $Q$ to $S_{i}$ and $S'_{j}$ respectively. For each $i\in I$ let us consider the $D$-ring isomorphism
\[
\lmap{\a_i}{S_{i}}{S'_{f(i)}}
\]
defined by
\[
\a_i = \psi'_{f(i)}\psi_{i}^{-1}.
\]
Given $\CA\in\BD_{i}$, let us consider the bijection
\[
\lmap{s_{\CA}}{X_{\CA}}{X'_{\overline{f}(\CA)}}
\]
defined by $s_{\CA} = t'^{-1}_{f(\CA)}t_{\CA}$. Let $\za\in S_{i}$ and $\CB\in\BE_{f(i)}$. Then $\CB = \overline{f}(\CA)$ for a unique $\CA\in\BD_{i}$ and for every $x,y\in X'_{\CB}$ we have:
\begin{equation}\label{eq computiso}
\begin{aligned}
\a_{i}(\za)(x,y) &= [(\psi'_{f(i)}\psi_{i}^{-1})(\za)](x,y) = \psi_{i}^{-1}(\za)(t'_{\CB}(x),t'_{\CB}(y)) \\
 &= \psi_{i}^{-1}(\za)(t_{\CA}(s^{-1}_{\CA}(x)),t_{\CA}(s^{-1}_{\CA}(y))) \\
 &= \za(s^{-1}_{\CA}(x),s^{-1}_{\CA}(y)).
\end{aligned}
\end{equation}
We claim that if $i,j\in I$ and $i<j$, given $\za\in S_{i}$, $\zb\in S_{j}$ the equality
\begin{equation}\label{eq iso1}
    \a_{i}(\za)\a_{j}(\zb) = \a_{i}(\za\zb)
\end{equation}
holds, that is,
\begin{equation}\label{eq iso2}
    [\a_{i}(\za)\a_{j}(\zb)](x,y) = \a_{i}(\za\zb)(x,y)
\end{equation}
for every $x,y\in X$. Indeed, first note that both members of \eqref{eq iso1} belong to $S'_{f(i)}$ by Proposition \ref{subrings}, (2); thus, given $x,y\in X$, if there is no $\CB\in\BE_{f(i)}$ such that $x,y\in X'_{\CB}$, then both members of \eqref{eq iso2} are zero. Assume, on the contrary, that $x,y\in X'_{\CB}$ for some $\CB\in\BE_{f(i)}$ and let $\CA\in\BD_{i}$ be such that $\CB = \overline{f}(\CA)$. Then, by using \eqref{eq computiso} we have:
\begin{align*}
[\a_{i}(\za)\a_{j}(\zb)](x,y)
        &= \sum_{z\in X}\a_{i}(\za)(x,z)\,\a_{j}(\zb)(z,y) \\
        &= \sum_{z\in X'_{\CB}}\a_{i}(\za)(x,z)\,\a_{j}(\zb)(z,y) \\
        &= \sum_{z\in X'_{\CB}}\za(s^{-1}_{\CA}(x),s^{-1}_{\CA}(z))\,\zb(s^{-1}_{\CA}(z),s^{-1}_{\CA}(y)) \\
        &= \sum_{w\in X_{\CB}}\za(s^{-1}_{\CA}(x),w)\,\zb(w,s^{-1}_{\CA}(y)) \\
        &= (\za\zb)(s^{-1}_{\CA}(x),s^{-1}_{\CA}(y)) \\
        &= [\a_{i}(\za\zb)](x,y).
\end{align*}
Thus the equality \eqref{eq iso1} is proven.

Assume that both $I$, $J$ are polarized in such a way that $J' = f(I')$ and, for every $j\in J$, define $H'_{j}$ as follows:
\[
H'_j = \left\{
        \begin{array}{ll}
          \psi'_j\left(F_{\l(j)}\right), & \hbox{if $j$
\underbar{is not} a maximal element of $J$ and $j\in J'$;} \\
          \psi'_j\left(G_{\l(j)}\right), & \hbox{if $j$
\underbar{is not} a maximal element of $J$ and $j\nin J'$;} \\
          \psi'_j\left(D\right), & \hbox{if $j$ \underbar{is} a maximal
element of $J$.}
        \end{array}
      \right.
 \]
Then, for any $K\sbs J$, we can define $H'_{K} \defug\bigoplus_{j\in K}H'_j$. For every $i\in I$ it is clear that $\za\in H_i$ if and only if $\a_i(\za)\in H'_{f(i)}$, therefore we can define the $D$-module isomorphism
\[
\lmap{\bigoplus_{i\in I}\a_i}{H_{I}}{H'_{J}},
\]
which extends to a $D$-module isomorphism
\[
\lmap{\a}{D_{I}}{D_{J}}.
\]
This is obvious if $I$ has a finite cofinal subset, since in this case we have that $D_{I} = H_I$ end $D_J = H'_J$ by Proposition \ref{multunit}; otherwise $D_{I} = H_I\oplus \ze_{X}D$ end $D_J = H'_J\oplus \ze_{X}D$, therefore $\a = \left(\bigoplus_{i\in I}\a_i\right)\oplus 1_{\ze_{X}D}$. Now it follows from \eqref{eq iso1} that $\a$ is a $D$-ring isomorphism.

\begin{re}\label{remark iso}
If $I$ is any artinian poset and $J\sbs I$, then the two rings $D_{I,J}$ and $D_{J} = D_{J,J}$ can be different and may even be non-isomorphic. This latter case occurs if, for example, $\sup(|I|, |\CM|)> \al_0$ and $\sup(|J|, |\CN|)<\sup(|I|, |\CM|)$, where $\CM$ and $\CN$ are the sets of all maximal chains of $I$ and $J$ respectively.
\end{re}

\section{Upper subsets of $I$ versus ideals of the ring $D_{I}$.}\label{sect idDI}

If $K\sbs J\sbs I$, then it is clear that $H_K$ and $H_{J\setminus
K}$ are complementary direct summands of $H_J$ as
$(D,D)$-submodules, but they need not be ideals of $D_{I,J}$. In this
connection the case in which $K$ is an upper subset of $J$ is of a
particular interest, mainly due to the following result.

\begin{pro}\label{uppersbs}
Assume that $\vu\ne K\sbs J\sbs I$. Then the following properties hold:
\begin{enumerate}
    \item $H_{J\setminus K}$ is an ideal of $D_{I,J}$ if and only if $K$ is an
upper subset of $J$.
    \item If $K$ is an upper subset of $J$, then there is a unique (unital)
surjective $D$-linear ring
    homomorphism
\[
\lmap{\f_{K,J}}{D_{I,J}}{D_{I,K}}
\]
such that
\begin{equation}\label{mapfKJ}
\f_{K,J}(\za'+\za''+\ze_{X_J}d) = \za'+\ze_{X_K}d \quad\text{ for
all\,\, $\za'\in H_K,\,\, \za''\in H_{J\setminus K},\,\, d\in D$};
\end{equation}
moreover
\begin{equation}\label{kermapfKJ}
    \Ker(\f_{K,J}) = \begin{cases} H_{J\setminus K}+(\ze_{X_J}-\ze_{X_K})D,\text{ if $K$
    is finitely sheltered in $I$, } \\
    H_{J\setminus K},\text{ if $K$ is not finitely sheltered in $I$, }
    \end{cases}
 \end{equation}
therefore $\f_{K,J}$ induces an isomorphism of $D$-rings
\[
D_{I,K} \is \begin{cases} D_{I,J}/\left[H_{J\setminus K}+(\ze_{X_J}-\ze_{X_K})D\right],\text{ if $K$
    is finitely sheltered in $I$, } \\
D_{I,J}/H_{J\setminus K},\text{ if $K$
    is not finitely sheltered in $I$. }
    \end{cases}
\]
\end{enumerate}
\end{pro}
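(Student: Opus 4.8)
The plan is to reduce everything to the multiplicative interaction of the blocks $H_i$ recorded in Theorem~\ref{lemposetring} together with Proposition~\ref{multunit}. For part (1), recall that $H_{J\setminus K}=\bigoplus_{j\in J\setminus K}H_j$ is a $D$-subring of $Q$, that $\ze_{X_J}$ is a two-sided identity on all matrices supported in $X_J\times X_J$ (hence on $H_{J\setminus K}$), and that $H_{J\setminus K}$ is a $(D,D)$-submodule; since $D_{I,J}=H_K\oplus H_{J\setminus K}+\ze_{X_J}D$, the only point is whether $H_KH_{J\setminus K}$ and $H_{J\setminus K}H_K$ lie in $H_{J\setminus K}$. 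So I would take $k\in K$, $j\in J\setminus K$: if $k,j$ are incomparable then $H_kH_j=H_jH_k=\mathbf 0$ by Theorem~\ref{lemposetring}(2); and if $K$ is an upper subset of $J$, the case $k<j$ is excluded (it would put $j$ in $K$), leaving only $j<k$, in which case $H_jH_k\cup H_kH_j\sbs H_j\sbs H_{J\setminus K}$ by Theorem~\ref{lemposetring}(3). Thus $H_{J\setminus K}$ absorbs multiplication by $H_J$ and by $\ze_{X_J}D$, i.e.\ it is an ideal of $D_{I,J}$. For the converse, if $K$ is not an upper subset of $J$, pick $k\in K$, $j\in J\setminus K$ with $k<j$; by Theorem~\ref{lemposetring}(3) there is $\mathbf 0\ne\za\in H_j\sbs H_{J\setminus K}$ with $\mathbf 0\ne H_k\za\sbs H_k$, and since $H_k\cap H_{J\setminus K}=0$ by the independence of $\CH$, the submodule $H_{J\setminus K}$ is not an ideal of $D_{I,J}$.

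For part (2), assume $K$ is an upper subset of $J$, so that $H_{J\setminus K}$ is an ideal of $D_{I,J}$ and the projection $p\colon H_J=H_K\oplus H_{J\setminus K}\to H_K$ along $H_{J\setminus K}$ is a (possibly non-unital) ring homomorphism which is also $(D,D)$-linear. I would first record a preliminary fact: \emph{if $J$ is finitely sheltered in $I$, then so is $K$ and $\max K=\max J\cap K$}. Indeed $K\sbs J$ gives $\{K\le\}\sbs\{J\le\}$, so $\max K\sbs\max J$ is finite; if $m\in\max K$ then $m\in\max J\sbs J$ and $m\ge k$ for some $k\in K$, hence $m\in K$ since $K$ is upper in $J$; and each $k\in K\sbs J\sbs\{\le\max J\}$ has some $m\in\max J$ with $k\le m$, and then $m\in\max K$. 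By Proposition~\ref{multunit} this yields $\ze_{X_J}=\sum_{m\in\max J}\ze_{X_m}$ and $\ze_{X_K}=\sum_{m\in\max K}\ze_{X_m}$, so $\ze_{X_J}-\ze_{X_K}=\sum_{m\in\max J\setminus K}\ze_{X_m}\in H_{J\setminus K}$; in particular $\ze_{X_K}$ is then the $H_K$-component of $\ze_{X_J}$, i.e.\ $p(\ze_{X_J})=\ze_{X_K}$. Now I would define $\f_{K,J}$: if $J$ is not finitely sheltered in $I$, the sum in \eqref{Jring} is direct by Proposition~\ref{multunit}, so formula \eqref{mapfKJ} unambiguously defines a $(D,D)$-linear map; if $J$ is finitely sheltered, then $D_{I,J}=H_J$ and I take $\f_{K,J}=p$, which by the preliminary fact still satisfies \eqref{mapfKJ} (as $p$ is $D$-linear with $p(\ze_{X_J})=\ze_{X_K}$). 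In both cases $\f_{K,J}$ restricts to $p$ on $H_J$ and sends $\ze_{X_J}d$ to $\ze_{X_K}d$; since every element of $D_{I,J}=H_J+\ze_{X_J}D$ has the form $\za'+\za''+\ze_{X_J}d$, formula \eqref{mapfKJ} forces $\f_{K,J}$, so it is unique. It is clearly $(D,D)$-linear, it fixes $H_K$ and sends $\ze_{X_J}D$ onto $\ze_{X_K}D$, hence is surjective, and $\f_{K,J}(1_{D_{I,J}})=\f_{K,J}(\ze_{X_J})=\ze_{X_K}=1_{D_{I,K}}$.

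It remains to verify that $\f_{K,J}$ is multiplicative and to identify its kernel. For multiplicativity I would write two elements as $\za=\et_1+\ze_{X_J}d_1$, $\zb=\et_2+\ze_{X_J}d_2$ with $\et_i\in H_J$; using that $\ze_{X_J}$ is an identity on $H_J$ one gets $\za\zb=(\et_1\et_2+\et_1d_2+d_1\et_2)+\ze_{X_J}(d_1d_2)$ with the bracketed matrix in $H_J$, so $\f_{K,J}(\za\zb)=p(\et_1\et_2+\et_1d_2+d_1\et_2)+\ze_{X_K}(d_1d_2)$; since $H_{J\setminus K}$ is an ideal, $p$ is a ring homomorphism, and this equals $(p(\et_1)+\ze_{X_K}d_1)(p(\et_2)+\ze_{X_K}d_2)=\f_{K,J}(\za)\f_{K,J}(\zb)$ (using that $\ze_{X_K}$ is an identity on $H_K$). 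Finally $\za'+\za''+\ze_{X_J}d\in\Ker\f_{K,J}$ iff $\za'+\ze_{X_K}d=0$ in $D_{I,K}$: if $K$ is not finitely sheltered then $D_{I,K}=H_K\oplus\ze_{X_K}D$ is direct (Proposition~\ref{multunit}), forcing $\za'=0$ and $d=0$, so $\Ker\f_{K,J}=H_{J\setminus K}$; if $K$ is finitely sheltered then $\ze_{X_K}\in H_K$ and $\za'=-\ze_{X_K}d$, so $\Ker\f_{K,J}=\{(\ze_{X_J}-\ze_{X_K})d+\za''\mid d\in D,\ \za''\in H_{J\setminus K}\}=H_{J\setminus K}+(\ze_{X_J}-\ze_{X_K})D$. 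The displayed isomorphism then follows from the first isomorphism theorem applied to the surjective $D$-ring homomorphism $\f_{K,J}$. The step I expect to be the main obstacle is not any single computation but the bookkeeping of the two alternatives — $D_{I,J}$ (resp.\ $D_{I,K}$) being $H_J$ (resp.\ $H_K$) or a proper unital extension of it — and especially the preliminary fact above, which is exactly what reconciles the two cases in the definition of $\f_{K,J}$ with \eqref{mapfKJ}; everything else reduces to routine block computations.
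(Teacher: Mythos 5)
Your proof is correct and follows essentially the same route as the paper's: part (1) by the multiplicative behavior of the blocks $H_i$ from Theorem~\ref{lemposetring}, and part (2) by distinguishing whether $J$ (resp.\ $K$) is finitely sheltered so as to pin down the direct-sum decomposition of $D_{I,J}$ from Proposition~\ref{multunit}. Your "preliminary fact" (that $K$ inherits finite shelteredness from $J$ with $\max K\subset\max J$, whence $\ze_{X_J}-\ze_{X_K}\in H_{J\setminus K}$) and the explicit multiplicativity computation merely spell out steps the paper compresses, so the two arguments are the same in substance.
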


\begin{proof}
(1) Assume that $H_{J\setminus K}$ is an ideal of $D_{I,J}$ and take
$j\in J$, $k\in K$ with $k\le j$. If $j\not\in K$, then $H_j\sbs
H_{J\setminus K}$ and it follows from Theorem \ref{lemposetring}
that $0\ne H_jH_{k}\sbs H_{k}\cap H_{J\setminus K} = 0$, hence a
contradiction. Thus necessarily $j\in K$. Conversely, suppose that
$K$ is an upper subset of $J$ and let $j\in J$, $k\in J\setminus K$.
Then exactly one of the following possibilities occurs: a) $j\not\in
K$, b) $j\in K$ and $j,k$ are unrelated, c) $j\in K$ and $j>k$. In
all cases it follows from Theorem \ref{lemposetring} that
$H_jH_k\cup H_kH_j\sbs H_{J\setminus K}$. Since $H_{J\setminus K}$
is already a $(D,D)$-submodule of $D_{I,J}$, this is sufficient to
conclude that it is an ideal of $D_{I,J}$.

(2) Assume now that $K$ is an upper subset of $J$. If $J$ is finitely sheltered in $I$, then $D_{I,J} = H_{J} = H_{K}\oplus
H_{J\setminus K}$. Since $K$ is an upper subset of $J$, then $K$ is finitely sheltered in $I$ as well and hence $D_{I,K} = H_K$. Set $\max{J} =
\{m_{1},\ldots,m_{r},m_{r+1},\ldots,m_{r+s}\}$, where
$\{m_{1},\ldots,m_{r}\} = \max{K}$. It is clear that every maximal chain of $J$ is bounded by an element of $\max{J}$, thus Proposition
\ref{pro max} tells us that both
$\{X_{m_{1}},\ldots,X_{m_{r+s}}\}$ and $\{X_{m_{1}},\ldots,X_{m_{r}}\}$ are partitions of $X_{J}$ and $X_{K}$, respectively. It follows that $\ze_{X_{m_{1}}},\ldots,\ze_{X_{m_{r}}}, \ze_{X_{m_{r+1}}},\ldots,\ze_{X_{m_{r+s}}}$ are pairwise orthogonal idempotents and we have that
\[
\ze_{X_{K}} = \ze_{X_{m_{1}}}+\cdots+\ze_{X_{m_{r}}}\in H_{K}, \quad \ze_{X_{J\setminus K}}=
 \ze_{X_{m_{r+1}}}+\cdots+\ze_{X_{m_{r+s}}}\in H_{J\setminus K},
\]
hence $\ze_{X_{J}} = \ze_{X_{K}}+\ze_{X_{J\setminus K}}$. As a result, given $\za'\in
H_K$, $\za''\in H_{J\setminus K}$ and $d\in D$, we may write
\[
\za'+\za''+\ze_{X_{J}}d = \za'+\ze_{X_{K}} d+\za''+\ze_{X_{J\setminus K}} d
\]
and, since $\za'+\ze_{X_{K}} d\in H_{K}$ and $\za''+\ze_{X_{J\setminus K}} d\in H_{J\setminus
K}$, we infer that \eqref{mapfKJ} defines $\f_{K,J}$ as the projection
of $D_{I,J}$ onto $H_{K} = D_{I,K}$ parallel to $H_{J\setminus K}$. If $J$ is not finitely sheltered in $I$, then
    \begin{equation}\label{eq decomp}
    D_{I,J} =
H_{K}\oplus H_{J\setminus K}\oplus \ze_{X_J}D
\end{equation}
by Proposition \ref{multunit} and there exists a unique $D$-linear map
$\map{\f_{K,J}}{D_{I,J}}{D_{I,K}}$ satisfying \eqref{mapfKJ}.
In any case, we have that $\f_{K,J}$ is well defined by mean of
\eqref{mapfKJ} and, by using the fact that $H_{J\setminus K}$ is an
ideal of $D_{I,J}$, it is an easy matter to show that
$\f_{K,J}$ is a ring homomorphism.

Finally, if $K$ is finitely sheltered in $I$, then
$\ze_{X_{\max{K}}} = \ze_{X_K}\in H_{K}$ by Proposition \ref{multunit} and so
$\ze_{X_{J}}-\ze_{X_{K}}\in D_{I,J}$; consequently
$\Ker(\f_{K,J}) = H_{J\setminus K}+(\ze_{X_{J}}-\ze_{X_{K}})D$. If, on the contrary, $K$ is not finitely sheltered in $I$, then $J$ is not finitely sheltered in $I$ as well and we have the decomposition \eqref{eq decomp}; in this case it is clear that $\Ker(\f_{K,J}) = H_{J\setminus K}$.
\end{proof}

Given any subset $J$ of $I$ and any ordinal $\a <\xi$, the $\a$-th layer $J_{\a}$ is a lower subset of $J$, therefore it follows from Proposition \ref{uppersbs} that $H_{J_{\a}}$ is an ideal of the ring $D_{I,J}$. By considering the set $J_1$ of all minimal elements of $J$, the corresponding ideal $H_{J_1}$ will play a special role, mainly due to the next result.

\begin{pro}\label{uppersbss}
If $\emptyset \ne J\sbs I$, then $H_{J_1}$ is essential as a right ideal and is pure as a left ideal of the ring $D_{I,J}$.
\end{pro}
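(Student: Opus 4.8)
The plan is to prove the two assertions separately; the left–purity part is comparatively soft and the right–essentiality part is where the work is.

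For \textbf{left purity} I would invoke the fact, recorded in Section \ref{sect wochsbr}, that $\FR_X(D)$ and $\FM_X(D)$ are pure as left ideals of $Q$, the witnesses being the finite diagonal idempotents. Given finitely many $\za_1,\dots,\za_n\in H_{J_1}$, write $\za_k=\sum_{i\in G}\za_{k,i}$ with $G\sbs J_1$ finite and $\za_{k,i}\in H_i$. For $i\in G$ not maximal in $I$, transport the $\psi_i^{-1}(\za_{k,i})$ back through $\phi_{\l(i)}$ into $\FR_{\al^{\xi+1-\l(i)}\bullet\bet}(D)$ (resp.\ $\FM$), choose a finite diagonal idempotent there acting as a left identity on all of them, and push it forward to an idempotent $e_i\in H_i$ of the form $\ze_{Y(i)}$; for $i\in G$ maximal put $e_i:=\ze_{X_i}$. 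Since $J_1$ is an antichain, Theorem \ref{lemposetring}(2) gives $H_iH_{i'}=0$ for $i\ne i'$ in $G$, so $e:=\sum_{i\in G}e_i$ is an idempotent of $H_{J_1}$ with $e\za_k=\za_k$ for all $k$. This is the standard criterion making $H_{J_1}$ a pure left ideal of $D_{I,J}$.

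For \textbf{right essentiality} the key preliminary observation is that, for every $i_0\in J_1$ and every $Z\in\CP_{\l(i_0)}$ (read $Z(i_0):=X_{i_0}$ when $i_0$ is maximal in $I$), one has $\alpha\ze_{Z(i_0)}\in H_{i_0}\sbs H_{J_1}$ for \emph{every} $\alpha\in D_{I,J}$: writing $\alpha=\ga+\ze_{X_J}d$ with $\ga=\sum_{j\in F}\ga_j\in H_J$ ($F$ finite), the components $\ga_j$ with $j$ incomparable to $i_0$ vanish after multiplication by $\ze_{Z(i_0)}$ by Theorem \ref{lemposetring}(2); no $j\in F$ lies strictly below $i_0$ (minimality of $i_0$ in $J$); the components with $j\ge i_0$ give $\ga_j\ze_{Z(i_0)}\in H_{i_0}$ by Theorem \ref{lemposetring}(2)–(3); and $\ze_{X_J}d\,\ze_{Z(i_0)}=\ze_{Z(i_0)}d\in DH_{i_0}\sbs H_{i_0}$ since $Z(i_0)\sbs X_{i_0}\sbs X_J$. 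So it suffices to find, for each $0\ne\alpha\in D_{I,J}$, a pair $(i_0,Z)$ with $\alpha\ze_{Z(i_0)}\ne 0$. If $\ga=0$ (hence $\alpha=\ze_{X_J}d$, $d\ne 0$) any $i_0\in J_1$, any $Z$ works, as $\alpha\ze_{Z(i_0)}=\ze_{Z(i_0)}d\ne 0$. If $\ga\ne0$, pick $j\in F$ and $i_0\in J_1$ with $i_0\le j$; then $\ga':=\sum_{j'\in F,\ j'\ge i_0}\ga_{j'}$ is nonzero by independence of $\CH$ and lies in $\bigoplus_{j'\in\{i_0\le\}}H_{j'}$, so the ``moreover'' clause of Theorem \ref{lemposetring}(3) yields $Z$ with $0\ne\ga'\ze_{Z(i_0)}\in H_{i_0}$; as the remaining components of $\ga$ are incomparable to $i_0$, $\alpha\ze_{Z(i_0)}=\ga'\ze_{Z(i_0)}+\ze_{Z(i_0)}d$. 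When the scalar term is absent — in particular whenever $J$ is finitely sheltered in $I$, by Proposition \ref{multunit} — this is already nonzero, and we are done.

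The remaining case — $J$ \emph{not} finitely sheltered, $d\ne 0$, $\ga\ne0$ — is the technical heart, the obstacle being exact cancellation $\ga'\ze_{Z(i_0)}=-\ze_{Z(i_0)}d$. The plan here is: if some $i_0\in J_1$ is incomparable to all of $F$ then $\ga'=0$ and $\alpha\ze_{Z(i_0)}=\ze_{Z(i_0)}d\ne0$; otherwise argue by contradiction assuming $\alpha\ze_{Z(i_0)}=0$ for all $i_0\in J_1$ and all $Z$. Summing over $Z$ (the $\ze_{Z(i_0)}$ are orthogonal with union-support $X_{i_0}$, by Lemma \ref{lem idempprim Hi}) gives $\ga'\ze_{X_{i_0}}=-\ze_{X_{i_0}}d$, where $\left|X_{i_0}\right|=\al$ is infinite. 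One then separates the components $\ga_{j'}$ of $\ga'$ into those with $j'$ non‑maximal in $I$ (finite‑ranked, hence contributing only finitely many nonzero rows) and those with $j'$ maximal (for which $\ga_{j'}=\ze_{X_{j'}}d_{j'}$, which meets only the blocks of $\CP_{\xi+1}$ indexed by maximal chains running through both $i_0$ and $j'$); using that each $Z(i_0)$ is a finite union of $\CP_{\l(i_0)}$‑blocks, each lying in a single block of the coarsest partition $\CP_{\xi+1}$ (Lemma \ref{lpartpart}), one chooses $Z$ so that $Z(i_0)$ is carried by maximal chains through $i_0$ avoiding the finitely many maximal elements of $F$ above $i_0$, which kills all maximal components of $\ga'$; then $\alpha\ze_{Z(i_0)}$ is a finite‑ranked matrix plus $\ze_{Z(i_0)}d$, hence nonzero since $\left|Z(i_0)\right|=\al$. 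The delicate point I expect to have to argue carefully is that such a chain exists for \emph{some} $i_0\in J_1$ precisely because $J$ is not finitely sheltered in $I$ — i.e.\ failure to find a good block $Z(i_0)$ for every minimal element of $J$ would force $\max\nolimits^{\bigstar}\!J$ to be finite with $J\sbs\{\le\max\nolimits^{\bigstar}\!J\}\sbs J$, a contradiction — and it is this bookkeeping about maximal elements of $I$ inside the support of $\ga$ that is the real obstacle.
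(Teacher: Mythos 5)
Your treatment of left purity is essentially the paper's argument: pull each component back through $\psi_i$ to $\FR$ or $\FM$, pick a finite diagonal idempotent acting as left identity, push forward, and sum using the orthogonality of idempotents in $H_i$, $H_{i'}$ for distinct minimal $i,i'$. That part is fine. Your preliminary observation for essentiality — that $\alpha\,\ze_{Z(i_0)}\in H_{i_0}$ for every $\alpha\in D_{I,J}$, $i_0\in J_1$ and $Z\in\CP_{\l(i_0)}$ — is also correct and cleanly handles the cases $\ga=0$ and $d=0$ (or $J$ finitely sheltered), exactly as the paper does.

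The hard case ($J$ not finitely sheltered, $\ga\neq 0\neq d$) is where you leave a genuine gap, and the plan you sketch would not close it. Two of the claims in the sketch are false. First, $Z(i_0)=\bigcup\{t_{\CA'}^{-1}(Z)\mid\CA'\in\BD_{i_0}\}$ is a disjoint union indexed by $\BD_{i_0}$, so it is generally an \emph{infinite} union of $\CP_{\l(i_0)}$-blocks — one block per equivalence class of maximal chains through $i_0$ — not a finite one; in particular you cannot ``choose $Z$ so that $Z(i_0)$ is carried by maximal chains avoiding the maximal elements of $F$,'' because $Z(i_0)$ unavoidably meets $X_{\CA'}$ for \emph{every} $\CA'\in\BD_{i_0}$. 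Second, a non-maximal component $\ga_{j'}$ does not contribute ``only finitely many nonzero rows'': Lemma \ref{HsbsFr} only says its nonzero rows lie in finitely many sets $Y(j')$, each of which has cardinality $\al$, and worse each such $Y(j')$ again spreads over all of $\BD_{j'}$. So the contradiction you set up — comparing the support of $\ga'\ze_{X_{i_0}}$ with the full-support $\ze_{X_{i_0}}d$ — does not yield a contradiction on the strength of these facts alone. You flag the ``delicate point'' yourself, and it is indeed exactly there that the argument stalls.

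The missing idea is that the quantifiers have to be arranged the other way. The paper does not fix $i_0\in J_1$ and hunt for a good $Z$; it first uses the failure of finite sheltering to produce an element $m\in J$ with $m\nleqslant k$ for every $k\in K$ (where $K$ is the finite support of $\zb$), and only then takes $j\in J_1$ with $j\le m$ via Corollary \ref{lengthlayerr}. Fixing $\CB\in\BD_m$ and $\CC\in\BD_j$ with $\CB\sbs\CC$, the point of choosing $m$ this way is that every element of $K$ whose $\BD$-classes meet $\CB$ is strictly below $m$, hence non-maximal; Lemma \ref{HsbsFr} then bounds the rows of $\zb$ on the \emph{single block} $X_{\CB}$ by finitely many members of $\CQ_{\CB}\sbs\CP_{\l(m)}$, leaving $\al$-many $V\in\CQ_{\CB}$ on which $\zb$ has zero rows. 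One then picks $W\in\CQ_{\CC}$ with $W\sbs V$ (Remark \ref{remarklem-partition2}) and checks $\za\ze_{\overline{W}}$ only on rows indexed by $W$: there $\zb\ze_{\overline{W}}$ vanishes and $\zd\ze_{\overline{W}}$ gives a nonzero diagonal. Nothing needs to be said about the rows of $\overline{W}$ outside $X_{\CC}$, which is precisely the part your sketch tries, and fails, to control.
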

\begin{proof}
Suppose that $\mathbf{0}\ne\za\in D_{I,J}$ and assume first that $\za\in
H_J$. Then there is a smallest finite nonempty subset $K\sbs J$ such that $\za\in H_K$.
According to Corollary \ref{lengthlayerr} we can choose some $j\in
J_1$ such that $K' = K\cap\{j\le\}\ne\emptyset$ and so $\za =
\za'+\za''$ for unique nonzero elements $\za'\in H_{K'}$ and
$\za''\in H_{K\setminus K'}$. Consequently it follows from Theorem
\ref{lemposetring} that there is an idempotent $\ze\in H_j$ such
that $\mathbf{0}\ne\za'\ze\in H_j\sbs H_{J_1}$, while $\za''\ze =
\mathbf{0}$. As a result $\mathbf{0}\ne\za\ze\in H_{J_1}$.

If $D_{I,J} = H_J$, the above argument shows that $H_{J_1}$ is essential
as a right ideal of $D_{I,J}$. Otherwise, according to Proposition \ref{multunit} we have that $D_{I,J} = H_J\oplus \ze_{X_J}D$. If $\mathbf{0}\ne\za\in \ze_{X_J}D$, given any $j\in J_{1}$ and $Y\in\CP_{\l(j)}$, we have that $\ze_{Y(j)}\in H_j\sbs H_{J_{1}}$ (see Lemma \ref{lem idempprim Hi}) and so $\mathbf{0}\ne\za\,\ze_{Y(j)}\in H_{J_{1}}$, because $Y(j)\sbs X_J$. In order to complete the proof it remains to consider the case in which $\za = \zb+ \zd$, where $\mathbf{0}\ne\zb\in H_J$ and $\mathbf{0}\ne\zd\in \ze_{X_J}D$. Let $K$ be the smallest finite subset of $J$ such that
$\zb\in H_K$. Again from Proposition \ref{multunit} we have that either $\max{J}$ is infinite, or $J\nsbs\{\max{J}\}$, or $\max{J}\nsbs J$. In the first and third cases it is clear that $\max{J}\setminus K\ne
\emptyset$. In the second case, $J$ contains at least an infinite chain. Thus, in all cases we can choose an element $m\in J$ such that $m\nleqslant k$ for every $k\in K$ and Corollary \ref{lengthlayerr} allows us to take some $j\in J_{1}$ in such a way
that $j\le m$. By Lemma \ref{lem-partition2} there are $\CB\in\BD_{m}$ and $\CC\in\BD_{j}$ such that $\CB\sbs\CC$.
We claim that there is some $V\in\CQ_{\CB}$ such that the $x$-th row
of $\zb$ is zero when $x\in V$. This is clear if $\CB\cap\CA =
\emptyset$ for all $\CA\in\BD_{k}$ with $k\in K$. Otherwise, let
$k_1,\ldots,k_r$ be those elements of $K$ such that
$\CB\cap\CA_{t}\ne\emptyset$ for some $\CA_{t}\in\BD_{k_t}$, where
$t\in\{1,\ldots,r\}$. Because of the choice of $m$, it follows from Lemma
\ref{lem-partition2} that $k_1,\ldots,k_r$ and $m$ are
pairwise comparable and we may assume that $k_1<\cdots<k_r<m$. Let
us consider the unique decomposition $\zb = \zb'+\zb''$, where $\zb'\in
H_{\{k_{1},\ldots,k_{r}\}}$ and $\zb''\in
H_{K\setminus\{k_{1},\ldots,k_{r}\}}$. Noting that $\CA\cap\CB =
\emptyset$ whenever $\CA\in\BD_{k}$ for some $k\in
K\setminus\{k_{1},\ldots,k_{r}\}$, we see that the $(X_{\CB}\times
X_{\CB})$-blocks of $\zb$ and $\zb'$ coincide. Inasmuch as
$k_{1},\ldots,k_{r}$ are not maximal, by applying Lemma
\ref{HsbsFr}to the single components of $\zb'$ in $H_{k_{1}}$, \ldots, $H_{k_{n}}$ we see that there are
$V_{1},\ldots,V_{s}\in\CQ_{\CB}\sbs\CP_{\l(m)}$ such that if $x\in
X_{\CB}$ and the $x$-th row of $\zb'$ is not zero, then $x\in
V_{1}\cup\cdots\cup V_{s}$. Thus, since $\CQ_{\CB}$ contains $\al$
elements of $\CP_{\l(m)}$, there is $V\in\CQ_{\CB}$ such that the
$x$-th row of $\zb$ is zero when $x\in V$ and our claim is
established.

Now, pick any $W\in\CQ_{\CC}$ such that $W\sbs V$ (see Remark \ref{remarklem-partition2}) and consider the idempotent $\ze_{\overline{W}}\in H_{j}$. By the above,
$\zb\,\ze_{\overline{W}}$ has zero $x$-th row for $x\in W$, while
$\zd\,\ze_{\overline{W}}$ has nonzero $x$-th row \emph{for all}
$x\in W$, because $W\sbs\overline{W}$ and $\overline{W}\sbs X_{J}$. This shows that
$\za\,\ze_{\overline{W}} = (\zb+\zd)\,\ze_{\overline{W}}$ is not zero and belongs to
$H_{J_{1}}$, completing the proof that $H_{J_1}$ is
essential as a right ideal of $D_{I,J}$.

Finally, let $j\in J_1$ and note that $H_j$ is an ideal of $D_{I,J}$ by (1) of Proposition \ref{uppersbs}. We
have that $H_j = \psi_j(F_{\l(j)})$ and $\psi_j$ is a ring
monomorphism; consequently, since $F_{\l(j)}$ is left pure, we infer
that $H_j$ is left pure as well; in particular, for every $\za\in H_j$ there is an idempotent $\ze\in H_j$ such that $\za = \ze\za$. Assume that $\za\in H_{J_1} = \bigoplus_{j\in J_1}H_j$, that is, $\za = \za_1+\cdots+\za_n$ for some  $\za_1\in H_{j_1},\ldots,\za_n\in H_{j_n}$ with $j_1,\ldots,j_n\in J_1$. Then there are appropriate idempotents $\ze_1\in H_{j_1},\ldots,\ze_n\in H_{j_n}$ such that $\za_r = \ze_r\za_r$ for all $r = 1,\ldots,n$ and it follows from Theorem \ref{lemposetring} that these idempotents are pairwise orthogonal. As a result $\ze = \ze_1+\cdots+\ze_n$ is an idempotent of $H_{J_1}$ such that $\za = \ze\za$, hence $H_{J_1}$ is left pure.
\end{proof}

\section{Semiartinian unit-regular rings are coming, finally!}

The setup we need is now complete for use. In this final section, starting from a given \underline{{\em nonempty\/}} polarized artinian poset $I$, we only have to specialize the ring $D$ and check that the corresponding ring $D_{I}$, as defined in the previous section, is a semiartinian and regular ring which satisfies the conditions we had announced. Thus, by keeping the same data, assumptions and notations so far introduced, \underline{{\em from now on we assume that $D$  is a division ring\/}}. Now the ring $Q = \CFM_{X}(D)$, and hence $Q_{\a}$ for every
$\a\le\xi$, is regular, prime and right selfinjective; moreover
$F_{\a} = \Soc(Q_{\a})$, so that each $H_i$ is a simple and semisimple ring, no matter if $i\in I'$ or not; it has a
multiplicative identity if and only if $i$ is a maximal element of
$I$, in which the case $H_i$ is isomorphic to $D$.

\begin{lem}\label{socRJ}
    Assume that $\vu\ne J\sbs I$. Then $D_{I,J}$ is a regular ring and
    \[
    H_{J_{1}} = \Soc(D_{I,J}).
    \]
    \end{lem}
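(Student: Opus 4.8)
The plan is to prove the two assertions separately, relying heavily on the structural results already established, especially Theorem~\ref{lemposetring}, Proposition~\ref{uppersbs}, Proposition~\ref{uppersbss} and Proposition~\ref{multunit}. For regularity of $D_{I,J}$, I would argue by a double quotient/layer induction. First, by Proposition~\ref{uppersbs}(1) the set $J_{1}$ of minimal elements of $J$ is a lower subset, so $H_{J\setminus J_{1}}$ is an ideal of $D_{I,J}$, and by Proposition~\ref{uppersbs}(2) the quotient $D_{I,J}/H_{J\setminus J_{1}}$ is isomorphic to $D_{I,J_{1}}$ (up to adjusting by the idempotent $\ze_{X_{J}}-\ze_{X_{J_{1}}}$ in the finitely sheltered case). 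Now $H_{J_{1}} = \bigoplus_{j\in J_{1}}H_{j}$, each $H_{j}$ being $\psi_{j}(F_{\l(j)}) = \psi_{j}(\Soc(Q_{\l(j)}))$, a (two-sided, simple) semisimple ring; hence $H_{J_{1}}$ is a regular ring (a direct sum of regular rings is regular), and $D_{I,J_{1}} = H_{J_{1}} + \ze_{X_{J_{1}}}D$ is regular because it is a semisimple-by-$D$ extension — here I would invoke that $H_{J_{1}}$ is an essential ideal which is itself regular with the right idempotents, so $D_{I,J_{1}}$ is regular (the scalar part $D$ is a division ring, and each element outside $H_{J_{1}}$ can be corrected using the orthogonal idempotents $\ze_{Y(j)}$ from Lemma~\ref{lem idempprim Hi}). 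Then one lifts regularity along the tower of ideals $H_{J_{1}}\sbs H_{J_{2}}\sbs\cdots$ indexed by the layers of $J$: since regularity passes to extensions of a regular ideal by a regular quotient, and since $D_{I,J} = \bigcup_{\a}H_{J_{\a}}+\ze_{X_{J}}D$ with $J = \bigcup_{\a<\xi}J_{\a}$ by artinianness, a transfinite induction on $\a$ (treating limit ordinals via the union) shows $D_{I,J}$ is regular.

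For the socle identity, I would show $H_{J_{1}}\sbs\Soc(D_{I,J})$ and $\Soc(D_{I,J})\sbs H_{J_{1}}$. The inclusion $H_{J_{1}}\sbs\Soc(D_{I,J})$: fix $j\in J_{1}$; then $H_{j} = \psi_{j}(F_{\l(j)})$ is a homogeneous semisimple right $D_{I,J}$-module once we know its simple right $H_{j}$-submodules remain simple over $D_{I,J}$. Since $j$ is minimal in $J$, for any $k\in J$ with $k\neq j$ we have $k\not\le j$, so by Theorem~\ref{lemposetring}(2), if $k,j$ incomparable $H_{k}H_{j} = 0$, and if $j<k$ then $H_{k}H_{j}\sbs H_{j}$ by Theorem~\ref{lemposetring}(3); in either case the right action of $D_{I,J}$ on $H_{j}$ factors through $H_{j}$ plus scalars, so any minimal right ideal of $H_{j}$ (which exists, $H_j$ being semisimple) is a minimal right ideal of $D_{I,J}$. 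Hence $H_{j}\sbs\Soc(D_{I,J})$ for every $j\in J_{1}$, giving $H_{J_{1}}\sbs\Soc(D_{I,J})$.

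For the reverse inclusion, I would use that $H_{J_{1}}$ is essential as a right ideal of $D_{I,J}$ by Proposition~\ref{uppersbss}: the socle of any ring is contained in every essential right ideal, so $\Soc(D_{I,J})\sbs H_{J_{1}}$. \emph{(Indeed $\Soc(D_{I,J})$ is the intersection of all essential right ideals.)} Combining the two inclusions yields $H_{J_{1}} = \Soc(D_{I,J})$. The main obstacle I anticipate is the careful verification of regularity for the ``base case'' ring $D_{I,J_{1}} = H_{J_{1}}+\ze_{X_{J_{1}}}D$ and, more generally, checking that the extension of a regular ideal by a regular quotient is regular in this precise non-unital setting — one must check that idempotents can be chosen compatibly, which is where Lemma~\ref{lem idempprim Hi} (the pairwise orthogonal idempotents $\ze_{Y(j)}$ generating the $H_{j}$ as ideals) and the left purity from Proposition~\ref{uppersbss} do the real work; everything else is a clean transfinite induction on the layers of $J$.
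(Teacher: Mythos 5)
Your argument for the socle identity is essentially the paper's: both directions match. For $H_{J_1}\subseteq\Soc(D_{I,J})$ the paper notes, as you do, that for each minimal $i$ the singleton $\{i\}$ is a lower subset of $J$, so $H_i$ is an ideal of $D_{I,J}$ by Proposition~\ref{uppersbs}(1), and being a semisimple ring (possibly without identity) its minimal right ideals are minimal right ideals of $D_{I,J}$; for the reverse inclusion one uses essentiality of $H_{J_1}$ from Proposition~\ref{uppersbss}, exactly as you argue.

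The regularity part, however, contains a genuine error and is unnecessarily heavy. You invoke Proposition~\ref{uppersbs}(1) with $J_1$ to conclude that $H_{J\setminus J_1}$ is an ideal of $D_{I,J}$ and that $D_{I,J}/H_{J\setminus J_1}\cong D_{I,J_1}$; but Proposition~\ref{uppersbs} requires the set you pass as $K$ to be an \emph{upper} subset of $J$, and $J_1$ is lower. Taking the correct $K = J\setminus J_1$ (which is upper) yields instead that $H_{J_1}$ is an ideal and that $D_{I,J}/H_{J_1}\cong D_{I,J\setminus J_1}$ — the opposite of what your induction wants, and you would still face the issue that $\ze_{X_{J_\alpha}}\neq\ze_{X_J}$ in general, so the rings $D_{I,J_\alpha}$ are not subrings of $D_{I,J}$ and the tower you build is not well-posed. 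There is no need for transfinite induction at all: the paper observes that the whole ideal $H_J = \bigoplus_{j\in J} H_j$ is a direct sum of simple and semisimple (hence regular) rings, so $H_J$ is a regular ideal; the quotient $D_{I,J}/H_J$ is either $0$ or isomorphic to the division ring $D$, hence regular; and then Goodearl's Lemma 1.3 (regularity of $D_{I,J}$ from regularity of the ideal $H_J$ and of the quotient) finishes it in one step.
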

    \begin{proof}
Since $J\setminus J_{1}$ is an upper subset of $J$, then $H_{J_{1}}$ is an ideal of $D_{I,J}$ by Proposition \ref{uppersbs}. Moreover $H_{J}$ is Von Neumann regular, because it is a direct sum of simple and semisimple rings. If $D_{I,J}\ne H_{J}$, then $D_{I,J}/H_{J}\cong D$ is regular, thus $D_{I,J}$ is itself regular (see \cite[Lemma 1.3]{Good:3}). Let $i\in J_{1}$. Since $\{i\}$ is a lower subset of $J$, it follows from Proposition \ref{uppersbs} that $H_{i}$ is an ideal of $D_{I,J}$. On the other hand, in view of our assumptions $H_{i}$ is a semisimple ring (possibly without identity), thus we infer that $H_{J_{1}} = \bigoplus_{i\in J_{1}}H_{i} \sbs\Soc(D_{I,J})$. Since $H_{J_{1}}$ is an essential right ideal of $D_{I,J}$ by Proposition \ref{uppersbss}, the opposite inclusion holds and the equality follows.
\end{proof}

As we have seen in Lemma \ref{HsbsFr}, if $i\in I\setminus
\max{I}$, then $H_i$ contains the set $\BE_i = \{\ze_{Y(i)}\mid
Y\in\CP_{\l(i)}\}$ of pairwise orthogonal idempotents, each of which
generates $H_i$ as an ideal of itself (see \eqref{H_i}). This time, having chosen $D$ as a division ring, \emph{each $\ze_{Y(i)}$ is primitive}. To every element $i\in
I$ we associate an idempotent $\zu_i\in D_{I}$ and a right $D_{I}$-module $U_i$ with the following
rules: if $i\in I\setminus \max{I}$, we \emph{choose} $\zu_i\in
\BE_i$, while if $i\in\max{I}$, then we set $\zu_i \defug
\ze_{X_i}$. Next, set
\[
U_{i} \defug\left(\left.\zu_{i}D_{I} +
H_{I_{\l(i)-1}}\right)\right/H_{I_{\l(i)-1}}.
\]

\begin{pro}\label{simples}
For every $i\in I$ the right $D_{I}$-module $U_i$ is simple and
\begin{equation}\label{rrUi}
r_{D_{I}}(U_{i}) = \left\{
                   \begin{array}{l}
                     H_{I\setminus\{i\le\}}+\left(\mathbf{1}-\ze_{X_{\{i\le\}}}\right)D,\,  \hbox{if $\{i\le\}$ is finitely sheltered in $I$;} \\
                     H_{I\setminus\{i\le\}},\,  \hbox{if $\{i\le\}$ is not finitely sheltered in $I$.}
                   \end{array}
                 \right.
\end{equation}
If $I$ has a finite cofinal subset, then $r_{D_{I}}(U_{i}) = H_{I\setminus\{i\le\}}$ for every $i\in I$.
\end{pro}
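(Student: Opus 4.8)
The plan is to identify $r_{D_I}(U_i)$ with the kernel of the canonical projection of $D_I$ onto its ``up-set quotient'' at $i$. Since $\{i\le\}$ is an upper subset of $I$, Proposition~\ref{uppersbs} provides a surjective $D$-linear ring homomorphism $\f_{\{i\le\},I}\colon D_I\to R'$, where $R':=D_{I,\{i\le\}}$, whose kernel is $H_{I\setminus\{i\le\}}$, or $H_{I\setminus\{i\le\}}+(\mathbf1-\ze_{X_{\{i\le\}}})D$ when $\{i\le\}$ is finitely sheltered in $I$ --- precisely the two ideals in \eqref{rrUi} (recall $\ze_{X_I}=\ze_X=\mathbf1$). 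So it suffices to prove that $U_i$ is simple and $r_{D_I}(U_i)=\Ker(\f_{\{i\le\},I})$. Several facts will be used repeatedly: $\{<i\}\sbs I_{\l(i)-1}\sbs I\setminus\{i\le\}$ (by the definition of the canonical length function and Proposition~\ref{lengthlayer}), so $H_{I_{\l(i)-1}}\sbs\Ker(\f_{\{i\le\},I})$; $D_I$ and $R'$ are regular rings (Lemma~\ref{socRJ}); $\{i\}$ is the least element of $\{i\le\}$, hence $\Soc(R')=H_i$ by Lemma~\ref{socRJ} and $H_i$ is essential as a right ideal of $R'$ by Proposition~\ref{uppersbss}; and $\f_{\{i\le\},I}$ restricts to the identity on the summand $H_{\{i\le\}}$ (formula \eqref{mapfKJ}), so it sends $\zu_i$ to a nonzero idempotent of $R'$, which I will still write $\zu_i$.

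First I would record the product rules coming from Theorem~\ref{lemposetring}. Since $\zu_i\in H_i$ we have $\zu_i H_j\sbs H_i H_j$, which Theorem~\ref{lemposetring} puts inside $H_j$ when $j<i$, inside $H_i$ when $j\ge i$, and equal to $0$ when $i,j$ are incomparable; as also $\zu_i D\sbs H_i$, this gives $\zu_i D_I\sbs H_{\{\le i\}}$. Next, $H_{\{\le i\}}\,H_{I\setminus\{i\le\}}\sbs H_{I_{\l(i)-1}}$: for $j\le i$ and $k\not\ge i$ the product $H_jH_k$ is $0$ if $j,k$ are incomparable, is contained in $H_j$ with $j<i$ if $j<k$, and in $H_k$ with $k<i$ if $k<j$, and in each case this summand lies in $H_{I_{\l(i)-1}}$. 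If moreover $\{i\le\}$ is finitely sheltered, then $H_{\{\le i\}}(\mathbf1-\ze_{X_{\{i\le\}}})\sbs H_{I_{\l(i)-1}}$: the $H_i$-summand is killed because the columns of matrices in $H_i$ lie in $X_i\sbs X_{\{i\le\}}$, while each $H_j$-summand with $j<i$ stays inside $H_j\sbs H_{I_{\l(i)-1}}$. Combining these, $\zu_i D_I\cdot\Ker(\f_{\{i\le\},I})\sbs H_{I_{\l(i)-1}}$, so $U_i$ becomes a right $R'$-module. Finally, Theorem~\ref{lemposetring} together with $\zu_i\ze_{X_{\{i\le\}}}=\zu_i$ gives $\zu_i R'=\zu_i H_i$ and $R'\zu_i R'\sbs H_i$; since $H_i$ is a simple ring containing the nonzero idempotent $\zu_i$ and is an ideal of $R'$, we get $R'\zu_i R'=H_i=\Soc(R')$.

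Now I would conclude. The right ideal $\zu_i R'=\zu_i H_i$ is a minimal right ideal of the semisimple ring $H_i$, hence a simple right $R'$-module, and the natural $R'$-epimorphism $\zu_i R'\to U_i$, $\zu_i a\mapsto\overline{\zu_i}\,a$ (well defined because $\overline{\zu_i}$ is fixed by the idempotent $\zu_i$), must be an isomorphism; in particular $U_i$ is simple. Therefore $r_{D_I}(U_i)=\f_{\{i\le\},I}^{-1}\bigl(r_{R'}(\zu_i R')\bigr)=\f_{\{i\le\},I}^{-1}\bigl(r_{R'}(R'\zu_i R')\bigr)=\f_{\{i\le\},I}^{-1}\bigl(r_{R'}(\Soc(R'))\bigr)$, and $r_{R'}(\Soc(R'))=0$: in a von Neumann regular ring with essential right socle, if $0\ne a\in R'$ then $aR'\cap\Soc(R')$ contains a minimal right ideal $fR'$ with $f$ a primitive idempotent, so $f=ar$ for some $r\in R'$, whence $\Soc(R')\,a=0$ would force $f=f^2\in\Soc(R')f\sbs\Soc(R')aR'=0$, a contradiction. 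Thus $r_{D_I}(U_i)=\Ker(\f_{\{i\le\},I})$, which is exactly \eqref{rrUi}. For the final assertion, if $I$ has a finite cofinal subset then so does $\{i\le\}$, hence $\{i\le\}$ is finitely sheltered in $I$ and $r_{D_I}(U_i)=H_{I\setminus\{i\le\}}+(\mathbf1-\ze_{X_{\{i\le\}}})D$; but \eqref{multuniteq} and Proposition~\ref{pro max} give $\mathbf1=\ze_X=\sum_{m\in\ZM(I)}\ze_{X_m}$ and $\ze_{X_{\{i\le\}}}=\sum_{m\in\ZM(I),\,i\le m}\ze_{X_m}$, so that $\mathbf1-\ze_{X_{\{i\le\}}}=\sum_{m\in\ZM(I),\,i\not\le m}\ze_{X_m}\in H_{I\setminus\{i\le\}}$, and therefore $r_{D_I}(U_i)=H_{I\setminus\{i\le\}}$.

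The main obstacle is the equality $r_{D_I}(U_i)=\Ker(\f_{\{i\le\},I})$. The inclusion ``$\supseteq$'' is just bookkeeping with the multiplication rules of Theorem~\ref{lemposetring} (with an extra step in the finitely sheltered case), but ``$\subseteq$'' has genuine content: it amounts to recognizing $U_i$ as a \emph{faithful} simple module over $D_{I,\{i\le\}}$, and the crucial input for that is that a von Neumann regular ring with essential right socle has zero right annihilator of its socle --- combined with the fact, from Lemma~\ref{socRJ}, that the socle of $D_{I,\{i\le\}}$ is the simple ring $H_i$, generated by $\zu_i$.
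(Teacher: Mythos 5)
Your proof is correct and follows essentially the paper's strategy: identify $r_{D_I}(U_i)$ with $\Ker(\f_{\{i\le\},I})$ by showing that $U_i$ is a faithful simple right module over $R'=D_{I,\{i\le\}}$, which reduces to the fact that the essential socle $H_i=\Soc(R')$ has zero right annihilator. The paper proves simplicity of $U_i$ directly from $U_i=U_iH_i$ and then invokes primitivity of $R'$ via Proposition~\ref{uppersbss}, while you realize $U_i\is\zu_iR'$ and argue $r_{R'}(\Soc(R'))=0$ from regularity and essentiality; these are the same idea in two phrasings, and your verification that $\Ker(\f_{\{i\le\},I})$ kills $U_i$ (checking $\zu_iD_I\cdot\Ker\sbs H_{I_{\l(i)-1}}$) amounts to the paper's use of $U_i=U_iH_i$ together with Theorem~\ref{lemposetring}.

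One genuine improvement is worth flagging. For the final assertion your computation $\mathbf1-\ze_{X_{\{i\le\}}}=\sum_{m\in\ZM(I),\,i\not\le m}\ze_{X_m}$, justified by \eqref{multuniteq} and Proposition~\ref{pro max}, cleanly gives the required membership $\mathbf1-\ze_{X_{\{i\le\}}}\in H_{I\setminus\{i\le\}}$. The paper instead asserts the set-theoretic identity $\mathbf1-\ze_{X_{\{i\le\}}}=\ze_{X_{I\setminus\{i\le\}}}$, which is in general false: a maximal chain that meets $I\setminus\{i\le\}$ may also pass through $i$, so $X_{I\setminus\{i\le\}}$ can properly contain $X\setminus X_{\{i\le\}}$ (for instance $I=\{a,b,c\}$ with $a<b$, $a<c$, $b$ and $c$ incomparable, and $i=b$). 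The conclusion the paper needs --- namely the membership --- is nevertheless true, and your argument makes it transparent by expressing $\mathbf1-\ze_{X_{\{i\le\}}}$ as a sum of $\ze_{X_m}$ with $m$ maximal and $m\not\ge i$, each lying in $H_m\sbs H_{I\setminus\{i\le\}}$.
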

\begin{proof}
Firstly, it follows from \eqref{H_i} that
\begin{equation}\label{Ui}
    U_i = \left(\left.\zu_{i}H_i +
H_{I_{\l(i)-1}}\right)\right/H_{I_{\l(i)-1}} = U_iH_i.
\end{equation}
Suppose that $0\ne x\in U_i$. Then $x = \zu_i\za+H_{I_{\l(i)-1}}$ for some nonzero $\za\in H_{i}$. Inasmuch as $\zu_{i}H_i$ is a minimal
right ideal of the ring $H_i$, then $\zu_{i}\za H_i = \zu_{i}H_i$
and consequently $xD_{I} =
U_i$ by \eqref{Ui}, proving that $U_i$ is a simple $D_{I}$-module.
Next, taking into account that $\{i\le\}$ is an upper subset of $I$, if we specialize Proposition \ref{uppersbs} by setting $J = I$ and $K = \{i\le\}$, we see that the second member of the equality \eqref{rrUi} is
precisely the kernel of the ring epimorphism
$\map{\f = \f_{K,J}}{D_{I}}{D_{I,\{i\le\}}} = H_{\{i\le\}}+\ze_{X_{\{i\le\}}}D$ defined by the rule \eqref{mapfKJ}. By \eqref{Ui} and Theorem
\ref{lemposetring} we have that $U_iH_j = 0$ when $j\in
I\setminus\{i\le\}$, therefore $H_{I\setminus\{i\le\}}\sbs
r_{D_{I}}(U_{i})$. If $\{i\le\}$ is finitely sheltered in $I$,
then $\ze_{X_{\{i\le\}}}$ is the multiplicative identity of the ring
$H_{\{i\le\}}$ according to Proposition \ref{multunit}; in
particular $H_i = H_i\ze_{X_{\{i\le\}}}$ and so
$\left(\mathbf{1}-\ze_{X_{\{i\le\}}}\right)D \sbs r_{D_{I}}(U_{i})$ by
\eqref{Ui}. This shows that $\Ker\left(\f\right) \sbs r_{D_{I}}(U_{i})$
and hence $U_i$ is canonically a simple right $D_{I,\{i\le\}}$-module.
Now it follows from Proposition \ref{uppersbss} that $D_{I,\{i\le\}}$
has essential socle given by $H_i$; since this latter is homogeneous
and regular, we infer that the ring $D_{I,\{i\le\}}$ is primitive.
Accordingly, since $U_i = U_iH_i$ we conclude that $U_i$ is faithful
as a simple right $D_{I,\{i\le\}}$-module and this establishes the
equality \eqref{rrUi}.

The last statement follows directly from the
equality \eqref{rrUi}, because if $I$ has a finite cofinal subset, then every subset of $I$ is finitely sheltered in $I$ and it follows from Proposition \ref{multunit} that $\mathbf{1}-\ze_{X_{\{i\le\}}} = \ze_{X_{I\setminus\{i\le\}}} \in H_{I\setminus\{i\le\}}$.
\end{proof}

We are now in a position to analyze the main features of the regular
ring $D_{I}$, the first of which is that it is semiartinian. Observe that $I^{\bullet\bullet}_\xi = \emptyset$ is finitely sheltered in $I$; thus we may consider the
ordinal $\xi_0$ defined by
\[
\xi_0
\defug\min\left\{\a\left|I^{\bullet\bullet}_\a\text{ is
finitely sheltered in $I$}\right\}\right. \le \xi.
\]
As we shall see, $\xi_0$ will be critical when determining the Loewy chain of
$D_{I}$. If $\xi_0 < \xi$ and $\max{(I^{\bullet\bullet}_{\xi_0})} =
\{k_1,\ldots,k_n\}$, we shall consider the idempotent
\[
\zf \defug\ze_{X_{k_1}}+\cdots+\ze_{X_{k_n}}.
\]
Remember that $\zf$ is the multiplicative identity of $H_{I^{\bullet\bullet}_{\xi_0}} =
D_{I,I^{\bullet\bullet}_{\xi_0}}$ by Proposition
\ref{multunit}.

If $\xi_0 <\xi$, then $I^{\bullet\bullet}_{\xi_0}$ has a finite cofinal subset and therefore $\xi$ must be a successor ordinal. In particular, it is clear that $\xi_0 = 0$ if and only if $I$ has a finite cofinal subset, in which the case $\zf = \mathbf{1}$ and $D_{I} = H_I$ by Proposition \ref{multunit}.

We shall need a couple of lemmas, the second of which is a direct consequence of \cite[Proposition 3]{Bac:7}.

\begin{lem}\label{lemhereditary}
Let $R$ be a ring with projective and essential right socle $L$ and
assume that $R$ has a subring $S$ such that $R = S+L$ and $R$ is left $S$-flat. If $S$ is right hereditary, then $R$ is right hereditary as
well.
\end{lem}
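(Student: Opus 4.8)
The plan is to prove Lemma~\ref{lemhereditary} by reducing the hereditarity of $R$ to a statement about right ideals generated by idempotents, using the fact that $R/L$ is (isomorphic to) a factor ring of the hereditary ring $S$ together with the projectivity of $L$. First I would recall that a ring is right hereditary if and only if every right ideal is projective, and that since $L=\Soc(R_R)$ is projective and essential, it suffices to control how an arbitrary right ideal $\mathfrak{a}$ sits relative to $L$: we have the exact sequence $0\to \mathfrak{a}\cap L\to \mathfrak{a}\to (\mathfrak{a}+L)/L\to 0$. The submodule $\mathfrak{a}\cap L$ is a right ideal contained in the semisimple projective module $L$, hence is itself projective (a direct summand of $L$). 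The quotient $(\mathfrak{a}+L)/L$ is a right ideal of $R/L\cong S/(S\cap L)$, and since $S$ is right hereditary every factor ring of $S$ is right hereditary too, so $(\mathfrak{a}+L)/L$ is projective as an $R/L$-module. The key point will then be to lift projectivity over $R/L$ back to projectivity over $R$, which is where the flatness hypothesis $R$ left $S$-flat (equivalently, I would argue, the purity of $L$ as a left ideal, which is exactly what Proposition~\ref{uppersbss} provides in the application) enters.

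The main technical step, and the one I expect to be the principal obstacle, is the following: if $M$ is a right $R/L$-module that is projective over $R/L$, then $M$ is of projective dimension at most $1$ over $R$, and more precisely any short exact sequence $0\to L'\to P\to M\to 0$ with $P$ projective over $R$ and $L'\subseteq L$ splits. I would handle this by first treating $M = R/L$ itself: one shows $R/L$ has projective dimension $\le 1$ over $R$ because $0\to L\to R\to R/L\to 0$ is a projective resolution (here $L$ projective is used), and then one propagates to arbitrary $R/L$-projective $M$ by writing $M$ as a direct summand of a free $R/L$-module $(R/L)^{(\Lambda)}$ and using that direct sums and summands of modules of projective dimension $\le 1$ again have projective dimension $\le 1$. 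Combined with $\mathfrak{a}\cap L$ projective, the exact sequence $0\to \mathfrak{a}\cap L\to \mathfrak{a}\to (\mathfrak{a}+L)/L\to 0$ then shows $\mathrm{pd}_R(\mathfrak{a})\le 1$; but one actually wants $\mathfrak{a}$ itself projective, so I would instead argue that the sequence splits: since $(\mathfrak{a}+L)/L$ has $\mathrm{pd}_R\le 1$, the obstruction to splitting lies in $\mathrm{Ext}^1_R((\mathfrak{a}+L)/L,\mathfrak{a}\cap L)$, and one shows this vanishes either directly from projectivity of $(\mathfrak a+L)/L$ over $R/L$ and $\mathfrak a\cap L$ being an $L$-submodule annihilated appropriately, or by the dimension-shift/flatness argument.

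Alternatively — and this is probably the cleanest route, matching the paper's citation of \cite[Proposition 3]{Bac:7} — I would simply invoke that external result, which presumably states precisely the homological fact needed (e.g. that under ``$R=S+L$, $L$ projective essential socle, $R$ left $S$-flat, $S$ right hereditary'' one gets $R$ right hereditary), and the proof of the lemma reduces to checking the hypotheses match. In the write-up I would state the reduction to ``every right ideal is projective,'' perform the two-step filtration of a right ideal $\mathfrak{a}$ by $\mathfrak a\cap L$ and $(\mathfrak a+L)/L$ as above, cite the right-hereditarity of factor rings of $S$ for the quotient piece, use projectivity of $L$ for the intersection piece, and then cite \cite[Proposition 3]{Bac:7} (or run the short $\mathrm{Ext}$-vanishing argument using left $S$-flatness) to conclude the extension is projective. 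The flatness hypothesis is essential and I would flag exactly where it is used: it guarantees that tensoring up from $S$ behaves well enough that $R/L$-projective modules acquire $\mathrm{pd}_R\le 1$, and it is this that is verified in the application via the left purity of the socle established in Proposition~\ref{uppersbss}.
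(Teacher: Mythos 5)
Your plan takes a genuinely different route from the paper---working with right ideals and projectivity rather than with injective modules---and it breaks at a specific step. You filter a right ideal $\mathfrak a$ as $0\to\mathfrak a\cap L\to\mathfrak a\to(\mathfrak a+L)/L\to 0$ and want the quotient to be projective over $R/L\cong S/(S\cap L)$, for which you invoke that ``every factor ring of $S$ is right hereditary too.'' That claim is false: $\BZ$ is hereditary but $\BZ/4\BZ$ is not, since the ideal $2\BZ/4\BZ$ is not projective. Nothing in the hypotheses makes $R/L$ hereditary, so there is no reason for $(\mathfrak a+L)/L$ to be $R/L$-projective. Even if it were, your concluding step is incomplete: knowing $(\mathfrak a+L)/L$ has projective dimension $\le 1$ over $R$ kills $\mathrm{Ext}^2_R((\mathfrak a+L)/L,-)$ but says nothing about $\mathrm{Ext}^1_R((\mathfrak a+L)/L,\mathfrak a\cap L)$, which is what you would need to split the sequence and conclude $\mathfrak a$ is projective (as opposed to merely $\mathrm{pd}_R(\mathfrak a)\le 1$). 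Finally, the citation to Proposition 3 of \cite{Bac:7} that you propose as a fallback is not for this lemma at all; the paper uses it for Lemma \ref{pro simplinject}, and gives an independent proof of Lemma \ref{lemhereditary}.

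The paper's proof shows why the injective side is the right side to work on. It uses the criterion that $R$ is right hereditary iff $E/M$ is injective whenever $E_R$ is injective with essential submodule $M$. Flatness of $_SR$ transfers $R$-injectivity of $E$ to $S$-injectivity (this is exactly where the flatness hypothesis is used); since $L$ is semisimple, $EL\sbs\Soc(E)\sbs M$, so $E/M$ is an $R/L$-module; hereditariness of $S$ makes $E/M$ $S$-injective, and because $R/L$ is a \emph{quotient} of $S$, this descends automatically to $R/L$-injectivity (injectivity restricts along surjections in a way projectivity does not); finally left purity of $L$, which the projectivity and essentiality of the socle guarantee, lifts $R/L$-injectivity to $R$-injectivity. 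Notice that the argument never needs $R/L$ to be hereditary, and the directions of the two change-of-rings moves (down to $S$ via flatness, then across from $S$ to $R/L$ via the surjection) are precisely what make it close; your projectivity-based mirror image of this gets both moves going the wrong way.
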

\begin{proof}
In order to prove that $R$ is right hereditary it is sufficient to
show that if $E$ is an injective right $R$-module with an essential
submodule $M$, then $E/M$ is $R$-injective. Firstly, by the flatness of $_SR$, the injectivity of $E_R$ implies that of $E_S$. Now it is readily seen that the
canonical right $R/L$-module structure on $E/M$, arising from the
fact that $EL\sbs M$, and the original structure of a factor
$R$-module restrict to the same $S$-module structure. As $S$ is
right hereditary, it follows that $E/M$ is $S$-injective and hence
$R/L$-injective. Finally, since $L$ is left pure in $R$, we conclude
that $E/M$ is $R$-injective.
\end{proof}

\begin{lem}\label{pro simplinject}
Let $R$ be a ring with a faithful simple and projective right $R$-module $S$ and let $Q = \BIE(S_R)$, so that $R$ can be identified with a dense subring of $Q$ and $\Soc(R) = R\cap\Soc(Q)$ is the trace of $S$ in $R$. Then $S_R$ is injective if and only if $\Soc(R) = \Soc(Q)$.
\end{lem}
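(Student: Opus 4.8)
The plan is to reduce the statement to the classical structure of a primitive ring possessing a faithful, simple, projective right module, and then to quote \cite[Proposition 3]{Bac:7}, which treats exactly this configuration. First I would pick a primitive idempotent $e\in R$ with $S\is eR$ and $D=\End(S_R)\is eRe$. Since $S$ is faithful, identifying $R$ with a dense subring of $Q=\BIE(S_R)=\End({}_DS)$ turns $e$ into a rank-one idempotent of $Q$; hence $eQ$ is a minimal right ideal of $Q$, $\Soc(Q)=QeQ=(Qe)(eQ)$ is the ideal of finite-ranked operators, and $\Soc(R)=ReR$ equals $R\cap\Soc(Q)$, as recorded in the hypothesis.

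The first observation I would establish is that $eR=eQ$ unconditionally. Indeed, $Q$ is primitive with nonzero socle, so the minimal right ideal $eQ$ is isomorphic, as a right $Q$-module, to the faithful simple right $Q$-module (unique up to isomorphism), namely $S$; restricting scalars along $R\hookrightarrow Q$ gives $eQ_R\is S_R$, which is simple. As $eR$ is a nonzero $R$-submodule of $eQ$, it follows that $eR=eQ$, hence $eQ\sbs R$. Consequently $\Soc(Q)=(Qe)(eQ)\sbs(Qe)R$, and therefore
\[
\Soc(R)=\Soc(Q)\ \Longleftrightarrow\ \Soc(Q)\sbs R\ \Longleftrightarrow\ Qe\sbs R\ \Longleftrightarrow\ Qe=Re,
\]
the last equivalence because $Qe\sbs R$ forces $qe=(qe)e\in Re$ for every $q\in Q$.

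The second — and main — ingredient is the description, inside the bicommutant $Q$, of the injective hull of the minimal right ideal $S_R\is eR$; this is where \cite[Proposition 3]{Bac:7} enters. Concretely, one identifies $E(eR_R)$ with a dual module built from the minimal left ideal $Qe$ of $Q$ — say $\Hom_D(Qe,D)$ equipped with its natural right $R$-module structure — inside which the canonical image of $eR$ is the corresponding dual $\Hom_D(Re,D)$ of the sub-bimodule $Re\sbs Qe$. Since $D$ is a division ring, applying $\Hom_D(-,D)$ to the inclusion $Re\sbs Qe$ yields an equality $\Hom_D(Re,D)=\Hom_D(Qe,D)$ precisely when $Re=Qe$. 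Hence $S_R$ is injective, that is $eR=E(eR_R)$, if and only if $Re=Qe$; combining this with the displayed chain of equivalences gives exactly the condition $\Soc(R)=\Soc(Q)$.

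I expect the delicate point to be this second ingredient: making the identification of $E(S_R)$ with a completion of $R$ inside $Q$ precise, keeping the one-sided $D$-module structures on $eR$, $Re$ and $Qe$ straight, and checking that the embedding $eR\hookrightarrow E(S_R)$ is the one described. That is precisely why the argument is best organized so as to invoke \cite[Proposition 3]{Bac:7} rather than to redo the injective-hull computation by hand; once that is granted, the remainder is routine bookkeeping with traces, socles and one-sided ideals in a primitive ring.
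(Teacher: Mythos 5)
The paper does not actually give a proof of this lemma: it states only that the result ``is a direct consequence of \cite[Proposition 3]{Bac:7},'' so there is no detailed argument to compare against. Your first two steps are fine and are almost certainly the right preprocessing before invoking that citation: the identity $eR = eQ$ holds because $eQ_R$ is the restriction of the simple module $eQ_Q\is S_Q$, and the equivalences $\Soc(R)=\Soc(Q)\Leftrightarrow\Soc(Q)\sbs R\Leftrightarrow Qe\sbs R\Leftrightarrow Qe=Re$ are all correct.

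The ``main ingredient,'' however, is not right as stated. First, ``the canonical image of $eR$ is the corresponding dual $\Hom_D(Re,D)$'' does not type-check: the inclusion $Re\sbs Qe$ induces a \emph{surjection} (restriction) $\Hom_D(Qe,D)\twoheadrightarrow\Hom_D(Re,D)$, so $\Hom_D(Re,D)$ is a quotient, not a submodule, of $\Hom_D(Qe,D)$, and cannot be the image of anything sitting inside it. Second, and more seriously, the identification $E(eR_R)\is\Hom_D(Qe,D)$ cannot be the content of the cited proposition. You have shown $eR=eQ$, and $eQ$ has left $D$-dimension $\dim_D S$, whereas $Qe\is\Hom_D(S,D)$ has strictly larger right $D$-dimension whenever $S$ is infinite-dimensional; hence the natural pairing embeds $eR$ as a \emph{proper} essential submodule of $\Hom_D(Qe,D)$ whenever $\dim_D S$ is infinite. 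If $E(eR_R)$ were all of $\Hom_D(Qe,D)$, $eR$ would then never be injective in the infinite-dimensional case, contradicting the ``if'' direction of the very lemma being proved. The dualizing object must be $Re$ rather than $Qe$, and the extra module $\Hom_D(Qe,D)$ plays no role. So the reduction is sound, but the sketch of what \cite[Proposition 3]{Bac:7} supplies — the one place your argument genuinely leans on that reference — has the wrong statement and would not produce the claimed equivalence.
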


We recall that a ring $R$ is \emph{unit-regular} if for every $x\in R$ there exists a unit $u\in R$ such that $x = xux$. It is well known that a regular ring $R$ is unit regular if and only if, given three finitely generated projective right $R$-modules $A,B,C$, the condition $A\oplus C\is B\oplus C$ implies $A\is B$. Another equivalent condition is that $R$ has stable range 1, meaning that if $a,b\in R$ and $aR+bR = R$, then there is some $c\in R$ such that $a+bc$ is a unit (see \cite[Chapter 4]{Good:3}).

\begin{theo}\label{theo semiartunitreg}
    With the above settings and notations, the ring $D_{I}$ satisfies the
    following properties:
    \begin{enumerate}
    \item For every ordinal $\a\le\xi$
\begin{equation}\label{loewy}
\Soc_\a(D_{I}) = \begin{cases}H_{I_\a},\,\,\text{ if $I$ has a finite cofinal subset
 or }\a\le\xi_0 \\
H_{I_\a}\oplus(\mathbf{1}-\zf)D,\,\,\text{ if }0<\xi_0<\a.
                            \end{cases}
\end{equation}
Thus the ring $D_{I}$ is semiartinian and its Loewy length is $\xi$
(resp. $\xi+1$) if $\xi_0<\xi$ (resp. $\xi_0 = \xi$).
\item If $i,j\in I$, then $U_i\preccurlyeq U_j$ if and only if $i\le j$ and we have
\begin{equation}\label{SimpR}
\Simp_{D_{I}} = \begin{cases} \{U_i\mid i\in I\}, \text{ if $I$ has a finite cofinal subset,} \\
\left\{U_i\mid i\in I\right\}\cup\{D_{I}/H_I\},\text{ otherwise.}
\end{cases}
\end{equation}
Thus $I$ and $\Simp_{D_{I}}$ are isomorphic posets if $I$ has a finite cofinal subset, otherwise the additional simple module
$D_{I}/H_I$ is a maximal element of $\Simp_{D_{I}}$ such that
\[
h(D_{I}/H_I) = \xi_0+1
\]
and, for
every $i\in I$,
\begin{equation}\label{SimpR1}
U_{i}\prec D_{I}/H_I \text{ if and only if
$\{i\le\}$ is not finitely sheltered in $I$}.
\end{equation}
\item $D_{I}$ is unit regular.
\item If $U\in\Simp_{D_{I}}$, then $U_{D_{I}}$ is injective if and only if $U$ is either a maximal element, or $U = U_i$ for some $i\in I'$. Consequently $D_{I}$ is a right $V$-ring if and only if $I' = I$. Moreover, $D_{I}$
is a right and left $V$-ring if and only if $\xi = 1$, if and only if all
primitive factor rings of $D_{I}$ are artinian.
\item If $\xi$ is a
natural number, in particular if $I$ is finite, then $D_{I}$ is (right
and left) hereditary.
\item If $I' = \emptyset$ and $I$ is at most countable, then the dimension of $D_{I}$ as a right and a left vector space over $D$ is countable.
\item $D_{I}$ is well behaved (Proposition and Definition \ref{pro lambda=h}) if and only if, for every $\a<\xi_0$, there is some $i\in I^{\bullet}_{\a+1}$ such that $\{i\le\}$ is not finitely sheltered.
\item $D_{I}$ is very well behaved (Definition \ref{def vwbehaved}) if and only if $I$, or equivalently $\Simp_{D_{I}}$, has finitely many maximal elements.
\end{enumerate}
\end{theo}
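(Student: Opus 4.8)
The plan is to split (8) into its two implications and, at the outset, to record that the two finiteness conditions --- on $I$ and on $\Simp_{D_{I}}$ --- are equivalent. If $I$ has a finite cofinal subset this is clear, since $I$ and $\Simp_{D_{I}}$ are then order isomorphic by part (2). If $I$ has no finite cofinal subset, then by \eqref{SimpR} we have $\Simp_{D_{I}} = \{U_{i}\mid i\in I\}\cup\{D_{I}/H_{I}\}$ with $D_{I}/H_{I}$ maximal, and for a maximal $m\in I$ the set $\{m\le\} = \{m\}$ is finitely sheltered in $I$, so $U_{m}\not\prec D_{I}/H_{I}$ by \eqref{SimpR1}; hence $U_{m}$ is maximal in $\Simp_{D_{I}}$. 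Since $i\mapsto U_{i}$ preserves and reflects the order, the maximal elements of $\Simp_{D_{I}}$ are precisely $\{U_{m}\mid m\in\ZM(I)\}\cup\{D_{I}/H_{I}\}$, so one of the two sets of maximal elements is finite if and only if the other is. Granting this, the ``only if'' half of (8) is immediate from Proposition \ref{uppersimpp}(2).

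For the ``if'' half, assume $I$ (equivalently $\Simp_{D_{I}}$) has finitely many maximal elements. Since $D_{I}$ is regular we already know that $\P\circ\F = \mathrm{id}$ on $\BL_{2}(D_{I})$, so it suffices to prove $\F\circ\P = \mathrm{id}$ on $\Uparrow\!\!\Simp_{D_{I}}$, i.e. that $\Simp_{D_{I}/\P(\ZS)} = \ZS$ for every upper subset $\ZS$ of $\Simp_{D_{I}}$; the inclusion $\ZS\sbs\Simp_{D_{I}/\P(\ZS)}$ is automatic from $\P(\ZS) = \bigcap_{U\in\ZS}r_{D_{I}}(U)$. Let $K = \{i\in I\mid U_{i}\in\ZS\}$, which is an upper subset of $I$ by part (2), and note that if some $U_{i}\in\ZS$ has $\{i\le\}$ not finitely sheltered then $U_{i}\prec D_{I}/H_{I}$ forces $D_{I}/H_{I}\in\ZS$. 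The first substantial step is to compute $\P(\ZS)$ from the annihilator formulas of Proposition \ref{simples}, together with $r_{D_{I}}(D_{I}/H_{I}) = H_{I}$, the identity $\bigcup_{i\in K}\{i\le\} = K$ and the independence of $\{H_{j}\mid j\in I\}$ (Theorem \ref{lemposetring}): one gets $\P(\ZS) = H_{I\setminus K}$ whenever $I$ has a finite cofinal subset, and also when $I$ has no finite cofinal subset but $D_{I}/H_{I}\in\ZS$; whereas if $I$ has no finite cofinal subset and $D_{I}/H_{I}\notin\ZS$, then every $i\in K$ has $\{i\le\}$ finitely sheltered, hence $K$ is itself finitely sheltered in $I$ (it is an upper subset each of whose elements lies below one of the finitely many maximal elements of $I$), and $\P(\ZS) = H_{I\setminus K}+(\mathbf{1}-\ze_{X_{K}})D$ --- here using that $\ze_{X_{\{i\le\}}}$ is the identity of $H_{\{i\le\}}$ and that $\ze_{X_{K}} = \ze_{X_{\max{K}}}$, by Proposition \ref{multunit}.

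It then remains to check that $\Simp_{D_{I}/\P(\ZS)} = \ZS$, which I would do directly with annihilators rather than by computing the quotient ring. Each $U_{i}$ with $i\in K$ lies in $\Simp_{D_{I}/\P(\ZS)}$ because $\P(\ZS)\sbs r_{D_{I}}(U_{i})$; the module $D_{I}/H_{I}$ (when it exists) lies there if and only if $\P(\ZS)\sbs H_{I}$, which holds in the first case above and fails in the second, since there $\mathbf{1}-\ze_{X_{K}}\in\P(\ZS)\setminus H_{I}$ --- recall that $\mathbf{1}\notin H_{I}$ precisely when $I$ has no finite cofinal subset, by Proposition \ref{multunit}. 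Conversely, for $j\notin K$ one picks $\mathbf{0}\ne\za\in H_{j}\sbs H_{I\setminus K}\sbs\P(\ZS)$ with $U_{j}\za\ne 0$ (possible since $U_{j} = U_{j}H_{j}\ne 0$ by \eqref{Ui}), so that $\P(\ZS)\not\sbs r_{D_{I}}(U_{j})$ and $U_{j}\notin\Simp_{D_{I}/\P(\ZS)}$. Comparing with \eqref{SimpR} and \eqref{SimpR1} this yields $\Simp_{D_{I}/\P(\ZS)} = \ZS$ in all three configurations, completing the proof. One could instead try to recognize $D_{I}/\P(\ZS)$ as $D_{I,K}$ (possibly times a ring direct factor $\is D$) via Proposition \ref{uppersbs}(2), but that approach merely shifts the burden onto describing $\Simp_{D_{I,K}}$, so the direct annihilator argument is preferable.

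The step I expect to be the real obstacle is the intersection computation $\bigcap_{i\in K}r_{D_{I}}(U_{i}) = H_{I\setminus K}+(\mathbf{1}-\ze_{X_{K}})D$ in the third configuration: one has to control how the ``idempotent tails'' $(\mathbf{1}-\ze_{X_{\{i\le\}}})D$ of the individual annihilators combine across all $i\in K$, and it is precisely here that the finiteness of $\ZM(I)$ enters --- it is what makes $K$ finitely sheltered, so that $\ze_{X_{K}}$ is a genuine idempotent of $D_{I}$ equal to $\ze_{X_{\max{K}}}$, and so that the tail of $\P(\ZS)$ is a single line rather than an infinite mess. A secondary, more routine point requiring care is the verification that the set $K$ attached to an upper subset $\ZS$ is indeed upper and, in the critical case, finitely sheltered in $I$; both follow from \eqref{SimpR1} and the definition of ``finitely sheltered''.
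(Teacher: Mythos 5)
Your proof is correct, and the reduction of the ``only if'' half to Proposition~\ref{uppersimpp}(2) together with the case analysis over whether $I$ has a finite cofinal subset and whether $D_I/H_I\in\ZS$ matches the structure of the paper's argument. Where you diverge is in the treatment of the critical third case: you propose to compute $\Psi(\ZS)=\bigcap_{U\in\ZS}r_{D_I}(U)$ outright as $H_{I\setminus K}+(\mathbf{1}-\ze_{X_K})D$ and then verify $\Phi(\Psi(\ZS))=\ZS$, whereas the paper sidesteps the intersection by \emph{producing} an ideal $H=H_{I\setminus K}+(\mathbf{1}-\ze_{X_K})D$, showing directly that $H\sbs r_{D_I}(U_i)$ iff $i\in K$ and $H\nsbs H_I$ (so $\Phi(H)=\ZS$), and then invoking the already-known identity $\Psi\circ\Phi=\mathrm{id}$ to conclude $\Psi(\ZS)=H$ and hence $\Phi(\Psi(\ZS))=\ZS$. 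Both routes hinge on exactly the same facts --- in particular, that under the finiteness hypothesis the set $K$ is finitely sheltered and, via Proposition~\ref{multunit}, $\ze_{X_K}=\ze_{X_{m_1}}+\cdots+\ze_{X_{m_r}}$ with $\ze_{X_K}-\ze_{X_{\{i\le\}}}=\sum_{m\in\max K\setminus\max\{i\le\}}\ze_{X_m}\in H_{I\setminus\{i\le\}}$, so the idempotent tails of the individual $r_{D_I}(U_i)$ collapse to the single line $(\mathbf{1}-\ze_{X_K})D$ --- so there is no real gap in your approach, only more bookkeeping. The paper's variant is marginally more economical, because after observing $\Psi\Phi=\mathrm{id}$ one needs only a membership test for each simple module rather than an exact description of the intersection; your closing remark that ``recognizing $D_I/\Psi(\ZS)$ as $D_{I,K}$ merely shifts the burden onto describing $\Simp_{D_{I,K}}$'' underestimates this shortcut, since the paper never actually needs $\Simp_{D_{I,K}}$ --- it only checks the annihilator condition $H\sbs r_{D_I}(U)$ for $U\in\Simp_{D_I}$.
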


\begin{proof}
(1) Obviously \eqref{loewy} holds if $\a = 0$, while if $\a = 1$, then \eqref{loewy} follows directly from Lemma \ref{socRJ}. Suppose that $\a>1$, assume that either $I$ has a finite cofinal subset, or $\a\le\xi_0$, and assume inductively that $\Soc_\b(D_{I}) = H_{I_{\b}}$ for every ordinal $\b<\a$. If $\a$ is a limit ordinal, since $I_{\a} \defug \bigcup_{\b<\a}I_{\b}$, then we have
\[
\Soc_{\a}(D_{I}) = \bigcup_{\b<\a}\Soc_{\b}(D_{I}) = \bigcup_{\b<\a}H_{I_{\b}} = H_{I_{\a}}.
\]
Suppose that $\a = \b+1$ for some $\b$ and set $J = I_{\b}^{\bullet\bullet}$. Since $J$ is an upper subset of $I$, we can consider the surjective ring homomorphism $\map{\f_{J,I}}{D_{I}}{D_{I,J}}$ as in Proposition \ref{uppersbs}. If $\a\le\xi_0$, then $J$
is not finitely sheltered in $I$ and it follows from Proposition \ref{uppersbs} that
\begin{equation}\label{eq-KerfJI}
\Ker(\f_{J,I}) = H_{I\setminus J} = H_{I_{\b}} = \Soc_{\b}(D_{I}).
\end{equation}
If $I$ has a finite cofinal subset, then $J$ is finitely sheltered in $I$ and again Proposition \ref{uppersbs} tells us that
\[
\Ker(\f_{J,I}) =
H_{I\setminus J}+(\mathbf{1}-\ze_{X_J})D = H_{I_{\b}}+(\mathbf{1}-\ze_{X_J})D ;
\]
moreover we have that $\mathbf{1}-\ze_{X_J}\in H_{I_{\b}}$,
therefore \eqref{eq-KerfJI} again holds. Thus, in both
cases $\f_{J,I}$ induces an isomorphism $D_{I}/\Soc_{\b}(D_{I})\is
D_{I,J}$, which in turn restricts to the
canonical isomorphism $(H_{I_{\b+1}^{\bullet}}+\Soc_{\b}(D_{I}))/\Soc_{\b}(D_{I}) \is
H_{I_{\b+1}^{\bullet}}$. As a result we obtain
\[
H_{I_{\b+1}}/H_{I_{\b}} =
\left.\left(H_{I_{\b+1}^{\bullet}}+H_{I_{\b}}\right)\right/H_{I_{\b}} \is
H_{I_{\b+1}^{\bullet}} =
H_{\left(I_{\b}^{\bullet\bullet}\right)_{1}} =
\Soc(D_{I,J}),
\]
where the last equality comes from Lemma \ref{socRJ}. This shows that
\[
H_{I_{\b+1}}/H_{I_{\b}} = \Soc(D_{I}/\Soc_{\b}(D_{I}))
\]
and therefore
$\Soc_\a(D_{I}) = H_{I_{\a}}$.

Next, let us consider the case in which $0<\xi_0<\a$. Inasmuch as $\xi_0\le\b$, then $\max{J}$ is finite and so we may consider the (orthogonal) idempotents $\zg = \sum\left\{\ze_{X_k}\left|k\in \max{J}
\right\}\right.$ and $\zh = \sum\left\{\ze_{X_k}\left|k\in
\max{(I_{\xi_{0}}^{\bullet\bullet})}
\setminus\max{J}\right\}\right.$. Moreover $\zg\in
H_{J} = R_{J}$ by
Proposition \ref{multunit} and $\zh\in H_{I_{\b}}$, because
$\max{(I_{\xi_{0}}^{\bullet\bullet})}
\setminus\max{J} \sbs I_{\b}$. As a result, since $I$ is not finitely sheltered in $I$, by using again Proposition
\ref{multunit} and noting that $\zf = \zg+\zh$ we see that
\begin{equation}\label{eq loew}
D_{I} = H_I\oplus\mathbf{1}D = H_I\oplus(\mathbf{1}-\zf)D =
H_{I_{\b}}\oplus D_{I,J}\oplus(\mathbf{1}-\zg-\zh)D
\end{equation}
and hence $D_{I}/H_{I_{\b}}\is D_{I,J}\oplus
(\mathbf{1}-\zg)D$. Now, since $\zg$ is the multiplicative identity of $H_{J}$, it is immediately checked that $(\mathbf{1}-\zg)+H_{I_{\b}}$ is a central idempotent of $D_{I}/H_{I_{\b}}$. Consequently
\[
D_{I}/H_{I_{\b}}\is
D_{I,J}\times D
\]
as rings. We are then in a position to compute $\Soc_{\xi_{0}+1}(D_{I})$, by putting $\b = \xi_{0}$ in the above. Since $\Soc_{\xi_{0}}(D_{I}) = H_{I_{\xi_{0}}}$, as it follows from the first part of the proof, then we have that
\[
D_{I}/\Soc_{\xi_{0}}(D_{I}) = D_{I}/H_{I_{\xi_{0}}}\is
D_{I,I_{\xi_{0}}^{\bullet\bullet}}\times D
\]
as rings. Inasmuch as $\Soc\left(D_{I,I_{\xi_{0}}^{\bullet\bullet}}\right) = H_{I_{\xi_{0}+1}^{\bullet}}$ by Lemma \ref{socRJ} and $\zh = \mathbf{0}$ when $\b = \xi_{0}$, it follows from \eqref{eq loew} that
\[
\Soc_{\xi_{0}+1}(D_{I}) = H_{I_{\xi_{0}}}\oplus H_{I_{\xi_{0}+1}^{\bullet}}\oplus(\mathbf{1}-\zf)D = H_{I_{\xi_{0}+1}}\oplus(\mathbf{1}-\zf)D.
\]
Now, assume that $\a > \xi_0+1$ and suppose, inductively, that
\begin{equation}\label{eq loew1}
    \Soc_{\b}(D_{I}) = H_{I_{\b}}\oplus(\mathbf{1}-\zf)D
\end{equation}
whenever $\xi_0<\b<\a$. If $\a = \b+1$ for some $\b>\xi_0$, then it follows from \eqref{eq loew} and \eqref{eq loew1} that $D_{I}/\Soc_{\b}(D_{I}) \is
D_{I,J}$. Since $\Soc\left(D_{I,J}\right) = H_{I_{\b+1}^{\bullet}}$ by Lemma \ref{socRJ}, we infer that
\[
\Soc_{\a}(D_{I}) = H_{I_{\b}}\oplus H_{I_{\b+1}^{\bullet}}\oplus(\mathbf{1}-\zf)D = H_{I_{\a}}\oplus(\mathbf{1}-\zf)D,
\]
as wanted. Finally, if $\a$ is a limit ordinal, then we have that $H_{I_{\a}} = \bigcup_{\b<\a}H_{I_{\b}}$ and hence
\[
H_{I_{\a}}\cap(\mathbf{1}-\zf)D =
\left(\bigcup_{\b<\a}H_{I_{\b}}\right)\cap(\mathbf{1}-\zf)D =
\bigcup_{\b<\a}\left(H_{I_{\b}}\cap(\mathbf{1}-\zf)D\right) = 0.
\]
It follows that
\begin{gather*}
\Soc_{\a}(D_{I}) = \bigcup_{\b<\a}\Soc_{\b}(D_{I}) = \bigcup_{\b<\a}(H_{I_{\b}}\oplus(\mathbf{1}-\zf)D) = \left(\bigcup_{\b<\a}H_{I_{\b}}\right)\oplus(\mathbf{1}-\zf)D \\
 = H_{I_{\a}}\oplus(\mathbf{1}-\zf)D
\end{gather*}
and we are done.

As far as the Loewy length of $D_{I}$ is concerned, if $I$ has
a finite cofinal subset, that is $\xi_{0} = 0$, then $D_{I} = H_I =
H_{I_{\xi}}$ and so it follows from \eqref{loewy} that $D_{I}$ has Loewy length $\xi$. If $0<\xi_0<\xi$, then by \eqref{loewy} we have that
\[
\Soc_{\xi}(D_{I}) = H_{I_{\xi}}\oplus(\mathbf{1}-\zf)D =
H_{I}\oplus\mathbf{1}D = D_{I}
\]
and therefore $D_{I}$ has again Loewy length $\xi$. If $\xi_{0} =
\xi$, then \eqref{loewy} imply that
$D_{I}/\Soc_{\xi}(D_{I}) = D_{I}/H_{I_{\xi}} = D_{I}/H_{I} \is D$ and $D_{I}$ has Loewy
length $\xi+1$.

(2) Let $i,j\in I$ and assume that $i\le j$. Then $\{i\le\}\sps \{j\le\}$, that is
$I\setminus\{i\le\}\sbs I\setminus\{j\le\}$ and therefore $H_{I\setminus\{i\le\}}\sbs H_{I\setminus\{j\le\}}$. As a result, if $\{i\le\}$ is not finitely sheltered in $I$, then it follows from Proposition \ref{simples} that $r_{D_{I}}(U_{i}) \sbs r_{D_{I}}(U_{j})$.
Suppose that $\{i\le\}$, and hence $\{j\le\}$, is finitely sheltered in $I$ and set $\max{\{i\le\}} = \{m_{1},\ldots,m_{r},m_{r+1},\ldots,m_{s}\}$, where $\{m_{r+1},\ldots,m_{s}\} = \max{\{j\le\}}$. By Proposition \ref{pro max}, $\{X_{m_{1}},\ldots,X_{m_{r}},X_{m_{r+1}},\ldots,X_{m_{s}}\}$ and $\{X_{m_{r+1}},\ldots,X_{m_{s}},\}$ are partitions of $X_{\{i\le\}}$ and $X_{\{j\le\}}$ respectively, therefore we can consider the corresponding pairwise orthogonal idempotents $\ze_{X_{m_{1}}},\ldots,\ze_{X_{m_{r}}},$ $\ze_{X_{m_{r+1}}},\ldots,\ze_{X_{m_{s}}}$. We observe that $m_{1},\ldots,m_{r}\in I\setminus\{j\le\}$, therefore $\ze_{X_{m_{1}}},\ldots,\ze_{X_{m_{r}}}$ $\in H_{I\setminus\{j\le\}}$. Consequently, by taking again Proposition \ref{simples} into account, we see that
\begin{gather*}
\mathbf{1}-\ze_{X_{\{i\le\}}} = \mathbf{1}-\ze_{X_{m_{1}}}-\cdots-\ze_{X_{m_{r}}}-\ze_{X_{m_{r+1}}}-\cdots-\ze_{X_{m_{s}}} \\
= -\ze_{X_{m_{1}}}-\cdots-\ze_{X_{m_{r}}}+ \mathbf{1}-\ze_{X_{\{j\le\}}}\in r_{D_{I}}(U_{j}).
\end{gather*}
Thus we have again that $r_{D_{I}}(U_{i}) \sbs r_{D_{I}}(U_{j})$, proving that $i\le j$ implies $U_i\preccurlyeq U_j$. At this point, in order to show that the reverse implication holds, it is sufficient to prove that if $U_i\preccurlyeq U_j$, then $i$ and $j$ are comparable. However, if $r_{D_{I}}(U_{i}) \sbs r_{D_{I}}(U_{j})$, then necessarily $U_iH_j\ne 0$, otherwise we would get $U_j = U_jH_j = 0$. Since $U_i = U_iH_i$, then $H_iH_j\ne 0$ and so $i$ and $j$ are comparable by Theorem \ref{lemposetring}.

Let $V$ be any simple right $D_{I}$-module. If $VH_I = 0$, then $H_I\ne D_{I}$ and it is clear that $V\is D_{I}/H_I$. Otherwise $VH_I = V$ and we may consider the smallest ordinal $\a$ such that $VH_i\ne 0$ for some $i\in I^{\bullet}_{\a+1}$. According to Lemma \ref{HsbsFr} we have that $H_i = H_i\mathbf{u}_iH_i$, thus
\[
0\ne VH_i = VH_i\mathbf{u}_iH_i \sbs V\mathbf{u}_iH_i.
\]
Let $x\in V$ be such that $x\mathbf{u}_i\ne 0$. If $\mathbf{a}\in D_{I}$ and $\mathbf{u}_i\mathbf{a}\in H_{I_\a}$, then $x\mathbf{u}_i\mathbf{a} = 0$ by the choice of $\a$. Thus the assignment $\mathbf{u}_i\mathbf{a}+H_{I_\a}\mapsto x\mathbf{u}_i\mathbf{a}$ defines a nonzero $D_I$-linear map from $U_i$ to $V$ and consequently $V\is U_i$.

Finally, if $i\in I$ and $\{i\le\}$ is not finitely sheltered in $I$, it
follows from Proposition \ref{simples} that $U_{i}\preccurlyeq D_{I}/H_{I}$.
If, on the contrary, $\{i\le\}$ is finitely sheltered in $I$, then
$\ze_{X_{\{i\le\}}}\in H_{\{i\le\}}$ by Proposition \ref{multunit}. As a
result, by using again Proposition \ref{simples} we see that
\[
\mathbf{1} =
\left(\mathbf{1}-\ze_{X_{\{i\le\}}}\right)+\ze_{X_{\{i\le\}}} \in
r_{D_{I}}(U_{i})+H_I
\]
and hence $r_{D_{I}}(U_{i})+H_I = D_{I}$, proving that
$r_{D_{I}}(U_{i})\nsbs H_I$, namely $U_{i}\not\preccurlyeq
D_{I}/H_I$.

(3) Let us prove first that the ring $D_{I,J} = H_J+\ze_{X_J}\,D$ is unit-regular for every
\underline{finite} subset $J\sbs I$. It is obvious that
$R_{\emptyset} = 0$ is unit-regular; let $n$ be any positive
integer, assume inductively that $D_{I,J}$ is unit-regular whenever
$|J|<n$ and take $J\sbs I$ such that $|J| = n$. Let us consider the
surjective ring homomorphism
\[
\lmap{\f_{J\setminus J_1,J}}{D_{I,J}}{D_{I,J\setminus J_1}}
\]
(see Proposition \ref{uppersbs}) and note that $D_{I,J\setminus J_1}$ is unit-regular by the inductive
hypo\-thesis. In order to prove unit-regularity of $D_{I,J}$, according to
Vasershtein criterion (see \cite[Proposition 4.12]{Vasershtein:1},
or \cite[Lemma 3.5]{Bac:12} for a ready-to-use version) it will be
sufficient to prove that every unit of $D_{I,J\setminus J_1}$ has the
form $\f_{J\setminus J_1,J}(\za)$ for some unit $\za$ of $D_{I,J}$ and
$\zu D_{I,J}\zu$ is unit-regular for every idempotent
$\zu\in\Ker(\f_{J\setminus J_1,J})$. By denoting with $K$ the set of those elements of $J$ which are isolated in $J$, i. e. are minimal and maximal in $J$, we have from Proposition \ref{pro max} that $K = \{j\in J\mid X_j\cap
X_{J\setminus\{j\}} = \emptyset\}$; since $K\sbs J_1$, we infer that
\[
X_{J\setminus J_1}\cap X_K = \emptyset \quad\text{and}\quad
X_{J\setminus J_1}\cup X_K = X_J.
\]
Let us write
\[
\ze = \ze_{X_J},\quad \ze' = \ze_{X_{J\setminus J_1}}
\quad\text{and}\quad \ze'' = \ze_{X_K},
\]
so that $\ze'$, $\ze''$ are orthogonal idempotents and $\ze'+\ze'' =
\ze$, and suppose that $\zb$ is a unit of $R_{J\setminus J_1}$. If
$D_{I,J\setminus J_1} = H_{J\setminus J_1}$, then it follows from
Proposition \ref{multunit} that $\ze'$ is the multiplicative identity of $H_{J\setminus J_1}$ and
$J\setminus J_1$ is finitely sheltered in $I$; consequently $\ze'' =
\ze-\ze'\in \Ker(\f_{J\setminus J_1,J})$ by \eqref{kermapfKJ}. If
$\zb'$ is the inverse of $\zb$ in $D_{I,J\setminus J_1}$, using the
fact that $\zb$ and $\zb'$ belong to $\ze'D_{I,J}\ze'$ it is immediately seen
that $\zb'+\ze''$ is an inverse for $\zb+\ze''$ in $D_{I,J}$ and
$\f_{J\setminus J_1,J}(\zb+\ze'') = \zb$. Assume that $D_{I,J\setminus J_1} \ne H_{J\setminus J_1}$. Then $D_{I,J\setminus J_1} =
H_{J\setminus J_1}\oplus \ze' D$ by Proposition \ref{multunit} and so
$\zb = \zc+\ze'd$ for unique $\zc\in H_{J\setminus J_1}$ and $d\in
D$. Necessarily $d\ne 0$ and if $\zc'+\ze'd'$ is the inverse of
$\zb$ in $D_{I,J\setminus J_1}$, where $\zc'\in H_{J\setminus J_1}$
and $d'\in D$, then $d' = d^{-1}$. Noting that $\ze''H_{J\setminus J_1} = 0 = H_{J\setminus J_1}\ze''$, we infer that
\begin{gather*}
(\zc+\ze d)(\zc'+\ze d') = (\zc+\ze'd+\ze''d)(\zc'+\ze'd'+\ze''d')
= \\
(\zc+\ze'd)(\zc'+\ze'd') + \ze''dd' = \ze'+\ze'' = \ze.
\end{gather*}
Similarly $(\zc'+\ze d')(\zc+\ze d) = \ze$, hence $\zc+\ze d$ is
a unit in $D_{I,J}$ and $\f_{J\setminus J_1,J}(\zc+\ze d) =
\zc+\ze'd = \zb$.

Next, let $\zu$ be an idempotent of $\Ker(\f_{J\setminus J_1,J})$.
If $J\setminus J_1$ is not finitely sheltered in $I$, then it follows from
\eqref{kermapfKJ} and Lemma \ref{socRJ} that $\Ker(\f_{J\setminus J_1,J}) = H_{J_1} =
\Soc(D_{I,J})$; thus $\zu D_{I,J}\zu$ is a semisimple ring and so it is
unit-regular. If, on the contrary, $J\setminus J_1$ is finitely sheltered in $I$, then
\eqref{kermapfKJ} tells us that $\Ker(\f_{J\setminus J_1,J}) =
H_{J_1}+\ze''D$ and hence, observing that $\ze''H_{J_1\setminus
K}\ze'' = 0$, we get
\[
\Ker(\f_{J\setminus J_1,J}) = H_{J_1\setminus K}
\oplus(H_{K}+\ze''D) = H_{J_1\setminus K} \oplus D_{I,K}.
\]
As a consequence $\zu = \zu'+\zu''$ for unique orthogonal
idempotents $\zu'\in H_{J_1\setminus K}$ and $\zu''\in D_{I,K}$.
Inasmuch as $H_{J_1\setminus K}$ is semisimple and $D_{I,K}$ is
unit-regular by the inductive hypothesis, we conclude that $\zu
D_{I,J}\zu = \zu' H_{J_1\setminus K}\zu'\oplus \zu'' D_{I,K}\zu''$ is
unit-regular and we are done.

Finally, given any $\za\in D_{I}$, there is a finite subset $J\sbs I$ and two elements
$\zb\in H_J$, $d\in D$ such that $\za = \zb+\mathbf{1}d = \zb+
\ze_{X_J}d+(\mathbf{1}-\ze_{X_J})d$. Thus $\za$ belongs to the
\underline{unital} subring $S = D_{I,J}+(\mathbf{1}-\ze_{X_J})D$ of
$D_{I}$, in which $\ze_{X_J}$ is a central idempotent. If $X_J = X$, then $\ze_{X_J} = \mathbf{1}$ and $S = D_{I,J}$. If $X_J \ne X$, then $S\is D_{I,J}\times D$ as rings. In both cases $S$ is unit-regular and hence $\za = \za\zb\za$ for a unit $\zb\in S\sbs D_{I}$, as wanted.

(4) If $U$ is a maximal element of $\Simp_{D_{I}}$, that is
$r_{D_{I}}(U)$ is a maximal right ideal, then $U_{D_{I}}$ is injective
by \cite[Corollary 4.8]{Bac:15}. Otherwise, according to
\eqref{SimpR}, $U = U_{i}$ for some non-maximal element $i\in I$.
Set $J = \{i\le\}$ and note that $D_{I,J}\is D_{I}/r_{D_{I}}(U)$ is a
primitive ring which has $U_{i}$ as the unique (up to an isomorphism)
faithful simple right module. Let us consider the ring $S_i$ introduced immediately before Proposition \ref{subrings} and the $D$-linear map
$\map{\theta}{D_{I,J}}{S_{i}}$ defined by $\theta(\za) =
\ze_{X_i}\za\ze_{X_i}$. Note that if  $j>i$ and $\za\in H_{j}$, then
both $\ze_{X_i}\za$ and $\za\ze_{X_i}$ belong to $H_{i}$ by Theorem
\ref{lemposetring}, (3) and therefore, since $H_{i} = \ze_{X_i}H_{i}\ze_{X_i}$, for every $\za,\zb\in H_{J}$ we have that
\[
\ze_{X_i}(\za\zb)\ze_{X_i} = \ze_{X_i}\za\ze_{X_i}\zb\ze_{X_i}.
\]
From this we infer immediately that $\theta$ is a unital ring
homomorphism. Observe that $\theta$ restricts to the identity on
$H_{i} \sbs \psi_{i}(F_{\l(i)}) = \Soc(S_{i})$; moreover $H_{i} = \Soc(D_{I,J})$ is essential as
a right ideal of $D_{I,J}$ (see Lemma \ref{socRJ}), therefore $\theta$ is a monomorphism. As a
result $D_{I,J}$ can be identified with a subring of $S_{i}$ and $\Soc(D_{I,J}) = H_{i} = D_{I,J}\cap \Soc(S_{i})$. Since in turn $S_{i}\is Q$, it follows from Lemma
\ref{pro simplinject} that $U = U_{i}$ is $D_{I,J}$-injective if and only if $\Soc(D_{I,J}) =  \Soc(S_{i})$, if and only if $H_{i} = \psi_{i}(F_{\l(i)})$, if and only if $i\in I'$.
Inasmuch as $D_{I}$ is regular, then $r_{D_{I}}(U)$ is pure as a left
ideal and so $U_{D_{I}}$ is injective.
Finally, according to \cite[Theorem 2.7]{Bac:12} $D_{I}$ is a right and left
$V$-ring if and only if all primitive factor rings of $D_{I}$ are
artinian, that is, if and only if all primitive ideals of $D_{I}$ are
maximal as right ideals; in view of property (2) this happens if and
only if $I$ is an antichain, that is $\xi = 1$.

(5) Suppose that $\xi$ is a natural number. We will prove that $D_{I,J}$ is hereditary for every subset $J\sbs I$ by applying induction on the has dual classical Krull length $\xi(J)$ of $J$. It will follows that, in particular, $D_{I} = D_{I,I}$ is hereditary.
If $\xi(J) = 0$, that is $J = \emptyset$, then and $D_{I,\emptyset} = 0$ is trivially hereditary. Given a positive integer $n\le\xi$, suppose that $D_{I,J}$ is hereditary whenever $\xi(J)<n$ and
let $J$ be any subset of $I$ such that $\xi(J) = n$. Since $\xi(J_{1}^{\bullet\bullet}) = n-1$, then $D_{I,J_{1}^{\bullet\bullet}}$ is hereditary by the inductive hypothesis. Assume that $J$ is finitely sheltered in $I$ and let $K$ be the set of all isolated elements of $I$ which belong to $J$. Then we have
\[
D_{I,J} = H_J = H_{J_1}\oplus H_{J_{1}^{\bullet\bullet}} = H_K\oplus H_{J_1\setminus K}\oplus D_{I,J_{1}^{\bullet\bullet}} =
H_{K}\oplus D_{I,J\setminus K},
\]
taking Proposition \ref{multunit} into account. Since $K$ is finite, then $H_{K} \is D^K$ is a semisimple ring and hence is hereditary; thus, in order to prove that $D_{I,J}$ is hereditary we may assume that $K = \emptyset$. Consequently $X_J = X_I$ and therefore $\ze_{X_{J_{1}^{\bullet\bullet}}} = \ze_{X_J}$, so that $H_{J_{1}^{\bullet\bullet}} = D_{I,J_{1}^{\bullet\bullet}}$ is a \underline{unitary} and subring of $D_{I}$. Since $D_{I,J_{1}^{\bullet\bullet}}$ is a regular ring and $H_{J_1} = \Soc(D_{I,J})$, it follows from Lemma \ref{lemhereditary} that $D_{I,J}$ is right and left hereditary.

Assume now that $J$ is not finitely sheltered in $I$. Then it follows from Proposition \ref{multunit} that $D_{I,J} = H_J\oplus\ze_{X_J}D = H_{J_1}\oplus H_{J_{1}^{\bullet\bullet}}\oplus\ze_{X_J}D$ and, by using Proposition \ref{uppersbs}, we infer that $H_{J_{1}^{\bullet\bullet}}\oplus\ze_{X_J}D \is D_{I,J}/H_{J_1} \is D_{I,J_{1}^{\bullet\bullet}}$. Thus $H_{J_{1}^{\bullet\bullet}}\oplus\ze_{X_J}D$ is a regular and hereditary ring and Lemma \ref{lemhereditary} applies again, proving that $D_{I,J}$ is right
and left hereditary.

(6) If $I$ is at most countable, then (see Notations \ref{not main})  $|X| = \al = \al_{0}$ and therefore $\FM_{X}(D)$ has countable dimension over $D$. If $I' =\emptyset$ and $i\in I$, then $H_{i}\is D$ if $i$ is a maximal element of $I$, otherwise $H_{i}\is \FM_{X}(D)$. As a result $H_I = \bigoplus_{i\in I}H_{i}$ has countable dimension over $D$ and the same occurs for $D_{I} = H_I+\ze_{X}D$.

(7) If $i\in I$, then it follows from \eqref{loewy} and property (2) that $h(U_i) = \l(U_i)$. Thus we only have to check the behavior of the simple module $V = D_I/H_I$, in case $0<\xi_0$, that is $I$ has not a finite cofinal subset. Set $\a+1 = \l(V)$ and assume that $\a+1<h(V) = \xi_0+1$, namely $\a+1\le\xi_0$. Since $\xi_0\le\xi$, then there is some $i\in I$ such that $\l(i) = \a+1$. We have that $\l(U_i) = \a+1$, therefore $U_i$ and $V$ are not comparable and consequently $\{i\le\}$ is finitely sheltered by \eqref{SimpR1}. Suppose, on the contrary, that $\l(V) = \xi_0+1$. Given any $\a<\xi_0$, we have from Corollary \ref{lengthlayerr} that there is some $U\in\Simp_{D_I}$ such that $\l(U) = \a+1$ and $U\prec V$. Necessarily $U = U_i$ for a unique $i\in I$ with $\l(i) = \a+1$ and $\{i\le\}$ is not finitely sheltered by \eqref{SimpR1}.

(8) According to Proposition \ref{uppersimpp} we only have to show the ``if'' part. Firstly, it is clear from \eqref{SimpR} that $\Simp_{D_I}$ has finitely many maximal elements if and only if $I$ satisfies the same condition. Thus, assume that this condition holds, let $\ZS$ be an upper subset of $\Simp_{D_I}$, set $J = \{j\in I\mid U_j\in\ZS\}$ and note that $J$ is an upper subset of $I$ by property (2). For every $i\in I$, it follows from \eqref{rrUi} that $H_{I\setminus J}$ annihilates $U_i$ if and only if $i\in J$. If either $I$ has a finite cofinal subset or $D_I/H_I\in\ZS$, by property (2) this is enough to conclude that $\Phi(\Psi(\ZS)) = \ZS$. Assume that $I$ has not a finite cofinal subset and $D_I/H_I\nin\ZS$. Then it follows from property (2) that $J$ is order isomorphic to $\ZS$ and, since every element of $\Simp_{D_I}$ is bounded by a maximal element and $\ZS$ is an upper subset, then $J$ is a finitely sheltered upper subset of $I$. Let $m_{1},\ldots,m_{r}$ be the maximal elements of $J$, so that $\{X_{m_{1}},\ldots,X_{m_{r}}\}$ is a partition of $X_J$ by Proposition \ref{pro max}. If we set $H = H_{I\setminus J}+ (\mathbf{1}- \ze_{X_{m_{1}}}+\cdots+\ze_{X_{m_{r}}})D$, then we have from Propositions \ref{multunit} and \ref{uppersbs} that $H$ is an ideal of $D_I$ and $D_I/H \is D_{I,J} = H_J$. From this, with the help of Proposition \ref{simples} we infer that $H$ annihilates $U_i$ if and only if $i\in J$. It is not the case that $H\sbs H_I$ otherwise, since $\ze_{X_{m_{1}}},\ldots,\ze_{X_{m_{r}}}\in H_I$, it would follow that $\mathbf{1}\in H_I\psbs D_I$. Again, we can conclude that $\Phi(\Psi(\ZS)) = \ZS$, proving that $D_I$ is very well behaved.
\end{proof}

Concerning hereditariness, we are presently unable to exhibit an example of non-hereditary, regular and semiartinian ring; nonetheless we have the following easy result.

\begin{pro}\label{pro hered}
If $R$ is a right semiartinian ring with Loewy length at most 2 and projective right socle, then $R$ is right hereditary.
\end{pro}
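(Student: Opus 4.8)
The plan is to establish that $R$ is right hereditary in the most elementary possible way, namely by showing directly that every right ideal of $R$ is projective. Two preliminary observations will carry the argument. Writing $L = \Soc(R_R)$, the first is that $R/L$ is a semisimple right $R$-module: since $R$ is right semiartinian with Loewy length at most $2$ we have $\Soc_{2}(R_R) = R$, hence $R/L = \Soc_{2}(R_R)/\Soc_{1}(R_R) = \Soc(R/\Soc(R_R))$, so $R/L$ equals its own socle, and in particular every submodule of $R/L$ is a direct summand. The second is that $L$ is at once semisimple (as a socle) and projective (by hypothesis), so every submodule of $L$ is a direct summand of $L$ and is therefore projective.

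With these in hand, here is how I would argue. Let $I$ be an arbitrary right ideal of $R$. Using semisimplicity of $R/L$, choose a right ideal $J$ with $L \subseteq J$ such that $R/L = (I+L)/L \oplus J/L$; then $I + J = R$ and $I \cap J = I \cap L$. Set $K = I \cap L$ and consider the exact sequence
\[
0 \longrightarrow K \longrightarrow I \oplus J \longrightarrow R \longrightarrow 0 ,
\]
where the first map is $k \mapsto (k,-k)$ and the second is $(i,j) \mapsto i+j$ (exactness at $I\oplus J$ uses precisely $I\cap J = K$). Since $R_R$ is free of rank one, hence projective, this sequence splits, so $I \oplus J \cong K \oplus R$. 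Now $K$ is a submodule of the projective semisimple module $L$ and hence projective, so $K \oplus R$ is projective, and therefore so is its direct summand $I$. As $I$ was arbitrary, $R$ is right hereditary.

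I do not anticipate a genuine obstacle in this proof; it is short and self-contained, and the only points that require (trivial) care are the two preliminary observations above — that Loewy length $\le 2$ forces $R/L$ semisimple, and that a submodule of a projective semisimple module is projective. One could alternatively try to invoke Lemma \ref{lemhereditary}, but that would require producing a right hereditary subring $S$ with $R = S + L$, i.e. a Wedderburn-type lift of the semisimple ring $R/L$, which need not exist; so the direct route above is preferable. It is worth noting that this argument uses neither essentiality of the socle nor any information about the left-module structure of $R$.
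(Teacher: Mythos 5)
Your proof is correct and takes a genuinely different route from the paper's. The paper proves hereditariness via an injective-module criterion: it suffices that whenever $E$ is injective with an essential submodule $M$, the quotient $E/M$ is again injective. Setting $K = \Soc(R_R)$, the paper uses essentiality of $M$ to conclude $EK \subseteq \Soc(E) \subseteq M$, so $E/M$ is an $R/K$-module; it is $R/K$-injective since $R/K$ is semisimple, and the left purity of the ideal $K$ is then invoked to lift this to $R$-injectivity. You instead verify the definition directly: for an arbitrary right ideal $I$, you produce a right ideal $J \supseteq L$ with $R/L = (I+L)/L \oplus J/L$, giving $I + J = R$ and $I \cap J = I \cap L$; the Mayer--Vietoris sequence
\[
0 \longrightarrow I \cap J \longrightarrow I \oplus J \longrightarrow R \longrightarrow 0
\]
splits because $R$ is projective, so $I$ is a direct summand of $(I \cap J) \oplus R$, which is projective because $I \cap J$ is a direct summand of the projective semisimple module $L$. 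Your route is more elementary --- it bypasses injectives, the Baer-type criterion, and the purity step (which in the paper's argument requires knowing that projectivity of $\Soc(R_R)$ yields left purity of that ideal, a point the paper leaves implicit) --- and it transparently shows exactly which hypotheses enter and where. Your remark that Lemma \ref{lemhereditary} is not directly applicable, because a subring $S$ with $R = S + L$ (a Wedderburn-type lift of $R/L$) need not exist, is also well taken.
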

\begin{proof}
If the Loewy length of $R$ is 1, then $R$ is semisimple and so is hereditary. Assume that $R$ has Loewy length 2. In order to prove that $R$ is right hereditary, it is sufficient to show that if $E$ is an injective right $R$-module with an essential submodule $M$, then $E/M$ is an injective $R$-module. Set $K = \Soc(R_R)$ and note that $EK\sbs M$, so that $E/M$ is canonically a right $R/K$-module. Since $R/K$ is a semisimple ring, then $E/M$ is $R/K$-injective and the left purity of $K$ implies the $R$-injectivity of $E/M$.
\end{proof}

A couple of final remarks are in order. First, on the basis of properties (7) and (8) of Theorem \ref{theo semiartunitreg}, a suitable choice of the artinian poset $I$ produces a semiartinian and regular ring $D_I$ such that $(\Simp_{D_I})_{\a}$ is finite for every $\a$, but $\Simp_{D_I}$ has infinitely many maximal elements, so that $D_I$ is well behaved but not very well behaved (see Propositions \ref{pro orderreg} and \ref{uppersimpp}). The second remark concerns the distribution of non-maximal, injective members of $\Simp_{R}$, where $R$ is a regular and semiartinian ring. On the basis of property (4) of the previous theorem one might wonder wether the subset of these modules is always a lower subset of $\Simp_R$. However this is not the case, as shown by the following example.

\begin{ex}\label{ex injectives}
There exists a semiartinian, hereditary and unit-regular ring $R$
such that $\Simp_R$ is a chain $\{U\prec V\prec W\}$, where $V$ and $W$ are injective but $U$ is not injective.
\end{ex}
\begin{proof}
Let $\al$ and $\beth$ be infinite cardinals with $\beth<\al$ and set $X =\al\bullet\beth$. With the notations of Section \ref{sect wochsbr}, let us consider the partition $\CP_{1} = \{X_{1,\l}\mid\l<\beth\}$ of $X$, where $X_{1,\l} = \{\al\bullet\l+\r\mid\r<\al\}$ for all $\l<\beth$ and note that $|X_{1,\l}| = \al$ and $|\CP_{1}| = \beth$. Given a division ring $D$, let us consider the ring $Q = \CFM_{X}(D)$ and let $T$ be the subset of $Q$ of all matrices whose rows have support of cardinality not exceeding $\beth$:
\[
T = \{\ZA\in Q\mid |\Supp(\za(x,-))|\le\beth\text{ for all }x\in X\}.
\]
Then $T$ is a (unital) subring of $Q$. Indeed, let $\ZA,\ZB\in T$, let $x\in X$ and set
\[
Y = \Supp(\za(x,-)), \quad Z = \Supp(\zb(x,-)), \quad U = \bigcup\{\Supp(\zb(z,-))\mid z\in Z\}.
\]
Then
\[
|\Supp((\za-\zb)(x,-))| = |\Supp(\za(x,-)-\zb(x,-))| \le |Y\cup Z|\le\beth,
\]
showing that $T$ is an additive subgroup of $Q$. Moreover, if $y\in X\setminus U$, then
\[
(\za\zb)(x,y) = \sum_{z\in X}\za(x,z)\zb(z,y) = \sum_{z\in Y}\za(x,z)\zb(z,y) = 0.
\]
It follows that $\za\zb\in T$, because $|U|\le\beth$. Set $H = \FR_{X}(D)\cap T$ and note that $H$ is a semisimple and regular ideal of $T$. With the notations of Theorem \ref{pro-chainflr}, we have that $F_1 = \f_1(\FR_{\beth}(D)\sbs T$. Finally, let us consider the ring
\[
R = H\oplus F_1\oplus \mathbf{1}_{Q}D.
\]
Now it is easy to check that $R$ is a regular and semiartinian ring, where $\Soc{R} = H$ is homogeneous, $\Soc_{2}(R) = H\oplus F_1$ and $\Soc_{2}(R)/\Soc(R) \is F_1$ is homogeneous and $R/\Soc_{2}(R) \is D$. A straightforward application of Lemma \ref{lemhereditary} and Vasershtein criterion shows that $R$ is hereditary and unit-regular. It is clear that $\Prim_{R} = \{\{0\}, \Soc{R},\Soc_1{R}\}$, thus $\Simp_{R}$ is a chain  $\{U\prec V\prec W\}$, where $\Soc{R}$ is the trace of $U$ in $R$, $\Soc_1(R)/\Soc{R}$ is the trace of $V$ in $R/\Soc(R)$ and $W\is D$ is injective because it is a maximal element. According to Lemma \ref{pro simplinject}, $V$ is injective but $U$ is not injective.
\end{proof}

\end{document}